\newtheorem{proposition}{Proposition}[section]
\newtheorem{theorem}{Theorem}[section]
\newtheorem{lemma}[proposition]{Lemma}
\newtheorem{remark}{Remark}[section]
\newtheorem{corollary}[theorem]{Corollary}
\numberwithin{equation}{section}
\numberwithin{equation}{section}
\author{}
\date{}
\title{Non-existence of ground states 
and gap of variational problems for 
combined power-type 
nonlinear scalar field equations 
involving the Sobolev critical exponent 
in three space dimensions}
\author{Takafumi Akahori, Slim Ibrahim, 
Hiroaki Kikuchi, Hayato Nawa}
\date{}
\begin{document}
\maketitle

\begin{abstract}
In this paper, we consider minimization problems related to the 
combined power-type nonlinear scalar field equations 
involving the Sobolev critical exponent in three space dimensions. In four and higher space dimensions, it is known 
that for any frequency and any power of the subcritical nonlinearity, 
there exists a ground state. In contrast to those cases, 
when the space dimension is three and the subcritical power is three or less, we can 
show that there exists a threshold frequency, 
above which no ground state exists, 
and below which the ground state exists 
(see Theorems \ref{thm-0} and \ref{ex-critical-o}). 
Furthermore, we prove the difference between two typical variational problems used to characterize the ground states (see Theorem \ref{thm-2mini-1}). 
\end{abstract}

\section{Introduction}
In this paper, we consider the existence of 
minimizer for the following minimization problem:
\begin{equation}\label{m-value}
m^{\mathcal{N}}_{\omega} 
:= 
\inf\big\{\mathcal{S}_{\omega}(u) \colon 
u \in H^{1}(\mathbb{R}^{d}) 
\setminus \{0\}, \; \mathcal{N}_{\omega} (u) = 0 
\big\}, 
\end{equation} 
where $\mathcal{S}_{\omega}$ 
and $\mathcal{N}_{\omega}$ are the functionals 
on $H^{1}(\mathbb{R}^{d})$ defined by 
\begin{align}
& \mathcal{S}_{\omega}(u) := \frac{1}{2} \|\nabla u\|_{L^{2}}^{2} 
+ \frac{\omega}{2} \|u\|_{L^{2}}^{2} - \frac{1}{p+1} \|u\|_{L^{p+1}}^{p+1}
- \frac{d-2}{2d} \|u\|_{L^{\frac{2d}{d-2}}}^{\frac{2d}{d-2}}, 
\label{eq-action}\\
& \mathcal{N}_{\omega}(u) := \|\nabla u\|_{L^{2}}^{2} 
+ \omega \|u\|_{L^{2}}^{2} - \|u\|_{L^{p+1}}^{p+1}
- \|u\|_{L^{\frac{2d}{d-2}}}^{\frac{2d}{d-2}}, 
\end{align}
with $1<p<\frac{d+2}{d-2}$. We refer $\mathcal{S}_{\omega}$ and $\mathcal{N}_{\omega}$ as action and Nehari's functional, respectively. 
\par
The main reason why we consider the minimization problem \eqref{m-value} is that when a minimizer exists, it solves the following Sobolev-critical scalar field equation: 
\begin{equation}\label{sp}
- \Delta u + \omega u - |u|^{p-1}u - 
|u|^{\frac{4}{d-2}}u = 0 
\quad 
\mbox{in $H^{-1}(\mathbb{R}^{d})$}. 
\end{equation}
We note that 
the equation \eqref{sp} is related to the following 
nonlinear Schr\"{o}dinger equation 
\begin{equation} \label{nls}
i \frac{\partial \psi}{\partial t} + \Delta \psi + |\psi|^{p-1} \psi 
+ |\psi|^{\frac{4}{d-2}} \psi = 0 \qquad \mbox{
in $\mathbb{R}^{d} \times \mathbb{R}$}. 
\end{equation}
When we look for solutions to \eqref{nls} of the 
form $\psi(t, x) = e^{i \omega t} u(x) \; (\omega > 0)$, 
we see that $u$ satisfies \eqref{sp}. 

In this paper, we call a minimizer for 
$m_{\omega}^{\mathcal{N}}$ a 
\lq\lq ground state of $m_{\omega}^{\mathcal{N}}$"; 
The definition of ground state seems 
to vary depending on papers. 
Indeed, in our previous papers \cite{AIKN3, AIIKN}, 
a ground state is defined as a minimizer 
for the following problem: 
\begin{equation} \label{mini-sol}
m_{\omega}^{S}
:= \inf \big\{ \mathcal{S}_\omega(v) : 
\mbox{$v \in H^{1}(\mathbb{R}^d)$ 
is a nontrivial (complex-valued) 
solution to \eqref{sp}} 
\big\}.
\end{equation} 
Clearly, if $u \in H^{1}(\mathbb{R}^d)$ 
is a solution to \eqref{sp}, then 
$u \in H^{1}(\mathbb{R}^d)$ satisfies 
$\mathcal{N}_{\omega}(u) = 0$, so that 
$m_{\omega}^{\mathcal{N}} \le m_{\omega}^{S}$. 
Furthermore, as mentioned, a ground state (minimizer for $m_{\omega}^{\mathcal{N}}$) satisfies \eqref{sp}, 
so that 
$m_{\omega}^{\mathcal{N}} = m_{\omega}^{S}$ 
as long as a ground state exists. 

The aim of this paper is to study the existence and non-existence 
of ground state (minimizer for $m_{\omega}^{\mathcal{N}}$) in $\mathbb{R}^{3}$. 
To this end, we introduce a supplementary variational value $m_{\infty}$ as 
\begin{equation}\label{m-infty}
m_{\infty}
:=
\inf\big\{ \mathcal{H}^{\ddagger}(u) \colon 
u \in \dot{H}^{1}(\mathbb{R}^{d})\setminus \{0\},\ 
\mathcal{N}^{\ddagger}(u)=0 
\big\}
,
\end{equation}
where 
\begin{align} 
\label{h-infty}
\mathcal{H}^{\ddagger}(u)
&:=
\frac{1}{2}\|\nabla u\|_{L^{2}}^{2}
-
\frac{d-2}{2d}\|u \|_{L^{\frac{2d}{d-2}}}^{\frac{2d}{d-2}},
\\[6pt]
\label{n-infty}
\mathcal{N}^{\ddagger}(u)
&:=
\|\nabla u\|_{L^{2}}^{2}
-
\|u \|_{L^{\frac{2d}{d-2}}}^{\frac{2d}{d-2}}.
\end{align}
Putting here 
\begin{equation}\label{sob-bc}
\sigma := 
\inf\big\{\|\nabla u\|_{L^{2}}^{2} \colon 
\mbox{$u \in \dot{H}^{1}(\mathbb{R}^{d})$ with 
$\|u\|_{L^{\frac{2d}{d-2}}} = 1$}\big\},
\end{equation}
we can verify the following (see Section \ref{app-A} for the proof): 
\begin{equation}\label{sob-bc-rela}
m_{\infty}=\frac{1}{d}\sigma^{\frac{d}{2}}.
\end{equation}
Notice that the value $\sigma$ 
gives us the best constant of Sobolev's 
inequality in $\mathbb{R}^{d}$ (see \cite{Aubin, 
Talenti}): 
\begin{equation}\label{19/9/7/15:31}
\sigma \|u\|_{L^{\frac{2d}{d-2}}}^{2} \le 
\|\nabla u\|_{L^{2}}^{2}
\end{equation}
with equality if and only if 
\begin{equation}\label{scale-W}
u(x) = z \lambda_{0}^{-1} 
W( \lambda_{0}^{-\frac{2}{d-2}}x+x_{0})
\quad 
\mbox{for some $z \in \mathbb{C}$, 
$\lambda_{0}>0$ and 
$x_{0}\in \mathbb{R}^{d}$}, 
\end{equation}
where $W$ is the 
Talenti function on $\mathbb{R}^{d}$ 
with $W(0) = 1$, namely, 
\begin{equation}\label{F-Talenti-1}
W(x) := \Big(1 + \frac{|x|^{2}}{d(d-2)}\Big)^{- 
\frac{d-2}{2}}. 
\end{equation}
It is well known that for any $d \geq 3$, 
we have 
\begin{equation}\label{19/9/18/11:15}
\|\nabla W\|_{L^{2}}^{2}=\|W\|_{L^{\frac{2d}{d-2}}}^{\frac{2d}{d-2}}=\sigma^{\frac{d}{2}}.
\end{equation}
Moreover, the values $m_{\infty}$ is related to a sufficient condition for the existence of ground state (cf, \cite{Brezis-Nirenberg}). Indeed, we have the following Brezis-Nirenberg type proposition: 
\begin{proposition}\label{thm-0-1}
Assume $d \ge 3$ and 
$1<p<\frac{d+2}{d-2}$. Then, for any $\omega>0$ with $m_{\omega}^{\mathcal{N}} < m_{\infty}$, a ground state (minimizer for $m_{\omega}^{\mathcal{N}}$) exists. 
\end{proposition}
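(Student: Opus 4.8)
The plan is to follow the classical Brezis--Nirenberg concentration-compactness strategy adapted to the Nehari constraint. First I would take a minimizing sequence $\{u_{n}\} \subset H^{1}(\R^{d})\setminus\{0\}$ for $m^{\boN}_{\omega}$, so $\boN_{\omega}(u_{n})=0$ and $\boS_{\omega}(u_{n}) \to m^{\boN}_{\omega}$. Using $\boN_{\omega}(u_{n})=0$ one rewrites
\begin{equation}\label{prop01-rewrite}
\boS_{\omega}(u_{n}) = \boS_{\omega}(u_{n}) - \frac{1}{p+1}\boN_{\omega}(u_{n}) = \Big(\frac{1}{2}-\frac{1}{p+1}\Big)\big(\|\nabla u_{n}\|_{L^{2}}^{2}+\omega\|u_{n}\|_{L^{2}}^{2}\big) + \Big(\frac{1}{p+1}-\frac{d-2}{2d}\Big)\|u_{n}\|_{L^{\frac{2d}{d-2}}}^{\frac{2d}{d-2}},
\end{equation}
and since $1<p<\frac{d+2}{d-2}$ both bracketed coefficients are positive; hence $\{u_{n}\}$ is bounded in $H^{1}(\R^{d})$. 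After passing to a subsequence, $u_{n} \rightharpoonup u$ weakly in $H^{1}$, strongly in $L^{q}_{\mathrm{loc}}$ for subcritical $q$, and a.e.

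The decisive step is to rule out vanishing and dichotomy. I would appeal to a profile decomposition / the Brezis--Lieb lemma: write $u_{n}=u+v_{n}$ with $v_{n}\rightharpoonup 0$. The $L^{p+1}$ term is compact (this uses $p+1<\frac{2d}{d-2}$ together with, if needed, a vanishing argument ruling out $u=0$ unless mass escapes to infinity), so the only obstruction is the critical $L^{\frac{2d}{d-2}}$ term, which splits as $\|u_{n}\|_{L^{\frac{2d}{d-2}}}^{\frac{2d}{d-2}} = \|u\|_{L^{\frac{2d}{d-2}}}^{\frac{2d}{d-2}} + \|v_{n}\|_{L^{\frac{2d}{d-2}}}^{\frac{2d}{d-2}} + o(1)$ and similarly $\|\nabla u_{n}\|_{L^{2}}^{2} = \|\nabla u\|_{L^{2}}^{2} + \|\nabla v_{n}\|_{L^{2}}^{2}+o(1)$. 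The hypothesis $m^{\boN}_{\omega}<m_{\infty}$ is exactly what forces the ``critical mass'' carried by $v_{n}$ to be zero: from $\boN_{\omega}(u_{n})=0$ and the above splittings one gets an asymptotic relation of the form $\lim(\|\nabla v_{n}\|_{L^{2}}^{2}-\|v_{n}\|_{L^{\frac{2d}{d-2}}}^{\frac{2d}{d-2}}) = -\boN_{\omega}^{\dagger}(u) \le 0$ for a suitable limiting functional, while the Sobolev inequality \eqref{19/9/7/15:31} gives, for the bubble part, that if $\|v_{n}\|_{L^{\frac{2d}{d-2}}}\not\to 0$ then its energy contribution is at least $m_{\infty}-o(1)$, so combined with the (nonnegative) energy of $u$ one would obtain $m^{\boN}_{\omega}\ge m_{\infty}$, a contradiction. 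Therefore $v_{n}\to 0$ in $L^{\frac{2d}{d-2}}$, hence (by interpolation and the $H^{1}$ bound) in all relevant $L^{q}$, and $u\neq 0$.

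Once strong convergence of the nonlinear terms is established, I would show $u$ lies on the Nehari manifold: either $\boN_{\omega}(u)=0$ directly from passing to the limit, or, if only $\boN_{\omega}(u)\le 0$ holds, I would rescale $u$ by $u_{t}(x):=u(x/t)$ (or the appropriate two-parameter scaling compatible with both nonlinear terms) to land back on $\{\boN_{\omega}=0\}$ and use that the energy along this scaling is maximized/controlled at the constraint point, concluding $\boS_{\omega}(u)\le \liminf \boS_{\omega}(u_{n}) = m^{\boN}_{\omega}$; since $u$ is admissible, $\boS_{\omega}(u)=m^{\boN}_{\omega}$ and $u$ is the desired minimizer. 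Strict positivity of the coefficients in \eqref{prop01-rewrite} together with weak lower semicontinuity of the norms handles the inequality $\boS_{\omega}(u)\le m^{\boN}_{\omega}$ cleanly once the $L^{p+1}$ and $L^{\frac{2d}{d-2}}$ terms converge.

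The main obstacle is the critical term: weak $H^{1}$ convergence does \emph{not} give convergence of $\|u_{n}\|_{L^{\frac{2d}{d-2}}}^{\frac{2d}{d-2}}$, and a priori the minimizing sequence could develop a Talenti bubble and lose compactness. The entire weight of the argument rests on quantifying this loss via the sharp Sobolev constant $\sigma$ and the identity $m_{\infty}=\frac{1}{d}\sigma^{d/2}$ from \eqref{sob-bc-rela}, so that the strict inequality $m^{\boN}_{\omega}<m_{\infty}$ leaves no room for a bubble to form. A secondary technical point is excluding loss of compactness by translation to infinity for the subcritical term; here one uses that the ``problem at infinity'' for the $L^{p+1}$ piece would have to satisfy $\boN_{\omega}=0$ with the same positivity structure, and a standard argument (or the fact that the limiting problem without the subcritical term has value $m_{\infty}$) rules it out under the strict inequality.
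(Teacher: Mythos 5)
Your outline captures the correct Brezis--Nirenberg mechanism (the strict inequality $m_\omega^{\mathcal{N}} < m_\infty$ leaves no room for a Talenti bubble), but the implementation differs from the paper's in two ways that matter, and one of them leaves a real gap.

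The paper first applies \emph{Schwarz symmetrization} to the minimizing sequence, replacing each $u_n$ by its radial rearrangement $u_n^*$. This is harmless (it preserves all the $L^q$ norms, does not increase $\|\nabla\cdot\|_{L^2}$, so the constraint $\mathcal{N}_\omega \le 0$ persists and $\mathcal{J}$ does not increase), and it buys the compact embedding $H^1_{\mathrm{rad}}\hookrightarrow L^{p+1}$ for free. Your proposal never symmetrizes, and your claim that ``the $L^{p+1}$ term is compact'' is not true without either symmetrization or a translation-extraction argument: $H^1(\R^d)\hookrightarrow L^{p+1}(\R^d)$ is not compact. You acknowledge this (``if needed, a vanishing argument\ldots'', ``a secondary technical point is excluding loss of compactness by translation to infinity''), but the argument is not carried out, and this is exactly the step that a concentration-compactness proof must close: one must extract translations $y_n$, show the recentred sequence has a nonzero weak limit, and justify that the dichotomy case collapses. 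The paper avoids all of this by symmetrizing. So as written, your proof has a hole precisely where the paper made a deliberate technical choice.

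The second difference is cosmetic but worth noting. The paper does not minimize $\mathcal{S}_\omega$ on $\{\mathcal{N}_\omega=0\}$ directly; it first replaces the problem by the equivalent relaxed minimization $\inf\{\mathcal{J}(u) : \mathcal{N}_\omega(u) \le 0\}$ (Lemma~\ref{variational-character4}), where $\mathcal{J} = \mathcal{S}_\omega - \tfrac12 \mathcal{N}_\omega$ is coercive and weakly lower-semicontinuous. This makes the ``land back on the manifold'' step trivial: if the limit $Q_\omega$ satisfies $\mathcal{N}_\omega(Q_\omega) \le 0$, it is already admissible. You instead propose to rescale the weak limit onto $\{\mathcal{N}_\omega=0\}$ via the spatial dilation $u_t(x) = u(x/t)$, which is an awkward one-parameter family here (it scales $\|\nabla u\|_{L^2}^2$, $\|u\|_{L^2}^2$, $\|u\|_{L^{p+1}}^{p+1}$, $\|u\|_{L^{2^*}}^{2^*}$ with incompatible powers). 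The natural choice, used by the paper, is scalar multiplication $t u$: for $\mathcal{N}_\omega(u)<0$ there is $t_0\in(0,1)$ with $\mathcal{N}_\omega(t_0 u)=0$ and $\mathcal{J}(t_0 u)<\mathcal{J}(u)$. Your bubble-exclusion sign analysis (``$\lim(\|\nabla v_n\|_{L^2}^2 - \|v_n\|_{L^{2^*}}^{2^*}) = -\mathcal{N}_\omega^{\dagger}(u)\le 0$'') also needs the sign of $\mathcal{N}_\omega(u)$ settled first, which the paper does by a small contradiction argument before invoking the Sobolev inequality; your sketch assumes it.

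In short: same core idea and same role for the hypothesis $m_\omega^{\mathcal{N}} < m_\infty$, but the paper's proof is materially simpler because symmetrization removes the translation/vanishing issue entirely, and the $\mathcal{J}$-relaxation removes the rescaling issue. Your version, as stated, does not complete the ruling out of vanishing/translation-to-infinity for a non-radial minimizing sequence.
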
 

\begin{remark}\label{remark-1}
It is known (see 
\cite{GNN} and Proposition 2.1 of \cite{AIKN3}) 
that for any ground state $Q_{\omega}$, 
there exist $\theta \in [0,2\pi)$, $y \in \mathbb{R}^{d}$ and a positive radial ground state $\Phi_\omega$ 
such that $Q_{\omega}=e^{i\theta }\Phi_{\omega}(\cdot-y)$. Thus, we may assume that ground states are positive radial solutions 
to \eqref{sp}. 
\end{remark}

We give a proof of Proposition \ref{thm-0-1} in Appendix \ref{exisistence-groundstate}. 
We can also find that $m_{\omega}^{\mathcal{N}}$ satisfies 
the following: 
\begin{proposition}\label{thm4-1}
Assume $d \geq 3$ and 
$1<p< \frac{d+2}{d-2}$. Then, the value $m_{\omega}^{\mathcal{N}}$ is nondecreasing with respect to $\omega$ on $(0,\infty)$. 
\end{proposition}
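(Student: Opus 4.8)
The plan is to fix $0<\omega_{1}<\omega_{2}$ and, starting from an arbitrary $u$ in the Nehari manifold $\{u\in H^{1}(\mathbb{R}^{d})\setminus\{0\}\colon \mathcal{N}_{\omega_{2}}(u)=0\}$, to produce by an amplitude rescaling $u\mapsto t_{*}u$ an admissible competitor for $m_{\omega_{1}}^{\mathcal{N}}$ whose action does not exceed $\mathcal{S}_{\omega_{2}}(u)$. The first ingredient I would record is the elementary identity valid on any Nehari manifold: if $\mathcal{N}_{\omega}(v)=0$ then, using $\frac{1}{2}-\frac{d-2}{2d}=\frac{1}{d}$,
\begin{equation}\label{eq-nehari-mono}
\mathcal{S}_{\omega}(v)=\mathcal{S}_{\omega}(v)-\tfrac{1}{2}\mathcal{N}_{\omega}(v)
=\Big(\tfrac{1}{2}-\tfrac{1}{p+1}\Big)\|v\|_{L^{p+1}}^{p+1}+\tfrac{1}{d}\|v\|_{L^{\frac{2d}{d-2}}}^{\frac{2d}{d-2}},
\end{equation}
where both coefficients on the right-hand side are strictly positive since $p>1$, and the right-hand side does not involve $\omega$.

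Next, given $u$ with $\mathcal{N}_{\omega_{2}}(u)=0$, I would analyze
\begin{equation*}
\phi(t):=\mathcal{N}_{\omega_{1}}(tu)=t^{2}\big(\|\nabla u\|_{L^{2}}^{2}+\omega_{1}\|u\|_{L^{2}}^{2}\big)-t^{p+1}\|u\|_{L^{p+1}}^{p+1}-t^{\frac{2d}{d-2}}\|u\|_{L^{\frac{2d}{d-2}}}^{\frac{2d}{d-2}}\qquad(t>0).
\end{equation*}
Because $p+1>2$, $\frac{2d}{d-2}>2$, and $\|\nabla u\|_{L^{2}}^{2}+\omega_{1}\|u\|_{L^{2}}^{2}>0$, we have $\phi(t)>0$ for all sufficiently small $t>0$; on the other hand,
\begin{equation*}
\phi(1)=\mathcal{N}_{\omega_{1}}(u)=\mathcal{N}_{\omega_{2}}(u)-(\omega_{2}-\omega_{1})\|u\|_{L^{2}}^{2}=-(\omega_{2}-\omega_{1})\|u\|_{L^{2}}^{2}<0.
\end{equation*}
Hence, by the intermediate value theorem, there exists $t_{*}\in(0,1)$ with $\phi(t_{*})=0$; that is, $t_{*}u\in H^{1}(\mathbb{R}^{d})\setminus\{0\}$ satisfies $\mathcal{N}_{\omega_{1}}(t_{*}u)=0$.

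Finally I would combine the two steps: since $t_{*}u$ is admissible for $m_{\omega_{1}}^{\mathcal{N}}$, applying \eqref{eq-nehari-mono} first with $(\omega,v)=(\omega_{1},t_{*}u)$ and then with $(\omega,v)=(\omega_{2},u)$, and using $0<t_{*}<1$ together with the positivity of the two coefficients, gives
\begin{equation*}
m_{\omega_{1}}^{\mathcal{N}}\le\mathcal{S}_{\omega_{1}}(t_{*}u)
=\Big(\tfrac{1}{2}-\tfrac{1}{p+1}\Big)t_{*}^{p+1}\|u\|_{L^{p+1}}^{p+1}+\tfrac{1}{d}t_{*}^{\frac{2d}{d-2}}\|u\|_{L^{\frac{2d}{d-2}}}^{\frac{2d}{d-2}}
\le\mathcal{S}_{\omega_{2}}(u).
\end{equation*}
Taking the infimum over all $u$ with $\mathcal{N}_{\omega_{2}}(u)=0$ yields $m_{\omega_{1}}^{\mathcal{N}}\le m_{\omega_{2}}^{\mathcal{N}}$, which is the claimed monotonicity.

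This is a soft rescaling comparison and I do not expect a real difficulty. The only step requiring any care --- and the sole place the hypothesis $\omega_{1}<\omega_{2}$ is used --- is locating the rescaling factor in $(0,1)$, which comes from the sign of $\phi(1)=\mathcal{N}_{\omega_{1}}(u)$. Everything else rests only on the Sobolev embeddings $H^{1}(\mathbb{R}^{d})\hookrightarrow L^{p+1}(\mathbb{R}^{d})\cap L^{\frac{2d}{d-2}}(\mathbb{R}^{d})$ (so that every term is finite and the two Lebesgue norms of a nonzero $u$ are strictly positive) and on $p>1$ (for the positivity of $\frac{1}{2}-\frac{1}{p+1}$). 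One could instead run the argument along a minimizing sequence for $m_{\omega_{2}}^{\mathcal{N}}$, but this is unnecessary, since the competitor $t_{*}u$ is produced pointwise in $u$.
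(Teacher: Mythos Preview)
Your proof is correct and follows essentially the same approach as the paper: both exploit the identity $\mathcal{S}_{\omega}(v)=\mathcal{J}(v)$ on the Nehari manifold together with an amplitude rescaling $v\mapsto t_{*}v$ to compare the two variational values. The only organizational difference is that the paper first records the equivalent characterization $m_{\omega}^{\mathcal{N}}=\inf\{\mathcal{J}(u):\mathcal{N}_{\omega}(u)\le 0\}$ (Lemma~\ref{variational-character4}), after which the monotonicity of the constraint sets in $\omega$ is immediate without an explicit rescaling step, whereas you perform the rescaling inline; the underlying content is the same.
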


We will prove Proposition \ref{thm4-1} in Section \ref{section-4}. Observe from Proposition \ref{thm4-1} that the set $\{ \omega > 0 
\colon m_{\omega}^{\mathcal{N}} < m_{\infty} 
\}$ is connected. This fact together with Proposition \ref{thm-0-1} motivates us to introduce the following value: 
\begin{equation}\label{threshold}
\omega_{c}^{\mathcal{N}} := 
\sup\bigm\{ \omega > 0 
\colon m_{\omega}^{\mathcal{N}} < m_{\infty} 
\bigm\}
. 
\end{equation}
\begin{remark}\label{remark-0}
\noindent
{\rm (i)}~It follows from a result in \cite{Zhang-Zou} that $m_{\omega}^{\mathcal{N}} < m_{\infty}$ for any $3 < p < 5$ and any $\omega > 0$. Hence, when $3<p<5$, $\omega_{c}^{\mathcal{N}} = \infty$ (see also \cite{AIIKN, CG}). 
\\
\noindent
{\rm (ii)}~The authors of the present paper showed in \cite{AIKN3} that for any $1 <p \le 3 $, there exists $\omega_{0}>0$ such that if $\omega \in (0, \omega_{0})$, then $m_{\omega}^{\mathcal{N}} < m_{\infty}$. Hence, when $1<p\le 3$, $\omega_{c}^{\mathcal{N}}>0$. 
\end{remark}

Now, we state main results of this paper (recall that by a ground state, we 
mean a minimizer for $m_{\omega}^{\mathcal{N}}$): 
\begin{theorem} \label{thm-0}
Assume $d=3$ and $1 < p \le 3$. 
Then, the following holds: 
\begin{enumerate}
\item[\rm (i)]
$0 < \omega_{c}^{\mathcal{N}} < \infty$. 
\item[\rm (ii)] 
If $0< \omega < \omega_{c}^{\mathcal{N}}$, 
then a ground state exists; and 
if $\omega > \omega_{c}^{\mathcal{N}}$, 
then there is no ground state.
\end{enumerate}
\end{theorem}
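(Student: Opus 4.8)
\textbf{Proof proposal for Theorem \ref{thm-0}.}

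The plan is to combine the two one-sided facts recorded in Remark \ref{remark-0} with the monotonicity from Proposition \ref{thm4-1} to pin down $\omega_c^{\mathcal N}$, and then to upgrade the strict/non-strict inequalities into the existence/non-existence dichotomy via Proposition \ref{thm-0-1}. Part (i) is the easy half: Remark \ref{remark-0}(ii) gives an $\omega_0>0$ with $m_\omega^{\mathcal N}<m_\infty$ on $(0,\omega_0)$, hence $\omega_c^{\mathcal N}\ge\omega_0>0$; for the finiteness $\omega_c^{\mathcal N}<\infty$ (note we are in the regime $1<p\le3$, which is \emph{not} covered by Remark \ref{remark-0}(i)), I would argue that $\liminf_{\omega\to\infty} m_\omega^{\mathcal N}\ge m_\infty$, so that $m_\omega^{\mathcal N}<m_\infty$ fails for all large $\omega$. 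The cleanest route is a scaling/concentration argument: take (rescaled) minimizers or near-minimizers $u_\omega$ for $m_\omega^{\mathcal N}$, exploit that $\mathcal S_\omega(u_\omega)=\frac1d\|u_\omega\|_{L^{2d/(d-2)}}^{2d/(d-2)}+(\tfrac12-\tfrac1{p+1})\|u_\omega\|_{L^{p+1}}^{p+1}$ on the Nehari constraint, and show the $L^{p+1}$ term cannot help beat $m_\infty$ when $\omega$ is large because the large $\omega\|u\|_{L^2}^2$ cost forces any competitor either to concentrate (reducing to the $m_\infty$ problem, where the critical term alone already costs $m_\infty$) or to pay an $L^2$ penalty. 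Equivalently, one shows $m_\omega^{\mathcal N}\to m_\infty$ as $\omega\to\infty$ from below; since Proposition \ref{thm4-1} makes $\omega\mapsto m_\omega^{\mathcal N}$ nondecreasing and it stays $\le m_\infty$ (by definition of $\omega_c^{\mathcal N}$ on the relevant range), monotonicity plus this limit forces the set $\{\omega: m_\omega^{\mathcal N}<m_\infty\}$ to be a bounded interval, i.e. $\omega_c^{\mathcal N}<\infty$.

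For part (ii), the case $0<\omega<\omega_c^{\mathcal N}$ is immediate: by definition of the supremum in \eqref{threshold} and the monotonicity in Proposition \ref{thm4-1}, every such $\omega$ satisfies $m_\omega^{\mathcal N}<m_\infty$ (if $m_{\omega_1}^{\mathcal N}=m_\infty$ for some $\omega_1<\omega_c^{\mathcal N}$, monotonicity would give $m_\omega^{\mathcal N}\ge m_\infty$ for all $\omega\ge\omega_1$, contradicting that arbitrarily large $\omega<\omega_c^{\mathcal N}$ still lie in the set), and then Proposition \ref{thm-0-1} yields a ground state. The substantive direction is $\omega>\omega_c^{\mathcal N}$: here $m_\omega^{\mathcal N}\ge m_\infty$ by definition of $\omega_c^{\mathcal N}$, so I must rule out a minimizer in the \emph{borderline/over-threshold} situation $m_\omega^{\mathcal N}\ge m_\infty$. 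Suppose for contradiction $Q_\omega$ is a ground state for such an $\omega$; by Remark \ref{remark-1} we may take it positive and radial, and it solves \eqref{sp} with $\mathcal N_\omega(Q_\omega)=0$ and $\mathcal S_\omega(Q_\omega)=m_\omega^{\mathcal N}$. The idea is to produce a competitor contradicting minimality or to violate $m_\omega^{\mathcal N}\ge m_\infty$. One natural mechanism: use a Pohozaev-type identity for \eqref{sp} in $d=3$ together with $\mathcal N_\omega(Q_\omega)=0$ to extract sharp relations among $\|\nabla Q_\omega\|_{L^2}^2$, $\|Q_\omega\|_{L^2}^2$, $\|Q_\omega\|_{L^{p+1}}^{p+1}$, $\|Q_\omega\|_{L^6}^6$; combined with the Sobolev inequality \eqref{19/9/7/15:31} and $m_\omega^{\mathcal N}=\tfrac1d\|Q_\omega\|_{L^6}^6+(\tfrac12-\tfrac1{p+1})\|Q_\omega\|_{L^{p+1}}^{p+1}$, the constraint $1<p\le 3$ and $d=3$ should force $\|Q_\omega\|_{L^6}^6\ge \sigma^{3/2}\cdot(\text{something})$ in a way that makes $m_\omega^{\mathcal N}>m_\infty$ \emph{strictly}—but then $\omega\in\{\omega:m_\omega^{\mathcal N}<m_\infty\}$ is impossible while $m_\omega^{\mathcal N}=m_\infty$ exactly is also impossible, squeezing out the existence. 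Alternatively, and perhaps more robustly, one shows that if a minimizer existed at an $\omega$ with $m_\omega^{\mathcal N}=m_\infty$, then by scaling $Q_\omega$ one could strictly lower the action below $m_\infty$ at a slightly larger $\omega'$, contradicting $m_{\omega'}^{\mathcal N}\ge m_\infty$; the subcritical exponent range $p\le 3$ in $d=3$ (so that $p+1\le 4 < 6$, and crucially $p-1\le 2 = \tfrac{4}{d-2}\big|_{d=4}$... ) is exactly what makes the relevant scaling derivative have the right sign.

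The main obstacle I anticipate is the strict-inequality/non-existence step at $\omega>\omega_c^{\mathcal N}$, precisely because at the threshold one only has $m_\omega^{\mathcal N}\ge m_\infty$ and the Brezis--Nirenberg criterion (Proposition \ref{thm-0-1}) is one-directional—it gives existence from $<m_\infty$ but says nothing when equality holds. One must therefore either (a) prove the gap is always \emph{strict} above threshold, i.e. $\omega>\omega_c^{\mathcal N}\implies m_\omega^{\mathcal N}>m_\infty$, and separately show no minimizer can have action exactly $m_\infty$ (because a minimizer of action $m_\infty$ satisfying the \emph{subcritical-inclusive} Nehari constraint would, after testing against the critical Nehari functional, be forced to have vanishing subcritical nonlinearity, hence be a Talenti bubble, which is not in $H^1(\R^3)$—a clean contradiction); or (b) directly run a concentration-compactness analysis of a minimizing sequence at such $\omega$, showing the only possible profile decomposition has the mass escaping into a critical bubble at infinitesimal scale, so no $H^1$ minimizer survives. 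I would pursue route (a): the non-membership of the Talenti function $W$ in $L^2(\R^3)$ (its decay is $|x|^{-1}$, borderline non-$L^2$ in $d=3$) is the structural reason the three-dimensional, low-subcritical-power case behaves differently from $d\ge4$, and it is what ultimately powers the non-existence half of the theorem. The bookkeeping—getting the Pohozaev and Nehari identities to combine with the right inequalities under the precise hypothesis $d=3,\ 1<p\le3$—is the part that will require care but no new ideas.
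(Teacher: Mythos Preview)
Your proposal has a genuine gap in part~(i), and a correctable but real misdirection in part~(ii).

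\textbf{Part (i), finiteness of $\omega_c^{\mathcal N}$.} Your argument ``$\liminf_{\omega\to\infty} m_\omega^{\mathcal N}\ge m_\infty$, so $m_\omega^{\mathcal N}<m_\infty$ fails for large $\omega$'' is a non-sequitur. The test-function computation with truncated Talenti bubbles (carried out in the proof of Proposition~\ref{thm4-2}) shows $m_\omega^{\mathcal N}\le m_\infty$ for \emph{every} $\omega>0$. Combined with monotonicity, $\omega\mapsto m_\omega^{\mathcal N}$ is nondecreasing and bounded above by $m_\infty$; such a function can perfectly well approach $m_\infty$ strictly from below without ever reaching it, in which case $\omega_c^{\mathcal N}=\infty$. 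Your ``concentrate or pay an $L^2$ penalty'' dichotomy does not rule this out: a near-minimizer could concentrate at scale $\varepsilon$ while the subcritical term contributes an $O(\varepsilon^{5-p})$ (or $O(\varepsilon^{2})$, etc.) improvement that, though vanishing, keeps $m_\omega^{\mathcal N}$ strictly below $m_\infty$ for every finite $\omega$. Excluding this scenario is the hard content of the theorem. The paper does it \emph{not} by a direct variational estimate but by contradiction: assuming ground states exist along a sequence $\omega_n\to\infty$, one invokes results from \cite{AIIKN} to show the rescaled ground states converge to $W$ in $\dot H^1$, and then the key Theorem~\ref{non-seq} (whose proof occupies Section~\ref{sec-2} and relies on the resolvent estimate Theorem~\ref{thm-ufe} and the uniform-in-$\alpha$ geometric decomposition Theorem~\ref{thm-imp}) forbids such a sequence. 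You have not engaged with this mechanism at all, and there is no known elementary substitute.

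\textbf{Part (ii), non-existence for $\omega>\omega_c^{\mathcal N}$.} Your route~(a) begins with the hope that $m_\omega^{\mathcal N}>m_\infty$ strictly above threshold; this is \emph{false} --- Proposition~\ref{thm4-2} shows $m_\omega^{\mathcal N}=m_\infty$ for all $\omega\ge\omega_c^{\mathcal N}$. Your fallback (``a minimizer with action $m_\infty$ must have vanishing subcritical nonlinearity and hence be a Talenti bubble'') does not follow: a minimizer $Q$ satisfies the Pohozaev identity $\omega\|Q\|_{L^2}^2=\tfrac{5-p}{2(p+1)}\|Q\|_{L^{p+1}}^{p+1}$, which ties the two norms together but does not force either to vanish. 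Your alternative scaling idea is in the right spirit but you have the direction wrong: moving to a \emph{larger} $\omega'$ makes $\mathcal N_{\omega'}(Q)>0$, forcing you to scale \emph{up} and thereby \emph{increase} $\mathcal J$. The paper's clean argument goes the other way: a minimizer $u_*$ at $\omega_*>\omega_c^{\mathcal N}$ satisfies $\mathcal N_{\omega_c^{\mathcal N}}(u_*)<0$, so some $t_c\in(0,1)$ gives $\mathcal N_{\omega_c^{\mathcal N}}(t_cu_*)=0$ and hence
\[
m_\infty=m_{\omega_c^{\mathcal N}}^{\mathcal N}\le \mathcal J(t_cu_*)<\mathcal J(u_*)=m_{\omega_*}^{\mathcal N}=m_\infty,
\]
a contradiction. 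This step is elementary once Proposition~\ref{thm4-2} is in hand; the substantial work is all in part~(i).
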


\begin{theorem} \label{ex-critical-o}
Assume $d=3$ and $1 < p < 3$. 
If $\omega = \omega_{c}^{\mathcal{N}}$, then 
a ground state exists. 
\end{theorem}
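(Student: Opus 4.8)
The plan is to combine the Brezis--Nirenberg criterion of Proposition~\ref{thm-0-1} with a concentration--compactness analysis of the ground states at frequencies $\omega_{n}\uparrow\omega_{c}^{\mathcal{N}}$; the hard part will be an asymptotic lower bound for the action that rules out concentration of such ground states precisely in the range $1<p<3$.

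First I would observe that, by Proposition~\ref{thm-0-1}, if $m_{\omega_{c}^{\mathcal{N}}}^{\mathcal{N}}<m_{\infty}$ a ground state already exists, so it remains to treat the borderline case. I would then fix $\omega_{n}\uparrow\omega_{c}^{\mathcal{N}}$; by Theorem~\ref{thm-0}(ii) and Remark~\ref{remark-1} there are positive radial ground states $Q_{n}$ of $m_{\omega_{n}}^{\mathcal{N}}$, and since $\mathcal{N}_{\omega_{n}}(Q_{n})=0$ one has $\mathcal{S}_{\omega_{n}}(Q_{n})=\big(\tfrac{1}{2}-\tfrac{1}{p+1}\big)\|Q_{n}\|_{L^{p+1}}^{p+1}+\tfrac{1}{3}\|Q_{n}\|_{L^{6}}^{6}$, whence $\{Q_{n}\}$ is bounded in $H^{1}(\mathbb{R}^{3})$. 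Rescaling each $Q_{n}$ onto the Nehari constraint of $\omega_{c}^{\mathcal{N}}$ (the rescaling factor tends to $1$ because $\|Q_{n}\|_{L^{2}}^{2}$ stays bounded) gives $m_{\omega_{c}^{\mathcal{N}}}^{\mathcal{N}}\le\liminf_{n}m_{\omega_{n}}^{\mathcal{N}}$, which together with the monotonicity of Proposition~\ref{thm4-1} shows $m_{\omega_{n}}^{\mathcal{N}}\to m_{\omega_{c}^{\mathcal{N}}}^{\mathcal{N}}\le m_{\infty}$. Hence in the remaining case $m_{\omega_{c}^{\mathcal{N}}}^{\mathcal{N}}=m_{\infty}$, so $\mathcal{S}_{\omega_{n}}(Q_{n})\to m_{\infty}$, while $\mathcal{S}_{\omega_{n}}(Q_{n})=m_{\omega_{n}}^{\mathcal{N}}<m_{\infty}$ for every $n$ because $\omega_{n}<\omega_{c}^{\mathcal{N}}$.

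Passing to a subsequence, $Q_{n}\rightharpoonup Q$ weakly in $H^{1}_{\mathrm{rad}}(\mathbb{R}^{3})$, $Q_{n}\to Q$ a.e.\ and $Q\ge0$. If $Q\neq0$, then passing to the limit in \eqref{sp} shows $Q$ solves \eqref{sp} with $\omega=\omega_{c}^{\mathcal{N}}$, so $\mathcal{N}_{\omega_{c}^{\mathcal{N}}}(Q)=0$ and thus $\mathcal{S}_{\omega_{c}^{\mathcal{N}}}(Q)\ge m_{\omega_{c}^{\mathcal{N}}}^{\mathcal{N}}$; on the other hand Fatou's lemma applied to the two nonnegative nonlinear terms in the identity for $\mathcal{S}_{\omega_{n}}(Q_{n})$ gives $\mathcal{S}_{\omega_{c}^{\mathcal{N}}}(Q)\le\liminf_{n}\mathcal{S}_{\omega_{n}}(Q_{n})=m_{\omega_{c}^{\mathcal{N}}}^{\mathcal{N}}$, so $Q$ is a ground state (and the convergence is in fact strong). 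It therefore suffices to exclude the vanishing case $Q=0$.

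Suppose $Q=0$. The compact embedding $H^{1}_{\mathrm{rad}}(\mathbb{R}^{3})\hookrightarrow L^{p+1}(\mathbb{R}^{3})$ gives $\|Q_{n}\|_{L^{p+1}}\to0$; the Pohozaev identity for \eqref{sp} in $d=3$, which combined with $\mathcal{N}_{\omega_{n}}(Q_{n})=0$ reads $\omega_{n}\|Q_{n}\|_{L^{2}}^{2}=\tfrac{5-p}{2(p+1)}\|Q_{n}\|_{L^{p+1}}^{p+1}$, then forces $\|Q_{n}\|_{L^{2}}\to0$; and the identity for $\mathcal{S}_{\omega_{n}}(Q_{n})$ together with $\mathcal{S}_{\omega_{n}}(Q_{n})\to m_{\infty}=\tfrac{1}{3}\sigma^{3/2}$ gives $\|Q_{n}\|_{L^{6}}^{6}\to\sigma^{3/2}$, hence (by $\mathcal{N}_{\omega_{n}}(Q_{n})=0$) $\|\nabla Q_{n}\|_{L^{2}}^{2}\to\sigma^{3/2}$, so $Q_{n}/\|Q_{n}\|_{L^{6}}$ is a minimizing sequence for \eqref{sob-bc}. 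Since $Q_{n}\rightharpoonup 0$ and the $Q_{n}$ are radial, a profile decomposition (of Struwe bubbling type) shows that $Q_{n}$ is, up to a dilation concentrating at the origin at a scale $\mu_{n}\to0$, asymptotically a single Talenti bubble $W$, with remainder $\psi_{n}$ (necessarily nonzero, since $W\notin L^{2}(\mathbb{R}^{3})$ while $Q_{n}$ decays exponentially by \eqref{sp}) small in $\dot{H}^{1}$. Analysing $\psi_{n}$ through the linearized operator around $W$, modulo its dilation mode, and using $1<p<3$, one obtains the quantitative estimate $\|\psi_{n}\|_{\dot{H}^{1}}^{2}=o\!\big(\|Q_{n}\|_{L^{p+1}}^{p+1}\big)$, equivalently $\sigma^{3/2}-\|Q_{n}\|_{L^{6}}^{6}=o\!\big(\|Q_{n}\|_{L^{p+1}}^{p+1}\big)$. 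Inserting this into the identity for $\mathcal{S}_{\omega_{n}}(Q_{n})$,
\[
\mathcal{S}_{\omega_{n}}(Q_{n})=\tfrac{1}{3}\|Q_{n}\|_{L^{6}}^{6}+\Big(\tfrac{1}{2}-\tfrac{1}{p+1}\Big)\|Q_{n}\|_{L^{p+1}}^{p+1}
= m_{\infty}+\frac{p-1}{2(p+1)}\,\|Q_{n}\|_{L^{p+1}}^{p+1}\,\big(1+o(1)\big)>m_{\infty}
\]
for $n$ large, contradicting $\mathcal{S}_{\omega_{n}}(Q_{n})=m_{\omega_{n}}^{\mathcal{N}}<m_{\infty}$. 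Hence $Q\neq0$, which completes the proof. The step I expect to be genuinely delicate is the sharp bubble estimate $\|\psi_{n}\|_{\dot{H}^{1}}^{2}=o(\|Q_{n}\|_{L^{p+1}}^{p+1})$: this is precisely where the three-dimensional, subcritical-power regime is used, since at $p=3$ the subcritical mass $\|Q_{n}\|_{L^{p+1}}^{p+1}$ is of the same order as the $L^{2}$-mass and the argument breaks down.
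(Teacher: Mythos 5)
Your Steps 1 through 5 (boundedness of $\{Q_n\}$, $\lim m_{\omega_n}^{\mathcal{N}} = m_{\omega_c^{\mathcal{N}}}^{\mathcal{N}} = m_\infty$, $\|Q_n\|_{L^{p+1}}\to 0$, $\|Q_n\|_{L^2}\to 0$, $\|Q_n\|_{L^6}^6 \to \sigma^{3/2}$, $\|Q_n\|_{L^\infty}\to\infty$, concentration onto a single Talenti bubble) mirror the paper's Lemma~\ref{conti-m} and Proposition~\ref{thm-conv} and are in order. The paper then closes the argument by invoking the non-existence Theorem~\ref{non-seq}, which is the technical core of the entire work; the question is whether your proposed substitute can stand in for it.

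It cannot, and the obstruction is structural rather than a matter of sharpening an estimate. Combining $\mathcal{N}_{\omega_n}(Q_n)=0$ with Pohozaev's identity $\omega_n\|Q_n\|_{L^2}^2 = \tfrac{5-p}{2(p+1)}\|Q_n\|_{L^{p+1}}^{p+1}$ gives
\begin{equation*}
\|\nabla Q_n\|_{L^2}^2 - \|Q_n\|_{L^6}^6 = \tfrac{3(p-1)}{2(p+1)}\|Q_n\|_{L^{p+1}}^{p+1},
\qquad\text{hence}\qquad
\mathcal{S}_{\omega_n}(Q_n) = \mathcal{J}(Q_n) = \tfrac{1}{3}\|\nabla Q_n\|_{L^2}^2 .
\end{equation*}
Thus $\mathcal{S}_{\omega_n}(Q_n) - m_\infty = \tfrac{1}{3}\bigl(\|\nabla Q_n\|_{L^2}^2 - \sigma^{3/2}\bigr)$, and since $\omega_n < \omega_c^{\mathcal{N}}$ forces $\mathcal{S}_{\omega_n}(Q_n) < m_\infty$, one necessarily has $\|\nabla Q_n\|_{L^2}^2 < \sigma^{3/2}$. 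This has two consequences for your proposal. First, the claimed equivalence of $\|\psi_n\|_{\dot H^1}^2 = o(\|Q_n\|_{L^{p+1}}^{p+1})$ with $\sigma^{3/2}-\|Q_n\|_{L^6}^6 = o(\|Q_n\|_{L^{p+1}}^{p+1})$ is false: in any bubble decomposition $Q_n = W_{\mu_n} + \psi_n$ with $\psi_n\to 0$ in $\dot H^1$, one has $\|\nabla Q_n\|_{L^2}^2 = \sigma^{3/2} + 2\langle \nabla W_{\mu_n}, \nabla\psi_n\rangle + \|\nabla\psi_n\|_{L^2}^2$, so $\sigma^{3/2}-\|Q_n\|_{L^6}^6$ is governed by the \emph{first-order} cross term $\langle\nabla W_{\mu_n},\nabla\psi_n\rangle$, which your bound on $\|\psi_n\|_{\dot H^1}^2$ does not control, and which the constraint forces to be negative. (You also cannot impose $\dot H^1$-orthogonality of $\psi_n$ to the bubble: that would give $\|\nabla Q_n\|_{L^2}^2 \ge \sigma^{3/2}$ instantly, for every $p$ and $\omega$, so no such orthogonal decomposition with $\psi_n\to 0$ can exist here.) Second, and more fundamentally, the inequality $\mathcal{S}_{\omega_n}(Q_n) > m_\infty$ you aim for is literally impossible, so no amount of improvement to the bubble estimate can close the argument by comparing actions.

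The contradiction in the paper is detected elsewhere. Writing $\widetilde Q_n = W + \eta_n$ and $(H + \alpha_{[n]})\eta_{[n]} = -\alpha_{[n]}W + \beta_{[n]}W^{p} + N_{[n]}$ with $\alpha_n = \omega_n M_n^{-4}$, $\beta_n = M_n^{p-5}$, $M_n = \|Q_n\|_{L^\infty}$, the modulation orthogonality against the zero resonance $\Lambda W$ (namely $\langle (-\Delta+\alpha_{[n]})^{-1}F_{[n]},\,V_+\Lambda W\rangle=0$ from Theorem~\ref{thm-imp}), combined with Lemma~\ref{thm-6} and the resolvent estimates of Theorem~\ref{thm-ufe} and Lemmas~\ref{error-estimate-gap-1}, \ref{error-estimate-gap}, yields $6\pi\alpha_{[n]}^{1/2}(1+o(1)) = O(\beta_{[n]}) + O(\alpha_{[n]}^{1/2+\varepsilon})$. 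This is incompatible with $\alpha_{[n]}^{-1/2}\beta_{[n]} = \omega_n^{-\frac{5-p}{4}}\alpha_{[n]}^{\frac{3-p}{4}} \to 0$, which holds precisely when $1<p<3$ and $\omega_n$ is bounded below; this is the actual role of the three-dimensional, subcritical-power regime. A bare Struwe-type bubble expansion of the action does not see this scaling mismatch because the $\alpha_n^{1/2}$ contribution cancels at the level of $\mathcal{J}$, so some version of the resolvent/modulation machinery of Section~\ref{18/11/11/10:47} and Theorem~\ref{non-seq} seems genuinely unavoidable.
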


\begin{remark}\label{rem-3}
When $p = 3$ and 
$\omega = \omega_{c}^{\mathcal{N}}$, 
we have not obtained any information 
about the existence of ground state. 
\end{remark}

Here, we would like to emphasize that 
the significance of the study for 
the existence and non-existence of 
ground state in $\mathbb{R}^{3}$. 
When $d\ge 4$, it is known that 
for any $\omega>0$ and 
any $1<p<\frac{d+2}{d-2}$, a ground state exists 
(see Proposition 1.1 of \cite{AIIKN}). 
On the other hand, 
Theorem \ref{thm-0} tells us 
that when $d=3$, 
we cannot always find a solution to \eqref{sp} 
(or a standing wave of \eqref{nls}) 
through the minimization problem for $m_{\omega}^{\mathcal{N}}$. 

Note here that, as mentioned before, 
we have
$m_{\omega}^{\mathcal{N}} \le m_{\omega}^{S}$ for any $\omega > 0$, 
in particular, 
$m_{\omega}^{\mathcal{N}} 
= m_{\omega}^{S}$ 
if a ground state exists. 
It would be an interesting problem to reveal 
whether or not we always have $m_{\omega}^{\mathcal{N}} = m_{\omega}^{S}$. Concerning this, we can obtain the 
following result:
\begin{theorem}\label{thm-2mini-1}
Assume $d=3$. If $1<p<3$, then $m_{\omega}^{\mathcal{N}}< m_{\omega}^{S}$ for any 
$\omega > \omega_{c}^{\mathcal{N}}$; 
and if $p=3$, then there exists $\omega_{*}\ge \omega_{c}^{\mathcal{N}}$ such that $m_{\omega}^{\mathcal{N}}<m_{\omega}^{S}$ for any $\omega>\omega_{*}$.
\end{theorem}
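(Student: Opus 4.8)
The plan is to argue by contradiction, combining the non‑existence of ground states (Theorem \ref{thm-0}) with a bubble (profile) decomposition of a minimizing sequence for $m^{S}_{\omega}$. Fix $\omega>\omega_{c}^{\mathcal{N}}$ and suppose $m^{\mathcal{N}}_{\omega}=m^{S}_{\omega}$. For such $\omega$ one has $m^{\mathcal{N}}_{\omega}=m_{\infty}$: indeed $m^{\mathcal{N}}_{\omega}\ge m_{\infty}$ by the definition of $\omega_{c}^{\mathcal{N}}$ together with Proposition \ref{thm4-1}, while $m^{\mathcal{N}}_{\omega}\le m_{\infty}$ follows by evaluating $\mathcal{N}_{\omega}$ and $\mathcal{S}_{\omega}$ on suitably truncated rescalings $\mu^{-1/2}W(\cdot/\mu)$ of the Aubin--Talenti function and letting $\mu\to0$. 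I would also record two elementary identities, valid for $d=3$: for every $u\in H^{1}(\mathbb{R}^{3})$ with $\mathcal{N}_{\omega}(u)=0$,
\[
\mathcal{S}_{\omega}(u)=\tfrac{p-1}{2(p+1)}\|u\|_{L^{p+1}}^{p+1}+\tfrac13\|u\|_{L^{6}}^{6},
\]
and for every (complex-valued) solution $Q$ of \eqref{sp}, by $\mathcal{N}_{\omega}(Q)=0$ together with the Pohozaev identity,
\[
\omega\|Q\|_{L^{2}}^{2}=\tfrac{5-p}{2(p+1)}\|Q\|_{L^{p+1}}^{p+1},\qquad
\|\nabla Q\|_{L^{2}}^{2}=\tfrac{3(p-1)}{2(p+1)}\|Q\|_{L^{p+1}}^{p+1}+\|Q\|_{L^{6}}^{6}.
\]
Then I would pick nontrivial solutions $Q_{n}$ of \eqref{sp} with $\mathcal{S}_{\omega}(Q_{n})\to m^{S}_{\omega}=m_{\infty}$; the first identity forces $\|Q_{n}\|_{L^{p+1}}$ and $\|Q_{n}\|_{L^{6}}$ to be bounded, hence, by $\mathcal{N}_{\omega}(Q_{n})=0$ and $\omega>0$, $\{Q_{n}\}$ is bounded in $H^{1}(\mathbb{R}^{3})$.

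Passing to a subsequence with $Q_{n}\rightharpoonup Q$ in $H^{1}$, I would apply the bubble decomposition to $Q_{n}-Q$. Since \eqref{sp} is translation invariant and its linear and subcritical terms become negligible after rescaling at a scale tending to $0$ (while a scale tending to $\infty$ leaves no nontrivial finite-energy limit), every bubble profile solves $-\Delta V=|V|^{4}V$, hence equals a rescaled $W$ up to a unimodular constant, with $\|\nabla V\|_{L^{2}}^{2}=\|\nabla W\|_{L^{2}}^{2}=\sigma^{3/2}=3m_{\infty}$, and the remainder tends to $0$ in $H^{1}$. As $p+1<6$, bubbles at vanishing scales contribute $o(1)$ to $\|\cdot\|_{L^{p+1}}$, so $\|Q_{n}\|_{L^{p+1}}^{p+1}\to\|Q\|_{L^{p+1}}^{p+1}$ and a count of $\mathcal{S}_{\omega}$ gives $\mathcal{S}_{\omega}(Q_{n})\to\mathcal{S}_{\omega}(Q)+(\#\text{bubbles})\,m_{\infty}$. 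Because $\mathcal{S}_{\omega}(Q)=\tfrac{p-1}{2(p+1)}\|Q\|_{L^{p+1}}^{p+1}+\tfrac13\|Q\|_{L^{6}}^{6}\ge0$, with equality iff $Q=0$, and the left side tends to $m_{\infty}$, only two cases survive: (a) there is no bubble and $Q\ne0$; (b) there is exactly one bubble and $Q=0$.

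In case (a), the vanishing of the remainder and the two identities above force $\|Q_{n}\|_{L^{p+1}}\to\|Q\|_{L^{p+1}}$, $\|Q_{n}\|_{L^{2}}\to\|Q\|_{L^{2}}$ and $\|\nabla Q_{n}\|_{L^{2}}\to\|\nabla Q\|_{L^{2}}$, hence $Q_{n}\to Q$ strongly in $H^{1}$; so $Q$ is a nontrivial solution of \eqref{sp} with $\mathcal{N}_{\omega}(Q)=0$ and $\mathcal{S}_{\omega}(Q)=m^{S}_{\omega}=m^{\mathcal{N}}_{\omega}$, i.e.\ a ground state --- contradicting Theorem \ref{thm-0}(ii), since $\omega>\omega_{c}^{\mathcal{N}}$ and $1<p\le3$. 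In case (b), after translation $\mu_{n}^{1/2}Q_{n}(\mu_{n}\,\cdot)\to W$ (up to a unimodular factor) with $\mu_{n}\to0$, and the crux of the proof is to upgrade this to sharp asymptotics: using uniform-in-$n$ pointwise control of $Q_{n}$ in the bubble region together with the exponential decay $\sim\mu_{n}^{1/2}e^{-\sqrt\omega|x|}/|x|$ in the exterior layer of width $\sim\omega^{-1/2}$, one obtains $\|Q_{n}\|_{L^{2}}^{2}=c(\omega)\,\mu_{n}(1+o(1))$ with $c(\omega)\asymp\omega^{-1/2}$, and $\|Q_{n}\|_{L^{p+1}}^{p+1}\asymp\mu_{n}^{(5-p)/2}$ for $2<p<3$, $\asymp\mu_{n}^{3/2}|\log\mu_{n}|$ for $p=2$, $\asymp\mu_{n}^{(p+1)/2}$ for $1<p<2$, and $=\|W\|_{L^{4}}^{4}\,\mu_{n}(1+o(1))$ for $p=3$.

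Substituting into the Pohozaev relation $\omega\|Q_{n}\|_{L^{2}}^{2}=\tfrac{5-p}{2(p+1)}\|Q_{n}\|_{L^{p+1}}^{p+1}$ finishes the proof. When $1<p<3$, the left side is of order $\mu_{n}$ and the right side of order $\mu_{n}^{\gamma}$ with $\gamma>1$ (for $2<p<3$, $\gamma=(5-p)/2\in(1,\tfrac32)$, and likewise in the other subranges), which is impossible as $\mu_{n}\to0$; so case (b) cannot occur, case (a) applies, and $m^{\mathcal{N}}_{\omega}<m^{S}_{\omega}$ for every $\omega>\omega_{c}^{\mathcal{N}}$. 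When $p=3$ both sides are of order $\mu_{n}$, so $\mu_{n}$ cancels and the relation degenerates to $\omega\,c(\omega)=\tfrac14\|W\|_{L^{4}}^{4}(1+o(1))$; since $\omega\,c(\omega)\asymp\omega^{1/2}$ is unbounded while the right side is a fixed positive number, this fails once $\omega$ exceeds a computable threshold, so case (b) is excluded for all $\omega>\omega_{*}$ with a suitable $\omega_{*}<\infty$; moreover $\omega_{*}\ge\omega_{c}^{\mathcal{N}}$, because a ground state --- hence $m^{\mathcal{N}}_{\omega}=m^{S}_{\omega}$ --- exists for $0<\omega<\omega_{c}^{\mathcal{N}}$ by Theorem \ref{thm-0}. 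The main obstacle is exactly case (b): obtaining the sharp orders in $\mu_{n}$ of $\|Q_{n}\|_{L^{2}}^{2}$ and $\|Q_{n}\|_{L^{p+1}}^{p+1}$ requires a careful blow-up analysis with uniform pointwise estimates on the concentrating solutions, both inside the bubble and in the exponentially decaying outer layer, and then detecting the borderline cancellation at $p=3$, which is what makes the conclusion there hold only for large $\omega$.
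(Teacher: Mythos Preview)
Your proposal is correct and follows the same overall strategy as the paper: argue by contradiction, reduce a minimizing sequence for $m_\omega^S$ to a single concentrating Aubin--Talenti bubble (the paper packages this as Proposition~\ref{thm-conv}, reached via elementary concentration--compactness and the Brezis--Lieb lemma rather than the full profile decomposition you invoke), and then rule out that bubble via a Pohozaev-type obstruction. Case~(a) is handled identically in both, by appeal to Theorem~\ref{thm-0}\,(ii).

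The difference lies only in how case~(b) is made rigorous. You propose matched asymptotics --- pointwise control of $Q_n$ near the bubble and exponential decay in the outer layer --- to obtain $\|Q_n\|_{L^2}^2 = c(\omega)\mu_n(1+o(1))$ with $c(\omega)\asymp\omega^{-1/2}$, and then substitute directly into the Pohozaev identity. The paper instead rescales, linearizes around $W$, and imposes orthogonality of the error $\eta_{[n]}$ against $G(\alpha)^*V_+\Lambda W$ (Theorem~\ref{thm-imp}); since $\Lambda W$ is the generator of the scaling that produces Pohozaev, this orthogonality is exactly your Pohozaev relation in disguise. The key quantitative input is the Coles--Gustafson identity of Lemma~\ref{thm-6}, $\alpha^{1/2}\langle W, (-\Delta+\alpha)^{-1}V_+\Lambda W\rangle = 6\pi + O(\alpha^{1/2})$, which is the rigorous form of your $c(\omega)\asymp\omega^{-1/2}$; combined with the resolvent bound of Theorem~\ref{thm-ufe} and the $L^r$ error estimate of Lemma~\ref{error-estimate-gap-1}, this yields the contradiction \eqref{19/9/15/16:26}, which is precisely your scale mismatch $\mu_n \asymp \mu_n^\gamma$ with $\gamma>1$ (or, when $p=3$, $\omega^{1/2}\asymp 1$). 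The paper's route has the practical advantage that no pointwise control of $Q_n$ is needed --- the error is estimated only in $L^r$ for large $r$ --- which streamlines the treatment when $1<p\le 2$ and $W\notin L^{p+1}$, where your outer-layer contribution dominates $\|Q_n\|_{L^{p+1}}$.
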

\begin{remark}\label{remark-11}
By Proposition \ref{thm-0-1} and 
the definition of $\omega_{c}^{\mathcal{N}}$, 
we see that if $d\ge 3$, $1<p<\frac{d+2}{d-2}$ 
and $\omega<\omega_{c}^{\mathcal{N}}$,
then we have $m_{\omega}^{\mathcal{N}} = 
m_{\omega}^{\mathcal{S}}$.
\end{remark}

We will prove Theorem \ref{thm-2mini-1} 
in Section \ref{section-gap}.

We make a comment on a result in \cite{AIKN}. The same authors of the present paper considered the following 
minimization problem in \cite{AIKN}: 
\begin{equation}\label{mini-k}
m_{\omega}^{\mathcal{K}} := 
\inf\left\{\mathcal{S}_{\omega}(u) \colon 
u \in H^{1}(\mathbb{R}^{d}) \setminus \{0\}, 
\; \mathcal{K} (u) = 0 \right\}, 
\end{equation}
where 
\begin{equation}\label{func-k}
\mathcal{K}(u) := \|\nabla u\|_{L^{2}}^{2} 
- \frac{d(p-1)}{2(p+1)} \|u\|_{L^{p+1}}^{p+1} 
- \|u\|_{L^{\frac{2d}{d-2}}}^{\frac{2d}{d-2}}. 
\end{equation}
In Theorem 1.2 of \cite{AIKN}, the 
authors claimed that if $d=3$, $1+
\frac{4}{3} < p < 5$ and $\omega$ is 
sufficiently large, 
then there is no minimizer for 
$m_{\omega}^{\mathcal{K}}$. 
However, the claim is false; indeed, we 
can show that for any $3 < p < 5$ and 
any $\omega>0$, there exists a 
minimizer for $m_{\omega}
^{\mathcal{K}}$ in a way similar to 
Proposition 1.1 of \cite{AIIKN}. In the 
present paper, we correct the mistake. 
To this end, we introduce the following 
value $\omega_{c}^{\mathcal{K}}$, 
as well as $\omega_{c}^{\mathcal{N}}
$: 
\begin{equation}\label{threshold-2}
\omega_{c}^{\mathcal{K}} := 
\sup\bigm\{ \omega > 0 
\colon m_{\omega}^{\mathcal{K}} < 
m_{\infty}
\bigm\}. 
\end{equation}
Then, we can obtain 
Theorem \ref{thm-1} below, in which 
the third claim {\rm (iii)} gives us the 
correct 
range of the exponent $p$ 
concerning the existence of minimizer 
for $m_{\omega}^{\mathcal{K}}$:
\begin{theorem} \label{thm-1}
Assume $d=3$ and $\frac{7}{3} < p < 5$. 
Then, all of the following holds: 
\begin{enumerate}
\item[\rm (i)]
$\omega_{c}^{\mathcal{N}} = 
\omega_{c}^{\mathcal{K}}$.
\item[\rm (ii)]
$m_{\omega}^{\mathcal{N}} = 
m_{\omega}^{\mathcal{K}}$ for any 
$\omega >0$. 
\item[\rm (iii)]
Assume $1+\frac{4}{3}<p\le 3$ 
($p=3$ is excluded). 
If $0< \omega < \omega_{c}
^{\mathcal{N}} (=\omega_{c}
^{\mathcal{K}})$, then a minimizer for 
$m_{\omega}^{\mathcal{K}}$ exists; 
and 
if $\omega > \omega_{c}
^{\mathcal{N}} (=\omega_{c}
^{\mathcal{K}})$, then there is no 
minimizer for $m_{\omega}
^{\mathcal{K}}$. 
\item[\rm (iv)]
Assume $1+\frac{4}{3}<p < 3$. 
If $\omega = \omega_{c}
^{\mathcal{N}} (=\omega_{c}
^{\mathcal{K}})$, then a minimizer for 
$m_{\omega}^{\mathcal{K}}$ exists. 
\end{enumerate}
\end{theorem}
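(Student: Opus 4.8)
The plan is to treat (ii) as the substantive statement and to deduce (i), (iii), (iv) from it together with Theorems~\ref{thm-0} and \ref{ex-critical-o}. Fix $d=3$. For $u\in H^{1}(\mathbb{R}^{3})\setminus\{0\}$ I would use the amplitude scaling $u\mapsto tu$ $(t>0)$ and the $L^{2}$-invariant dilation $u\mapsto u_{\lambda}$, $u_{\lambda}(x):=\lambda^{3/2}u(\lambda x)$ $(\lambda>0)$; a direct computation gives the identities
\[
\mathcal{N}_{\omega}(tu)=t\,\tfrac{d}{dt}\mathcal{S}_{\omega}(tu),
\qquad
\mathcal{K}(u_{\lambda})=\lambda\,\tfrac{d}{d\lambda}\mathcal{S}_{\omega}(u_{\lambda}),
\]
so $\mathcal{N}_{\omega}$ and $\mathcal{K}$ record the derivative of the action along the two scalings.

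For (ii) I would prove two one-sided inequalities by rescaling. \emph{(a)} If $\mathcal{N}_{\omega}(w)=0$: since $p+1>2$ and $6>2$, the map $t\mapsto\mathcal{S}_{\omega}(tw)$ is strictly increasing on $(0,1)$ and strictly decreasing on $(1,\infty)$, while $\mathcal{K}(tw)=t^{2}\big(\|\nabla w\|_{L^{2}}^{2}-\tfrac{3(p-1)}{2(p+1)}t^{p-1}\|w\|_{L^{p+1}}^{p+1}-t^{4}\|w\|_{L^{6}}^{6}\big)$ is positive for small $t>0$ and negative for large $t$ (the bracket decreases strictly from $\|\nabla w\|_{L^{2}}^{2}>0$ to $-\infty$); hence there is a unique $t_{1}>0$ with $\mathcal{K}(t_{1}w)=0$, and $\mathcal{S}_{\omega}(t_{1}w)\le\mathcal{S}_{\omega}(w)$. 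Taking the infimum over $w$ with $\mathcal{N}_{\omega}(w)=0$ yields $m_{\omega}^{\mathcal{K}}\le m_{\omega}^{\mathcal{N}}$. \emph{(b)} If $\mathcal{K}(w)=0$: here the hypothesis $p>1+\tfrac43$ enters, for it makes the dilation exponent $\tfrac{3(p-1)}{2}$ of the subcritical term strictly larger than $2$, so that $\tfrac{d}{d\lambda}\mathcal{S}_{\omega}(w_{\lambda})=\lambda\big(\|\nabla w\|_{L^{2}}^{2}-\tfrac{3(p-1)}{2(p+1)}\lambda^{\frac{3(p-1)}{2}-2}\|w\|_{L^{p+1}}^{p+1}-\lambda^{4}\|w\|_{L^{6}}^{6}\big)$ has a unique positive zero; as $\mathcal{K}(w)=0$, that zero is $\lambda=1$, which is therefore the global maximum of $\lambda\mapsto\mathcal{S}_{\omega}(w_{\lambda})$. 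Since $\mathcal{N}_{\omega}(w_{\lambda})=\lambda^{2}\|\nabla w\|_{L^{2}}^{2}+\omega\|w\|_{L^{2}}^{2}-\lambda^{\frac{3(p-1)}{2}}\|w\|_{L^{p+1}}^{p+1}-\lambda^{6}\|w\|_{L^{6}}^{6}$ is positive as $\lambda\to0^{+}$ and tends to $-\infty$ as $\lambda\to\infty$ (using $p<5$), there is $\lambda_{1}>0$ with $\mathcal{N}_{\omega}(w_{\lambda_{1}})=0$, and $\mathcal{S}_{\omega}(w_{\lambda_{1}})\le\mathcal{S}_{\omega}(w)$. Taking the infimum over $w$ with $\mathcal{K}(w)=0$ yields $m_{\omega}^{\mathcal{N}}\le m_{\omega}^{\mathcal{K}}$. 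Combining \emph{(a)} and \emph{(b)} gives (ii).

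Granting (ii), the rest is short. Claim (i) is immediate, since by (ii) the sets $\{\omega>0:m_{\omega}^{\mathcal{N}}<m_{\infty}\}$ and $\{\omega>0:m_{\omega}^{\mathcal{K}}<m_{\infty}\}$ coincide, hence so do their suprema. For the existence parts of (iii) and (iv), fix $1+\tfrac43<p<3$ and note that a ground state $Q_{\omega}$ exists by Theorem~\ref{thm-0}(ii) when $0<\omega<\omega_{c}^{\mathcal{N}}$ and by Theorem~\ref{ex-critical-o} when $\omega=\omega_{c}^{\mathcal{N}}$; applying \emph{(a)} to $Q_{\omega}$ produces $t_{1}>0$ with $\mathcal{K}(t_{1}Q_{\omega})=0$ and $\mathcal{S}_{\omega}(t_{1}Q_{\omega})\le\mathcal{S}_{\omega}(Q_{\omega})=m_{\omega}^{\mathcal{N}}=m_{\omega}^{\mathcal{K}}$, while $\mathcal{K}(t_{1}Q_{\omega})=0$ forces $\mathcal{S}_{\omega}(t_{1}Q_{\omega})\ge m_{\omega}^{\mathcal{K}}$, so $t_{1}Q_{\omega}$ minimizes $m_{\omega}^{\mathcal{K}}$ (indeed $t_{1}=1$, since $Q_{\omega}$ solves \eqref{sp} and hence $\mathcal{K}(Q_{\omega})=0$). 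For the non-existence part of (iii), suppose $\omega>\omega_{c}^{\mathcal{N}}\,(=\omega_{c}^{\mathcal{K}})$ and $v$ minimizes $m_{\omega}^{\mathcal{K}}$; then $\mathcal{K}(v)=0$, so applying \emph{(b)} to $v$ gives $\lambda_{1}>0$ with $\mathcal{N}_{\omega}(v_{\lambda_{1}})=0$ and $\mathcal{S}_{\omega}(v_{\lambda_{1}})\le\mathcal{S}_{\omega}(v)=m_{\omega}^{\mathcal{K}}=m_{\omega}^{\mathcal{N}}$, and since $\mathcal{N}_{\omega}(v_{\lambda_{1}})=0$ forces $\mathcal{S}_{\omega}(v_{\lambda_{1}})\ge m_{\omega}^{\mathcal{N}}$, the function $v_{\lambda_{1}}$ minimizes $m_{\omega}^{\mathcal{N}}$, i.e.\ is a ground state --- contradicting Theorem~\ref{thm-0}(ii). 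Hence no minimizer for $m_{\omega}^{\mathcal{K}}$ exists when $\omega>\omega_{c}^{\mathcal{N}}$.

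The genuinely delicate point is step \emph{(b)}, and within it the role of $p>1+\tfrac43$: this is precisely the condition under which the $L^{2}$-invariant dilation sees a mountain-pass geometry, i.e.\ $\lambda\mapsto\mathcal{S}_{\omega}(w_{\lambda})$ has a unique positive critical point, a maximum; for $p\le1+\tfrac43$ this map can instead have a local minimum at the constrained point, the comparison $\mathcal{S}_{\omega}(w_{\lambda_{1}})\le\mathcal{S}_{\omega}(w)$ breaks down, and one expects $m_{\omega}^{\mathcal{N}}\ne m_{\omega}^{\mathcal{K}}$ there, consistent with the correction announced in the Introduction. The remaining ingredients --- the sign and limit behaviour of $t\mapsto\mathcal{K}(tw)$ and $\lambda\mapsto\mathcal{N}_{\omega}(w_{\lambda})$, the positivity of $m_{\omega}^{\mathcal{N}}$ and $m_{\omega}^{\mathcal{K}}$, and the fact that ground states solve \eqref{sp} --- are routine; and the exclusion of $p=3$ from (iii)--(iv) mirrors that in Theorem~\ref{ex-critical-o} and the open case recorded in Remark~\ref{rem-3}.
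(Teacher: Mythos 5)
Your proposal is correct, and for parts (i) and (ii) it takes a genuinely different route from the paper. The paper proves (i) first, by a two-sided contradiction argument that invokes the existence results (Propositions \ref{thm-0-1} and \ref{thm-0-2}), the fact that $\mathcal{K}$-minimizers solve \eqref{sp} and hence lie on the Nehari manifold, and the Pohozaev identity $\mathcal{K}(\Phi_\omega^{\mathcal{N}})=0$ for ground states (which requires the exponential-decay argument \`a la Berestycki--Lions to justify $x\cdot\nabla\Phi\in H^{1}$); it then proves (ii) by cross-comparison of the two minimizers for $\omega<\omega_c$ and by Propositions \ref{thm4-2} and \ref{super-omega-K} for $\omega\ge\omega_c$. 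You instead prove (ii) directly and uniformly in $\omega$ by a two-scaling fibering argument: the amplitude scaling $t\mapsto tw$ projects any Nehari point onto the $\mathcal{K}$-manifold without increasing the action (this direction needs no restriction on $p$), and the $L^{2}$-invariant dilation projects any $\mathcal{K}$-point onto the Nehari manifold, the hypothesis $p>\tfrac73$ entering exactly where you say it does, to make $\lambda=1$ the global maximum of $\lambda\mapsto\mathcal{S}_\omega(w_\lambda)$. Claim (i) then falls out for free since the two sublevel sets $\{\omega: m_\omega^{\cdot}<m_\infty\}$ coincide, and (iii)--(iv) follow by transporting minimizers between the two constraint manifolds and quoting Theorems \ref{thm-0} and \ref{ex-critical-o}. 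What your route buys is that it bypasses Proposition \ref{super-omega-K} and the regularity/decay input entirely (your observation that $t_1=1$ is decorative, not load-bearing), at the cost of nothing beyond the elementary monotonicity checks you carry out; the computations $\mathcal{N}_\omega(tu)=t\frac{d}{dt}\mathcal{S}_\omega(tu)$ and $\mathcal{K}(u_\lambda)=\lambda\frac{d}{d\lambda}\mathcal{S}_\omega(u_\lambda)$ and the intermediate-value arguments (using $\|w\|_{L^2}>0$ and $\|w\|_{L^6}>0$ for $w\in H^1\setminus\{0\}$) are all sound.
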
 
\begin{remark}
We can verify that any ground state (minimizer for $m_{\omega}^{\mathcal{N}}$) becomes a minimizer for $m_{\omega}^{\mathcal{K}}$, and vice versa. 
\end{remark}
We give a proof of Theorem \ref{thm-1} in Section \ref{sec-7}. 

In what follows, we 
will always fix $d = 3$ 
unless otherwise noted. 
We introduce the notation used in this paper: 
\\ 
\noindent 
{\bf Notation and auxiliary results}.
\\
\noindent 
{\rm (i)}~We define the operators $L_{+}$ and $L_{-}$ by 
\begin{align}
\label{eq:1.18}
L_{+}&:=-\Delta + V_{+}
\quad 
\mbox{with}
\quad 
V_{+}:=-5W^{4}
,
\\[6pt]
\label{19/9/20/8:20}
L_{-}&:=-\Delta +V_{-}
\quad 
\mbox{with}
\quad 
V_{-}:=-W^{4}
.
\end{align}
The operators $L_{+}$ and $L_{-}$ 
are self-adjoint on $L^{2}(\mathbb{R}^{3})$ with the domain $H^{2}(\mathbb{R}^{3})$. For the information about the spectrum of $L_{+}$, see Lemma \ref{18/11/14/10:43} below. 

Furthermore, we define the operator $H$ by 
\begin{equation}\label{op-H}
H u := L_{+}\Re[u] + i L_{-}\Im[u]
.
\end{equation}
We refer $H$ as the linearized operator around the Talenti function $W$.

\noindent 
{\rm (ii)}~
\begin{equation}\label{eq-J}
\mathcal{J}(u):= 
\Big(\frac{1}{2} - \frac{1}{p+1}\Big) \|u\|
_{L^{p+1}}^{p+1} 
+ 
\frac{1}{3}\|u\|_{L^{6}}^{6}.
\end{equation} 
\noindent 
{\rm (iii)}~For $d\ge 3$, we define the operator $\Lambda \colon \dot{H}^{1}(\mathbb{R}^{3})\to \dot{H}^{1}(\mathbb{R}^{3})$ by 
\begin{equation}\label{18/06/24/17:08}
\Lambda u := \frac{1}{2} u + 
x \cdot \nabla u
.
\end{equation}
Notice that 
$\Lambda W \not\in L^{2}(\mathbb{R}^{3})$, and $\Lambda W$ is a zero energy resonance of $L_{+}$. 
By direct computations, we can verify that 
\begin{equation}\label{19/10/9/10:22}
V_{+}\partial_{j}W = \Delta \partial_{j}W \quad \mbox{for $1\le j\le 3$}, 
\qquad 
V_{+} \Lambda W = \Delta \Lambda W,
\qquad 
V_{-}W =\Delta W.
\end{equation}

\noindent 
{\rm (iv)}~We use the symbol $\langle \cdot, \cdot \rangle$ 
to denote the scalar product 
in $L^{2}(\mathbb{R}^{3})$, namely, 
if $f, g \in L^{2}(\mathbb{R}^{3})$, then 
\begin{equation}\label{19/9/18/1:1}
\langle f, \, g \rangle = 
\Re\int_{\mathbb{R}^{3}} f(x) \overline{g(x)} \,dx.
\end{equation} 
We also use the same symbol to denote the pairing between $H^{1}(\mathbb{R}^{3})$ and $H^{-1}(\mathbb{R}^{3})$, namely if $f \in H^{1}(\mathbb{R}^{3})$ and $g \in H^{-1}(\mathbb{R}^{3})$, then 
\begin{equation}\label{19/9/18/1:2}
\langle f, \, g \rangle 
= 
\Re\int_{\mathbb{R}^{3}} \langle \nabla \rangle f(x) \overline{\langle \nabla \rangle^{-1} g(x)} \,dx.
\end{equation} 

\noindent 
{\rm (v)}~For $\nu>0$, we define the scaling operator $T_{\nu}$ to be that for any function $u$ on $\mathbb{R}^{3}$, 
\begin{equation}\label{18/06/29/12:35}
T_{\nu} u (x):=\nu^{-1}u(\nu^{-\frac{2}{3}}x). 
\end{equation}
Notice that $\|\nabla T_{\nu}u\|_{L^{2}}= \|\nabla u\|_{L^{2}}$ and 
$\|T_{\nu}u\|_{L^{6}}= \|\nabla u\|_{L^{6}}$.

\noindent 
{\rm (vi)}~For any $(y,\nu,\theta)\in 
\mathbb{R}^{3}\times (0,\infty) \times [-\pi, \pi)$, 
we define the transformation $T_{(y,\nu,\theta)}$ 
by 
\begin{equation}\label{19/9/27/12:07}
T_{(y,\nu,\theta)}[u](x):=\nu^{-1} e^{i\theta} u 
(\nu^{-\frac{2}{3}}x+y)
.
\end{equation}

\noindent 
{\rm (vii)}~
We put 
\[
\mathbf{b}_{0} = (0, 1, 0) \in \mathbb{R}^3 
\times (0, \infty) 
\times [-\pi, \pi). 
\]
Moreover, for $\delta>0$ and $r>0$, we define 
\begin{align*}
B_{\dot{H}^{1}}(W,\delta)
&:=
\bigm\{u \in \dot{H}^{1}(\mathbb{R}^{3}) \colon 
\|u-W\|_{\dot{H}^{1}} 
\le \delta
\bigm\}, 
\\[6pt]
B(\mathbf{b}_{0},r)&:=
\bigm\{ (y,\nu,\theta) \in \mathbb{R}^{3} \times 
\mathbb{R} \times [-\pi, \pi)
\colon |y|+|\nu-1|+|\theta| < r 
\bigm\}
.
\end{align*}


\section{Unrealizable sequence}\label{sec-2}

We will employ the contradiction argument in order to prove Theorems \ref{thm-0} though \ref{thm-2mini-1}. 
The following theorem plays a crucial role 
to prove these theorems: 
\begin{theorem} 
\label{non-seq}
Assume $1 < p \le 3$. 
Then, there is no sequence 
$\{(Q_{n}, \omega_{n})\}$ in 
$H^{1}(\mathbb{R}^{3}) \times (0,\infty)$ 
satisfying all of the following conditions:
\\
\noindent 
{\rm (i)}~$Q_{n}$ is a solution to \eqref{sp} with $\omega=\omega_{n}$.
\\
\noindent 
{\rm (ii)}~$\liminf\limits_{n\to \infty}
\omega_{n}>0$ if $1<p<3$, 
and $\limsup\limits_{n\to \infty}
\omega_{n}=\infty$ if $p=3$.
\\
\noindent 
{\rm (iii)}~$\lim\limits_{n\to \infty}
\|Q_{n}\|_{L^{\infty}}=\infty$, and 
$\lim\limits_{n\to \infty} \omega_{n}
\|Q_{n}\|_{L^{\infty}}^{-4}=0$. 
\\
\noindent 
{\rm (iv)}~
Define $\widetilde{Q}_{n}$ by
$\widetilde{Q}_{n}(x):=\|Q_{n}\|_{L^{\infty}}^{-1}
Q_{n}(\|Q_{n}\|_{L^{\infty}}^{-2}x)$. 
Then, $\lim\limits_{n\to \infty}
\widetilde{Q}_{n}=W$ strongly in $\dot{H}^{1}(\mathbb{R}^{3})$. 
\end{theorem}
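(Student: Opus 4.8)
The plan is to argue by contradiction: suppose such a sequence $\{(Q_{n}, \omega_{n})\}$ exists. The heart of the matter is that the renormalized profiles $\widetilde{Q}_{n}$ converge to the Talenti function $W$, yet each $Q_{n}$ solves the full equation \eqref{sp} with its subcritical term $|Q_{n}|^{p-1}Q_{n}$; I want to extract from this an obstruction based on orthogonality against the kernel of the linearized operator $H$ (or rather $L_{+}$), where the zero-energy resonance $\Lambda W$ and the translations $\partial_{j}W$ play the decisive role (cf.\ the identities \eqref{19/10/9/10:22}). First I would record the rescaled equation: setting $\mu_{n} := \|Q_{n}\|_{L^{\infty}}$ and applying $T_{\mu_{n}^{2}}$ (so that $\widetilde{Q}_{n}(x) = \mu_{n}^{-1}Q_{n}(\mu_{n}^{-2}x)$), the equation \eqref{sp} becomes
\begin{equation}\label{eq:rescaled}
-\Delta \widetilde{Q}_{n} + \omega_{n}\mu_{n}^{-4}\widetilde{Q}_{n} - \mu_{n}^{p-5}|\widetilde{Q}_{n}|^{p-1}\widetilde{Q}_{n} - |\widetilde{Q}_{n}|^{4}\widetilde{Q}_{n} = 0.
\end{equation}
By hypothesis (iii), $\omega_{n}\mu_{n}^{-4} \to 0$; and since $\mu_{n}\to\infty$ while $p \le 3 < 5$, the coefficient $\mu_{n}^{p-5}\to 0$ as well. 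So \eqref{eq:rescaled} is a small perturbation of the Talenti equation $-\Delta W = W^{5}$, consistent with (iv). The subtlety is that these two small coefficients decay at different rates and the argument must track their competition.

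Next I would set up the modulation/orthogonal decomposition near $W$. By hypothesis (iv), for large $n$ we have $\widetilde{Q}_{n} \in B_{\dot{H}^{1}}(W,\delta)$, and (since ground states may be taken positive and radial by Remark \ref{remark-1}, hence so are the $\widetilde{Q}_{n}$) I can dispense with translations and phases, keeping only the scaling parameter: write $\widetilde{Q}_{n} = T_{\nu_{n}}(W + \eta_{n})$ with $\nu_{n}\to 1$ and $\eta_{n}\to 0$ in $\dot{H}^{1}$, chosen so that $\eta_{n} \perp \Lambda W$ in the appropriate pairing (this is the one genuine degree of freedom of the criticality, $\Lambda W$ spanning the scaling direction). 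Because all objects are radial, the translational directions $\partial_{j}W$ are automatically irrelevant. Substituting into \eqref{eq:rescaled} and linearizing, $\eta_{n}$ satisfies $L_{+}\eta_{n} = \mu_{n}^{p-5}|W|^{p-1}W + (\text{h.o.t. in }\eta_{n}) + (\text{terms involving }\omega_{n}\mu_{n}^{-4})$. The key point: $L_{+}$ is invertible on the subspace orthogonal to $\Lambda W$ (modulo the resonance issue, which is handled because $\Lambda W \notin L^{2}$ but the source terms decay fast enough — this is where one needs the precise spectral information referenced in Lemma \ref{18/11/14/10:43}), so $\eta_{n}$ inherits the size $O(\mu_{n}^{p-5}) + O(\omega_{n}\mu_{n}^{-4})$ in a suitable norm.

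The contradiction then comes from testing \eqref{eq:rescaled} against $\Lambda W$ itself — the direction that $L_{+}$ cannot control. Pairing \eqref{eq:rescaled} with $\Lambda W$ and using $L_{+}\Lambda W = 0$ (i.e.\ $V_{+}\Lambda W = \Delta\Lambda W$), the leading $\Delta$ and $W^{5}$ terms cancel, leaving a balance of the form
\begin{equation}\label{eq:solvability}
\omega_{n}\mu_{n}^{-4}\langle \widetilde{Q}_{n}, \Lambda W\rangle = \mu_{n}^{p-5}\langle |\widetilde{Q}_{n}|^{p-1}\widetilde{Q}_{n}, \Lambda W\rangle + (\text{quadratic error in }\eta_{n}).
\end{equation}
Here is the crux: $\langle W, \Lambda W\rangle = \tfrac12\|W\|_{L^{2}}^{2} + \langle W, x\cdot\nabla W\rangle$, and in $d=3$ one has $\|W\|_{L^{2}} = \infty$, so this pairing must be interpreted via a careful regularization/cutoff — in fact the divergence of $\|W\|_{L^{2}}^{2}$ is exactly the mechanism that will force a contradiction, because the left side of \eqref{eq:solvability} will be comparatively \emph{large} (of a logarithmic or polynomial order in the cutoff coming from the slow decay of $W$ and $\Lambda W$ in three dimensions) while the right side, governed by $\mu_{n}^{p-5}$ with $p\le 3$, is genuinely small. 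Making this rigorous requires introducing a truncation at radius $\sim\mu_{n}^{\beta}$ for a well-chosen $\beta$, estimating the tail contributions of $W$, $\Lambda W$, and $\widetilde{Q}_{n} - W$ on the annulus, and checking that for $1<p\le 3$ the exponents never balance — with the endpoint $p=3$ needing the stronger hypothesis $\limsup\omega_{n} = \infty$ (hypothesis (ii)) precisely to kill a borderline term. \textbf{The main obstacle} I anticipate is exactly this endpoint bookkeeping: identifying the correct cutoff scale and proving that the three-dimensional $L^{2}$-criticality of $W$ (its failure to be $L^{2}$) produces a quantitatively dominant left-hand side in \eqref{eq:solvability}, uniformly in $n$, despite the competing smallness of $\omega_{n}\mu_{n}^{-4}$; this is where the restriction $p\le 3$ and the case split in hypothesis (ii) must be used in an essential way, and where a naive perturbative estimate would fail.
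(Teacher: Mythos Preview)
Your intuition is on target --- the contradiction does come from a solvability condition in the $\Lambda W$ direction, and the three-dimensional failure $W\notin L^{2}$ is indeed the mechanism. But the execution you sketch has a real gap, and the paper's argument differs from yours in exactly the places you flag as ``the main obstacle.''

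First, the regularization is not a spatial cutoff but the resolvent itself. Rather than pairing \eqref{eq:rescaled} directly with a truncated $\Lambda W$, the paper writes the equation for $\eta_{[n]}=u_{[n]}-W$ as
\[
G(\alpha_{[n]})\,\eta_{[n]} \;=\; (-\Delta+\alpha_{[n]})^{-1}\bigl(-\alpha_{[n]}W+\beta_{[n]}W^{p}+N_{[n]}\bigr),
\]
where $G(\alpha)=1+(-\Delta+\alpha)^{-1}V_{+}$ on real parts, and chooses the modulation so that $\langle \eta_{[n]},\,G(\alpha_{[n]})^{*}V_{+}\Lambda W\rangle=0$ (Theorem \ref{thm-imp}). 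Testing the displayed identity against $V_{+}\Lambda W$ therefore annihilates the left side exactly and yields
\[
-\alpha_{[n]}\bigl\langle (-\Delta+\alpha_{[n]})^{-1}W,\,V_{+}\Lambda W\bigr\rangle
\;=\;\beta_{[n]}\bigl\langle (-\Delta+\alpha_{[n]})^{-1}W^{p},\,V_{+}\Lambda W\bigr\rangle\;+\;\text{error}.
\]
The left side is then evaluated by a sharp asymptotic (Lemma \ref{thm-6}, due to Coles--Gustafson): $\alpha\langle (-\Delta+\alpha)^{-1}W,\,V_{+}\Lambda W\rangle=-6\pi\,\alpha^{1/2}+O(\alpha)$. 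The resonance manifests as a precise $\alpha^{1/2}$, not a cutoff-dependent $\alpha\cdot R$ or $\alpha\log R$; your proposal never produces this exponent, and without it the endpoint comparison with $\beta_{[n]}=\omega_{n}^{-(5-p)/4}\alpha_{[n]}^{(5-p)/4}$ (so that $\alpha_{[n]}^{-1/2}\beta_{[n]}=\omega_{n}^{-(5-p)/4}\alpha_{[n]}^{(3-p)/4}\to 0$ for $p\le 3$) cannot be closed.

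Second, your error control is off by a square root. The source term $-\alpha_{[n]}W$ passed through $(-\Delta+\alpha_{[n]})^{-1}$ gains back only $\alpha_{[n]}^{-1/2}$ in $L^{r}$ (Lemma \ref{Delta-ineq}), so the correct bound is $\|\eta_{[n]}\|_{L^{1/\varepsilon}}\lesssim \alpha_{[n]}^{1/2-2\varepsilon}$, not $O(\alpha_{[n]})+O(\beta_{[n]})$. Obtaining this requires the uniform resolvent estimate for $G(\alpha)^{-1}$ on the orthogonal complement (Theorem \ref{thm-ufe}), which in turn forces the $\alpha$-dependent orthogonality of Theorem \ref{thm-imp} rather than the naive $\eta_{n}\perp\Lambda W$. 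With $\eta_{[n]}=O(\alpha_{[n]}^{1/2})$ one can then show the nonlinear error contributes $O(\alpha_{[n]}^{1/2+\varepsilon})$, strictly smaller than the main $6\pi\alpha_{[n]}^{1/2}$ term --- this is Lemma \ref{error-estimate-gap}. Your spatial-cutoff scheme does not supply either of these two ingredients, and both are essential.
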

We will give the proof of Theorem \ref{non-seq} in Section \ref{19/9/9/14:50} below.

\subsection{Preliminaries}\label{sec2-1}

In order to prove Theorem 
\ref{non-seq}, 
we need some preparations.

It is well known (see, e.g., Theorem 
6.23 of \cite{Lieb-Loss}) that for any 
$\alpha >0$ and any function $u$ on $
\mathbb{R}^{3}$, 
\begin{equation} \label{explicit-1}
(-\Delta + \alpha)^{-1} u(x) 
= \int_{\mathbb{R}^{3}} \frac{e^{- 
\sqrt{\alpha} |x-y|}}{4\pi |x - y|} u(y) 
dy.
\end{equation}
Moreover, 
\begin{equation}
\label{delta0-inv}
(-\Delta)^{-1}f(x)
:=
\int_{\mathbb{R}^{3}}\frac{\Gamma(\frac{3}{2})}{1 2 \pi^{\frac{3}{2}}}
|x-y|^{-1} f(y)\,dy 
,
\end{equation}
where $\Gamma$ denotes the Gamma function, namely 
\begin{equation*}
\Gamma(\nu)=\int_{0}^{\infty}e^{-t}t^{\nu-1}\,dt 
\quad \mbox{for $\nu>0$}. 
\end{equation*}
For our convenience, we record the Hardy-
Littlewood-Sobolev inequality: 
For $0<s<3$, and 
$1<q<r<\infty$ with $s-\frac{3}{q}=-\frac{3}{r}$, 
\begin{equation}\label{HLS-ineq}
\big\| 
\int_{\mathbb{R}^{3}}\frac{f(y)}{|x-y|^{3-s}}\,dy 
\big\|_{L^{r}} 
\lesssim 
\|f\|_{L^{q}}.
\end{equation}
See, e.g. Theorem 2.6 of \cite{Linares-Ponce}.

The following lemma 
holds from 
the Young and 
the weak Young inequalities 
(see, e.g., (9) in Section 4.3 of \cite{Lieb-Loss})
\begin{lemma}\label{Delta-ineq}
Let $\alpha >0$, and let 
$1\le s \le q \le \infty$ and 
$3(\frac{1}{s}-\frac{1}{q})<2$. Then, we have 
\begin{equation}\label{res-est1}
\|(-\Delta+\alpha)^{-1} \|_{L^{s}(\mathbb{R}^{3}) \to L^{q}(\mathbb{R}^{3})}
\lesssim 
\alpha^{\frac{3}{2}(\frac{1}{s}-\frac{1}{q})-1}
,
\end{equation}
where the implicit constant depends only on $d$, $s$ and $q$. Furthermore, if 
$1< s \le q < \infty$ and $3(\frac{1}{s}-\frac{1}{q})<2$, then, 
\begin{equation}\label{res-est2}
\|(-\Delta+\alpha)^{-1} \|_{L_{\rm weak}^{s}(\mathbb{R}^{3}) \to L^{q}(\mathbb{R}^{3})}
\lesssim 
\alpha^{\frac{3}{2}(\frac{1}{s}-\frac{1}{q})-1}
,
\end{equation}
where the implicit constant depends only on $d$, $s$ and $q$.
\end{lemma}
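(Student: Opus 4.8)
The plan is to reduce both inequalities to convolution estimates via the explicit kernel representation \eqref{explicit-1}, and then to exploit scaling to locate the correct power of $\alpha$. Write $G_{\alpha}(x) := \dfrac{e^{-\sqrt{\alpha}|x|}}{4\pi|x|}$, so that $(-\Delta+\alpha)^{-1}f = G_{\alpha} * f$. The first observation is a scaling identity: if $G_{1}$ is the kernel for $\alpha = 1$, then a change of variables $x \mapsto \alpha^{-1/2}x$ gives $G_{\alpha}(x) = \alpha^{1/2} G_{1}(\alpha^{1/2}x)$, hence $\|G_{\alpha}\|_{L^{r}(\mathbb{R}^{3})} = \alpha^{\frac{1}{2} - \frac{3}{2r}}\|G_{1}\|_{L^{r}(\mathbb{R}^{3})}$ for any exponent $r$ for which the right side is finite, and similarly $\|G_{\alpha}\|_{L^{r}_{\rm weak}(\mathbb{R}^{3})} = \alpha^{\frac{1}{2}-\frac{3}{2r}}\|G_{1}\|_{L^{r}_{\rm weak}(\mathbb{R}^{3})}$.

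First I would prove \eqref{res-est1}. Choose $r \in [1,\infty]$ by the Young scaling relation $1 + \frac{1}{q} = \frac{1}{s} + \frac{1}{r}$, i.e. $\frac{1}{r} = 1 - (\frac{1}{s} - \frac{1}{q})$; the hypothesis $3(\frac{1}{s}-\frac{1}{q}) < 2$ forces $\frac{1}{r} > \frac{1}{3}$, i.e. $r < 3$, which is exactly the range in which $G_{1} \in L^{r}(\mathbb{R}^{3})$: indeed near the origin $G_{1}(x) \sim |x|^{-1}$, which is in $L^{r}_{\rm loc}$ iff $r < 3$, and the exponential decay handles the region at infinity for all $r \ge 1$. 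Then Young's convolution inequality gives $\|G_{\alpha}*f\|_{L^{q}} \le \|G_{\alpha}\|_{L^{r}}\|f\|_{L^{s}}$, and by the scaling identity $\|G_{\alpha}\|_{L^{r}} = \alpha^{\frac{1}{2}-\frac{3}{2r}}\|G_{1}\|_{L^{r}}$. Substituting $\frac{1}{r} = 1 - \frac{1}{s} + \frac{1}{q}$ yields the exponent $\frac{1}{2} - \frac{3}{2}(1 - \frac{1}{s}+\frac{1}{q}) = -1 + \frac{3}{2}(\frac{1}{s}-\frac{1}{q})$, which is precisely the claimed power, and the implicit constant is $\|G_{1}\|_{L^{r}} = \|G_{1}\|_{L^{r}(d,s,q)}$, depending only on $d$, $s$, $q$.

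For \eqref{res-est2} I would argue identically, but invoke the weak Young inequality (Lieb--Loss, Section 4.3, (9)): for $1 < s \le q < \infty$ and the same exponent $r$ with $\frac{1}{r} = 1 - \frac{1}{s} + \frac{1}{q}$ (now $1 < r < 3$), one has $\|G_{\alpha}*f\|_{L^{q}} \lesssim \|G_{\alpha}\|_{L^{r}_{\rm weak}}\|f\|_{L^{s}_{\rm weak}}$. Since $G_{1}(x) \sim |x|^{-1}$ near $0$ and decays exponentially, $G_{1} \in L^{r}_{\rm weak}(\mathbb{R}^{3})$ for every $r$ with $1 < r$ (in fact the pure power $|x|^{-1}$ lies in $L^{3}_{\rm weak}$, which even covers the endpoint, though we only need $r < 3$ here), and the scaling identity again produces the factor $\alpha^{\frac{1}{2}-\frac{3}{2r}} = \alpha^{-1+\frac{3}{2}(\frac{1}{s}-\frac{1}{q})}$. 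The mildly delicate point — and the only place requiring care — is verifying that the exponent $r$ determined by Young's scaling relation really does fall into the range $r < 3$ exactly when $3(\frac{1}{s}-\frac{1}{q}) < 2$, so that the convolution kernel is in the right Lebesgue (or weak-Lebesgue) space; this is a short arithmetic check, after which the two estimates follow immediately. No genuine obstacle is expected.
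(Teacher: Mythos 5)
Your approach---representing $(-\Delta+\alpha)^{-1}$ by its Bessel kernel $G_{\alpha}$, using the scaling $G_{\alpha}(x)=\alpha^{1/2}G_{1}(\alpha^{1/2}x)$, and applying Young and weak Young with the exponent $r$ determined by $1+\tfrac{1}{q}=\tfrac{1}{r}+\tfrac{1}{s}$---is exactly what the paper intends by its one-line citation to Lieb--Loss, and your arithmetic for the power of $\alpha$ and the range $r<3$ is correct. One flag: the inequality you invoke for \eqref{res-est2}, namely $\|G_{\alpha}*f\|_{L^{q}}\lesssim\|G_{\alpha}\|_{L^{r}_{\rm weak}}\|f\|_{L^{s}_{\rm weak}}$ with \emph{both} factors measured in weak Lebesgue spaces, is not the weak Young inequality and is false in that generality; Lieb--Loss 4.3(9) requires one of the two factors to be in the strong space. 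This does not derail your argument, since for $r<3$ the kernel $G_{\alpha}$ actually lies in $L^{r}(\mathbb{R}^{3})$ (not merely $L^{r}_{\rm weak}$), so the correct estimate $\|G_{\alpha}*f\|_{L^{q}}\lesssim\|G_{\alpha}\|_{L^{r}}\|f\|_{L^{s}_{\rm weak}}$ applies and yields the same power of $\alpha$ via the identical scaling computation; you should state it that way. A second, smaller point: weak Young requires $1<r<\infty$, and from $\tfrac{1}{r}=1-(\tfrac{1}{s}-\tfrac{1}{q})$ one gets $r=1$ precisely when $s=q$, a case the lemma's hypotheses permit but your claim ``now $1<r<3$'' silently excludes; in the paper's applications $s<q$ strictly, so this corner case is never used, but it is worth noting that the reduction to weak Young genuinely requires $s<q$.
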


Lemma \ref{Delta-ineq} does not deal with the case where $3(\frac{1}{s}-\frac{1}{q})=2$. In that case, we use the following:
\begin{lemma}\label{18/11/23/17:17}
Let $\alpha >0$, and 
let $3 < q < \infty$. 
Then, we have 
\begin{equation}\label{res-est3}
\|(-\Delta+\alpha)^{-1} \|
_{L^{\frac{3q}{3+2q}}(\mathbb{R}
^{3}) \to L^{q}(\mathbb{R}^{3})}
\lesssim 
1,
\end{equation}
where the implicit constant depends only on $s$ and $q$. 
\end{lemma}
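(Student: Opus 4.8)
The plan is to bound the Bessel kernel of $(-\Delta+\alpha)^{-1}$ pointwise by the Riesz kernel $|x|^{-1}$, and then invoke the Hardy--Littlewood--Sobolev inequality \eqref{HLS-ineq} at its scaling-critical endpoint, which is precisely the case left open by Lemma \ref{Delta-ineq}. By \eqref{explicit-1}, for $\alpha>0$ the operator $(-\Delta+\alpha)^{-1}$ is convolution with the kernel $K_{\alpha}(x)=\dfrac{e^{-\sqrt{\alpha}\,|x|}}{4\pi|x|}$; since $e^{-\sqrt{\alpha}\,|x|}\le 1$ for all $\alpha>0$, this yields the $\alpha$-uniform pointwise bound
\begin{equation*}
\big|(-\Delta+\alpha)^{-1}f(x)\big|
\le \frac{1}{4\pi}\int_{\mathbb{R}^{3}}\frac{|f(y)|}{|x-y|}\,dy
\qquad\text{for a.e. }x\in\mathbb{R}^{3},
\end{equation*}
valid for any $f$ for which the right-hand side is finite a.e.

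The next step is to apply \eqref{HLS-ineq} with $3-s=1$ (that is, $s=2$), target exponent $r=q$, and source exponent $q_{0}:=\frac{3q}{3+2q}$. The admissibility relation $s-\frac{3}{q_{0}}=-\frac{3}{r}$ becomes $\frac{3+2q}{3q}-\frac{1}{q}=\frac{2}{3}$, which holds by the very definition of $\frac{3q}{3+2q}$; and the remaining requirement $1<q_{0}<r<\infty$ reads $1<\frac{3q}{3+2q}<q<\infty$. Here the right inequality is automatic, while $\frac{3q}{3+2q}>1$ is equivalent to $q>3$, exactly the hypothesis of the lemma. Combining this with the pointwise bound gives, for all $\alpha>0$,
\begin{equation*}
\big\|(-\Delta+\alpha)^{-1}f\big\|_{L^{q}(\mathbb{R}^{3})}
\le \frac{1}{4\pi}\big\|\,|\,\cdot\,|^{-1}*|f|\,\big\|_{L^{q}(\mathbb{R}^{3})}
\lesssim \|f\|_{L^{\frac{3q}{3+2q}}(\mathbb{R}^{3})},
\end{equation*}
with an implicit constant depending only on $q$ (the dimension being fixed at $3$); in particular it is independent of $\alpha$, which is the whole point of the statement.

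I do not expect a genuine obstacle here: once the exponential factor is discarded in the first line, the content is entirely contained in the endpoint Riesz-potential estimate. The only items warranting a line of care are the exponent bookkeeping in the second paragraph---most notably the observation that $q>3$ is exactly what keeps the source exponent $\frac{3q}{3+2q}$ inside the open interval $(1,\infty)$ on which \eqref{HLS-ineq} applies---and, if one wants to be scrupulous, a brief density/Fatou remark confirming that for $f\in L^{\frac{3q}{3+2q}}(\mathbb{R}^{3})$ the operator $(-\Delta+\alpha)^{-1}f$ (defined a priori on $L^{2}$) really agrees with the convolution $K_{\alpha}*f$, so that the pointwise bound is legitimately applicable.
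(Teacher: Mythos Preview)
Your proof is correct. It differs from the paper's argument in how the reduction to the Riesz potential is carried out. The paper uses the first resolvent identity
\[
(-\Delta+\alpha)^{-1}=(-\Delta)^{-1}-\alpha(-\Delta+\alpha)^{-1}(-\Delta)^{-1},
\]
bounds the correction term by Lemma~\ref{Delta-ineq} with $s=q$ (which gives a factor $\alpha^{-1}$ that cancels the leading $\alpha$), and then applies the Hardy--Littlewood--Sobolev inequality to $(-\Delta)^{-1}f$. You instead exploit the explicit kernel formula \eqref{explicit-1} and the trivial pointwise bound $e^{-\sqrt{\alpha}|x|}\le 1$ to dominate $(-\Delta+\alpha)^{-1}$ directly by the Riesz potential, after which the same HLS step finishes the job. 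Your route is shorter and more elementary, avoiding both the resolvent identity and the auxiliary Lemma~\ref{Delta-ineq}; the paper's route, on the other hand, does not rely on the explicit form of the Bessel kernel and would generalise more readily to operators for which only resolvent bounds (and not a closed-form kernel) are available. Your exponent check---in particular the observation that $q>3$ is exactly what forces $\frac{3q}{3+2q}>1$---matches the paper's.
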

\begin{proof}[Proof of Lemma \ref{18/11/23/17:17}]

It follows from the first resolvent 
equation that for any $\alpha>0$ and $
\alpha_{0}\ge 0$, 
\begin{equation}\label{first-res}
\begin{split}
(-\Delta+\alpha)^{-1}-(-\Delta+\alpha_{0})^{-1}
= -(\alpha-\alpha_{0}) (-\Delta+\alpha)^{-1} (-
\Delta+\alpha_{0})^{-1}.
\end{split}
\end{equation}
We see from \eqref{first-res} 
with $\alpha_{0} = 0$, 
Lemma \ref{Delta-ineq} and the Hardy-Littlewood-Sobolev inequality 
\eqref{HLS-ineq} 
(or Sobolev's embedding) that 
\begin{equation}\label{18/11/23/17:18}
\begin{split}
\|(-\Delta+\alpha)^{-1}f \|_{L^{q}}
&\le 
\|\big\{(-\Delta+\alpha)^{-1} -(-\Delta)^{-1} \big\} f \|_{L^{q}}
+
\|(-\Delta)^{-1} f \|_{L^{q}}
\\[6pt]
&\le
\alpha \| (-\Delta+\alpha)^{-1} (-\Delta)^{-1} f \|_{L^{q}}
+
\|(-\Delta)^{-1} f \|_{L^{q}}
\\[6pt]
&\lesssim 
\|(-\Delta)^{-1} f \|_{L^{q}}
\lesssim 
\|f\|_{L^{\frac{3q}{3+2q}}}.
\end{split}
\end{equation}
Note that if $3<q$, 
then $\frac{3q }{3+2q}>1$. Hence, the desired estimate \eqref{res-est1} holds. 
\end{proof}

In addition to the estimates above, we will use the following result given in Lemma 2.7 of \cite{CG}: 
\begin{lemma}\label{thm-6}
Let $\alpha>0$. Then, we have 
\begin{equation}\label{19/9/12/16:54}
\alpha^{\frac{1}{2}}
\bigm\langle W, \, (-\Delta + \alpha)^{-1} V_{+} \Lambda W \bigm\rangle
= 
6\pi 
+ 
O(\alpha^{\frac{1}{2}})
.
\end{equation}
Moreover, for any $2< p < 5$ and any $0< \delta_{1}<\min\{1, p-2\}$, we have 
\begin{equation}\label{19/9/12/16:55}
\bigm\langle W^{p}, \, (-\Delta + \alpha)^{-1} V_{+}\Lambda W \bigm\rangle 
= 
-\frac{5-p}{10(p+1)}
\|W\|_{L^{p+1}}^{p+1} 
+ 
O(\alpha^{\frac{\delta_{1}}{2}})
.
\end{equation}
\end{lemma}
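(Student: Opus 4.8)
The final statement to prove is Lemma~\ref{thm-6}, which gives precise asymptotic expansions as $\alpha \to 0^+$ for two pairings involving the resolvent $(-\Delta+\alpha)^{-1}$ applied to $V_+ \Lambda W$.

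\medskip

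The plan is to exploit the key algebraic identity $V_+ \Lambda W = \Delta \Lambda W$ from \eqref{19/10/9/10:22}, which rewrites the problematic object $(-\Delta+\alpha)^{-1}V_+\Lambda W$ in a tractable form. Indeed, writing $\Delta \Lambda W = -(-\Delta+\alpha)\Lambda W + \alpha \Lambda W$, we get
\begin{equation*}
(-\Delta+\alpha)^{-1} V_+ \Lambda W = -\Lambda W + \alpha (-\Delta+\alpha)^{-1}\Lambda W.
\end{equation*}
So the task reduces to understanding $\alpha(-\Delta+\alpha)^{-1}\Lambda W$ as $\alpha \to 0$, paired against $W$ (resp. $W^p$). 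For the first expansion \eqref{19/9/12/16:54}, I would first note $\langle W, \Lambda W\rangle$ is finite (even though $\Lambda W \notin L^2$, the product $W \cdot \Lambda W$ is integrable since $W$ decays like $|x|^{-1}$ and $\Lambda W$ like $|x|^{-1}$, giving $|x|^{-2}$ which is integrable at infinity in $\R^3$... actually one must check: $W\sim |x|^{-1}$, $x\cdot\nabla W \sim |x|^{-1}$, product $\sim |x|^{-2}$, borderline non-integrable — so care is needed and possibly $\langle W,\Lambda W\rangle$ should be interpreted via the resolvent limit). The heart of the matter is then the term $\alpha^{1/2}\cdot \alpha\langle W, (-\Delta+\alpha)^{-1}\Lambda W\rangle$; using the explicit Green's function \eqref{explicit-1}, $\alpha(-\Delta+\alpha)^{-1}\Lambda W(x) = \alpha\int \frac{e^{-\sqrt\alpha|x-y|}}{4\pi|x-y|}\Lambda W(y)\,dy$, and after the change of variables $y = x + z/\sqrt\alpha$ one extracts the leading behavior. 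The decay $W(x) = \sqrt{3}|x|^{-1} + O(|x|^{-3})$ (from \eqref{F-Talenti-1} with $d=3$, $W(x)=(1+|x|^2/3)^{-1/2}$) and $\Lambda W(x) = \frac12 W + x\cdot\nabla W$, whose far-field behavior is $-\frac{\sqrt 3}{2}|x|^{-1}+O(|x|^{-3})$, drives the constant $6\pi$.

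\medskip

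More concretely, for \eqref{19/9/12/16:54} I would compute $\alpha^{1/2}\langle W, (-\Delta+\alpha)^{-1}V_+\Lambda W\rangle = -\alpha^{1/2}\langle W,\Lambda W\rangle + \alpha^{3/2}\langle W, (-\Delta+\alpha)^{-1}\Lambda W\rangle$ and observe that the cross-term $-\alpha^{1/2}\langle W,\Lambda W\rangle$ (suitably regularized) contributes to the $O(\alpha^{1/2})$ remainder, while the main term comes from the region near the "diagonal at infinity". The cleanest route is probably to use the Fourier/convolution representation and the fact that $(-\Delta+\alpha)^{-1}$ has kernel with total mass $\alpha^{-1}$, so $\alpha(-\Delta+\alpha)^{-1}$ is an approximate identity with a long $|x|^{-1}e^{-\sqrt\alpha|x|}$ tail; pairing against the slowly-decaying $W\cdot\Lambda W$ picks up a $\log$-free constant precisely because in $\R^3$ the relevant integral $\int_0^\infty \alpha \cdot r \cdot e^{-\sqrt\alpha r}\cdot r^{-2}\,dr$ type expression is scale-invariant to leading order. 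For \eqref{19/9/12/16:55}, the computation is easier since $2<p<5$ makes $W^p \in L^q$ for suitable $q$, so $\langle W^p, \Lambda W\rangle$ is an honest convergent integral; one computes it by integration by parts ($\langle W^p, x\cdot\nabla W\rangle = \langle W^p, \Lambda W - \frac12 W\rangle$, and $\int W^p\, x\cdot\nabla W = \frac{1}{p+1}\int x\cdot\nabla(W^{p+1}) = -\frac{3}{p+1}\int W^{p+1}$), yielding $\langle W^p,\Lambda W\rangle = \frac12\|W\|_{p+1}^{p+1} - \frac{3}{p+1}\|W\|_{p+1}^{p+1} = \frac{p+1-6}{2(p+1)}\|W\|_{p+1}^{p+1}$... let me recompute: $\langle W^p,\Lambda W\rangle = \frac12\|W\|_{L^{p+1}}^{p+1} - \frac{3}{p+1}\|W\|_{L^{p+1}}^{p+1} = \frac{(p+1)/2 - 3}{p+1}\|W\|_{L^{p+1}}^{p+1} = \frac{p-5}{2(p+1)}\|W\|_{L^{p+1}}^{p+1}$, so $-\langle W^p,\Lambda W\rangle = \frac{5-p}{2(p+1)}\|W\|_{L^{p+1}}^{p+1}$. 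But \eqref{19/9/12/16:55} has a factor $\tfrac{1}{10}$ not $\tfrac12$ — that factor of $5$ must come from $V_+ = -5W^4$ in a different but equivalent route (i.e. writing $(-\Delta+\alpha)^{-1}V_+\Lambda W$ directly as $-5(-\Delta+\alpha)^{-1}(W^4\Lambda W)$ and using HLS/Lemma~\ref{Delta-ineq} to show $\langle W^p, (-\Delta+\alpha)^{-1}(W^4\Lambda W)\rangle \to \langle W^p, (-\Delta)^{-1}(W^4\Lambda W)\rangle = \langle (-\Delta)^{-1}W^p, W^4\Lambda W\rangle$, then using $(-\Delta)^{-1}W^p$ relates back appropriately) — I would cross-check both derivations against each other. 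The error bound $O(\alpha^{\delta_1/2})$ follows from the resolvent difference estimate $\|(-\Delta+\alpha)^{-1} - (-\Delta)^{-1}\|$ on the relevant $L^s\to L^q$ spaces via \eqref{first-res} and Lemma~\ref{Delta-ineq}, controlled by a small power of $\alpha$ for $\delta_1 < \min\{1,p-2\}$.

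\medskip

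The main obstacle I anticipate is the borderline integrability in \eqref{19/9/12/16:54}: since $\Lambda W \notin L^2$ and $W\cdot\Lambda W$ sits exactly at the edge of integrability at infinity in $\R^3$, one cannot simply pass the limit $\alpha \to 0$ inside the pairing, and the constant $6\pi$ arises from a genuine competition between the resolvent's exponential cutoff at scale $\alpha^{-1/2}$ and the $|x|^{-2}$ tail of the integrand. Handling this rigorously requires either a careful split into $|x|\le \alpha^{-1/2}$ and $|x|>\alpha^{-1/2}$ regions with explicit estimates on each using the exact form of $W$, or a Fourier-side argument tracking the behavior of $\widehat{\Lambda W}$ near the origin. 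The subcritical pairing \eqref{19/9/12/16:55} is comparatively routine once the right functional-analytic setup is in place. Throughout, I would lean on Lemmas~\ref{Delta-ineq} and~\ref{18/11/23/17:17} to justify all the resolvent mapping properties and on the explicit kernel \eqref{explicit-1} for the delicate leading-order extraction.
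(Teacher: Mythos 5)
The paper does not prove this lemma at all: it is quoted directly from Lemma~2.7 of the reference \cite{CG} (Coles--Gustafson), so there is no in-paper proof to compare your sketch against. Your approach via the identity $V_{+}\Lambda W = \Delta\Lambda W$ is the natural one and is close in spirit to what one would do from scratch, but as a proof it has three genuine gaps.

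First, for \eqref{19/9/12/16:54} your region-splitting at scale $\alpha^{-1/2}$ is fighting the borderline integrability you yourself flag, and your sketch never actually produces the constant $6\pi$. The cleaner route is to move the resolvent onto $W$: write $\langle W, (-\Delta+\alpha)^{-1}V_{+}\Lambda W\rangle = \langle (-\Delta+\alpha)^{-1}W, \, V_{+}\Lambda W\rangle$, note that $V_{+}\Lambda W = \Delta\Lambda W$ decays like $|x|^{-5}$ (so is in $L^{1}$ and concentrated near the origin), split $W = \sqrt{3}|x|^{-1} + R$ with $R\in L^{q}$ for $1\le q<3$, and use the exact formula
\begin{equation*}
(-\Delta+\alpha)^{-1}\frac{1}{|x|} \;=\; \frac{1-e^{-\sqrt{\alpha}\,|x|}}{\alpha\,|x|} \;=\; \alpha^{-\frac{1}{2}} - \frac{|x|}{2} + O(\alpha^{\frac12})
\end{equation*}
locally uniformly. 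This yields $\langle(-\Delta+\alpha)^{-1}W,\,V_{+}\Lambda W\rangle = \sqrt{3}\,\alpha^{-1/2}\int V_{+}\Lambda W + O(1)$, and since $\int V_{+}\Lambda W = -5\int W^{4}\Lambda W = \tfrac12\int W^{5} = 2\sqrt{3}\pi$ (using $\int W^{4}\Lambda W = -\tfrac{1}{10}\int W^{5}$ by parts and $\int W^{5}=-\int\Delta W = 4\sqrt{3}\pi$), the leading term is $6\pi\alpha^{-1/2}$. None of this is carried out in your sketch.

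Second, for \eqref{19/9/12/16:55} your integration by parts giving $-\langle W^{p},\Lambda W\rangle = \tfrac{5-p}{2(p+1)}\|W\|_{L^{p+1}}^{p+1}$ appears to be correct, and you are right to be troubled that it disagrees with the stated constant $-\tfrac{5-p}{10(p+1)}\|W\|_{L^{p+1}}^{p+1}$ by a factor of $-5$. That $-5$ is precisely the factor in $V_{+}=-5W^{4}$; the stated constant is what one gets for $\langle W^{p},(-\Delta+\alpha)^{-1}W^{4}\Lambda W\rangle$ rather than $\langle W^{p},(-\Delta+\alpha)^{-1}V_{+}\Lambda W\rangle$. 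This is almost certainly a normalization mismatch between this paper's $V_{+}$ and the convention used in the statement of Lemma~2.7 of \cite{CG}. A proof cannot leave this as an acknowledged puzzle the way you do; it must resolve it (and you should also note that in the only place this estimate is invoked, \eqref{18/06/30/17:19}, only the bound $|\langle W^{p},(-\Delta+\alpha_{[n]})^{-1}V_{+}\Lambda W\rangle|\lesssim 1$ is used, so the precise constant is immaterial downstream).

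Third, in neither part do you actually establish the error bounds $O(\alpha^{1/2})$ and $O(\alpha^{\delta_{1}/2})$. For \eqref{19/9/12/16:55}, one needs to show that $\langle W^{p},\,\{(-\Delta+\alpha)^{-1}-(-\Delta)^{-1}\}V_{+}\Lambda W\rangle = O(\alpha^{\delta_{1}/2})$; the first resolvent identity \eqref{first-res} together with Lemma~\ref{Delta-ineq} with exponents calibrated to $\delta_{1}<\min\{1,p-2\}$ is the right tool, and the threshold $p-2$ enters exactly here because $W^{p}\in L^{q}$ only for $q>3/p$. You name these lemmas but do not carry out the estimate.
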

The restriction about $p$ in 
Lemma \ref{thm-6} comes from the fact 
that $W \not\in L^{p+1}(\mathbb{R}^{3})$ 
for $1 \leq p \leq 2$. 
When $1<p\le 2$, we use the following estimate, 
instead of \eqref{19/9/12/16:55} in Lemma \ref{thm-6} : 
\begin{lemma} \label{thm-6-1}
Let $\alpha>0$. 
Then, for any $1 < p \le 2$ and 
any $0< \delta <\frac{p}{2}$, we have 
\begin{equation}
\big| \bigm\langle W^{p},\, 
(-\Delta +\alpha)^{-1} V_{+} \Lambda W \bigm\rangle 
\big|
\lesssim 
\alpha^{-\frac{2-p}{2}- \delta}, 
\end{equation}
where the implicit constant depends only on $p$ and $\delta$. 
\end{lemma}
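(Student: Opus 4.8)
The goal is to estimate $\langle W^{p}, (-\Delta+\alpha)^{-1} V_{+}\Lambda W\rangle$ from above by $\alpha^{-\frac{2-p}{2}-\delta}$ when $1<p\le 2$. Recall from \eqref{19/10/9/10:22} that $V_{+}\Lambda W = \Delta \Lambda W$, so we are really looking at $\langle W^{p}, (-\Delta+\alpha)^{-1}\Delta \Lambda W\rangle$. Writing $\Delta \Lambda W = -\alpha \Lambda W + (-\Delta+\alpha)\Lambda W$ is not directly useful since $\Lambda W \notin L^{2}$; instead I would use the pointwise decay $|\Lambda W(x)|\lesssim \langle x\rangle^{-1}$ and $|V_{+}\Lambda W(x)| = |\Delta \Lambda W(x)|\lesssim \langle x\rangle^{-5}$ (both follow from \eqref{F-Talenti-1} and \eqref{18/06/24/17:08}), together with $W(x)\lesssim \langle x\rangle^{-1}$, hence $W^{p}\lesssim \langle x\rangle^{-p}$, which is in $L^{q}$ for $q>3/p$; in particular $W^{p}\in L^{q}$ for all $q$ slightly above $3/p \ge 3/2$.

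\textbf{Plan.} First I would record the elementary pointwise bounds $|V_{+}\Lambda W(x)|\lesssim \langle x\rangle^{-5}$ and $W(x)^{p}\lesssim \langle x\rangle^{-p}$, and identify the relevant Lebesgue spaces: $V_{+}\Lambda W \in L^{s}$ for every $s\ge 1$ (since $\langle x\rangle^{-5}$ decays fast), and $W^{p}\in L^{q'}$ precisely when $q'>3/p$. Then I would estimate by duality:
\[
\big|\langle W^{p}, (-\Delta+\alpha)^{-1} V_{+}\Lambda W\rangle\big|
\le \|W^{p}\|_{L^{q'}} \, \|(-\Delta+\alpha)^{-1} V_{+}\Lambda W\|_{L^{q}},
\]
where $q$ is the conjugate exponent of $q'$, so $q$ is slightly below $\frac{3}{3-p}$ (note $\frac{3}{3-p}\in(3/2,3)$ for $1<p\le 2$). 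Now apply Lemma \ref{Delta-ineq}, estimate \eqref{res-est1}, with input space $L^{s}$ and output space $L^{q}$: choosing $s$ so that $1\le s\le q$ and $3(\frac1s-\frac1q)<2$, one gets a factor $\alpha^{\frac32(\frac1s-\frac1q)-1}$. To make this as small as possible (most negative exponent) we want $\frac1s-\frac1q$ small, i.e. $s$ close to $q$; but we also need $V_{+}\Lambda W\in L^{s}$, which is automatic for all $s\ge1$, so there is room. Taking $s=q$ is not allowed by the strict inequality margin needed, but taking $s$ just below $q$ gives exponent close to $-1$; actually the sharp balance comes from taking $s = \frac{3q}{3+2q}$, which is the endpoint case, and then Lemma \ref{18/11/23/17:17} gives a bound by a constant — too weak. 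So instead I keep $s$ strictly larger than $\frac{3q}{3+2q}$ and small enough, optimizing to get the exponent $-\frac{2-p}{2}-\delta$.

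\textbf{Carrying out the optimization.} With $q$ conjugate to $q' = \frac{3}{p}+\eta$ for small $\eta>0$, we have $\frac1q = 1-\frac{p}{3}-\eta'$ for correspondingly small $\eta'>0$, i.e. $\frac1q$ slightly below $\frac{3-p}{3}$. Pick $s$ with $\frac1s = \frac1q + \frac{2}{3}-\rho$ for small $\rho>0$ (so that $3(\frac1s-\frac1q)=2-3\rho<2$ and $s\ge1$ is satisfied because $\frac1s < \frac{3-p}{3}+\frac23 = \frac{5-p}{3}\le\frac{4}{3}$... ensuring $s\ge1$ needs $\frac1s\le1$, i.e. $\frac1q\le\frac13+\rho$, which for $p$ close to $2$ and $q$ close to $3$ is the binding constraint — here I would split into the regime where this is fine and, if necessary, interpolate). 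Then \eqref{res-est1} yields the factor $\alpha^{\frac32(\frac23-\rho)-1}=\alpha^{-\frac32\rho}$, which is not the claimed power; so the correct choice is rather to take $s$ as small as allowed, $s=1$, giving $\|(-\Delta+\alpha)^{-1}V_{+}\Lambda W\|_{L^{q}}\lesssim \alpha^{\frac32(1-\frac1q)-1}\|V_{+}\Lambda W\|_{L^{1}} = \alpha^{\frac32\cdot\frac{1}{q'}-1}$. With $\frac{1}{q'} = \frac{p}{3}+\eta$ this is $\alpha^{\frac{p}{2}+\frac32\eta -1} = \alpha^{-\frac{2-p}{2}+\frac32\eta}$. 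Renaming $\delta = \frac32\eta$ and noting $\eta$ can be taken in $(0,\frac{p}{3})$ hence $\delta\in(0,\frac{p}{2})$, this is exactly the assertion. Thus the proof reduces to: (a) the pointwise bounds, (b) $V_{+}\Lambda W\in L^{1}$, (c) $W^{p}\in L^{q'}$ for $q'$ just above $3/p$ with $3(1-\frac1q)<2$ (equivalently $q<3$, i.e. $q'>3/2$, which holds since $p\le2$), and (d) a single application of \eqref{res-est1} with $s=1$.

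\textbf{Main obstacle.} The only real subtlety is checking the admissibility conditions of Lemma \ref{Delta-ineq} simultaneously: we need $1 = s \le q < \infty$ and $3(\frac1s - \frac1q) = 3(1-\frac1q) < 2$, i.e. $q < 3$; since $q' > 3/2$ forces $q < 3$, this is compatible, but it is tight at $p=2$ where $q'\to 3/2^{+}$ and $q\to 3^{-}$, so the constant in \eqref{res-est1} degenerates as $\eta\to0$ — which is fine because we keep $\delta>0$ fixed. I would also need to justify $V_{+}\Lambda W\in L^{1}(\mathbb{R}^{3})$ carefully near the origin, but $V_{+}$ and $\Lambda W$ are both smooth there, so $|V_{+}\Lambda W|$ is bounded near $0$ and decays like $\langle x\rangle^{-5}$ at infinity, giving integrability. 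No use of the finer asymptotics of Lemma \ref{thm-6} is required; the estimate is purely a scaling/Young-inequality bound, which is why only an upper bound (and not an asymptotic formula) is obtained in this range.
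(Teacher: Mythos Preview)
Your approach is correct and is essentially the dual of the paper's: the paper moves the resolvent to $W^{p}$ by self-adjointness and bounds $\|V_{+}\Lambda W\|_{L^{1}}\,\|(-\Delta+\alpha)^{-1}W^{p}\|_{L^{\infty}}$ via \eqref{res-est1} with $(s,q)=(q',\infty)$, whereas you keep the resolvent on $V_{+}\Lambda W$ and use $(s,q)=(1,q)$; the resulting power of $\alpha$, namely $\frac{3}{2q'}-1$, is identical in both routes. One sign slip to correct: since $W^{p}\in L^{q'}$ forces $q'>3/p$, you have $\tfrac{1}{q'}=\tfrac{p}{3}-\eta$ (not $+\eta$), which yields the exponent $-\tfrac{2-p}{2}-\tfrac{3}{2}\eta$ and hence exactly the asserted bound rather than a spuriously stronger one.
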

\begin{proof}[Proof of Lemma \ref{thm-6-1}] 
Let $1<p\le 2$ and $0< \delta <\frac{p}{2}$. Furthermore, put $\delta^{\prime}:=\frac{6\delta}{p-2\delta}$ and $q':=\frac{3 + \delta^{\prime}}{p}$. Notice that $\frac{3}{2}< q' <\infty$. Hence, it follows from \eqref{res-est1} in Lemma \ref{Delta-ineq} with $s=q'$ and $q = \infty$, and $W \in L^{3+\delta^{\prime}}(\mathbb{R}^{3})$ that 
\begin{equation}
\begin{split}
&\big| 
\langle (-\Delta + \alpha)^{-1} V_{+} \Lambda W, 
\ W^{p} \rangle 
\big| 
= \big| 
\langle V_{+} \Lambda W, \ (-\Delta + \alpha )^{-1} W^{p} \rangle 
\big| 
\\[6pt]
& \leq 
\|V_{+} \Lambda W\|_{L^{1}} 
\|(-\Delta+ \alpha)^{-1} W^{p} \|_{L^{\infty}} 
\lesssim 
\alpha^{\frac{1}{2}( \frac{3}{q'}-2)}
\|W\|_{L^{3+\delta^{\prime}}}^{p}
\lesssim 
\alpha^{-\frac{2-p}{2}-\delta},
\end{split}
\end{equation}
where the implicit constants depend only on $p$ and $\delta$. 
Hence, we have proved the lemma. 
\end{proof}
For $\alpha>0$, we define the operators $G(\alpha)$ and $G(\alpha)^{*}$ by 
\begin{align}
\label{def-G}
G(\alpha)f
&:=
\big\{ 1+(-\Delta+\alpha)^{-1}V_{+}\big\} \Re[f]
+i
\big\{
1+ (-\Delta+\alpha)^{-1}V_{-}\big\} \Im[f]
,
\\[6pt]
\label{def-G*}
G(\alpha)^{*}f
&:=
\big\{ 1+V_{+}(-\Delta+\alpha)^{-1} \big\} \Re[f]
-i
\big\{
1+ V_{-}(-\Delta+\alpha)^{-1}\big\} \Im[f]
.
\end{align}
Notice that 
\begin{equation}\label{eq-HG}
H+\alpha =
(-\Delta + \alpha) G(\alpha). 
\end{equation}

\begin{theorem}\label{thm-ufe} 
Assume $6 < r <\infty$. 
Then, there exists $\alpha_{*}>0$ 
depending only on $r$ 
such that for any $0<\alpha < \alpha_{*}$
the inverse of $G(\alpha)$ exists as 
bounded operator from 
$L^{r}(\mathbb{R}^{3})$ to itself, and for any $f \in L^{r}(\mathbb{R}^{3})$,
\begin{equation}\label{res-est-G}
\|G(\alpha)^{-1}f\|_{L^{r}}
\lesssim 
\alpha^{-\frac{1}{2}} 
\| f \|_{L^{r}},
\end{equation} 
where the implicit constant depends only on $d$ and $r$. Furthermore, if $f \in L^{r}(\mathbb{R}^{3})$ satisfies 
\begin{equation}\label{res-est-G1}
\langle f,\, V_{+} \nabla W \rangle 
=0,
\quad 
\langle f,\, V_{+} \Lambda W \rangle
=
0, 
\quad 
\langle f,\, i V_{-} \Lambda W \rangle =0,
\end{equation}
then 
\begin{equation}\label{res-est-G2}
\|G(\alpha)^{-1}f \|_{L^{r}}
\lesssim 
\| f \|_{L^{r}},
\end{equation} 
where the implicit constant depends only on $r$.
\end{theorem}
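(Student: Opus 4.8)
The plan is to treat $G(\alpha)$ as a compact perturbation of the identity, derive invertibility from the Fredholm alternative together with the spectrum of $L_{\pm}$, and then read off the quantitative bounds \eqref{res-est-G}, \eqref{res-est-G2} from the explicit structure of $\Ker G(0)$ and the scalar asymptotics of Lemmas \ref{thm-6}--\ref{thm-6-1}. \emph{Compactness and Fredholm property.} Write $G(\alpha)=I+A(\alpha)$ with $A(\alpha)f:=(-\Delta+\alpha)^{-1}V_{+}\Re[f]+i(-\Delta+\alpha)^{-1}V_{-}\Im[f]$. Since $V_{\pm}$ are bounded, decay like $|x|^{-4}$, and belong to $L^{\frac32-\delta}(\mathbb{R}^{3})$ for small $\delta>0$, splitting $V_{\pm}$ into a compactly supported bounded part plus an $L^{\frac32-\delta}\cap L^{\infty}$-small remainder and invoking the smoothing of $(-\Delta+\alpha)^{-1}$ (Lemmas \ref{Delta-ineq}, \ref{18/11/23/17:17}) together with Rellich--Kondrachov shows that $A(\alpha)$ is compact on $L^{r}(\mathbb{R}^{3})$ for every $6<r<\infty$. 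Moreover, \eqref{first-res} with $\alpha_{0}=0$ gives $A(\alpha)-A(0)=-\alpha(-\Delta+\alpha)^{-1}(-\Delta)^{-1}(V_{+}\Re[\cdot]+iV_{-}\Im[\cdot])$, and estimating the factors by \eqref{HLS-ineq} and Lemma \ref{Delta-ineq} (choosing $t\in(1,\tfrac32)$ with $W^{4}\in L^{t}$ and $\tfrac1t+\tfrac1r<1$) yields $\|A(\alpha)-A(0)\|_{L^{r}\to L^{r}}\lesssim\alpha^{\gamma}$ with $\gamma=\tfrac{3}{2t}-1>0$; hence $A(0)$ is compact as well, and $G(0)=I+A(0)$ is Fredholm of index $0$.

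\emph{Invertibility.} By the Fredholm alternative it suffices to prove injectivity of $G(\alpha)$ on $L^{r}$ for small $\alpha$. If $G(\alpha)u=0$ with $u\in L^{r}$, then $u=-A(\alpha)u$, and bootstrapping with Lemmas \ref{Delta-ineq}--\ref{18/11/23/17:17} and the decay of $V_{\pm}$ gives $u\in H^{2}(\mathbb{R}^{3})\cap L^{2}(\mathbb{R}^{3})$. By \eqref{eq-HG}, $(H+\alpha)u=(-\Delta+\alpha)G(\alpha)u=0$, i.e. $(L_{+}+\alpha)\Re[u]=0$ and $(L_{-}+\alpha)\Im[u]=0$. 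As $W>0$ solves $L_{-}W=0$ with $W\notin L^{2}$, the operator $L_{-}$ is nonnegative with trivial $L^{2}$-kernel, so $L_{-}+\alpha>0$ and $\Im[u]=0$. By Lemma \ref{18/11/14/10:43}, the part of the spectrum of $L_{+}$ below $0$ is a single simple eigenvalue $\lambda_{1}<0$ and $\Ker_{L^{2}}L_{+}=\Span\{\partial_{1}W,\partial_{2}W,\partial_{3}W\}$, so $L_{+}+\alpha$ is injective on $L^{2}$ whenever $0<\alpha\neq-\lambda_{1}$; choosing $\alpha_{*}\leq-\lambda_{1}$ (and small enough for the previous step) forces $\Re[u]=0$, hence $u=0$. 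Thus $G(\alpha)^{-1}$ exists and is bounded on $L^{r}$ for $0<\alpha<\alpha_{*}$.

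\emph{The estimates.} The identity $(-\Delta+\alpha)^{-1}\Delta=\alpha(-\Delta+\alpha)^{-1}-I$ combined with \eqref{19/10/9/10:22} yields the clean relations
\[
G(\alpha)\partial_{j}W=\alpha(-\Delta+\alpha)^{-1}\partial_{j}W,\qquad G(\alpha)\Lambda W=\alpha(-\Delta+\alpha)^{-1}\Lambda W,\qquad G(\alpha)(iW)=i\alpha(-\Delta+\alpha)^{-1}W,
\]
whence $Z:=\Span_{\mathbb{R}}\{\partial_{1}W,\partial_{2}W,\partial_{3}W,\Lambda W,iW\}$ is exactly $\Ker_{L^{r}}G(0)$ (the inclusion "$\supseteq$" is the $\alpha=0$ case; "$\subseteq$" follows from the bootstrap above together with the nondegeneracy of $W$, i.e. the classification of decaying solutions of $L_{\pm}v=0$). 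Fixing a bounded projection $P$ of $L^{r}$ onto $Z$ along a closed complement $Y$, the convergence $\|A(\alpha)-A(0)\|_{L^{r}\to L^{r}}\to0$ and the isomorphism $G(0)\colon Y\to\mathrm{Ran}\,G(0)$ make $(I-P)G(\alpha)|_{Y}$ uniformly invertible onto its range with $O(1)$ norms for $\alpha<\alpha_{*}$; so the whole problem reduces to the five near-kernel coefficients. Writing $u=G(\alpha)^{-1}f=\sum_{k}c_{k}(\alpha)e_{k}+u^{\perp}$ with $\{e_{k}\}$ the basis of $Z$ and $u^{\perp}\in Y$, substituting into $G(\alpha)u=f$, and pairing with the rapidly decaying functions $V_{+}\partial_{j}W$, $V_{+}\Lambda W$, $iV_{-}\Lambda W$ reduces everything to a $5\times5$ linear system for $(c_{k})$ whose right-hand sides are bounded by $\|f\|_{L^{r}}$ and whose matrix is computed, via \eqref{explicit-1} and Lemmas \ref{thm-6}--\ref{thm-6-1} (for instance $\alpha^{\frac12}\langle W,(-\Delta+\alpha)^{-1}V_{+}\Lambda W\rangle=6\pi+O(\alpha^{\frac12})$), to be invertible but with certain rows scaling like a positive power of $\alpha$; solving it gives \eqref{res-est-G}. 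When $f$ additionally satisfies \eqref{res-est-G1}, the components of the right-hand side meeting the $\alpha$-singular rows vanish up to $O(\alpha\|f\|_{L^{r}})$ corrections, which exactly cancels the amplification and yields \eqref{res-est-G2}.

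\emph{Main obstacle.} The crux is this quantitative finite-dimensional reduction: pinning down the precise power of $\alpha$ produced by inverting the $5\times5$ matrix and checking that the three conditions of \eqref{res-est-G1} are exactly the ones killing the $\alpha$-singular contributions. The difficulty is that the zero modes $\Lambda W$ and $W$ lie only in $L^{q}$ for $q>3$, not in $L^{2}$, so $(-\Delta+\alpha)^{-1}$ applied to $V_{+}\Lambda W$ and $V_{-}W$ must be handled through the explicit kernel \eqref{explicit-1}, whose exponential factor truncates the governing integrals at the scale $|x|\sim\alpha^{-1/2}$ — precisely the mechanism behind Lemmas \ref{thm-6}--\ref{thm-6-1}, and the reason the orthogonality in \eqref{res-est-G1} is phrased against the fast-decaying weights $V_{+}\partial_{j}W$, $V_{+}\Lambda W$, $iV_{-}\Lambda W$ rather than against $\partial_{j}W$, $\Lambda W$, $W$ themselves.
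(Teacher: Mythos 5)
Your overall strategy — a Lyapunov–Schmidt reduction around the five-dimensional near-kernel of $G(0)$ — is the same mechanism the paper uses, and the compactness and injectivity arguments in the first two paragraphs are sound. But the proof is not complete, and the gaps lie precisely in the parts you label the "crux."

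First, the finite-dimensional reduction is not set up correctly. You write $u=\sum_k c_k e_k+u^\perp$ with $u^\perp\in Y$ for a single projection $P$ onto $Z=\ker G(0)$, and claim $(I-P)G(\alpha)|_Y$ is uniformly invertible from $Y$ to $Y$. However, $(I-P)G(0)|_Y$ is a bijection onto $Y$ only if $\operatorname{Ran}G(0)\cap Z=\{0\}$, i.e.\ if $G(0)$ has no generalized zero eigenvectors; this must either be proved or circumvented. The paper circumvents it by using two \emph{different} projections: $P$ (onto $Z$, built from $\partial_jW,\Lambda W, iW$) and $\Pi$ (onto $\operatorname{span}\{V_+\partial_jW,V_+\Lambda W,iV_-W\}$, a complement of $\operatorname{Ran}G(0)$ — the key identity being $\Pi G=0$). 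Moreover, the off-diagonal coupling between the near-kernel coordinates and $u^\perp$ is not addressed: pairing $G(\alpha)u=f$ with the weights gives equations that still involve $u^\perp$, and estimating the mixed blocks requires the extra scaling parameter $\varepsilon$ that the paper inserts through its operator $B_\varepsilon$, after which a Neumann series on the $2\times2$ block matrix closes the argument. Without this device, the sizes of $A_{12}$ and $A_{21}$ pull in opposite directions and the estimate does not assemble.

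Second, the $5\times5$ matrix computation is asserted rather than carried out, and the lemmas you cite do not supply what is needed. Lemmas \ref{thm-6} and \ref{thm-6-1} concern $\langle W,(-\Delta+\alpha)^{-1}V_{+}\Lambda W\rangle$ and $\langle W^{p},(-\Delta+\alpha)^{-1}V_{+}\Lambda W\rangle$, which feed the non-existence argument of Section \ref{sec-2}. What the diagonal of the matrix requires are $\langle\Lambda W,(-\Delta+\alpha)^{-1}V_{+}\Lambda W\rangle$, $\langle iW,(-\Delta+\alpha)^{-1}V_{-}(iW)\rangle$, and $\langle\partial_{j}W,(-\Delta+\alpha)^{-1}V_{+}\partial_{j}W\rangle$, i.e.\ Propositions \ref{18/12/02/11:25}, \ref{19/9/26/15:4} and Lemma \ref{19/9/30/9:26} — the $\alpha^{-1/2}$ leading terms for the $\Lambda W$ and $iW$ rows and the $O(1)$ behavior for the translation rows. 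Your qualitative Fredholm argument yields invertibility of $G(\alpha)$ for each fixed $\alpha$, but by itself gives no control on $\|G(\alpha)^{-1}\|$ as $\alpha\to0$; the uniform bounds \eqref{res-est-G} and \eqref{res-est-G2} are exactly the content of the theorem, and they can only come from the explicit block estimates that are missing here. (A minor point: the last orthogonality condition should be $\langle f,\,iV_{-}W\rangle=0$, matching the definition of $X_{r}$ and the projection $\Pi$; the $\Lambda W$ appearing in \eqref{res-est-G1} is a typo in the statement that you have copied unchanged.)
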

We give a proof of Theorem \ref{thm-ufe} in Section \ref{sec7-reso}. 

In order to ensure the orthogonalities in \eqref{res-est-G1}, we need the following geometric decomposition:
\begin{theorem}\label{thm-imp}
There exist 
$0< \delta_{\rm geo}<1$ and $0<r_{\rm geo}<\frac{1}{2}$ such that for any $0< \alpha <1$, there exists a unique continuous mapping $\mathbf{b}_{\alpha}=(y_{\alpha}, \nu_{\alpha}, \theta_{\alpha}) \colon B_{\dot{H}^{1}}(W,\delta_{\rm geo}) \to 
B(\mathbf{b}_{0}, r_{\rm geo})$ with the following properties: 
\begin{equation}\label{eq-origin}
\mathbf{b}_{\alpha}(W)
=
(y_{\alpha}(W), \nu_{\alpha}(W), \theta_{\alpha}(W))=(0,1,0) = \mathbf{b}_{0}, 
\end{equation}
and if $u \in B_{\dot{H}^{1}}(W,\delta_{\rm geo})$, then 
\begin{align}
\label{ortho-1}
&\bigm\langle 
T_{\mathbf{b}_{\alpha}(u)}[u]-W,
\ 
G(\alpha)^{*} V_{+}\partial_{j}W 
\bigm\rangle
=0 \quad 
\mbox{for $1\le j\le 3$}, 
\\[6pt]
\label{ortho-2}
&\bigm\langle 
T_{\mathbf{b}_{\alpha}(u)}[u]-W,
\ G(\alpha)^{*}V_{+}\Lambda W 
\bigm\rangle
=0,
\\[6pt]
\label{ortho-3}
&\bigm\langle 
T_{\mathbf{b}_{\alpha}(u)}[u]-W,
\ i G(\alpha)^{*}V_{-} W 
\bigm\rangle
=0
.
\end{align}
\end{theorem}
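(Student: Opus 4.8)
The plan is to set this up as an implicit function theorem argument in the three parameters $\mathbf{b}=(y,\nu,\theta)$. Define $F \colon B_{\dot H^1}(W,\delta) \times B(\mathbf{b}_0,r) \to \R^5$ by collecting the five inner products
\[
F(u,\mathbf{b}) := \Big(\,\big\langle T_{\mathbf{b}}[u]-W,\ G(\alpha)^{*}V_{+}\partial_j W\big\rangle_{j=1,2,3},\ \big\langle T_{\mathbf{b}}[u]-W,\ G(\alpha)^{*}V_{+}\Lambda W\big\rangle,\ \big\langle T_{\mathbf{b}}[u]-W,\ i\,G(\alpha)^{*}V_{-}W\big\rangle\,\Big).
\]
Clearly $F(W,\mathbf{b}_0)=0$ since $T_{\mathbf{b}_0}=\mathrm{id}$. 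The first step is to check that $F$ is $C^1$ in $\mathbf{b}$ with values in the $\dot H^1$–$\dot H^{-1}$ pairing sense: the map $\mathbf{b}\mapsto T_{\mathbf{b}}[u]$ is smooth into $\dot H^1(\R^3)$ for fixed $u$, using $\partial_\nu$, $\partial_{y_j}$, $\partial_\theta$ of the explicit formula \eqref{19/9/27/12:07}, and the test functions $G(\alpha)^{*}V_{+}\partial_j W$, $G(\alpha)^{*}V_{+}\Lambda W$, $iG(\alpha)^{*}V_{-}W$ lie in $\dot H^{-1}(\R^3)$ (here one uses that $V_\pm$ decay like $|x|^{-8}$ and $\partial_j W,\ \Lambda W,\ W$ have the stated decay, plus the mapping properties of $1+V_\pm(-\Delta+\alpha)^{-1}$; Lemma \ref{Delta-ineq} gives the needed $L^p$ bounds). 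Continuity in $u$ is immediate since $F$ is affine in $u$.

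The heart of the matter is to show the $5\times 5$ Jacobian matrix $D_{\mathbf{b}}F(W,\mathbf{b}_0)$ is invertible. Differentiating $T_{\mathbf{b}}[W]$ at $\mathbf{b}_0$ one gets, up to harmless scalar factors, $\partial_{y_j}T_{\mathbf{b}}[W]\big|_{\mathbf{b}_0}=\partial_j W$, $\partial_\nu T_{\mathbf{b}}[W]\big|_{\mathbf{b}_0}=-\Lambda W$ (from \eqref{18/06/24/17:08}, noting $\nu^{-1}u(\nu^{-2/3}x)$ has $\nu$-derivative $-u - \tfrac{2}{3}x\cdot\nabla u = -\tfrac{2}{3}(\tfrac32 u + x\cdot\nabla u) = -\tfrac23\Lambda u$ at $\nu=1$), and $\partial_\theta T_{\mathbf{b}}[W]\big|_{\mathbf{b}_0}=iW$. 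Hence the Jacobian is, up to nonzero constants, the Gram-type matrix with entries $\langle \partial_k W,\ G(\alpha)^{*}V_{+}\partial_j W\rangle$, $\langle \Lambda W,\ G(\alpha)^{*}V_{+}\Lambda W\rangle$, $\langle W,\ V_{-}W\rangle$ and the various cross terms. The key simplification is $\langle \phi,\ G(\alpha)^{*}\psi\rangle = \langle G(\alpha)\phi,\ \psi\rangle$, together with the resonance identities \eqref{19/10/9/10:22}: $\{1+(-\Delta+\alpha)^{-1}V_+\}\partial_j W = \partial_j W + (-\Delta+\alpha)^{-1}\Delta\partial_j W = \partial_j W - (-\Delta+\alpha)^{-1}(-\Delta+\alpha-\alpha)\partial_j W = \alpha(-\Delta+\alpha)^{-1}\partial_j W$, and similarly $G(\alpha)\Lambda W$-real-part $=\alpha(-\Delta+\alpha)^{-1}\Lambda W$, $G(\alpha)(iW)= i\alpha(-\Delta+\alpha)^{-1}W$. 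So every diagonal entry is $\alpha$ times something like $\langle(-\Delta+\alpha)^{-1}\partial_j W,\ V_+\partial_j W\rangle$; Lemma \ref{thm-6} with its identity \eqref{19/9/12/16:54} $\alpha^{1/2}\langle W,(-\Delta+\alpha)^{-1}V_+\Lambda W\rangle = 6\pi + O(\alpha^{1/2})$ is precisely the input that keeps the $\Lambda W$-entry from degenerating: it shows $\langle \Lambda W,\ G(\alpha)^{*}V_+\Lambda W\rangle \sim \alpha^{1/2}$, not $O(\alpha)$, because $(-\Delta+\alpha)^{-1}$ applied to the slowly-decaying $\Lambda W$ produces an $\alpha^{-1/2}$ blow-up. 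The real part / imaginary part block structure decouples the $iV_-W$ row from the rest, where $\langle W, V_-W\rangle = \langle W,\Delta W\rangle = -\|\nabla W\|_{L^2}^2 \neq 0$ by \eqref{19/9/18/11:15}. Translation invariance and the fact that $\partial_j W$ is odd in $x_j$ kill the cross terms between the $\partial_j W$ rows and the $\Lambda W$, $W$ rows. One thus obtains that $D_{\mathbf{b}}F(W,\mathbf{b}_0)$ is block-diagonal with nonzero determinant for all small $\alpha$, with the $\alpha$-dependence explicit and bounded away from singular on $(0,1)$ after an appropriate rescaling of the defining functionals (equivalently one applies the IFT for each fixed $\alpha$ and tracks uniformity of the constants $\delta_{\rm geo}, r_{\rm geo}$ in $\alpha$).

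With invertibility of the Jacobian in hand, the quantitative implicit function theorem yields $0<\delta_{\rm geo}<1$, $0<r_{\rm geo}<\tfrac12$ and a unique $C^1$ (in particular continuous) map $\mathbf{b}_\alpha \colon B_{\dot H^1}(W,\delta_{\rm geo})\to B(\mathbf{b}_0,r_{\rm geo})$ solving $F(u,\mathbf{b}_\alpha(u))=0$, which is exactly \eqref{ortho-1}--\eqref{ortho-3}, and $\mathbf{b}_\alpha(W)=\mathbf{b}_0$, which is \eqref{eq-origin}. The main obstacle is the $\alpha$-uniformity: one must verify that $\delta_{\rm geo}$ and $r_{\rm geo}$ can be chosen independent of $\alpha\in(0,1)$, which requires uniform (in $\alpha$) Lipschitz bounds on $D_{\mathbf b}F$ near $(W,\mathbf{b}_0)$ and a uniform lower bound on the smallest singular value of $D_{\mathbf b}F(W,\mathbf b_0)$ after the natural renormalization of the three functionals (dividing the $\Lambda W$ functional by $\alpha^{1/2}$ and the $\partial_j W$, $W$ functionals by $\alpha$ and by $1$ respectively). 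The estimates of Lemmas \ref{Delta-ineq}, \ref{18/11/23/17:17}, \ref{thm-6}, \ref{thm-6-1} are tailored to supply exactly these uniform bounds, so the argument closes.
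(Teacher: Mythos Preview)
Your overall strategy matches the paper's: set up the five functionals, compute the Jacobian at $(W,\mathbf b_0)$ via the resonance identities \eqref{19/10/9/10:22}, show it is diagonal with explicit $\alpha$-dependence, and run a quantitative implicit function theorem (the paper does this by hand as a contraction mapping $\mathscr G_\alpha(u,\mathbf b)=\mathbf b-A_\alpha^{-1}\mathscr H_\alpha(u,\mathbf b)$). However, there are two concrete problems.

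First, your computation of the $iW$ diagonal entry is wrong. You write $\langle W,V_-W\rangle=-\|\nabla W\|_{L^2}^2$ and propose to divide that functional by $1$. But the functional is $\langle\,\cdot\,,\,iG(\alpha)^*V_-W\rangle$, and the same resolvent identity you used for $\partial_jW$ and $\Lambda W$ gives $G(\alpha)(iW)=i\alpha(-\Delta+\alpha)^{-1}W$, so the diagonal entry is $\alpha\langle(-\Delta+\alpha)^{-1}V_-W,\,W\rangle$. Because $W$ (like $\Lambda W$) decays only like $|x|^{-1}$, this pairing blows up like $\alpha^{-1/2}$, and the entry is of order $\alpha^{1/2}$, not $O(1)$; see Lemma \ref{19/9/26/11:51}. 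The correct renormalization of $A_\alpha^{-1}$ therefore has $\alpha^{-1}$ on the three translation rows and $\alpha^{-1/2}$ on \emph{both} the $\Lambda W$ and the $iW$ rows.

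Second, and more seriously, the ``main obstacle'' you flag is not addressed by the lemmas you cite. To get $\delta_{\rm geo},r_{\rm geo}$ independent of $\alpha$ one needs
\[
\big\|A_\alpha^{-1}\{\partial_{\mathbf b}\mathscr H_\alpha(W,\mathbf b_0)-\partial_{\mathbf b}\mathscr H_\alpha(u,\mathbf b)\}\big\|\;\lesssim\;\delta+r
\]
uniformly in $\alpha$, which means the \emph{variations} of the Jacobian entries must themselves carry factors of $\alpha$ (rows $1$--$3$) or $\alpha^{1/2}$ (rows $4,5$) to cancel the blow-up of $A_\alpha^{-1}$. Lemmas \ref{Delta-ineq}, \ref{18/11/23/17:17}, \ref{thm-6}, \ref{thm-6-1} do not provide this: the first two are generic $L^p$ resolvent bounds with no gain in $\alpha$, and the last two concern only the fixed functions $W$ and $W^p$, not a general $g$ such as $T_{\mathbf b}[\partial_k u]-\partial_k W$. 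The paper establishes separate estimates for exactly this purpose: Proposition \ref{19/9/26/15:4} gives $|\langle(-\Delta+\alpha)^{-1}V_+g,\nabla W\rangle|\lesssim\min\{\|g\|_{L^q},\|(1+|x|)^{-1}g\|_{L^2}\}$ (no $\alpha^{-1/2}$ loss for the translation rows), and Proposition \ref{18/12/02/11:25}/Corollary \ref{18/12/02/11:25cor} give $|\langle(-\Delta+\alpha)^{-1}V_\pm g,\Lambda W\rangle|,\ |\langle(-\Delta+\alpha)^{-1}V_\pm g,iW\rangle|\lesssim\alpha^{-1/2}\min\{\|g\|_{L^q},\|(1+|x|)^{-1}g\|_{L^2}\}$. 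These are the ingredients that make Steps 2.1--2.6 of the paper's proof close; without them (or equivalents) the uniformity in $\alpha$ is an assertion, not a proof.
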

\begin{remark}\label{19/9/27/9:25}
Since the conditions \eqref{ortho-1} through \eqref{ortho-3} contain the parameter $\alpha$, Proposition \ref{thm-imp} does not follow directly from the implicit function theorem; 
The point is that $\delta_{\rm geo}$ is independent of $\alpha$. 
\end{remark}

We will prove Theorem 
\ref{thm-imp} in Section \ref{19/10/9/11:28}.


\subsection{Proof of Theorem \ref{non-seq}}\label{19/9/9/14:50}

We give a proof of Theorem \ref{non-seq}:
\begin{proof}[Proof of Theorem \ref{non-seq}]
Suppose for contradiction 
that there exists a sequence 
$\{(Q_{n}, \omega_{n}) \}$ in 
$H^{1}(\mathbb{R}^{3}) \times (0, \infty)$ 
satisfying the conditions {\rm (i)} through {\rm (iv)} 
in Theorem \ref{non-seq}: 
hence $Q_{n}$ is a solution to \eqref{sp} with $\omega=\omega_{n}$; and 
\begin{align}
\label{eq2-1-1}
\liminf_{n\to \infty} \omega_{n}&>0 \quad \mbox{if $1<p<3$,}
\quad 
\mbox{and}
\quad 
\limsup_{n\to \infty} \omega_{n}=\infty \quad \mbox{if $p=3$}, 
\\[6pt]
\label{eq2-1-2}
\lim_{n\to \infty}M_{n}&=\infty,
\qquad 
\lim_{n\to \infty}\alpha_{n}=0,
\\[6pt]
\label{eq2-1-3}
\lim_{n\to \infty}\widetilde{Q}_{n}&=W
\quad 
\mbox{strongly in $\dot{H}^{1}(\mathbb{R}^{3})$}
,
\end{align}
where 
\begin{equation}\label{eq2-1-4}
M_{n}:=\|Q_{n}\|_{L^{\infty}}, 
\quad 
\alpha_{n}:=\omega_{n}M_{n}^{-4}, 
\quad 
\widetilde{Q}_{n}:=T_{M_{n}} Q_{n}=M_{n}^{-1}Q_{n}(M_{n}^{-2}\cdot).
\end{equation} 
We also introduce $\beta_{n}:=M_{n}^{p-5}$. 
Note that $\widetilde{Q}_{n}$ is the normalization of $Q_{n}$ in $L^{\infty}(\mathbb{R}^{3})$, namely 
\begin{equation}\label{eq2-1-5}
\|\widetilde{Q}_{n}\|_{L^{\infty}}=1
.
\end{equation}

We shall derive a contradiction. 
Let $\delta_{\rm geo}$ and $r_{\rm geo}$ 
be the constants given in Theorem \ref{thm-ufe}.
By \eqref{eq2-1-2} and \eqref{eq2-1-3}, passing to a subsequence, we may assume that $0< \alpha_{n}<1$ and $\widetilde{Q}_{n}\in B_{\dot{H}^{1}}(W,\delta_{\rm geo})$ for all $n$. Hence, for any $n$, we can apply Theorem 
\ref{thm-imp} to $\widetilde{Q}_{n}$, and obtain the sequence $\{ (y_{n}, \nu_{n},\theta_{n})\}$ in $B(\mathbf{b}_{0}, r_{\rm geo})$ such that: 
\begin{align}
\label{conv-0}
&\lim_{n\to \infty}|y_{n}|=0, 
\quad 
\lim_{n\to \infty}\nu_{n}=1,
\quad 
\lim_{n\to \infty}\theta_{n}=0,
\\[6pt]
\label{orthogo-1}
&\langle 
T_{(y_{n}, \nu_{n},\theta_{n})}[\widetilde{Q}_{n}]
-W, \ 
G(\alpha_{n})^{*} V_{+} \partial_{j} W 
\rangle = 0, \quad 
1\le j\le 3, 
\\[6pt]
\label{orthogo-2}
&\langle 
T_{(y_{n},\nu_{n},\theta_{n})}[\widetilde{Q}_{n}] - W, \ 
G(\alpha_{n})^{*}V_{+} \Lambda W 
\rangle
=0, 
\\[6pt]
\label{orthogo-3}
&
\langle 
T_{(y_{n}, \nu_{n}, \theta_{n})}[\widetilde{Q}_{n}] - W, 
\ 
G(\alpha_{n})^{*}V_{-} (iW) 
\rangle
=0,
\\[6pt]
\label{conv-Ta}
&
\lim_{n\to \infty}
\|
T_{\nu_{n}}[\widetilde{Q}_{n}(\cdot+y_{n})]-W 
\|_{\dot{H}^{1}} 
= 0.
\end{align}
Then, we define 
\begin{align}\label{eq2-1-6}
u_{[n]}&:= T_{(y_{n}, \nu_{n},\theta_{n})}[\widetilde{Q}_{n}],
\\[6pt]
\label{eq2-1-7}
\eta_{[n]}&:=u_{[n]} - W 
= T_{(y_{n}, \nu_{n},\theta_{n})}[\widetilde{Q}_{n}] - W. 
\end{align}
Observe that $u_{[n]}$ satisfies 
\begin{equation}\label{eq2-1-8}
- \Delta u_{[n]}
+ 
\alpha_{[n]} u_{[n]}
- \beta_{[n]} 
| u_{[n]}|^{p-1} u_{[n]}
-|u_{[n]}|^{4} u_{[n]} = 0, 
\end{equation}
\begin{align}
& 
\alpha_{[n]} := \alpha_{n} \nu_{n}^{-4},
\\[6pt]
&
\beta_{[n]} := \beta_{n} \nu_{n}^{p-5}.
\end{align} 
Note that 
\begin{equation} \label{re-a-b-1}
\beta_{[n]} = \omega_{n}^{- \frac{5-p}{4}} 
\alpha_{[n]}^{\frac{5-p}{4}}
\end{equation}
Furthermore, plugging \eqref{eq2-1-7} into \eqref{eq2-1-8}, one can see that $\eta_{[n]}$ satisfies 
\begin{equation}\label{non-term}
\begin{split}
(H+ \alpha_{[n]}) \eta_{[n]} 
= 
F_{[n]}
:=
-\alpha_{[n]} W 
+ \beta_{[n]} W^{p} 
+ N_{[n]}, 
\end{split}
\end{equation}
where $H$ is the linearized operator around 
$W$ (see \eqref{op-H}), 
and 
\begin{equation}\label{term-N}
\begin{split}
N_{[n]}
&:= 
|W + \eta_{[n]}|^{4} (W + \eta_{[n]})
- W^{5} - 5 W^{4} \Re[\eta_{[n]}] 
-i W^{4}\Im[\eta_{n,\nu}]
\\[6pt]
&\qquad + 
\beta_{[n]} \bigm\{
|W + \eta_{[n]}|^{p-1}(W + \eta_{[n]}) 
- W^{p} 
\bigm\}
. 
\end{split} 
\end{equation}
Using the factorization \eqref{eq-HG}, 
we can rewrite the equation \eqref{non-term} as 
\begin{equation}
\label{eq-eta}
G(\alpha_{[n]}) \eta_{[n]} 
= 
(-\Delta + \alpha_{[n]})^{-1} 
F_{[n]}.
\end{equation}
By the orthogonalities \eqref{orthogo-1} 
through \eqref{orthogo-3}, and \eqref{eq-eta}, 
one can see that 
\begin{align}
\label{orthogo-11}
\langle (-\Delta + \alpha_{[n]})^{-1} F_{[n]}, \ 
V_{+}\partial_{j} W \rangle
= \langle \eta_{[n]}, \ 
G(\alpha_{[n]})^{*}V_{+}\partial_{j} W \rangle
&=0 
\quad 
\mbox{for $1\le j \le 3$},
\\[6pt]
\label{orthogo-21}
\langle (-\Delta + \alpha_{[n]})^{-1} F_{[n]}, \ 
V_{+}\Lambda W \rangle
= \langle \eta_{[n]}, G(\alpha_{[n]})^{*}
V_{+}\Lambda W \rangle 
&=0,
\\[6pt]
\label{orthogo-31} 
\langle (-\Delta + \alpha_{[n]})^{-1} F_{[n]}, \ 
i V_{-}W \rangle
= \langle \eta_{[n]}, 
G(\alpha_{n})^{*} V_{-} (iW) \rangle 
&=0
.
\end{align}
Here, we claim the following: 
\begin{lemma}\label{error-estimate-gap-1}
For any $1 < p \le 3$, any $0< \varepsilon <\frac{p-1}{20}$, 
and any sufficiently large $n$, the following estimate holds:
\begin{equation}\label{eq2-1-10}
\|\eta_{[n]}\|_{L^{\frac{1}{\varepsilon}}} 
\lesssim 
\alpha_{[n]}^{\frac{1}{2} - 2 \varepsilon} 
,
\end{equation}
where the implicit constant depends only on $p$, $\varepsilon$ and the quantity $\liminf_{n\to \infty}\omega_{n}$ 
(if finite). 
\end{lemma}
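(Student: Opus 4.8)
The plan is to run a bootstrap (fixed-point-flavoured) argument on the equation \eqref{eq-eta}, i.e. on $G(\alpha_{[n]})\eta_{[n]} = (-\Delta+\alpha_{[n]})^{-1}F_{[n]}$, inverting $G(\alpha_{[n]})$ via Theorem \ref{thm-ufe}. First I would record that the orthogonality relations \eqref{res-est-G1} needed for the improved bound \eqref{res-est-G2} are exactly \eqref{orthogo-11}--\eqref{orthogo-31}, which hold by Theorem \ref{thm-imp}; hence $\|\eta_{[n]}\|_{L^{r}} \lesssim \|(-\Delta+\alpha_{[n]})^{-1}F_{[n]}\|_{L^{r}}$ with no loss of $\alpha_{[n]}^{-1/2}$, for any fixed $6<r=\tfrac1\varepsilon<\infty$ (so $0<\varepsilon<\tfrac16$, which is compatible with $\varepsilon<\tfrac{p-1}{20}$ since $p\le 3$). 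Next I would estimate the right-hand side term by term. For the linear-in-$W$ piece, $\|(-\Delta+\alpha_{[n]})^{-1}(\alpha_{[n]}W)\|_{L^{1/\varepsilon}}$: since $W$ lies in every $L^{s}$ with $s>3$, choose $s$ with $\tfrac3s - 3\varepsilon = 2$ (or $<2$, splitting $W = W\mathbf{1}_{|x|\le1}+W\mathbf{1}_{|x|>1}$ if the endpoint is inconvenient) and apply Lemma \ref{Delta-ineq} or Lemma \ref{18/11/23/17:17}; this produces a factor $\alpha_{[n]}^{\frac32(\frac1s-\varepsilon)-1} = \alpha_{[n]}^{0}$ times $\alpha_{[n]}$, i.e. $\lesssim \alpha_{[n]}$, comfortably within $\alpha_{[n]}^{1/2-2\varepsilon}$. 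For the subcritical piece $\beta_{[n]}W^{p}$, use \eqref{re-a-b-1}, $\beta_{[n]} = \omega_{n}^{-\frac{5-p}{4}}\alpha_{[n]}^{\frac{5-p}{4}}$, together with $\liminf\omega_{n}>0$ (when $1<p<3$) or the explicit power when $p=3$ (where $\beta_{[n]}=\alpha_{[n]}^{1/2}$ up to the $\omega_{n}$-factor), and the same $L^{p+1}$-or-truncation mapping estimate on $(-\Delta+\alpha_{[n]})^{-1}$; since $\tfrac{5-p}{4}\ge\tfrac12$ for $p\le 3$, this term is also $\lesssim \alpha_{[n]}^{1/2}\le \alpha_{[n]}^{1/2-2\varepsilon}$.

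The genuinely bootstrap part is the nonlinear remainder $N_{[n]}$ from \eqref{term-N}. Schematically $N_{[n]} = O(W^{3}|\eta_{[n]}|^{2}) + O(|\eta_{[n]}|^{5}) + \beta_{[n]}\,O(W^{p-1}|\eta_{[n]}| + |\eta_{[n]}|^{p})$ (with the usual care when $p<2$, where $|\eta|^{p}$ is the binding term). From \eqref{conv-Ta}/\eqref{eq2-1-3} we already know $\|\eta_{[n]}\|_{\dot H^{1}}\to 0$, hence $\|\eta_{[n]}\|_{L^{6}}\to 0$; a first, crude application of \eqref{res-est-G} (allowing the $\alpha_{[n]}^{-1/2}$ loss) plus Hölder in the exponents dual to $\tfrac1\varepsilon$ gives a preliminary bound $\|\eta_{[n]}\|_{L^{1/\varepsilon}}\lesssim \alpha_{[n]}^{\gamma_{0}}$ for some $\gamma_{0}>0$; feeding this back, the quadratic-type terms square the exponent ($2\gamma_{0}$) and the $|\eta|^{5}$ term quintuples it, so the RHS contribution of $N_{[n]}$ is of strictly higher order than $\eta_{[n]}$ itself, and one iteration upgrades $\gamma_{0}$ until it reaches $\tfrac12-2\varepsilon$, absorbing the small remaining nonlinear part into the left-hand side (using $\|G(\alpha)^{-1}\|$ with no loss on the orthogonal complement). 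Concretely I would set $a_{n}:=\|\eta_{[n]}\|_{L^{1/\varepsilon}}\alpha_{[n]}^{-(1/2-2\varepsilon)}$ and derive an inequality of the form $a_{n}\lesssim 1 + \alpha_{[n]}^{c}a_{n}^{2} + \alpha_{[n]}^{c'}a_{n}^{5} + (\text{lower})$ with $c,c'>0$, which for large $n$ forces $a_{n}\lesssim 1$.

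The main obstacle I expect is bookkeeping the exponents so that every invocation of Lemmas \ref{Delta-ineq} and \ref{18/11/23/17:17} is legal — in particular staying off the forbidden endpoint $3(\tfrac1s-\tfrac1q)=2$ except where Lemma \ref{18/11/23/17:17} applies (which needs the target exponent $>3$, fine here since $\tfrac1\varepsilon>6$), and ensuring the Hölder splits of the quadratic and quintic terms land the $\eta$-factors in exponents we already control (either $L^{6}$ via Sobolev or $L^{1/\varepsilon}$ via the previous bootstrap step) while the $W$-factors land in some $L^{s}$, $s>3$. The case $1<p<2$ needs slightly more care: the term $|\eta_{[n]}|^{p}$ is only Hölder-of-order-$p$, so the exponent gain per bootstrap step is $p\gamma_{0}$ rather than $2\gamma_{0}$, but since $p>1$ this still contracts, and the constraint $\varepsilon<\tfrac{p-1}{20}$ is precisely what guarantees the target exponent $\tfrac12-2\varepsilon$ is reachable (and positive) in all these cases. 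The $p=3$, $\omega_{n}\to\infty$ regime requires that the $\omega_{n}$-dependent prefactors in \eqref{re-a-b-1} do not blow up; since $\beta_{[n]}=\omega_{n}^{-1/2}\alpha_{[n]}^{1/2}$ there and $\omega_{n}^{-1/2}\to0$, this is if anything favorable, so no uniformity in $\omega_{n}$ is lost.
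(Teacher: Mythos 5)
Your global structure is the same as the paper's: apply Theorem \ref{thm-ufe} through \eqref{res-est-G2} (no $\alpha^{-1/2}$ loss, justified by the orthogonalities \eqref{orthogo-11}--\eqref{orthogo-31}), then estimate $\alpha_{[n]} W$, $\beta_{[n]}W^{p}$ and the nonlinear remainder $N_{[n]}$. However, there are two real problems with the execution.

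First, the estimate you propose for $\alpha_{[n]}\|(-\Delta+\alpha_{[n]})^{-1}W\|_{L^{1/\varepsilon}}$ is broken. Choosing $s$ with $3(\tfrac{1}{s}-\varepsilon)$ near $2$ forces $s = \tfrac{3}{2+3\varepsilon} < 3$, and $W\notin L^{s}$ for any $s\le 3$ since $W(x)\sim |x|^{-1}$ at infinity; the cutoff $W\mathbf{1}_{|x|>1}$ has exactly the same tail, so the split does not rescue this. For the same reason Lemma \ref{18/11/23/17:17} does not apply ($W\notin L^{3/(3\varepsilon+2)}$). You have to take $s>3$, and then Lemma \ref{Delta-ineq} gives a genuine negative power: with $s=\tfrac{3}{1-\varepsilon}$ the resolvent exponent is $-\tfrac12-2\varepsilon$ and the piece is $\lesssim\alpha_{[n]}^{1/2-2\varepsilon}$, not $\lesssim\alpha_{[n]}$. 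The bound you want is still the right bound, but it is not ``comfortably within'' the target; it is exactly the target.

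Second, and more seriously, the bootstrap you set up for the nonlinear remainder does not close. The inequality $a_{n}\lesssim 1 + \alpha_{[n]}^{c}a_{n}^{2}+\alpha_{[n]}^{c'}a_{n}^{5}$ does \emph{not} by itself imply $a_{n}\lesssim 1$: e.g.\ $a_{n}\sim\alpha_{[n]}^{-c}$ also satisfies it. To extract the small root you would need a continuity-in-$n$ argument or an independent a priori bound on $a_{n}$, neither of which you supply; and the ``preliminary bound $\|\eta_{[n]}\|_{L^{1/\varepsilon}}\lesssim\alpha_{[n]}^{\gamma_{0}}$'' you invoke as a starting point is itself circular, since bounding $(-\Delta+\alpha_{[n]})^{-1}N_{[n]}$ already requires control of $\|\eta_{[n]}\|_{L^{1/\varepsilon}}$. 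The paper avoids all of this: in each monomial of $N_{[n]}$ it uses H\"older to place exactly \emph{one} factor of $\eta_{[n]}$ in $L^{1/\varepsilon}$ and the remaining $\eta_{[n]}$-factors in $L^{6}$ (where $\|\eta_{[n]}\|_{L^{6}}\to 0$ by \eqref{conv-Ta}), so that
\[
\|(-\Delta+\alpha_{[n]})^{-1}N_{[n]}\|_{L^{1/\varepsilon}}
\le o_{n}(1)\,\|\eta_{[n]}\|_{L^{1/\varepsilon}},
\]
i.e.\ the nonlinear contribution is \emph{linear} in the controlled norm with an $o_{n}(1)$ coefficient, and the absorption is a one-step affair, no iteration. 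Replacing your iterative scheme with this single absorption and fixing the $s>3$ choice would turn your outline into the paper's proof.
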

\begin{proof}[Proof of Lemma \ref{error-estimate-gap-1}]
We shall divide the proof into 2 steps. 

\textbf{Step 1.}~ Let $0< \varepsilon <\frac{p-1}{20}$. 
Then, by \eqref{eq-eta}, \eqref{non-term}, 
Theorem \ref{thm-ufe}, and \eqref{orthogo-11} through \eqref{orthogo-31}, one can see that 
\begin{equation} \label{eq2-1-11}
\begin{split}
\|\eta_{[n]}\|_{L^{\frac{1}{\varepsilon}}} 
\lesssim 
\|F_{[n]}\|_{L^{\frac{1}{\varepsilon}}}
&\leq 
\alpha_{[n]} 
\|(-\Delta + \alpha_{[n]})^{-1} W\|_{L^{\frac{1}{\varepsilon}}} 
\hspace{-3pt} + 
\beta_{[n]} 
\|(-\Delta + \alpha_{[n]})^{-1} W^{p}\|_{L^{\frac{1}{\varepsilon}}} 
\\[6pt]
&\quad + 
\|(-\Delta + \alpha_{[n]})^{-1} N_{[n]}\|_{L^{\frac{1}{\varepsilon}}} 
.
\end{split} 
\end{equation}
We consider the first two terms on the right-hand side of \eqref{eq2-1-11}. 
It follows from 
Lemma \ref{Delta-ineq} 
and \eqref{re-a-b-1} that 
\begin{equation}\label{eq2-1-12}
\begin{split}
&
\alpha_{[n]} 
\|(-\Delta + \alpha_{[n]})^{-1} W\|_{L^{\frac{1}{\varepsilon}}} 
+ 
\beta_{[n]} 
\|(-\Delta + \alpha_{[n]})^{-1} W^{p}\|_{L^{\frac{1}{\varepsilon}}}
\\[6pt]
&\lesssim 
\alpha_{[n]}\alpha_{[n]}^{-\frac{1}{2}-2 \varepsilon}
\|W\|_{L^{\frac{3}{1- \varepsilon}}}
+ 
(\omega_{n}^{-\frac{5-p}{4}} \alpha_{[n]}^{\frac{5-p}{4}})
\alpha_{[n]}
^{\frac{p-2}{2}- \frac{p+3}{2} \varepsilon} 
\|W^{p}\|_{L^{\frac{3}{p(1-\varepsilon)}}}
\\[6pt]
& \lesssim 
\alpha_{[n]}^{\frac{1}{2} - 2 \varepsilon} 
+ 
\omega_{n}^{-\frac{5-p}{4}}
\alpha_{[n]}
^{\frac{p +1}{4} - \frac{p+3}{2}\varepsilon} 
\lesssim
\alpha_{[n]}^{\frac{1}{2}- 2 \varepsilon} 
,
\end{split}
\end{equation}
where the implicit constants 
depend only on $p$, $\varepsilon$ and 
the quantity $\liminf_{n\to \infty}\omega_{n}$ 
(if finite). 

Next, we consider the last term on the right-hand side of \eqref{eq2-1-11}. Our aim here is to show that 
\begin{equation}\label{estN-e}
\|(-\Delta + \alpha_{[n]})^{-1} N_{[n]}\|_{L^{\frac{1}{\varepsilon}}} 
\le
o_{n}(1) \|\eta_{[n]}\|_{L^{\frac{1}{\varepsilon}}}
.
\end{equation}
Notice that by \eqref{estN-e}, the last term on the right-hand side of \eqref{eq2-1-11} can be absorbed into the left-hand side. 
Hence, the claim \eqref{eq2-1-10} follows 
from \eqref{eq2-1-11}, \eqref{eq2-1-12} and \eqref{estN-e}.

\textbf{Step 2.}~
It remains to prove \eqref{estN-e}. 
It follows from 
\eqref{term-N} and 
the triangle inequality 
that 
\begin{equation} \label{est-Ne0}
\begin{split}
&
\|(-\Delta + \alpha_{[n]})^{-1} N_{[n]}\|_{L^{\frac{1}{\varepsilon}}} 
\\[6pt]
&\le 
\|(-\Delta + \alpha_{[n]})^{-1} 
\bigm\{ 
|W + \eta_{[n]}|^{4}(W + \eta_{[n]}) 
- W^{5} - 5 W^{4} \Re[\eta_{[n]}]
-iW^{4} \Im[\eta_{[n]}]
\bigm\}
\|_{L^{\frac{1}{\varepsilon}}} 
\\[6pt]
&\quad + 
\beta_{[n]} \|(-\Delta + \alpha_{[n]})^{-1} 
\big\{ |W + \eta_{[n]}|^{p-1}(W+\eta_{[n]}) 
- W^{p}
\big\} 
\|_{L^{\frac{1}{\varepsilon}}} 
.
\end{split}
\end{equation}
We consider the first term 
on the right-hand side of \eqref{est-Ne0}. 
By Lemma \ref{18/11/23/17:17}, 
elementary computations, 
H\"older's inequality and \eqref{conv-Ta}, 
one can see that 
\begin{equation}\label{est-I}
\begin{split}
&\|(-\Delta + \alpha_{[n]})^{-1} 
\bigm\{ 
|W + \eta_{[n]}|^{4}(W + \eta_{[n]}) - W^{5} - 5 W^{4} \Re[\eta_{[n]}]
-i W^{4} \Im[\eta_{[n]}]
\bigm\}
\|_{L^{\frac{1}{\varepsilon}}} 
\\[6pt]
&\lesssim 
\bigm\|
|W + \eta_{[n]}|^{4}(W + \eta_{[n]}) - W^{5} 
- 5 W^{4} \Re[\eta_{[n]}]
-i W^{4} \Im[\eta_{[n]}]
\bigm\|_{L^{\frac{3}{2 + 3\varepsilon}}}
\\[6pt]
&\lesssim 
\|W^{3} |\eta_{[n]}|^{2} \|_{L^{\frac{3}{2 + 3\varepsilon}}}
+
\||\eta_{[n]}|^{5} \|_{L^{\frac{3}{2 + 3\varepsilon}}}
\\[6pt]
&\le 
\|W^{3}\|_{L^{2}} 
\|\eta_{[n]}\|_{L^{6}} 
\|\eta_{[n]}\|_{L^{\frac{1}{\varepsilon}}}
+
\|\eta_{[n]}^{4}\|_{L^{\frac{3}{2}}}
\|\eta_{[n]}\|_{L^{\frac{1}{\varepsilon}}}
\le 
o_{n}(1) \|\eta_{[n]}\|_{L^{\frac{1}{\varepsilon}}}, 
\end{split} 
\end{equation}
where the implicit constants 
depend only on $\varepsilon$. 
Next, we consider the second term 
on the right-hand side of \eqref{est-Ne0}. 
It follows from the fundamental 
theorem of calculus and Lemma \ref{Delta-ineq} that 
\begin{equation}\label{19/9/13/14:56}
\begin{split} 
&
\beta_{[n]} \|(-\Delta + \alpha_{[n]})^{-1}
\big\{ |W + \eta_{[n]}|^{p-1}(W+\eta_{[n]}) 
- W^{p}
\big\} 
\|_{L^{\frac{1}{\varepsilon}}} 
\\[6pt]
&\le
\beta_{[n]} \|(-\Delta + \alpha_{[n]})^{-1} 
\Bigm\{ \frac{p+1}{2} \int_{0}^{1} |W + \theta \eta_{[n]}|^{p-1}\,d\theta~\eta_{[n]} 
\Bigm\} 
\|_{L^{\frac{1}{\varepsilon}}} 
\\[6pt]
&\quad +
\beta_{[n]} \|(-\Delta + \alpha_{[n]})^{-1} 
\Bigm\{ \frac{p-1}{2} 
\int_{0}^{1} |W + \theta \eta_{[n]}|^{p-3}(W + \theta \eta_{[n]})^{2}\,d\theta~\overline{\eta_{[n]}} 
\Bigm\} 
\|_{L^{\frac{1}{\varepsilon}}} 
\\[6pt]
&\lesssim 
\beta_{[n]} 
\|(-\Delta + \alpha_{[n]})^{-1}
\bigm\{
W^{p-1} \eta_{[n]}
\bigm\} 
\|_{L^{\frac{1}{\varepsilon}}}
+
\beta_{[n]} 
\|(-\Delta + \alpha_{[n]})^{-1} 
\bigm\{
W^{p-1} \overline{\eta_{[n]}}
\bigm\} 
\|_{L^{\frac{1}{\varepsilon}}}
\\[6pt]
&\quad + 
\beta_{[n]} \|(-\Delta + \alpha_{[n]})^{-1} 
\Big\{ \int_{0}^{1} \big\{~|W + \theta \eta_{[n]}|^{p-1} -W^{p-1}~\big\} \,d\theta~\eta_{[n]} 
\Bigm\} 
\|_{L^{\frac{1}{\varepsilon}}} 
\\[6pt]
&\quad + 
\beta_{[n]} \|(-\Delta + \alpha_{[n]})^{-1}
\Big\{ \int_{0}^{1} 
\big\{~|W + \theta \eta_{[n]}|^{p-3}(W + \theta \eta_{[n]})^{2} -W^{p-1}~\big\} \,d\theta~\overline{\eta_{[n]}} 
\Bigm\} 
\|_{L^{\frac{1}{\varepsilon}}} 
\\[6pt]
&\lesssim 
\beta_{[n]} 
\alpha_{[n]}^{\frac{p-5}{4} + 3\varepsilon} 
\|W^{p-1} \eta_{[n]}\|_{
L^{\frac{6}{p - 1 + 18 \varepsilon}}} 
\\[6pt]
&\quad + 
\beta_{[n]} 
\alpha_{[n]}^{\frac{p-5}{4}} 
\sup_{0\le \theta \le 1} 
\|\big\{ |W + \theta \eta_{[n]}|^{p-1} -W^{p-1} \big\} \eta_{[n]}\|_{L^{\frac{6}{p - 1 + 6 \varepsilon}}}
\\[6pt]
&\quad + 
\beta_{[n]} 
\alpha_{[n]}^{\frac{p-5}{4}} 
\sup_{0\le \theta \le 1} 
\|\big\{ |W + \theta \eta_{[n]}|^{p-3}(W + \theta \eta_{[n]})^{2} -W^{p-1} \big\} \overline{\eta_{[n]}}
\|_{L^{\frac{6}{p - 1 + 6 \varepsilon}}}
, 
\end{split}
\end{equation} 
where the implicit constants 
depend only on $p$ and $\varepsilon$. 
Consider the first term on the right-hand side of \eqref{19/9/13/14:56}. 
By H\"older's inequality, \eqref{re-a-b-1} 
and \eqref{eq2-1-1}, one can see that 
\begin{equation}\label{19/9/14/15:13}
\begin{split}
\beta_{[n]} 
\alpha_{[n]}^{\frac{p-5}{4} + 3\varepsilon} 
\|W^{p-1} \eta_{[n]}\|_{
L^{\frac{6}{p - 1 + 18 \varepsilon}}} 
\le 
\beta_{[n]} 
\alpha_{[n]}^{\frac{p-5}{4} + 3 \varepsilon} 
\|W^{p-1}\|_{L^{\frac{6}{p - 1 + 12 \varepsilon}}}
\|\eta_{[n]}\|_{L^{\frac{1}{\varepsilon}}}
\lesssim 
\alpha_{[n]}^{3 \varepsilon} 
\|\eta_{[n]}\|_{L^{\frac{1}{\varepsilon}}}
,
\end{split}
\end{equation}
where the implicit constant depends only on $p$, $\varepsilon$ and the quantity $\liminf\limits_{n\to \infty}\omega_{n}$ 
(if finite). 
Next, consider 
the second and third terms on the right-hand side of \eqref{19/9/13/14:56}. 
When $1<p\le 2$, 
it follows from \eqref{eq2-1-1}, \eqref{re-a-b-1}, 
H\"older's inequality and the convexity, 
that 
\begin{equation}\label{19/9/13/17:10}
\begin{split} 
&
\beta_{[n]} 
\alpha_{[n]}^{\frac{p-5}{4}} 
\sup_{0\le \theta \le 1} 
\|\big\{ |W + \theta \eta_{[n]}|^{p-1} -W^{p-1} \big\} \eta_{[n]}\|_{L^{\frac{6}{p - 1 + 6 \varepsilon}}}
\\[6pt]
&\quad + 
\beta_{[n]} 
\alpha_{[n]}^{\frac{p-5}{4}} 
\sup_{0\le \theta \le 1} 
\|\big\{ |W + \theta \eta_{[n]}|^{p-3}(W + \theta \eta_{[n]}) -W^{p-1} \big\} \overline{\eta_{[n]}}
\|_{L^{\frac{6}{p - 1 + 6 \varepsilon}}}
\\[6pt]
&\lesssim 
\sup_{0\le \theta \le 1}
\| |\theta \eta_{[n]}|^{p-1} \|_{L^{\frac{6}{p-1}}}
\|\eta_{[n]}\|_{L^{\frac{1}{\varepsilon}}}
\le 
\|\eta_{[n]}\|_{L^{6}}^{p-1} \|\eta_{[n]}\|_{L^{\frac{1}{\varepsilon}}}
=
o_{n}(1) \|\eta_{[n]}\|_{L^{\frac{1}{\varepsilon}}}
,
\end{split}
\end{equation}
where the implicit constant 
depends only on $p$, $\varepsilon$ 
and the quantity $\liminf_{n\to \infty}\omega_{n}$ 
(if finite), 
while when $2< p \le 3$, by 
\eqref{re-a-b-1}, \eqref{eq2-1-1}, H\"older's inequality and \eqref{conv-Ta}, 
\begin{equation}\label{19/9/13/17:11}
\begin{split}
&\beta_{[n]} 
\alpha_{[n]}^{\frac{p-5}{4}} 
\sup_{0\le \theta \le 1} 
\|\bigm\{ |W + \theta \eta_{[n]}|^{p-1} -W^{p-1} \bigm\} 
\eta_{[n]}\|_{L^{\frac{6}{p - 1 + 6 \varepsilon}}}
\\[6pt]
&+ \beta_{[n]} 
\alpha_{[n]}^{\frac{p-5}{4}} 
\sup_{0\le \theta \le 1} 
\|\bigm\{ |W + \theta \eta_{[n]}|^{p-3}(W + \theta \eta_{[n]})^{2} -W^{p-1} 
\bigm\} \overline{\eta_{[n]}}\|_{L^{\frac{6}{p - 1 + 6 \varepsilon}}}
\\[6pt]
&\lesssim 
\|\bigm\{ W^{p-2}+ |\eta_{[n]}|^{p-2} \bigm\} |\eta_{[n]}| 
\|_{L^{\frac{6}{p-1}}} \|\eta_{[n]}\|_{L^{\frac{1}{\varepsilon}}}
\\[6pt]
&\le 
\bigm\{ 
\|W\|_{L^{6}}^{p-2}
+
\|\eta_{[n]}\|_{L^{6}}^{p-2}
\bigm\}
\|\eta_{[n]}\|_{L^{6}}
\|\eta_{[n]}\|_{L^{\frac{1}{\varepsilon}}}
=
o_{n}(1) \|\eta_{[n]}\|_{L^{\frac{1}{\varepsilon}}},
\end{split} 
\end{equation}
where the implicit constant 
depends only on $p$, $\varepsilon$ 
and the quantity $\liminf_{n\to \infty}\omega_{n}$ 
(if finite).

Putting the estimates \eqref{est-Ne0} through \eqref{19/9/13/17:11} together, 
we obtain the desired estimate \eqref{estN-e}. 
\end{proof}

Now, observe from 
\eqref{non-term} and 
\eqref{orthogo-21} that for any $n$, 
\begin{equation} \label{eq1-proof-thm1-gap}
\begin{split}
&-\alpha_{[n]} 
\langle W, \ (-\Delta + \alpha_{[n]})^{-1} V_{+} \Lambda W 
\rangle 
\\[6pt]
&\le 
\beta_{[n]} 
\big| \langle W^{p},\ (-\Delta + \alpha_{[n]})^{-1}V_{+} \Lambda W 
\rangle 
\big| 
+ 
\big| 
\langle N_{[n]}, \ (-\Delta + \alpha_{[n]})^{-1}V_{+}\Lambda W 
\rangle 
\big|
.
\end{split}
\end{equation}
Furthermore, by Lemma \ref{thm-6}, the left-hand side 
of \eqref{eq1-proof-thm1-gap} can be written as 
\begin{equation}
\label{18/06/30/17:18-gap}
-\alpha_{[n]} \langle W, \ 
(-\Delta + \alpha_{[n]})^{-1}V_{+} \Lambda W 
\rangle 
= 
6 \pi \alpha_{[n]}^{\frac{1}{2}}
+ O( \alpha_{[n]}) 
.
\end{equation}
On the other hand, by Lemma \ref{thm-6} (if $2<p\le 3$) and Lemma \ref{thm-6-1} with $\delta=\frac{p-1}{8}$ (if $1<p\le 2$), one can verify that the first term on the right-hand side of \eqref{eq1-proof-thm1-gap} is estimated as follows:
\begin{equation}\label{18/06/30/17:19}
\beta_{[n]} 
\big| \langle W^{p}, \ (-\Delta + \alpha_{[n]})^{-1}V_{+} \Lambda W 
\rangle 
\big|
\lesssim 
\left\{ \begin{array}{lcl}
\beta_{[n]} &\mbox{if} & 2< p \le 3, 
\\[6pt]
\beta_{[n]} \alpha_{[n]}^{-\frac{2-p}{2}- \frac{p-1}{8}}
&\mbox{if}& 1 < p \le 2. 
\end{array} 
\right.
\end{equation}

We consider the second term on the right-hand side 
of \eqref{eq1-proof-thm1-gap}, 
we claim the following:
\begin{lemma}\label{error-estimate-gap}
Assume $1 < p \le 3$ and 
$0<\varepsilon <\frac{p-1}{20}$. 
Then, the following estimate holds 
for any sufficiently large number $n$: 
\begin{align}\label{nest-gap} 
\|(-\Delta + \alpha_{[n]})^{-1}
N_{[n]}\|_{L^{\frac{1}{\varepsilon}}} 
\lesssim \alpha_{[n]}^{\frac{1}{2}+\varepsilon}, 
\end{align}
where the implicit constant depends only on $p$, $\varepsilon$ and the quantity $\liminf_{n\to \infty}\omega_{n}$ (if finite). 
\end{lemma}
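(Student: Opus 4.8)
\textbf{Proof proposal for Lemma \ref{error-estimate-gap}.}

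The plan is to estimate the term $\|(-\Delta+\alpha_{[n]})^{-1}N_{[n]}\|_{L^{1/\varepsilon}}$ by revisiting the two-part decomposition of $N_{[n]}$ in \eqref{term-N}, but this time extracting the full power of $\alpha_{[n]}$ rather than only absorbing a factor into $\|\eta_{[n]}\|_{L^{1/\varepsilon}}$ as in Lemma \ref{error-estimate-gap-1}. The crucial new ingredient is the already-proven bound $\|\eta_{[n]}\|_{L^{1/\varepsilon}}\lesssim\alpha_{[n]}^{1/2-2\varepsilon}$ of Lemma \ref{error-estimate-gap-1}, which I would apply with $\varepsilon$ replaced by a slightly smaller exponent so that there is room to spare. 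First I would treat the quintic part $|W+\eta_{[n]}|^{4}(W+\eta_{[n]})-W^{5}-5W^{4}\Re[\eta_{[n]}]-iW^{4}\Im[\eta_{[n]}]$: as in \eqref{est-I}, Lemma \ref{18/11/23/17:17} bounds $(-\Delta+\alpha_{[n]})^{-1}$ from $L^{3/(2+3\varepsilon)}$ into $L^{1/\varepsilon}$ with constant $O(1)$, and the pointwise inequality gives a bound by $\|W^{3}|\eta_{[n]}|^{2}\|_{L^{3/(2+3\varepsilon)}}+\||\eta_{[n]}|^{5}\|_{L^{3/(2+3\varepsilon)}}$. Using H\"older to split off one factor of $\|\eta_{[n]}\|_{L^{1/\varepsilon}}$ and then bounding the remaining factor by $\|\eta_{[n]}\|_{L^{6}}$ (which is $o_{n}(1)$ by \eqref{conv-Ta}), together with $\|\eta_{[n]}\|_{L^{1/\varepsilon}}\lesssim\alpha_{[n]}^{1/2-2\varepsilon}$, yields a bound of the form $o_{n}(1)\,\alpha_{[n]}^{1/2-2\varepsilon}$, which is $\lesssim\alpha_{[n]}^{1/2+\varepsilon}$ once we have redistributed the $\varepsilon$'s correctly (replacing $\varepsilon$ by something like $\varepsilon/4$ at the start of the argument so that $1/2-2\varepsilon > 1/2+\varepsilon$ holds for the chosen exponents).

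Next I would treat the subcritical part $\beta_{[n]}\{|W+\eta_{[n]}|^{p-1}(W+\eta_{[n]})-W^{p}\}$, which is where the extra care is needed and which I expect to be the main obstacle. Following the expansion in \eqref{19/9/13/14:56}, one splits this into the ``linear'' pieces $\beta_{[n]}(-\Delta+\alpha_{[n]})^{-1}\{W^{p-1}\eta_{[n]}\}$ and $\beta_{[n]}(-\Delta+\alpha_{[n]})^{-1}\{W^{p-1}\overline{\eta_{[n]}}\}$ plus the genuinely nonlinear remainders. For the linear pieces, Lemma \ref{Delta-ineq} gives $\beta_{[n]}\alpha_{[n]}^{(p-5)/4+3\varepsilon}\|W^{p-1}\eta_{[n]}\|_{L^{6/(p-1+18\varepsilon)}}$; using \eqref{re-a-b-1} to write $\beta_{[n]}=\omega_{n}^{-(5-p)/4}\alpha_{[n]}^{(5-p)/4}$, the powers of $\alpha_{[n]}$ combine to $\alpha_{[n]}^{3\varepsilon}$ times $\|\eta_{[n]}\|_{L^{1/\varepsilon}}$ after H\"older (as in \eqref{19/9/14/15:13}), and then Lemma \ref{error-estimate-gap-1} turns this into $\lesssim\alpha_{[n]}^{3\varepsilon+1/2-2\varepsilon}=\alpha_{[n]}^{1/2+\varepsilon}$ exactly. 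For the nonlinear remainders I would use \eqref{19/9/13/17:10} (when $1<p\le2$) and \eqref{19/9/13/17:11} (when $2<p\le3$) to extract a factor $o_{n}(1)\|\eta_{[n]}\|_{L^{1/\varepsilon}}$, after which the power counting of \eqref{19/9/13/14:56} leaves $\beta_{[n]}\alpha_{[n]}^{(p-5)/4}\cdot o_{n}(1)\cdot\alpha_{[n]}^{1/2-2\varepsilon}=\omega_{n}^{-(5-p)/4}\,o_{n}(1)\,\alpha_{[n]}^{1/2-2\varepsilon}$, again $\lesssim\alpha_{[n]}^{1/2+\varepsilon}$ by the choice of exponents. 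The delicate book-keeping is to make sure that in each of these terms the surplus exponent of $\alpha_{[n]}$, after combining $\beta_{[n]}$'s contribution, the resolvent gain, and the $\alpha_{[n]}^{1/2-2\varepsilon}$ from Lemma \ref{error-estimate-gap-1}, exceeds $1/2+\varepsilon$; this is why one starts with $\varepsilon$ small (say $0<\varepsilon<\frac{p-1}{20}$ as stated) and, internally, with an even smaller auxiliary exponent so that all the $O(\varepsilon)$ losses can be absorbed.

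Finally I would collect all the pieces: the quintic contribution is $\lesssim\alpha_{[n]}^{1/2+\varepsilon}$, the linear subcritical pieces are $\lesssim\alpha_{[n]}^{1/2+\varepsilon}$, and the nonlinear subcritical remainders are $\lesssim\alpha_{[n]}^{1/2+\varepsilon}$, so that \eqref{nest-gap} follows by the triangle inequality. The implicit constants depend only on $p$, $\varepsilon$, and $\liminf_{n\to\infty}\omega_{n}$ (if finite) through the factor $\omega_{n}^{-(5-p)/4}$; no new ingredients beyond Lemmas \ref{Delta-ineq}, \ref{18/11/23/17:17}, \ref{error-estimate-gap-1}, the convergence \eqref{conv-Ta}, and the identity \eqref{re-a-b-1} are required. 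The one point demanding genuine attention, as noted, is the simultaneous balancing of all the $\varepsilon$-dependent exponents so that every term individually clears the target exponent $\tfrac12+\varepsilon$; once the auxiliary exponent in the application of Lemma \ref{error-estimate-gap-1} is chosen small enough, the rest is routine.
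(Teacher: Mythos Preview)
Your handling of the linear subcritical piece $\beta_{[n]}(-\Delta+\alpha_{[n]})^{-1}\{W^{p-1}\eta_{[n]}\}$ is correct and matches the paper exactly. The gap is in the other terms. For the quintic remainder and the genuinely nonlinear subcritical remainder you propose to split off \emph{one} factor of $\|\eta_{[n]}\|_{L^{1/\varepsilon}}$ and control the rest by $\|\eta_{[n]}\|_{L^{6}}=o_{n}(1)$, obtaining $o_{n}(1)\,\alpha_{[n]}^{1/2-2\varepsilon}$. This cannot be upgraded to $\alpha_{[n]}^{1/2+\varepsilon}$: the convergence $\|\eta_{[n]}\|_{L^{6}}\to 0$ carries no quantitative rate in $\alpha_{[n]}$, and no ``redistribution of $\varepsilon$'s'' helps, since $1/2-2\varepsilon>1/2+\varepsilon$ would force $\varepsilon<0$. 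Replacing $\varepsilon$ by $\varepsilon/4$ in Lemma~\ref{error-estimate-gap-1} changes the Lebesgue exponent of the norm you control, not the one you need; the deficit $3\varepsilon$ in the $\alpha_{[n]}$-exponent remains.

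The fix, which is what the paper does, is to put \emph{every} factor of $\eta_{[n]}$ into $L^{1/\varepsilon}$ in the H\"older splitting, so that Lemma~\ref{error-estimate-gap-1} is invoked several times and yields genuine powers of $\alpha_{[n]}$ rather than $o_{n}(1)$. Concretely: for $W^{3}|\eta_{[n]}|^{2}$ write $\|W^{3}\|_{L^{3/(2-3\varepsilon)}}\|\eta_{[n]}\|_{L^{1/\varepsilon}}^{2}\lesssim\alpha_{[n]}^{1-4\varepsilon}$; for $|\eta_{[n]}|^{5}$ do \emph{not} use Lemma~\ref{18/11/23/17:17} but instead Lemma~\ref{Delta-ineq} from $L^{1/(5\varepsilon)}$ to $L^{1/\varepsilon}$, picking up $\alpha_{[n]}^{-1+6\varepsilon}$ and then $\|\eta_{[n]}\|_{L^{1/\varepsilon}}^{5}\lesssim\alpha_{[n]}^{5/2-10\varepsilon}$, net $\alpha_{[n]}^{3/2-4\varepsilon}$; for the subcritical nonlinear remainder bound pointwise by $|\eta_{[n]}|^{p}$, use Lemma~\ref{Delta-ineq} from $L^{1/(p\varepsilon)}$ to $L^{1/\varepsilon}$, and then $\|\eta_{[n]}\|_{L^{1/\varepsilon}}^{p}\lesssim\alpha_{[n]}^{p(1/2-2\varepsilon)}$. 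Each of these exponents exceeds $1/2+\varepsilon$ once $\varepsilon<(p-1)/20$, and the argument closes without ever appealing to the qualitative smallness of $\|\eta_{[n]}\|_{L^{6}}$.
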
 
\begin{proof}[Proof of Lemma \ref{error-estimate-gap}]
As in \eqref{est-Ne0}, we see that
\begin{equation}\label{19/9/14/17:13}
\begin{split}
&\|(-\Delta + \alpha_{[n]})^{-1} N_{[n]}
\|_{L^{\frac{1}{\varepsilon}}}
\\[6pt]
&\le 
\|(-\Delta + \alpha_{[n]})^{-1} 
\bigm\{ 
|W + \eta_{[n]}|^{4}(W + \eta_{[n]}) - W^{5} - 5 W^{4} \Re[\eta_{[n]}]-
i W^{4} \Im[\eta_{[n]}]
\bigm\}
\|_{L^{\frac{1}{\varepsilon}}} 
\\[6pt]
&\quad +
\beta_{[n]} \|(-\Delta + \alpha_{[n]})^{-1} 
\big\{ |W + \eta_{[n]}|^{p-1}(W+\eta_{[n]}) 
- W^{p}
\big\} 
\|_{L^{\frac{1}{\varepsilon}}}
.
\end{split}
\end{equation}
From \eqref{explicit-1}, one can verify that 
\begin{equation}\label{19/9/15/12:09}
\begin{split}
&
\bigm|
(-\Delta + \alpha_{[n]})^{-1}
\bigm\{ 
|W + \eta_{[n]}|^{4}(W + \eta_{[n]}) - W^{5} - 5 W^{4} \Re[\eta_{[n]}]
-iW^{5}\Im[\eta_{[n]}]
\bigm\}
\big|
\\[6pt]
&\lesssim 
(-\Delta + \alpha_{[n]})^{-1}\big\{ W^{3}|\eta_{[n]}|^{2}+ |\eta_{[n]}|^{5} \big\}
,
\end{split} 
\end{equation}
and 
\begin{equation}\label{19/9/15/13:45}
\begin{split}
&
\bigm|
(-\Delta + \alpha_{[n]})^{-1}
\bigm\{ |W + \eta_{[n]}|^{p-1}(W+\eta_{[n]}) 
- W^{p}
\bigm\}
\bigm|
\\[6pt]
&\lesssim 
(-\Delta + \alpha_{[n]})^{-1}
\bigm\{ |W|^{p-1} + |\eta_{[n]}|^{p-1} \bigm\} 
|\eta_{[n]}| 
. 
\end{split}
\end{equation}
We consider the first term on the right-hand side of \eqref{19/9/14/17:13}. 
By \eqref{19/9/15/12:09}, Lemma \ref{18/11/23/17:17}, 
Lemma \ref{Delta-ineq}, H\"older's inequality and Lemma \ref{error-estimate-gap-1}, one can see that 
\begin{equation}\label{19/9/15/12:58}
\begin{split}
&
\|
(-\Delta + \alpha_{[n]})^{-1}
\bigm\{ 
|W + \eta_{[n]}|^{4}(W + \eta_{[n]}) - W^{5} - 5 W^{4} \Re[\eta_{[n]}]
-iW^{5}\Im[\eta_{[n]}]
\bigm\}
\|_{L^{\frac{1}{\varepsilon}}}
\\[6pt]
&\lesssim
\| (-\Delta + \alpha_{[n]})^{-1}\big\{ W^{3}|\eta_{[n]}|^{2} \big\} \|_{L^{\frac{1}{\varepsilon}}}
+ 
\| (-\Delta + \alpha_{[n]})^{-1} |\eta_{[n]}|^{5} \|_{L^{\frac{1}{\varepsilon}}}
\\[6pt]
&\lesssim
\|W^{3} |\eta_{[n]}|^{2} \|_{L^{\frac{3}{2+3\varepsilon}}}
+ 
\alpha_{[n]}^{-1+6\varepsilon}\| |\eta_{[n]}|^{5} \|_{L^{\frac{1}{5 \varepsilon}}}
\\[6pt]
&\le 
\|W^{3}\|_{L^{\frac{3}{2-3\varepsilon}}}\| \eta_{[n]} \|_{L^{\frac{1}{\varepsilon}}}^{2}
+
\alpha_{[n]}^{-1+6\varepsilon}\| \eta_{[n]} \|_{L^{\frac{1}{\varepsilon}}}^{5}
\lesssim 
\alpha_{[n]}^{1-4\varepsilon}
+
\alpha_{[n]}^{\frac{3}{2}-4\varepsilon}
\lesssim 
\alpha_{[n]}^{\frac{1}{2}+\varepsilon}
,
\end{split} 
\end{equation}
where the implicit constants depend only on $p$, $\varepsilon$ and the quantity $\liminf_{n\to \infty}\omega_{n}$ (if finite).

Next, we consider the second term on the right-hand side of \eqref{19/9/14/17:13}. It follows from \eqref{19/9/15/13:45}, Lemma \ref{Delta-ineq}, 
\eqref{eq2-1-1}, H\"older's inequality and Lemma \ref{error-estimate-gap-1} that if $\varepsilon$ is sufficiently small depending only on $p$, 
\begin{equation}\label{19/9/15/13:52}
\begin{split}
&
\beta_{[n]}
\|
(-\Delta + \alpha_{[n]})^{-1}
\bigm\{ 
|W + \eta_{[n]}|^{p-1}(W + \eta_{[n]}) - W^{p} \bigm\}
\|_{L^{\frac{1}{\varepsilon}}}
\\[6pt]
&\lesssim
\beta_{[n]}
\| (-\Delta + \alpha_{[n]})^{-1}\big\{ W^{p-1} \eta_{[n]} \big\} \|_{L^{\frac{1}{\varepsilon}}}
+ 
\beta_{[n]}
\| (-\Delta + \alpha_{[n]})^{-1}|\eta_{[n]}|^{p} \|_{L^{\frac{1}{\varepsilon}}}
\\[6pt]
&\lesssim
\beta_{[n]} \alpha_{[n]}^{-\frac{5-p}{4}+3\varepsilon}
\|W^{p-1} \eta_{[n]} \|_{L^{\frac{6}{p-1+18\varepsilon}}}
+ 
\beta_{[n]} 
\alpha_{[n]}^{-1+\frac{3(p-1)\varepsilon}{2}}
\| |\eta_{[n]}|^{p} \|_{L^{\frac{1}{p \varepsilon}}}
\\[6pt]
&\lesssim 
\alpha_{[n]}^{3 \varepsilon}
\|W^{p-1}\|_{L^{\frac{6}{p-1+12\varepsilon}}} \| \eta_{[n]} \|_{L^{\frac{1}{\varepsilon}}}
+
\alpha_{[n]}^{-\frac{p-1}{4}+\frac{3(p-1)\varepsilon}{2}}
\| \eta_{[n]} \|_{L^{\frac{1}{\varepsilon}}}^{p}
\\[6pt]
&\lesssim 
\alpha_{[n]}^{\frac{1}{2}+\varepsilon}
+
\alpha_{[n]}^{\frac{1}{2}+\frac{p-1}{4}-\frac{(p+3)\varepsilon}{2}}
\lesssim 
\alpha_{[n]}^{\frac{1}{2}+\varepsilon}
,
\end{split} 
\end{equation}
where the implicit constants depend 
only on $p$, $\varepsilon$ and the quantity $\liminf_{n\to \infty}\omega_{n}$ (if finite).

Putting \eqref{19/9/14/17:13}, \eqref{19/9/15/12:58} and \eqref{19/9/15/13:52}
together, we obtain the desired estimate \eqref{nest-gap}. 
\end{proof}

Now, we shall finish the proof of Theorem \ref{non-seq}. 
By H\"older's inequality and Lemma \ref{error-estimate-gap}, 
one can estimate the last term on the right-hand side of \eqref{eq1-proof-thm1-gap} as follows: for any $0< \varepsilon<\frac{p-1}{20}$, 
\begin{equation}\label{19/9/15/14:35}
\big|\langle N_{[n]}, \ (-\Delta + \alpha_{[n]})^{-1} 
V_{+} \Lambda W \rangle \big| 
\le 
\|V_{+} \Lambda W \|_{L^{\frac{1}{1-\varepsilon}}}
\| (-\Delta + \alpha_{[n]})^{-1} N_{[n]} \|_{L^{\frac{1}{\varepsilon}}} 
\lesssim 
\alpha_{[n]}^{\frac{1}{2}+\varepsilon},
\end{equation}
where the implicit constants depend only on 
$p$, $\varepsilon$ and the quantity $\liminf\limits_{n\to \infty}\omega_{n}$. 
Then, plugging \eqref{18/06/30/17:18-gap}, \eqref{18/06/30/17:19}, and 
\eqref{19/9/15/14:35} into 
\eqref{eq1-proof-thm1-gap}, 
we see that for any $0< \varepsilon <\frac{p-1}{20}$, 
\begin{equation} \label{eq-proof-thm1-gap}
\begin{split}
\alpha_{[n]}^{\frac{1}{2}}
&\lesssim 
\left\{ \begin{array}{lcl}
\beta_{[n]} + \alpha_{[n]}^{\frac{1}{2}+\varepsilon} &\mbox{if} & 2< p \le 3, 
\\[6pt]
\beta_{[n]} \alpha_{[n]}^{-\frac{2-p}{2}- \frac{p-1}{8}}
+
\alpha_{[n]}^{\frac{1}{2}+\varepsilon}
&\mbox{if}& 1 < p \le 2, 
\end{array} 
\right. 
\end{split}
\end{equation}
where the implicit constant depends 
only on $p$, $\varepsilon$, 
and the quantity $\liminf\limits_{n\to \infty}\omega_{n}$. Furthermore, transposing the second term on the right-hand side of \eqref{eq-proof-thm1-gap} to the left, and dividing both sides 
by $\alpha_{[n]}^{\frac{1}{2}}$, we obtain 
\begin{equation}\label{19/9/15/16:26}
1 
\lesssim 
\left\{ \begin{array}{lcl}
\alpha_{[n]}^{-\frac{1}{2}}\beta_{[n]} &\mbox{if} & 2< p \le 3, 
\\[6pt]
\alpha_{[n]}^{-\frac{1}{2}} \beta_{[n]}
\alpha_{[n]}^{-\frac{2-p}{2}- \frac{p-1}{8}}
&\mbox{if}& 1 < p \le 2, 
\end{array} 
\right. 
. 
\end{equation}
Assume $2< p \le 3$. 
Then, it follows from \eqref{re-a-b-1}, \eqref{eq2-1-1}, \eqref{eq2-1-2} and $\lim\limits_{n\to \infty}\nu_{n}=1$ that 
\begin{equation}\label{19/9/15/17:34}
\lim_{n\to \infty}\alpha_{[n]}^{-\frac{1}{2}}\beta_{[n]}
=
\lim_{n\to \infty} \omega_{n}^{-\frac{5-p}{4}} \alpha_{[n]}^{\frac{3-p}{4}}
=0. 
\end{equation}
However, \eqref{19/9/15/17:34} contradicts \eqref{19/9/15/16:26}. 
Similarly, when $1<p\le 2$, 
from \eqref{re-a-b-1}, \eqref{eq2-1-1}, \eqref{eq2-1-2} and $\lim\limits_{n\to \infty}\nu_{n}=1$, 
we can verify that 
\begin{equation}\label{19/9/15/17:31}
\lim_{n\to \infty}
\alpha_{[n]}^{-\frac{1}{2}} \beta_{[n]}
\alpha_{[n]}^{-\frac{2-p}{2}- \frac{p-1}{8}}
=
\lim_{n\to \infty}
\omega_{n}^{-\frac{5-p}{4}}
\alpha_{[n]}^{\frac{p-1}{4}-\frac{p-1}{8}}
=0.
\end{equation}
This also contradicts \eqref{19/9/15/16:26}. Thus, we have completed the proof of Theorem \ref{non-seq}.
\end{proof}

\section{Proof of Theorem \ref{thm-0}}
\label{section-gs-n}

In this section, we prove Theorem \ref{thm-0}. 
To this end, we will employ Proposition \ref{thm4-1}. 
The following result immediately follows from results in \cite{AIIKN} and Theorem \ref{non-seq}: 
\begin{proposition} \label{thm-4}
Assume 
$1 < p \leq 3$. Then, there exists $\omega_{1} >0$ such that 
for any $\omega > \omega_{1}$, 
there is no ground state (no minimizer for $m_{\omega}^{\mathcal{N}}$). 
\end{proposition}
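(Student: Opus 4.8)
The plan is to deduce Proposition~\ref{thm-4} from Theorem~\ref{non-seq} by contradiction, using the asymptotic description of ground states for large frequency obtained in \cite{AIIKN}. First I would negate the statement: if no such $\omega_{1}$ existed, then for every $n\in\mathbb{N}$ there would be some $\omega_{n}>n$ for which $m_{\omega_{n}}^{\mathcal{N}}$ has a minimizer. This produces a sequence $\{\omega_{n}\}$ with $\omega_{n}\to\infty$ together with, for each $n$, a ground state $Q_{n}$, which by Remark~\ref{remark-1} we may take to be a positive radial solution of \eqref{sp} with $\omega=\omega_{n}$. The goal is then to verify that $\{(Q_{n},\omega_{n})\}$ satisfies conditions {\rm (i)}--{\rm (iv)} of Theorem~\ref{non-seq}; since that theorem asserts that no such sequence exists, this is the contradiction.

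Conditions {\rm (i)} and {\rm (ii)} are immediate: $Q_{n}$ solves \eqref{sp} with $\omega=\omega_{n}$, and $\omega_{n}\to\infty$ forces both $\liminf_{n}\omega_{n}>0$ and $\limsup_{n}\omega_{n}=\infty$, which covers the cases $1<p<3$ and $p=3$ alike. The substance is in conditions {\rm (iii)} and {\rm (iv)}: writing $M_{n}:=\|Q_{n}\|_{L^{\infty}}$, $\alpha_{n}:=\omega_{n}M_{n}^{-4}$, and $\widetilde{Q}_{n}:=M_{n}^{-1}Q_{n}(M_{n}^{-2}\,\cdot\,)$, one must show $M_{n}\to\infty$, $\alpha_{n}\to0$, and $\widetilde{Q}_{n}\to W$ strongly in $\dot{H}^{1}(\mathbb{R}^{3})$. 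Testing the Nehari identity $\mathcal{N}_{\omega_{n}}(Q_{n})=0$ with the crude bounds $\|Q_{n}\|_{L^{p+1}}^{p+1}\le\|Q_{n}\|_{L^{\infty}}^{p-1}\|Q_{n}\|_{L^{2}}^{2}$ and $\|Q_{n}\|_{L^{6}}^{6}\le\|Q_{n}\|_{L^{\infty}}^{4}\|Q_{n}\|_{L^{2}}^{2}$ already yields $\omega_{n}\le\|Q_{n}\|_{L^{\infty}}^{p-1}+\|Q_{n}\|_{L^{\infty}}^{4}$, hence, since $p-1\le2<4$, $M_{n}\to\infty$ and $\alpha_{n}$ bounded; the refined facts $\alpha_{n}\to0$ and $\widetilde{Q}_{n}\to W$ in $\dot{H}^{1}$ are the real content imported from \cite{AIIKN}, namely the concentration/profile analysis of minimizing families for $m_{\omega}^{\mathcal{N}}$ as $\omega\to\infty$ (with $m_{\omega}^{\mathcal{N}}\to m_{\infty}$, consistent with the monotonicity of Proposition~\ref{thm4-1}), which forces any family of ground states with $\omega_{n}\to\infty$ to concentrate, after the $\dot{H}^{1}$-rescaling, onto the Talenti function $W$. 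With {\rm (i)}--{\rm (iv)} in hand, Theorem~\ref{non-seq} is violated, and this contradiction yields the threshold $\omega_{1}$.

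The one delicate point is the appeal to \cite{AIIKN}: one must check that the hypotheses under which it derives the $\omega\to\infty$ profile of ground states are genuinely met here for $\{(Q_{n},\omega_{n})\}$ in three dimensions — positivity and radial symmetry (Remark~\ref{remark-1}), the Nehari and Pohozaev identities for each $Q_{n}$, and the energy convergence $m_{\omega_{n}}^{\mathcal{N}}\to m_{\infty}$. Since the genuinely new analytic input, Theorem~\ref{non-seq}, has already been proved, this is essentially the whole proof, and I would expect it to be short once the relevant statements of \cite{AIIKN} are quoted.
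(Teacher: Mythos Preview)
Your proposal is correct and follows essentially the same route as the paper: argue by contradiction, extract a sequence of positive radial ground states with $\omega_{n}\to\infty$, and invoke \cite{AIIKN} (specifically Lemma~2.3 for $M_{n}\to\infty$, $\alpha_{n}\to0$ and Proposition~2.1 for $\widetilde{Q}_{n}\to W$ in $\dot{H}^{1}$) to verify the hypotheses of Theorem~\ref{non-seq}. Your extra elementary observation from the Nehari identity that $\omega_{n}\le M_{n}^{p-1}+M_{n}^{4}$ is a nice sanity check but not needed, since the paper simply cites \cite{AIIKN} for all of condition~{\rm (iii)}.
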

\begin{proof}[Proof of Proposition \ref{thm-4}] 
We prove by contradiction: If the claim of Proposition \ref{thm-4} was false, then there existed a sequence $\{(\Phi_{n}, \omega_{n})\}$ in $H^{1}(\mathbb{R}^{3}) \times (0,\infty)$ with the following properties:
\\
\noindent 
{\rm (i)}~$\Phi_{n}$ is a ground state for $m_{\omega_{n}}$; we may assume that $\Phi_{n}$ is a positive radial solution to \eqref{sp} with $\omega=\omega_{n}$ (see Remark \ref{remark-1}).
\\
\noindent 
{\rm (ii)}~$\lim_{n\to \infty}\omega_{n}=\infty$. 
\\
Furthermore, defining 
$M_{n}:=\|\Phi_{n}\|_{L^{\infty}}$, 
$\alpha_{n} := \omega_{n} M_{n}^{-4}, 
\beta_{n} := M_{n}^{p-5}$ and 
$\widetilde{\Phi}_{n}:=T_{M_{n}} \Phi_{n}$, 
we can verify that $\widetilde{\Phi}_{n}:=T_{M_{n}}\Phi_{n}$ obeys 
\begin{equation}
- \Delta \widetilde{\Phi}_{n} 
+ \alpha_{n} \widetilde{\Phi}_{n}
- \beta_{n} |\widetilde{\Phi}_{n}|^{p-1}\widetilde{\Phi}_{n} 
- |\widetilde{\Phi}_{n}|^{4}
\widetilde{\Phi}_{n} 
= 0, 
\end{equation}
and 
\\
\noindent 
{\rm (iii)}~$\lim_{n \to \infty} M_{n}= \infty$,
\quad 
$\lim_{n \to \infty}\alpha_{n}=0$ 
\qquad (see Lemma 2.3 of \cite{AIIKN}).
\\
\noindent 
{\rm (iv)}~$\lim_{n \to \infty}\widetilde{\Phi}_{n} = W$
\quad strongly in 
$\dot{H}^{1}(\mathbb{R}^{3})$
\qquad 
(see Proposition 2.1 of \cite{AIIKN}).

However, when $1<p\le 3$, Theorem \ref{non-seq} forbids such a sequence to exist. Thus, the claim of Proposition \ref{thm-4} is true. 
\end{proof} 
From Proposition \ref{thm-4}, we see that 
$\omega_{c}^{\mathcal{N}} < \infty$. 
For $\omega \geq \omega_{c}^{\mathcal{N}}$, we obtain 
the following result: 
\begin{proposition}\label{thm4-2}
Assume $\omega > 0$ and $1 < p \leq 3$. Then, for any $\omega \ge \omega_{c}^{\mathcal{N}}$, we have $m_{\omega}^{\mathcal{N}} = m_{\infty}$. 
\end{proposition}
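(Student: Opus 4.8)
The plan is to show the two inequalities $m_\omega^{\mathcal N}\le m_\infty$ and $m_\omega^{\mathcal N}\ge m_\infty$ separately for $\omega\ge\omega_c^{\mathcal N}$. For the inequality $m_\omega^{\mathcal N}\le m_\infty$, I would first observe that by Proposition \ref{thm4-1} the function $\omega\mapsto m_\omega^{\mathcal N}$ is nondecreasing, and by the definition \eqref{threshold} of $\omega_c^{\mathcal N}$ we have $m_\omega^{\mathcal N}\le m_\infty$ for every $\omega<\omega_c^{\mathcal N}$; letting $\omega\uparrow\omega_c^{\mathcal N}$ and using monotonicity gives $m_{\omega_c^{\mathcal N}}^{\mathcal N}\le m_\infty$ as well (if in fact $\omega_c^{\mathcal N}$ is approached from below; the case where the defining set is empty would force $\omega_c^{\mathcal N}$ to be a supremum over the empty set and must be handled, but Remark \ref{remark-0}(ii) shows for $1<p\le 3$ the set $\{\omega>0:m_\omega^{\mathcal N}<m_\infty\}$ is nonempty, so $\omega_c^{\mathcal N}>0$ and the supremum is a genuine limit point). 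For $\omega>\omega_c^{\mathcal N}$ this does not immediately give $\le$, so instead I would produce a direct test-function argument: take a (nearly optimal, suitably rescaled) Talenti bubble $W_\lambda = T_\lambda W$ concentrating as $\lambda\to 0$ (i.e. the scaling \eqref{18/06/29/12:35} with small parameter), for which $\|\nabla W_\lambda\|_{L^2}^2=\|W\|_{\dot H^1}^2$ and $\|W_\lambda\|_{L^6}^6$ is scale-invariant while $\|W_\lambda\|_{L^2}^2$ and $\|W_\lambda\|_{L^{p+1}}^{p+1}$ tend to $0$; then solving $\mathcal N_\omega(tW_\lambda)=0$ for $t=t(\lambda)\to 1$ and evaluating $\mathcal S_\omega(t W_\lambda)$ shows $m_\omega^{\mathcal N}\le \mathcal S_\omega(tW_\lambda)\to \mathcal H^\ddagger(W)= m_\infty$ (using \eqref{sob-bc-rela}, \eqref{19/9/18/11:15}). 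Thus $m_\omega^{\mathcal N}\le m_\infty$ for all $\omega>0$, a fortiori for $\omega\ge\omega_c^{\mathcal N}$.

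For the reverse inequality $m_\omega^{\mathcal N}\ge m_\infty$ when $\omega\ge\omega_c^{\mathcal N}$, the key point is that equality $m_\omega^{\mathcal N}=m_\infty$ at $\omega=\omega_c^{\mathcal N}$ should propagate upward by monotonicity, so it suffices to treat $\omega=\omega_c^{\mathcal N}$ (since $m_\omega^{\mathcal N}$ is nondecreasing and bounded above by $m_\infty$, once $m_{\omega_c^{\mathcal N}}^{\mathcal N}=m_\infty$ we get $m_\omega^{\mathcal N}=m_\infty$ for all $\omega\ge\omega_c^{\mathcal N}$). So the crux is: $m_{\omega_c^{\mathcal N}}^{\mathcal N}\ge m_\infty$. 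Suppose not, i.e. $m_{\omega_c^{\mathcal N}}^{\mathcal N}<m_\infty$. Then by Proposition \ref{thm-0-1} a ground state $Q_{\omega_c^{\mathcal N}}$ exists at $\omega=\omega_c^{\mathcal N}$; more importantly, strict inequality $m_{\omega_c^{\mathcal N}}^{\mathcal N}<m_\infty$ is an open condition — I would argue, using the upper semicontinuity/continuity of $\omega\mapsto m_\omega^{\mathcal N}$ from above together with the monotonicity (or, more robustly, by directly using the existing ground state $Q_{\omega_c^{\mathcal N}}$ as a near-optimal competitor for slightly larger $\omega$: rescale it via Nehari to satisfy $\mathcal N_{\omega}=0$ and estimate $\mathcal S_\omega$, picking up only an $O(\omega-\omega_c^{\mathcal N})$ error because $\|Q_{\omega_c^{\mathcal N}}\|_{L^2}^2<\infty$) — that $m_\omega^{\mathcal N}<m_\infty$ still holds for some $\omega>\omega_c^{\mathcal N}$. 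This contradicts the definition \eqref{threshold} of $\omega_c^{\mathcal N}$ as the supremum of such $\omega$. Hence $m_{\omega_c^{\mathcal N}}^{\mathcal N}\ge m_\infty$, and combined with the first part, $m_{\omega_c^{\mathcal N}}^{\mathcal N}= m_\infty$; monotonicity then finishes all $\omega\ge\omega_c^{\mathcal N}$.

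The step I expect to be the main obstacle is the propagation of the strict inequality $m_\omega^{\mathcal N}<m_\infty$ from $\omega_c^{\mathcal N}$ to slightly larger $\omega$ — equivalently, establishing enough (one-sided) continuity of $\omega\mapsto m_\omega^{\mathcal N}$ near $\omega_c^{\mathcal N}$. The clean way is via the ground state $Q_{\omega_c^{\mathcal N}}$: because $Q_{\omega_c^{\mathcal N}}\in H^1$ has finite $L^2$-norm, for $\omega>\omega_c^{\mathcal N}$ close by one has $\mathcal S_\omega(Q_{\omega_c^{\mathcal N}}) = m_{\omega_c^{\mathcal N}}^{\mathcal N}+\tfrac{\omega-\omega_c^{\mathcal N}}{2}\|Q_{\omega_c^{\mathcal N}}\|_{L^2}^2$ and $\mathcal N_\omega(Q_{\omega_c^{\mathcal N}})=(\omega-\omega_c^{\mathcal N})\|Q_{\omega_c^{\mathcal N}}\|_{L^2}^2>0$, so a small Nehari rescaling $s\mapsto sQ_{\omega_c^{\mathcal N}}$ (with $s=s(\omega)\to 1$) restores $\mathcal N_\omega=0$ while changing $\mathcal S_\omega$ by only $O(\omega-\omega_c^{\mathcal N})$; since $m_{\omega_c^{\mathcal N}}^{\mathcal N}<m_\infty$ strictly, the perturbed value stays below $m_\infty$ for $\omega$ sufficiently close, giving the contradiction with \eqref{threshold}. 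Care is needed that $1<p\le 3$ guarantees (via Remark \ref{remark-0}(ii)) $\omega_c^{\mathcal N}>0$ so the statement is nonvacuous, and that Proposition \ref{thm-0-1} is genuinely applicable at $\omega_c^{\mathcal N}$ under the contradiction hypothesis; the rest is routine scaling estimates on the four terms of $\mathcal S_\omega$ and $\mathcal N_\omega$.
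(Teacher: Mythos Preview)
Your overall strategy matches the paper's: (i) show $m_\omega^{\mathcal N}\le m_\infty$ for every $\omega>0$ via concentrating Talenti-type test functions, and (ii) show $m_{\omega_c^{\mathcal N}}^{\mathcal N}\ge m_\infty$ by contradiction, using that a ground state at $\omega_c^{\mathcal N}$ would serve (after Nehari rescaling) as a near-optimal competitor for slightly larger $\omega$ and thus force $m_\omega^{\mathcal N}<m_\infty$ for some $\omega>\omega_c^{\mathcal N}$, against the definition of $\omega_c^{\mathcal N}$. The paper orders the two cases slightly differently (it first treats $\omega>\omega_c^{\mathcal N}$, then deduces the case $\omega=\omega_c^{\mathcal N}$ from the former), but the content of Step~2 there is exactly your ``open condition'' argument with $Q_{\omega_c^{\mathcal N}}$.

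There is, however, a genuine technical gap in your test-function step. In $d=3$ the Talenti function $W$ does \emph{not} belong to $L^2(\mathbb R^3)$ (since $W(x)\sim |x|^{-1}$ at infinity), and for $1<p\le 2$ it does not belong to $L^{p+1}(\mathbb R^3)$ either (cf.\ the remark after Lemma~\ref{thm-6}). Hence $W_\lambda=T_\lambda W\notin H^1(\mathbb R^3)$, the quantities $\|W_\lambda\|_{L^2}^2$ and (for $p\le 2$) $\|W_\lambda\|_{L^{p+1}}^{p+1}$ are infinite, and you cannot even form $\mathcal N_\omega(tW_\lambda)$, let alone claim those norms tend to $0$. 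The paper fixes this in the standard way by truncating: it uses $V_\varepsilon=\chi W_\varepsilon$ with a fixed smooth cutoff $\chi$ (see \eqref{modi-talenti}), for which one has the norm estimates \eqref{est-Talenti3}--\eqref{est-Talenti2} together with $\|V_\varepsilon\|_{L^2}^2=O(\varepsilon^2)$, and then solves $\mathcal N_\omega(t_\varepsilon V_\varepsilon)=0$ with $t_\varepsilon=1+O(\varepsilon^2)$. With this correction your argument goes through and coincides with the paper's proof.
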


For a proof of Proposition \ref{thm4-2}, 
we need the following characterization 
of $m_{\omega}^{\mathcal{N}}$: 
\begin{lemma}\label{variational-character4}
Assume $\omega > 0$ and 
$1 < p < 5$. Then, 
\begin{equation}\label{eq0-variational-character4}
m_{\omega}^{\mathcal{N}} 
= 
\inf \bigm\{
\mathcal{\mathcal{J}}(u) 
\colon 
u \in H^{1}(\mathbb{R}^{3})
\setminus \{0\}, \ 
\mathcal{N}_{\omega} (u) \leq 0 
\bigm\}, 
\end{equation}
where $\mathcal{J}$ is the functional defined by 
\eqref{eq-J}. 
\end{lemma}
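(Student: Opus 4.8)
The plan is to prove the two inequalities separately, using the homogeneity structure of $\boJ$ and the constraint functional $\mathcal{N}_\omega$. Write $I_\omega$ for the right-hand side of \eqref{eq0-variational-character4}. First note the elementary identity
\begin{equation*}
\mathcal{S}_\omega(u) - \tfrac{1}{2}\mathcal{N}_\omega(u) = \boJ(u)
\qquad \text{for all } u \in H^1(\mathbb{R}^3),
\end{equation*}
which is immediate from \eqref{eq-action}, the definition of $\mathcal{N}_\omega$, and \eqref{eq-J}. Consequently, on the set $\{\mathcal{N}_\omega(u)=0\}$ one has $\mathcal{S}_\omega(u)=\boJ(u)$, so the feasible set for $m_\omega^{\mathcal{N}}$ is contained in the (larger) feasible set $\{\mathcal{N}_\omega(u)\le 0\}$ for $I_\omega$, and on the smaller set the two objective functionals agree; this gives $I_\omega \le m_\omega^{\mathcal{N}}$ directly.

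For the reverse inequality $m_\omega^{\mathcal{N}} \le I_\omega$, I would take any $u \in H^1(\mathbb{R}^3)\setminus\{0\}$ with $\mathcal{N}_\omega(u)\le 0$ and produce a competitor for $m_\omega^{\mathcal{N}}$ with no larger value of $\boJ$. Since $1<p<5$, both nonlinear terms in $\mathcal{N}_\omega$ are superquadratic, so scaling $u \mapsto tu$ makes $\mathcal{N}_\omega(tu)>0$ for small $t>0$ and, because $\mathcal{N}_\omega(u)\le 0$, there is a unique $t_0 \in (0,1]$ with $\mathcal{N}_\omega(t_0 u)=0$; here one uses that $t \mapsto \mathcal{N}_\omega(tu)$ has the sign pattern of a function $at^2 - bt^{p+1} - ct^6$ with $b,c\ge 0$ not both zero, which starts positive and, once it is $\le 0$ at $t=1$, crosses zero exactly once in $(0,1]$. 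Then $t_0 u$ is feasible for $m_\omega^{\mathcal{N}}$, so $m_\omega^{\mathcal{N}} \le \mathcal{S}_\omega(t_0 u) = \boJ(t_0 u)$. Finally $\boJ$ is monotone under this scaling: since $t_0 \le 1$ and each term of $\boJ$ in \eqref{eq-J} is a positive constant times a power ($p+1 > 2$ and $6>2$) of a norm, $\boJ(t_0 u) = \big(\tfrac12-\tfrac1{p+1}\big)t_0^{p+1}\|u\|_{L^{p+1}}^{p+1} + \tfrac13 t_0^6\|u\|_{L^6}^6 \le \boJ(u)$. Taking the infimum over all such $u$ yields $m_\omega^{\mathcal{N}} \le I_\omega$.

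The only point requiring a little care — and the step I would flag as the main obstacle — is justifying the existence and uniqueness of the crossing parameter $t_0$ when $\mathcal{N}_\omega(u)< 0$ strictly, and more importantly handling the boundary case $\mathcal{N}_\omega(u)=0$ (then $t_0=1$ and there is nothing to scale, but one still wants strict monotonicity not to be needed). Writing $g(t) := \mathcal{N}_\omega(tu) = A t^2 - B t^{p+1} - C t^6$ with $A = \|\nabla u\|_{L^2}^2 + \omega\|u\|_{L^2}^2 > 0$, $B = \|u\|_{L^{p+1}}^{p+1}\ge 0$, $C = \|u\|_{L^6}^6 \ge 0$, the function $h(t) := g(t)/t^2 = A - Bt^{p-1} - Ct^4$ is strictly decreasing on $(0,\infty)$ as soon as $B+C>0$ (which holds, else $g\equiv At^2>0$ contradicts $g(1)\le 0$ and $u\neq 0$), with $h(0^+)=A>0$ and $h(1)\le 0$; hence $h$ has a unique zero $t_0\in(0,1]$, which is the unique zero of $g$ in $(0,\infty)$. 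This makes $t_0 u$ a legitimate element of the feasible set for $m_\omega^{\mathcal{N}}$. Everything else is the bookkeeping identity $\mathcal{S}_\omega - \tfrac12\mathcal{N}_\omega = \boJ$ and the trivial monotonicity of $\boJ$ under multiplication by $t_0\le 1$, so no further analytic input (compactness, regularity, etc.) is needed for this lemma.
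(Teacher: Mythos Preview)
Your proof is correct and follows essentially the same approach as the paper: both use the identity $\mathcal{S}_\omega - \tfrac{1}{2}\mathcal{N}_\omega = \mathcal{J}$ to rewrite $m_\omega^{\mathcal{N}}$ as an infimum of $\mathcal{J}$ over the Nehari set, then scale down by $t_0\in(0,1]$ to pass from $\mathcal{N}_\omega(u)\le 0$ to $\mathcal{N}_\omega(t_0 u)=0$ and use the monotonicity $\mathcal{J}(t_0 u)\le \mathcal{J}(u)$. Your justification of the existence and uniqueness of $t_0$ via the strictly decreasing function $h(t)=A-Bt^{p-1}-Ct^4$ is in fact more detailed than the paper's, which simply asserts the existence of such a $t_0$.
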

\begin{proof}[Proof of Lemma \ref{variational-character4}] 
Observe that for any $\omega>0$, 
\begin{equation}\label{re-S-J}
\mathcal{S}_{\omega}(u) - \frac{1}{2} \mathcal{N}_{\omega}(u)
= 
\Big(\frac{1}{2} - \frac{1}{p+1}\Big) \|u\|
_{L^{p+1}}^{p+1} 
+ 
\frac{1}{3}\|u\|_{L^{6}}^{6} 
= \mathcal{J}(u). 
\end{equation}
This yields that 
\begin{equation}\label{eq1-variational-character4}
m_{\omega}^{\mathcal{N}} 
= 
\inf \bigm\{
\mathcal{J}(u) \colon 
u \in H^{1}(\mathbb{R}^{3})
\setminus \{0\}, \ 
\mathcal{N}_{\omega} (u) = 0 
\bigm\}. 
\end{equation}
Hence, it suffices to show that if $\mathcal{N}_{\omega} (u) < 0$, then $m_{\omega}^{\mathcal{N}}<\mathcal{\mathcal{J}}(u)$. Suppose that 
$\mathcal{N}_{\omega}(u)<0$. Then, we can take 
$t_{0} \in (0, 1)$ such that $\mathcal{N}_{\omega} (t_{0} u) = 0$. Furthermore, this together with \eqref{eq1-variational-character4} yields that 
\begin{equation}\label{18/06/24/22:01}
m_{\omega}^{\mathcal{N}} \le \mathcal{\mathcal{J}}(t_{0} u) < \mathcal{\mathcal{J}}(u). 
\end{equation}
Since $u$ is arbitrary, we find from \eqref{18/06/24/22:01} that 
the claim \eqref{eq0-variational-character4} is true. 
\end{proof}
Now, we give a proof of Proposition \ref{thm4-2}. 
\begin{proof}[Proof of Proposition \ref{thm4-2}] 
We divide the proof into two parts: 
\\
{\bf Step 1.}~We shall show that if $\omega>\omega_{c}^{\mathcal{N}}$, 
then we have $m_{\omega}^{\mathcal{N}}=m_{\infty}$. 

First, observe from the definition of 
$\omega_{c}^{\mathcal{N}}$ 
(see \eqref{19/9/18/11:15}) and 
Proposition \ref{thm4-1} that if 
$\omega>\omega_{c}^{\mathcal{N}}$, then 
we see that $m_{\omega}^{\mathcal{N}} \ge m_{\infty}$. 
\par
We prove the claim by contradiction. Hence, suppose to the contrary that there exists 
$\omega_{1} >\omega_{c}^{\mathcal{N}}$ such that $m_{\omega_{1}}^{\mathcal{N}} > m_{\infty}$. 
Put $\mu_{1}:= m_{\omega_{1}}^{\mathcal{N}} - m_{\infty}~(>0)$. 
Moreover, for $\varepsilon>0$, 
we define the functions $W_{\varepsilon}$ and $V_{\varepsilon}$ by
\begin{equation}\label{modi-talenti}
W_{\varepsilon}(x) := 
T_{\varepsilon} W, \qquad 
V_{\varepsilon}(x) : = \chi(|x|) W_{\varepsilon}(x), 
\end{equation}
where $\chi$ is a smooth cut-off function satisfying 
$\chi(r) \equiv 1$ for $0 \leq r \leq 1$, 
$\chi(r) \equiv 0$ for $r \geq 2$ and 
$|\chi^{\prime}(r)| \leq 10$. 
Then, using \eqref{19/9/18/11:15}, we can verify that 
\begin{align} \label{est-Talenti3}
\|\nabla V_{\varepsilon}\|_{L^{2}}^{2} 
&= \sigma^{\frac{3}{2}} + O(\varepsilon^{2}), 
\\[6pt]
\|V_{\varepsilon}\|_{L^{6}}^{6} 
&= \sigma^{\frac{3}{2}} + O(\varepsilon^{6}), 
\label{est-Talenti1}
\\[6pt]
\|V_{\varepsilon}\|_{L^{p+1}}^{p+1} 
&= 
\begin{cases}
O(\varepsilon^{p+1}) & \mbox{if $1\le p< 2$}, 
\\
O(\varepsilon^{3} |\log \varepsilon|) & \mbox{if $p=2$}, 
\\
O(\varepsilon^{5-p}) & \mbox{if $2< p< 5$}. 
\end{cases}
\label{est-Talenti2}
\end{align} 
We see from \eqref{est-Talenti3} 
through \eqref{est-Talenti2} 
that for any $\varepsilon>0$, $t > 0$ 
and $1 < p \leq 3$, 
\begin{equation}\label{18/07/01/10:37}
t^{-2} \mathcal{N}_{\omega_{1}}(t V_{\varepsilon}) 
= 
\sigma^{\frac{3}{2}} + O(\varepsilon^{2}) 
+ \omega_{1} O(\varepsilon^{2})
- 
t^{p-1} \|V_{\varepsilon}\|_{L^{p+1}}^{p+1}
- 
t^{4} \sigma^{\frac{3}{2}}
+ 
t^{4} O(\varepsilon^{6}). 
\end{equation}
In particular, we find that for any sufficiently small 
$\varepsilon>0$, there exists $t_{\varepsilon} > 0$ 
such that 
\begin{equation}\label{18/07/01/10:41}
\mathcal{N}_{\omega_{1}}(t_{\varepsilon} V_{\varepsilon}) = 0, 
\qquad 
t_{\varepsilon} = 1 + O(\varepsilon^{2}).
\end{equation}
Hence, we see from Lemma \ref{variational-character4}, \eqref{18/07/01/10:41}, \eqref{est-Talenti1}, \eqref{est-Talenti2} and $p\le 3$ that for any sufficiently small $\varepsilon>0$, 
\begin{equation}\label{19/9/16/15:38}
\begin{split}
m_{\omega_{1}}^{\mathcal{N}}
& \leq \mathcal{\mathcal{J}}(t_{\varepsilon} V_{\varepsilon}) 
= \left(\frac{1}{2} - \frac{1}{p+1}\right)t_{\varepsilon}^{p+1}
\|V_{\varepsilon}\|_{L^{p+1}}^{p+1} 
+ \frac{t_{\varepsilon}^{3}}{3}
\|V_{\varepsilon}\|_{L^{6}}^{6} 
\\[6pt]
& \le \frac{1}{3}\sigma^{\frac{3}{2}}
+ 
O(\varepsilon^{2})
=
m_{\infty} + O(\varepsilon^{2}). 
\end{split}
\end{equation}
Furthermore, taking $\varepsilon \ll \mu_{1}$ in \eqref{19/9/16/15:38}, and 
recalling the definition of $\mu_{1}$ 
and the hypothesis 
$m_{\omega_{1}}^{\mathcal{N}} > m_{\infty}$, we find that 
\begin{equation}\label{18/07/01/10:52} 
m_{\omega_{1}}^{\mathcal{N}} 
\le 
m_{\infty} + \frac{1}{2}\mu_{1}
=
\frac{1}{2}m_{\omega_{1}}^{\mathcal{N}} 
+
\frac{1}{2} m_{\infty}
<
m_{\omega_{1}}^{\mathcal{N}}.
\end{equation} 
However, this a contradiction. Thus, we have proved that if $\omega > \omega_{c}^{\mathcal{N}}$, then $m_{\omega}^{\mathcal{N}} = m_{\infty}$.

\noindent 
{\bf Step 2.}~We shall show that if $\omega = \omega_{c}^{\mathcal{N}}$, then 
$m_{\omega}^{\mathcal{N}} = m_{\infty}$. 
\par 
Having proved that $m_{\omega}^{\mathcal{N}} = m_{\infty}$ for any $\omega > \omega_{c}^{\mathcal{N}}$, we see from Proposition \ref{thm4-1} that $m_{\omega_{c}^{\mathcal{N}}}^{\mathcal{N}} \le m_{\infty}$. 
Then, it follows from Proposition \ref{thm-0-1} that a ground state $\Phi$ for $\omega_{c}^{\mathcal{N}}$ exists. In particular, 
$\mathcal{N}_{\omega_{c}^{\mathcal{N}}}(\Phi)=0$
and $\mathcal{S}_{\omega_{c}^{\mathcal{N}}}(\Phi)=m_{\omega_{c}^{\mathcal{N}}}^{\mathcal{N}}$. 
Furthermore, we may assume that $\Phi$ is a positive radial solution to \eqref{sp} with $\omega=\omega_{c}^{\mathcal{N}}$ (see Remark \ref{remark-1}). 
Put $\mu_{c}^{\mathcal{N}} 
:= m_{\infty} - 
m_{\omega_{c}^{\mathcal{N}}}^{\mathcal{N}}$
($\mu_{c}^{\mathcal{N}} > 0$ by the hypothesis). 
Moreover, let $\{\omega_{n}\}$ be a sequence in $(0,\infty)$ such that 
$\lim_{n \to \infty}\omega_{n} = \omega_{c}^{\mathcal{N}}$, and $\omega_{n} > \omega_{c}^{\mathcal{N}}$ for all $n \ge 1$. 
We have already proved that for any $n \ge 1$, 
\begin{equation}\label{18/07/01/11:45}
m_{\omega_{n}}^{\mathcal{N}} = m_{\infty}.
\end{equation} 
We also see from $\mathcal{N}_{\omega_{c}^{\mathcal{N}}}(\Phi)=0$ that 
\begin{equation*}
\lim_{n\to \infty} \mathcal{N}_{\omega_{n}}(\Phi) 
= 
\lim_{n\to \infty} 
(\omega_{n} - \omega_{c}^{\mathcal{N}}) \|\Phi \|_{L^{2}}^{2} 
= 0. 
\end{equation*} 
This implies that there exists a sequence $\{t_{n}\}$ in $(0,\infty)$ such that \begin{align} 
\label{18/07/01/11:47}
& \mathcal{N}_{\omega_{n}}(t_{n}\Phi) = 0
\quad 
\mbox{for all $n\ge 1$}, 
\\[6pt]
\label{18/07/01/11:48} 
&\lim_{n \to \infty}t_{n} = 1.
\end{align} 
Furthermore, we can take a number $n_{c}$ such that for any $n\ge n_{c}$, 
\begin{equation}
\label{18/07/01/11:51}
\big| 
\mathcal{S}_{\omega_{n}} (t_{n}\Phi)- \mathcal{S}_{\omega_{c}^{\mathcal{N}}}(t_{n}\Phi) 
\big|
= 
\frac{t_{n}^{2}}{2}|\omega_{c}^{\mathcal{N}} - \omega_{n}|
\|\Phi\|_{L^{2}}^{2}< 
\frac{1}{4}\mu_{c}^{\mathcal{N}}. 
\end{equation}
Observe from \eqref{18/07/01/11:51} and 
$\mathcal{S}_{\omega_{c}^{\mathcal{N}}}(\Phi) =m_{\omega_{c}^{\mathcal{N}}}^{\mathcal{N}}$ that 
\begin{equation}
\label{18/07/01/11:59}
\begin{split}
\mathcal{S}_{\omega_{c}^{\mathcal{N}}} (t_{n}\Phi)
& \leq \mathcal{S}_{\omega_{c}^{\mathcal{N}}} (\Phi) 
+ |\mathcal{S}_{\omega_{c}^{\mathcal{N}}} (t_{n}\Phi) - 
\mathcal{S}_{\omega_{c}^{\mathcal{N}}} (\Phi)| \\
&< \mathcal{S}_{\omega_{c}^{\mathcal{N}}} (\Phi) 
+ \frac{1}{4}\mu_{c}^{\mathcal{N}}
=
m_{\omega_{c}^{\mathcal{N}}}^{\mathcal{N}} 
+ \frac{1}{4}\mu_{c}^{\mathcal{N}}.
\end{split}
\end{equation}
Using \eqref{18/07/01/11:45}, the definition of $m_{\omega}^{\mathcal{N}}$, \eqref{18/07/01/11:47}, 
\eqref{18/07/01/11:51}, \eqref{18/07/01/11:59} 
and $\mu_{c}^{\mathcal{N}} =m_{\infty}-m_{\omega_{c}^{\mathcal{N}}}^{\mathcal{N}}$, 
we find that for any $n \ge n_{c}$, 
\begin{equation*}
m_{\infty}
= 
m_{\omega_{n}}^{\mathcal{N}} \leq \mathcal{S}_{\omega_{n}} (t_{n}\Phi)
< 
\mathcal{S}_{\omega_{c}^{\mathcal{N}} }(t_{n}\Phi) 
+ \frac{1}{4}\mu_{c}^{\mathcal{N}} 
< m_{\omega_{c}^{\mathcal{N}}}^{\mathcal{N}} 
+ \frac{1}{2}\mu_{c}^{\mathcal{N}} 
= m_{\infty} - \frac{1}{2}\mu_{c}^{\mathcal{N}}. 
\end{equation*}
However, this is a contradiction. 
Thus, we have completed the proof. 
\end{proof}

Now, we shall prove Theorem \ref{thm-0}: 
\begin{proof}[Proof of Theorem \ref{thm-0}]
The claim \textrm{(i)} follows immediately 
from Remark \ref{remark-0} \textrm{(ii)} and 
Proposition \ref{thm-4}.

We move on to the proof of the claim {\rm (ii)}. We find from Proposition \ref{thm-0-1} and Proposition \ref{thm4-1} that if $\omega< \omega_{c}^{\mathcal{N}}$, then a ground state exists. It remains to show that for any $\omega>\omega_{c}^{\mathcal{N}}$, there is no ground state (no minimizer for $m_{\omega}^{\mathcal{N}}$). We prove this by contradiction. 
Hence, suppose to the contrary that there exists $\omega_{*} >\omega_{c}^{\mathcal{N}}$ for which a ground state $u_{*}$ exists. 
Since $\mathcal{N}_{\omega_{*}}(u_{*}) = 0$ and $\omega_{*} > \omega_{c}^{\mathcal{N}}$, we see that 
$\mathcal{N}_{\omega_{c}^{\mathcal{N}}}(u_{*}) < 0$. 
Furthermore, it is easy to see that there exists $t_{c} \in (0, 1)$ 
such that $\mathcal{N}_{\omega_{c}^{\mathcal{N}} }
(t_{c} u_{*}) = 0$. Hence, we find from Proposition \ref{thm4-2} and 
the definition of $m_{\omega_{c}}^{\mathcal{N}}$ (see \eqref{m-value}) 
that 
\begin{equation*}
\begin{split}
m_{\infty}
&=
m_{\omega_{c}^{\mathcal{N}}}^{\mathcal{N}} 
\le 
\mathcal{S}_{\omega_{c}^{\mathcal{N}}}(t_{c} u_{*})- \frac{1}{2}\mathcal{N}_{\omega_{c}^{\mathcal{N}}}(t_{c}u_{*})
=
\Big(\frac{1}{2} - \frac{1}{p+1}\Big) \|t_{c}u_{*}\|_{L^{p+1}}^{p+1} 
+ 
\frac{1}{3} \|t_{c}u_{*}\|_{L^{6}}^{6}
\\[6pt]
&< \Big(\frac{1}{2} - \frac{1}{p+1}\Big) \|u_{*}\|_{L^{p+1}}^{p+1} 
+ 
\frac{1}{3} \|u_{*}\|_{L^{6}}^{6}
=
\mathcal{S}_{\omega_{*}}(u_{*})- \frac{1}{2}\mathcal{N}_{\omega_{*}}(u_{*})
\\[6pt]
&=
\mathcal{S}_{\omega_{*}}(u_{*})
=
m_{\omega_{*}}^{\mathcal{N}}
=
m_{\infty}. 
\end{split}
\end{equation*}
However, this is a contradiction. 
Thus, we have proved the claim (ii) and completed 
the proof of Theorem \ref{thm-0}.
\end{proof}



\section{Proof of Theorem \ref{thm-2mini-1}} 
\label{section-gap}
Our aim in this section is to prove 
Theorem \ref{thm-2mini-1}. 
First, we recall the following fact: 
\begin{lemma} \label{re-regularity} 
Let $1<p<5, 
\omega>0$ and $u_{\omega} \in H^{1}(\mathbb{R}^{3})$ be a solution to \eqref{sp}. 
Then, 
there exists $(x_{0}, \theta_{0}) \in \mathbb{R}^{3}\times [-\pi, \pi)$ such that 
$\|u_{\omega}\|_{L^{\infty}}=e^{i\theta_{0}}u_{\omega}(x_{0})$.
\end{lemma}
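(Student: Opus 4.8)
The plan is to show that $u_{\omega}$ is, thanks to the elliptic regularity allowed by \eqref{sp}, a continuous function that vanishes at infinity; such a function attains the supremum of its modulus at a finite point, and rotating by a suitable phase $\theta_{0}$ turns that value into $\|u_{\omega}\|_{L^{\infty}}$. If $u_{\omega}\equiv 0$ the statement is trivial (take any $(x_{0},\theta_{0})$), so assume $u_{\omega}\not\equiv 0$.

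\textbf{Step 1: regularity.} Rewrite \eqref{sp} as $u_{\omega}=(-\Delta+\omega)^{-1}(|u_{\omega}|^{p-1}u_{\omega}+|u_{\omega}|^{4}u_{\omega})$. Since $u_{\omega}\in H^{1}(\mathbb{R}^{3})\hookrightarrow L^{2}(\mathbb{R}^{3})\cap L^{6}(\mathbb{R}^{3})$ and $1<p<5$, the critical potential satisfies $|u_{\omega}|^{4}\in L^{3/2}(\mathbb{R}^{3})$, while the subcritical potential $|u_{\omega}|^{p-1}$ lies in $L^{6/(p-1)}(\mathbb{R}^{3})$ with $6/(p-1)>3/2$. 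A standard Brezis--Kato type iteration for the Schr\"odinger operator $-\Delta+\omega$ with such potentials then yields $u_{\omega}\in L^{q}(\mathbb{R}^{3})$ for every $q\in[2,\infty)$. Feeding this back, the right-hand side of \eqref{sp} belongs to $L^{q}(\mathbb{R}^{3})$ for every $q<\infty$; in particular it lies in $L^{2}(\mathbb{R}^{3})$, so $u_{\omega}\in H^{2}(\mathbb{R}^{3})$ (and, using Lemma \ref{Delta-ineq} or the explicit kernel in \eqref{explicit-1}, $u_{\omega}\in W^{2,q}(\mathbb{R}^{3})$ for all $q<\infty$).

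\textbf{Step 2: decay and attainment.} By the Sobolev embedding $H^{2}(\mathbb{R}^{3})\hookrightarrow C_{0}(\mathbb{R}^{3})$, the function $u_{\omega}$ is continuous and $u_{\omega}(x)\to 0$ as $|x|\to\infty$ (equivalently, $u_{\omega}=K_{\omega}*(|u_{\omega}|^{p-1}u_{\omega}+|u_{\omega}|^{4}u_{\omega})$ with $K_{\omega}\in L^{1}(\mathbb{R}^{3})$ the kernel of \eqref{explicit-1} and the convolved factor in every $L^{q}$). Hence $|u_{\omega}|$ is continuous, tends to $0$ at infinity, and is not identically zero, so it attains its maximum at some $x_{0}\in\mathbb{R}^{3}$; since $u_{\omega}$ is continuous, this maximum equals the essential supremum $\|u_{\omega}\|_{L^{\infty}}$. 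Writing $u_{\omega}(x_{0})=e^{-i\theta_{0}}|u_{\omega}(x_{0})|$ with $\theta_{0}\in[-\pi,\pi)$, we get $e^{i\theta_{0}}u_{\omega}(x_{0})=|u_{\omega}(x_{0})|=\|u_{\omega}\|_{L^{\infty}}$, which is the assertion.

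\textbf{Main difficulty.} Everything except Step 1 is soft. The one genuine point in Step 1 is the Sobolev-critical nonlinearity: the naive bootstrap $u_{\omega}\in L^{6}\Rightarrow|u_{\omega}|^{5}\in L^{6/5}\Rightarrow u_{\omega}\in W^{2,6/5}\hookrightarrow L^{6}$ does not improve integrability, so one must run the Brezis--Kato argument carefully---testing the equation against truncations of $|u_{\omega}|^{2(s-1)}u_{\omega}$ and using that $\|\,|u_{\omega}|^{4}\,\|_{L^{3/2}(\{|u_{\omega}|>M\})}$ is small for large $M$ to absorb the critical term into the left-hand side---after which the subcritical term and the $H^{2}$ upgrade follow routinely.
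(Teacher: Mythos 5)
Your proposal is correct and follows essentially the same route as the paper: bootstrap regularity of $u_\omega$ (handling the Sobolev-critical term via a Brezis--Kato absorption argument, which is precisely what underlies the Brezis--Lieb theorem the paper cites), establish decay at infinity, and conclude that a continuous function vanishing at infinity attains the supremum of its modulus, with $\theta_0$ chosen to rotate the phase. The only cosmetic difference is that the paper pushes on to $C^2$ regularity via Schauder (not actually needed for this lemma), whereas you stop at continuity via the embedding $H^2(\mathbb{R}^3)\hookrightarrow C_0(\mathbb{R}^3)$, which is slightly more economical.
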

\begin{proof}
It is known that 
$u_{\omega} 
\in W_{\text{loc}}^{2, q}(\mathbb{R}^{3})$ 
for all $2 < q < \infty$ and 
$\lim_{|x| \to \infty} u_{\omega}(x) = 0$ 
(see e.g. Brezis and Lieb~\cite[Theorem 2.3]{Brezis-Lieb}). 
Furthermore, using Schauder's estimate, 
we can verify that $u_{\omega} 
\in C^{2}(\mathbb{R}^{3})$ 
(see e.g. 
Gilberg and Trudinger~\cite[Theorems 6.2 and 6.6]{GT}, 
Struwe~\cite[Appendix B]{Struwe}). 
Therefore, there exists 
$(x_{0}, \theta_{0}) \in \mathbb{R}^{3} \times [-\pi, \pi)$ such that 
$e^{i \theta_{0}}u_{\omega}(x_{0}) 
= \|u_{\omega}\|_{L^{\infty}} < \infty$. 
\end{proof}
The key to the proof of Theorem \ref{thm-2mini-1} 
is the following proposition: 
\begin{proposition} \label{thm-conv}
Assume $1<p\le 3$ and 
$\omega > \omega_{c}^{\mathcal{N}}$. 
Suppose that 
\begin{equation}\label{hypo-eq}
m_{\omega}^{S} 
= m_{\omega}^{\mathcal{N}}.
\end{equation} 
Then, for any minimizing sequence $\{u_{n}\}$
for $m_{\omega}^{S}$, and 
there exists a sequence $\{(x_{n},\theta_{n})\}$ 
in $\mathbb{R}^{3}\times [-\pi, \pi)$ and 
a subsequence of $\{u_{n}\}$ (still denoted by the same symbol) 
such that putting $\alpha_{n}:=\omega \|u_{n}\|_{L^{\infty}}^{-4}$ 
and $\widetilde{u}_{n}:=T_{\|u_{n}\|_{L^{\infty}}}[ e^{i\theta_{n}}u_{n}(\cdot+x_{n})]$, 
we have: 
$\|u_{n}\|_{K^{\infty}}
=
e^{i\theta_{n}}u_{n}(x_{n})$, 
\begin{align}
\label{19/9/10/18:17}
\lim_{n\to \infty}\|u_{n}\|_{L^{\infty}}
& = \infty, 
\\[6pt]
\lim_{n\to \infty}\alpha_{n}
& = 0, 
\\[6pt] 
\label{conv-W}
\lim_{n \to \infty} \widetilde{u}_{n} 
&= W \qquad 
\mbox{strongly in $\dot{H}^{1}(\mathbb{R}^{3})$}. 
\end{align}
\end{proposition}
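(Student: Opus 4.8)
The plan is to combine three inputs: under the hypothesis \eqref{hypo-eq} and Proposition \ref{thm4-2} one has $m_\omega^S=m_\omega^{\mathcal N}=m_\infty=\tfrac13\sigma^{3/2}$; for $\omega>\omega_c^{\mathcal N}$ there is no minimizer for $m_\omega^{\mathcal N}$ (Theorem \ref{thm-0}(ii)); and equality in Sobolev's inequality \eqref{19/9/7/15:31} forces the profile \eqref{scale-W}. Let $\{u_n\}$ be a minimizing sequence for $m_\omega^S$, so each $u_n$ is a nontrivial solution of \eqref{sp} and $\mathcal S_\omega(u_n)\to m_\infty$. Write $a_n:=\|u_n\|_{L^{p+1}}^{p+1}$ and $b_n:=\|u_n\|_{L^6}^6$. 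Since $\mathcal N_\omega(u_n)=0$, \eqref{re-S-J} gives $\mathcal S_\omega(u_n)=\mathcal J(u_n)=(\tfrac12-\tfrac1{p+1})a_n+\tfrac13 b_n\to\tfrac13\sigma^{3/2}$, whence $\{a_n\}$ and $\{b_n\}$ are bounded; together with $\mathcal N_\omega(u_n)=0$ this also shows $\|\nabla u_n\|_{L^2}^2+\omega\|u_n\|_{L^2}^2=a_n+b_n$ is bounded, so $\{u_n\}$ is bounded in $H^1(\mathbb R^3)$.

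The first step is to prove $a_n\to0$. Suppose not; along a subsequence $a_n\to a_*>0$, so $\{u_n\}$ does not vanish in $L^{p+1}$ (recall $2<p+1<6$). By Lions' concentration--compactness there are translations $x_n$ with $u_n(\cdot+x_n)\rightharpoonup u_*\neq0$ weakly in $H^1$; passing to the limit in \eqref{sp} --- the subcritical term by local compactness, the critical term by a.e.\ convergence and boundedness in $L^{6/5}$ --- shows $u_*$ is a nontrivial solution of \eqref{sp}. Hence $\mathcal S_\omega(u_*)\ge m_\omega^S=m_\infty$ and $\mathcal N_\omega(u_*)=0$, so $\mathcal J(u_*)=\mathcal S_\omega(u_*)\ge m_\infty$; but $\mathcal J$ is weakly lower semicontinuous on $H^1$, so $\mathcal J(u_*)\le\liminf\mathcal J(u_n)=m_\infty$. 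Thus $u_*$ satisfies $\mathcal N_\omega(u_*)=0$ and $\mathcal J(u_*)=m_\infty=m_\omega^{\mathcal N}$, i.e.\ $u_*$ is a ground state, contradicting Theorem \ref{thm-0}(ii). Therefore $a_n\to0$; then $b_n\to\sigma^{3/2}$, and $\sigma b_n^{1/3}\le\|\nabla u_n\|_{L^2}^2\le a_n+b_n$ (Sobolev and Nehari) squeezes $\|\nabla u_n\|_{L^2}^2\to\sigma^{3/2}$, after which $\mathcal N_\omega(u_n)=0$ gives $\|u_n\|_{L^2}\to0$.

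Next I would rescale. Put $M_n:=\|u_n\|_{L^\infty}$, so by Lemma \ref{re-regularity} $M_n=e^{i\theta_n}u_n(x_n)$ for some $(x_n,\theta_n)$. From $b_n\le M_n^4\|u_n\|_{L^2}^2$ with $b_n\to\sigma^{3/2}>0$ and $\|u_n\|_{L^2}\to0$ we obtain $M_n\to\infty$, hence $\alpha_n:=\omega M_n^{-4}\to0$ and $\beta_n:=M_n^{p-5}\to0$. Setting $\widetilde u_n:=T_{M_n}[e^{i\theta_n}u_n(\cdot+x_n)]$, one has $\|\widetilde u_n\|_{L^\infty}=\widetilde u_n(0)=1$, $\|\nabla\widetilde u_n\|_{L^2}^2=\|\nabla u_n\|_{L^2}^2\to\sigma^{3/2}$, and $\widetilde u_n$ solves $-\Delta\widetilde u_n+\alpha_n\widetilde u_n-\beta_n|\widetilde u_n|^{p-1}\widetilde u_n-|\widetilde u_n|^4\widetilde u_n=0$. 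Now $\{\widetilde u_n\}$ is bounded in $\dot H^1\cap L^\infty$; pass to a subsequence with $\widetilde u_n\rightharpoonup v$ in $\dot H^1$. Since $|\Delta\widetilde u_n|\le\alpha_n+\beta_n+1$ is uniformly bounded and $\widetilde u_n$ is bounded in $L^6_{\mathrm{loc}}$, interior elliptic estimates give a uniform $W^{2,6}_{\mathrm{loc}}$ bound, hence $\widetilde u_n\to v$ in $C^1_{\mathrm{loc}}$; thus $v(0)=1$ (so $v\neq0$), and letting $n\to\infty$ in the equation (using $\alpha_n,\beta_n\to0$) shows $v\in\dot H^1(\mathbb R^3)$ solves $-\Delta v=|v|^4v$, so $\|\nabla v\|_{L^2}^2=\|v\|_{L^6}^6$. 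Weak lower semicontinuity gives $\|\nabla v\|_{L^2}^2\le\sigma^{3/2}$, while \eqref{19/9/7/15:31} combined with $\|\nabla v\|_{L^2}^2=\|v\|_{L^6}^6$ gives $\|\nabla v\|_{L^2}^2\ge\sigma^{3/2}$; hence $\|\nabla v\|_{L^2}^2=\sigma^{3/2}=\|v\|_{L^6}^6$, which both yields $\|\nabla(\widetilde u_n-v)\|_{L^2}\to0$ (this is \eqref{conv-W}) and shows $v$ realizes equality in \eqref{19/9/7/15:31}.

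To conclude, by \eqref{scale-W} we have $v=z\lambda_0^{-1}W(\lambda_0^{-2}\,\cdot+x_0)$ for some $z\in\mathbb C$, $\lambda_0>0$, $x_0\in\mathbb R^3$; since $|\widetilde u_n|\le1$ pointwise forces $\|v\|_{L^\infty}\le1$ while $v(0)=1$, the maximum of $|v|$ is attained at $0$, which gives $x_0=0$ and $z=\lambda_0>0$, i.e.\ $v=W(\lambda_0^{-2}\,\cdot)$; finally, $-\Delta v(0)=|v(0)|^4v(0)=1$ whereas $-\Delta\bigl(W(\lambda_0^{-2}\,\cdot)\bigr)(0)=\lambda_0^{-4}(-\Delta W)(0)=\lambda_0^{-4}W(0)^5=\lambda_0^{-4}$, so $\lambda_0=1$ and $v=W$. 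The step I expect to be most delicate is the first one: one must run the vanishing/non-vanishing alternative with care and make sure the weak limit is genuinely a nontrivial $H^1$-solution of the fixed-frequency equation \eqref{sp}, so that the contradiction with the non-existence of ground states (Theorem \ref{thm-0}(ii)) is airtight; the remaining ingredients are standard elliptic bootstrapping, the Nehari/Sobolev bookkeeping, and the Aubin--Talenti rigidity for \eqref{19/9/7/15:31}.
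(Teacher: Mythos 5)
Your argument is correct and follows the paper's overall plan --- bound the sequence in $H^1$, show $\|u_n\|_{L^{p+1}}\to0$ and $\|u_n\|_{L^2}\to0$, deduce $\|u_n\|_{L^\infty}\to\infty$, rescale, and identify the limit profile as $W$ via the Aubin--Talenti rigidity --- but several steps are executed along a different track. To rule out a nontrivial weak limit you pass to the limit directly in equation \eqref{sp} (local compactness for the subcritical term, $L^{6/5}$-boundedness plus a.e.\ convergence for the critical one) and use that the limit $u_*$, being a nontrivial $H^1$-solution, satisfies $\mathcal{S}_\omega(u_*)\ge m_\omega^{S}$ and $\mathcal{N}_\omega(u_*)=0$; the paper avoids the PDE here and argues purely variationally, combining the Brezis--Lieb splitting with the $\{\mathcal{N}_\omega\le0\}$ characterization of $m_\omega^{\mathcal{N}}$ in Lemma \ref{variational-character4} to show $\mathcal{N}_\omega(u_\infty)\le 0$ and $\mathcal{J}(u_\infty)=m_\omega^{\mathcal{N}}$. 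To get $\|u_n\|_{L^2}\to0$ you use only the Nehari identity and Sobolev to squeeze $\|\nabla u_n\|_{L^2}^2\to\sigma^{3/2}$, whereas the paper invokes the Pohozaev identity $\omega\|u_n\|_{L^2}^2=\tfrac{5-p}{2(p+1)}\|u_n\|_{L^{p+1}}^{p+1}$ obtained from $\mathcal{K}(u_n)=\mathcal{N}_\omega(u_n)=0$; yours is slightly more elementary, the paper's is a one-liner once Pohozaev is available. After rescaling you deduce strong $\dot H^1$ convergence from weak convergence together with convergence of the $\dot H^1$ norm (a Hilbert-space fact), which is leaner than the paper's Brezis--Lieb bookkeeping with $\mathcal{H}^{\ddagger}$ and $\mathcal{N}^{\ddagger}$ in Step~6; and your identification $v=W$ from the $L^\infty$-normalization, $v(0)=1$, and the pointwise equation is equivalent in substance to the paper's use of $\mathcal{N}^{\ddagger}(u_\infty)=0$. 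In short: same skeleton, different flesh in a few steps, all sound.
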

For the proof of Proposition \ref{thm-conv}, 
we also use the following fact (Lemma \ref{19/9/7/17:12} 
through Lemma \ref{thm-bl}): 
\begin{lemma}\label{19/9/7/17:12}
Assume 
$1 < p < 5$. 
Then, there exists a constant $C_{\omega} > 0$ depending only on $p$ and $\omega > 0$ such that 
\begin{equation} \label{m-lower}
m_{\omega}^{\mathcal{N}} \geq C_{\omega}
. 
\end{equation}
\end{lemma}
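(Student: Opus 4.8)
The plan is to reduce the lower bound, via the elementary identity \eqref{re-S-J} (cf. Lemma~\ref{variational-character4}), to a lower bound on the functional $\mathcal{J}$ over the Nehari constraint set, and then to derive this from a uniform lower bound on the $H^{1}$-size of any nonzero $u$ with $\mathcal{N}_{\omega}(u)=0$.

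First I would note that if $u\in H^{1}(\mathbb{R}^{3})\setminus\{0\}$ satisfies $\mathcal{N}_{\omega}(u)=0$, then \eqref{re-S-J} gives $\mathcal{S}_{\omega}(u)=\mathcal{S}_{\omega}(u)-\tfrac{1}{2}\mathcal{N}_{\omega}(u)=\mathcal{J}(u)$, so it suffices to bound $\mathcal{J}(u)$ from below by a positive constant depending only on $p$ and $\omega$. Writing $A:=\|\nabla u\|_{L^{2}}^{2}+\omega\|u\|_{L^{2}}^{2}$, the constraint $\mathcal{N}_{\omega}(u)=0$ reads $A=\|u\|_{L^{p+1}}^{p+1}+\|u\|_{L^{6}}^{6}$.

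Next I would bound the right-hand side from above in terms of $A$. The Sobolev inequality \eqref{19/9/7/15:31} (with $d=3$) gives $\|u\|_{L^{6}}^{6}\le\sigma^{-3}\|\nabla u\|_{L^{2}}^{6}\le\sigma^{-3}A^{3}$, and the Gagliardo--Nirenberg inequality, available because $2<p+1<6$, combined with $\|\nabla u\|_{L^{2}}^{2}\le A$ and $\omega\|u\|_{L^{2}}^{2}\le A$, gives $\|u\|_{L^{p+1}}^{p+1}\le C_{1}A^{\frac{p+1}{2}}$ with $C_{1}$ depending only on $p$ and $\omega$. Hence $A\le C_{1}A^{\frac{p+1}{2}}+\sigma^{-3}A^{3}$; since $u\neq 0$ forces $A>0$, dividing by $A$ yields $1\le C_{1}A^{\frac{p-1}{2}}+\sigma^{-3}A^{2}$. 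Because $p>1$ both exponents on the right are positive, so the right-hand side tends to $0$ as $A\to 0^{+}$, and therefore there is $\delta=\delta(p,\omega)>0$ with $A\ge\delta$ for every admissible $u$.

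Finally, since $p>1$ we have $\tfrac{1}{2}-\tfrac{1}{p+1}=\tfrac{p-1}{2(p+1)}>0$, so with $c_{p}:=\min\{\tfrac{p-1}{2(p+1)},\tfrac{1}{3}\}$,
\[
\mathcal{J}(u)=\Big(\frac{1}{2}-\frac{1}{p+1}\Big)\|u\|_{L^{p+1}}^{p+1}+\frac{1}{3}\|u\|_{L^{6}}^{6}\ \ge\ c_{p}\big(\|u\|_{L^{p+1}}^{p+1}+\|u\|_{L^{6}}^{6}\big)\ =\ c_{p}A\ \ge\ c_{p}\delta,
\]
and taking the infimum over all admissible $u$ gives \eqref{m-lower} with $C_{\omega}:=c_{p}\delta$. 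I do not expect any real obstacle: this is a routine ``the energy stays bounded below on the Nehari set'' computation; the only point needing a little care is tracking the $\omega$-dependence of the Gagliardo--Nirenberg constant so that $C_{\omega}$ depends only on $p$ and $\omega$, as claimed.
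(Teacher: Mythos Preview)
Your proof is correct and follows essentially the same strategy as the paper: both use the Nehari constraint together with Sobolev-type inequalities to show that the nonlinear terms (hence $\mathcal{J}$) are bounded below by a positive constant depending only on $p$ and $\omega$. The only cosmetic difference is that the paper handles $\|u\|_{L^{p+1}}^{p+1}$ by interpolation and Young's inequality to obtain a lower bound on $\|u\|_{L^{6}}^{6}$ directly, whereas you use Gagliardo--Nirenberg to get a lower bound on the full quantity $A=\|\nabla u\|_{L^{2}}^{2}+\omega\|u\|_{L^{2}}^{2}$; either route gives the result.
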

\begin{proof}[Proof of Lemma \ref{19/9/7/17:12}] 
For any $\varepsilon>0$, Lemma \ref{variational-character4} 
enables us to take
a function $u_{\varepsilon} \in H^{1}
(\mathbb{R}^{3})\setminus \{0\}$ such that 
$\mathcal{N}_{\omega}(u_{\varepsilon}) \le 0$ and $\mathcal{J}(u_{\varepsilon}) \le m_{\omega}^{\mathcal{N}}+\varepsilon$. 
Then, it follows from $\mathcal{N}_{\omega}
(u_{\varepsilon}) \le 0$ and elementary computations that 
\begin{equation}\label{19/9/7/15:19}
\|\nabla u_{\varepsilon}\|_{L^{2}}^{2} 
+ 
\omega \|u_{\varepsilon}\|_{L^{2}}^{2} 
\le 
\|u_{\varepsilon}\|_{L^{p+1}}^{p+1} 
+ 
\|u_{\varepsilon}
\|_{L^{6}}^{6} 
\le 
\frac{\omega}{2} 
\|u_{\varepsilon}\|_{L^{2}}^{2} 
+ 
C(\omega)
\|u_{\varepsilon}
\|_{L^{6}}^{6},
\end{equation}
where $C(\omega)>0$ is some constant depending only on $p$ and $\omega$. Notice that the first term on the right-hand side of 
\eqref{19/9/7/15:19} can be absorbed into the second term 
on the left-hand side. Hence, we see from Sobolev's inequality \eqref{19/9/7/15:31} and \eqref{19/9/7/15:19} that 
\begin{equation}\label{19/9/7/16:13}
\sigma \|u_{\varepsilon}\|_{L^{6}}^{2} 
\le 
\|\nabla u_{\varepsilon}\|_{L^{2}}^{2} 
\le 
C(\omega)\|u_{\varepsilon}\|_{L^{6}}^{6}. 
\end{equation}
This together with $\mathcal{J}(u_{\varepsilon})\le m_{\omega}^{\mathcal{N}}+\varepsilon$ implies 
\begin{equation}\label{19/9/7/16:15}
\frac{1}{d}
\left( \frac{\sigma}{C(\omega)}\right)^{\frac{3}{2}} 
\le 
\frac{1}{3}\|u_{\varepsilon}
\|_{L^{6}}^{6} 
\le 
\mathcal{J}(u_{\varepsilon}) 
\le 
m_{\omega}^{\mathcal{N}}+\varepsilon. 
\end{equation}
Taking $\varepsilon \to 0$ in \eqref{19/9/7/16:15} yields \eqref{m-lower}. 
\end{proof}

Furthermore, we will employ the following lemmas (Lemma \ref{thm-fll} through Lemma \ref{thm-bl}):
\begin{lemma}[Fr\"ohlich, Lieb and Loss~\cite{FLL}]\label{thm-fll}
Let $d\ge 1$, $1<p_{1} <p_{2} <p_{3}$, and let $C_{1},C_{2}, C_{3}$ be positive constants. Then, there exist constants $C>0$ and $\eta>0$ such that for any measurable function $u$ on $\mathbb{R}^{d}$ satisfying 
\begin{align}
\label{16/05/05/07:01}
& \|u\|_{L^{p_{1}}}\le C_{1},
\\[6pt]
\label{16/05/05/07:02}
&C_{2}\le \|u\|_{L^{p_{2}}},
\\[6pt]
\label{16/05/05/07:03}
&\|u\|_{L^{p_{3}}}\le C_{3},
\end{align}
we have 
\begin{equation}\label{16/03/25/10:52}
|\{ x\in \mathbb{R}^{d} \colon |u(x)| >\eta \}|\ge C
.
\end{equation}
\end{lemma}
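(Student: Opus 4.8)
The plan is to bound $\|u\|_{L^{p_{2}}}^{p_{2}}$ from below by splitting the underlying space according to the size of $|u|$ relative to a threshold $\eta>0$ to be fixed. First I would handle the region where $u$ is small: on $\{|u|\le \eta\}$ one has $|u|^{p_{2}}=|u|^{p_{1}}|u|^{p_{2}-p_{1}}\le \eta^{p_{2}-p_{1}}|u|^{p_{1}}$ because $p_{1}<p_{2}$, so integrating and using \eqref{16/05/05/07:01} gives
\[
\int_{\{|u|\le \eta\}}|u|^{p_{2}}\le \eta^{p_{2}-p_{1}}\,C_{1}^{p_{1}}.
\]
I would then choose $\eta>0$ small, depending only on $p_{1},p_{2},C_{1},C_{2}$, so that $\eta^{p_{2}-p_{1}}C_{1}^{p_{1}}\le \tfrac12 C_{2}^{p_{2}}$. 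Combining this with the lower bound \eqref{16/05/05/07:02} for $\|u\|_{L^{p_{2}}}$ yields, with $A_{\eta}:=\{x\in\mathbb{R}^{d}:|u(x)|>\eta\}$,
\[
\int_{A_{\eta}}|u|^{p_{2}}\ge \tfrac12\,C_{2}^{p_{2}}.
\]

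Next I would estimate the same integral from above on $A_{\eta}$ using H\"older's inequality with the conjugate exponents $\tfrac{p_{3}}{p_{2}}$ and $\tfrac{p_{3}}{p_{3}-p_{2}}$ (legitimate since $p_{2}<p_{3}$) together with \eqref{16/05/05/07:03}:
\[
\int_{A_{\eta}}|u|^{p_{2}}\le \Big(\int_{A_{\eta}}|u|^{p_{3}}\Big)^{\frac{p_{2}}{p_{3}}}\,|A_{\eta}|^{1-\frac{p_{2}}{p_{3}}}\le C_{3}^{p_{2}}\,|A_{\eta}|^{1-\frac{p_{2}}{p_{3}}}.
\]
Comparing the two displays gives $|A_{\eta}|^{1-p_{2}/p_{3}}\ge \tfrac{C_{2}^{p_{2}}}{2C_{3}^{p_{2}}}$, hence
\[
|A_{\eta}|\ge \Big(\frac{C_{2}^{p_{2}}}{2C_{3}^{p_{2}}}\Big)^{\frac{p_{3}}{p_{3}-p_{2}}}=:C>0,
\]
which is exactly \eqref{16/03/25/10:52} for this choice of $C$ and $\eta$.

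There is essentially no serious obstacle; the only point deserving attention is the order in which the constants are produced: $\eta$ must be fixed first (it depends only on $p_{1},p_{2},C_{1},C_{2}$, which is possible precisely because $p_{2}-p_{1}>0$), and only afterwards is $C$ determined, depending on $p_{2},p_{3},C_{2},C_{3}$. I would also remark in passing that the hypothesis $p_{1}>1$ plays no role — only $p_{1}<p_{2}<p_{3}$ and finiteness of the three norms are used — and that the case $|A_{\eta}|=\infty$ trivially satisfies \eqref{16/03/25/10:52}, so one may freely assume $|A_{\eta}|<\infty$ when invoking H\"older.
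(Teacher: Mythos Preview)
Your argument is correct and is the standard proof of this lemma. Note, however, that the paper does not supply a proof of this statement at all: it is quoted as a known result from Fr\"ohlich--Lieb--Loss~\cite{FLL} and used as a black box, so there is no ``paper's own proof'' to compare against. Your observation that the hypothesis $p_{1}>1$ is unnecessary, and that only $p_{1}<p_{2}<p_{3}$ is used, is also accurate.
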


\begin{lemma}[Lieb~\cite{Lieb}]\label{thm-l}
Let $d\ge 1$, and let $\{u_{n}\}$ be a bounded sequence in $H^{1}(\mathbb{R}^{d})$. 
Assume that 
there exist $C>0$ and $\eta>0$ such that 
\begin{equation}\label{16/03/25/10:58}
\inf_{n \in \mathbb{N}}|\{ x\in \mathbb{R}^{d} 
\colon |u_{n}(x)| >\eta \}|\ge C.
\end{equation}
Then, there exist a subsequence of $\{u_{n}\}$ (still denoted by the same symbol $\{u_{n}\}$), 
a nontrivial function $u_{\infty}\in H^{1}(\mathbb{R}^{d})$ and a sequence $\{y_{n}\}$ in $\mathbb{R}^{d}$ such that
\begin{equation}\label{16/03/25/11:02}
\lim_{n\to \infty}u_{n}(\cdot +y_{n})=u_{\infty} 
\qquad 
\mbox{weakly in $H^{1}(\mathbb{R}^{d})$.}
\end{equation}
\end{lemma}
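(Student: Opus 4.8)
The plan is to exclude the \emph{vanishing} alternative of Lions' concentration--compactness principle: the lower bound \eqref{16/03/25/10:58} on the measure of a super-level set of $u_{n}$ prevents the $L^{q}$-mass (for a fixed subcritical $q$) from spreading out, so some unit cube always carries a definite amount of $L^{2}$-mass, and a suitable translation brings us to a nontrivial weak limit. Put $M:=\sup_{n}\|u_{n}\|_{H^{1}}<\infty$. First I would convert the hypothesis into an $L^{q}$ lower bound: fix $q:=2+\frac{4}{d}$, which satisfies $2<q<\frac{2d}{d-2}$ when $d\ge 3$ (and $q<\infty$ when $d\le 2$), so that $H^{1}(\mathbb{R}^{d})\hookrightarrow L^{q}(\mathbb{R}^{d})$; then
\[
\|u_{n}\|_{L^{q}}^{q}\ \ge\ \int_{\{|u_{n}|>\eta\}}|u_{n}|^{q}\,dx\ \ge\ \eta^{q}\,\bigl|\{x\in\mathbb{R}^{d}\colon |u_{n}(x)|>\eta\}\bigr|\ \ge\ \eta^{q}C ,
\]
so that $\inf_{n}\|u_{n}\|_{L^{q}}>0$.

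The core step is to localize this $L^{q}$-mass uniformly in $n$. Tile $\mathbb{R}^{d}$ by the integer translates $\{Q_{k}\}_{k\in\mathbb{Z}^{d}}$ of the unit cube. On each cube, H\"older's inequality together with the Sobolev embedding $H^{1}(Q_{k})\hookrightarrow L^{2d/(d-2)}(Q_{k})$ (the case $d\le 2$ being analogous with a larger exponent) gives $\|v\|_{L^{q}(Q_{k})}\lesssim\|v\|_{L^{2}(Q_{k})}^{1-\theta}\|v\|_{H^{1}(Q_{k})}^{\theta}$ with $\theta:=d(\tfrac12-\tfrac1q)=\tfrac{d}{d+2}\in(0,1)$ and a constant independent of $k$ by translation invariance. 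The choice $q=2+\frac{4}{d}$ is made precisely so that $q\theta=2$ and $q(1-\theta)=\frac{4}{d}$. Raising the cube estimate to the $q$th power, summing over $k$, and using the additivity over the tiling of $\|\cdot\|_{L^{q}}^{q}$ and of $\|\cdot\|_{H^{1}}^{2}$,
\[
\eta^{q}C\ \le\ \|u_{n}\|_{L^{q}}^{q}\ =\ \sum_{k}\|u_{n}\|_{L^{q}(Q_{k})}^{q}\ \lesssim\ \Bigl(\sup_{k}\|u_{n}\|_{L^{2}(Q_{k})}\Bigr)^{4/d}\sum_{k}\|u_{n}\|_{H^{1}(Q_{k})}^{2}\ \le\ M^{2}\Bigl(\sup_{k}\|u_{n}\|_{L^{2}(Q_{k})}\Bigr)^{4/d}.
\]
Since $M,\eta,C$ are fixed, there is $\rho>0$, independent of $n$, with $\sup_{k}\|u_{n}\|_{L^{2}(Q_{k})}^{2}\ge\rho$; for each $n$ we then pick $y_{n}\in\mathbb{Z}^{d}$ with $\|u_{n}\|_{L^{2}(Q_{y_{n}})}^{2}\ge\rho/2$.

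Then I would translate and pass to the limit. Put $v_{n}:=u_{n}(\cdot+y_{n})$; since translation preserves the $H^{1}$ norm, $\{v_{n}\}$ is bounded in $H^{1}(\mathbb{R}^{d})$, so after passing to a subsequence $v_{n}\rightharpoonup u_{\infty}$ weakly in $H^{1}(\mathbb{R}^{d})$ for some $u_{\infty}\in H^{1}(\mathbb{R}^{d})$. By the Rellich--Kondrachov theorem $v_{n}\to u_{\infty}$ strongly in $L^{2}(Q_{0})$, with $Q_{0}$ the cube of the tiling at the origin, hence
\[
\|u_{\infty}\|_{L^{2}(Q_{0})}^{2}\ =\ \lim_{n\to\infty}\|v_{n}\|_{L^{2}(Q_{0})}^{2}\ =\ \lim_{n\to\infty}\|u_{n}\|_{L^{2}(Q_{y_{n}})}^{2}\ \ge\ \rho/2\ >\ 0 ,
\]
whence $u_{\infty}\neq 0$. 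Unwinding the subsequence extractions gives exactly the assertion of the lemma.

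I expect the genuinely delicate point to be the uniform localization of the second step, not the weak-compactness or Rellich arguments, which are routine. Two things must be checked there: that the Sobolev/interpolation constant on a unit cube is the same for every cube of the tiling (it is, by translation invariance), and that the exponent $q=2+\frac{4}{d}$ is admissible, i.e. lies strictly below the critical exponent $\frac{2d}{d-2}$ for every $d\ge 3$ — which is exactly the elementary inequality $2+\frac{4}{d}<\frac{2d}{d-2}$ — and below $\infty$ for $d\le 2$. With that choice the cube-sum collapses cleanly to $\|u_{n}\|_{H^{1}}^{2}$ because $q\theta=2$ exactly, which is what makes the argument self-contained; choosing any other subcritical $q$ would force the extra elementary step $\sum_k a_k^{s}\le(\sum_k a_k)^{s}$ for $s=q\theta/2\ge 1$.
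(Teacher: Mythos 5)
The paper does not prove this lemma: it is quoted as a result of Lieb~\cite{Lieb} (with Lemma~\ref{thm-fll} and Lemma~\ref{thm-bl} in the same cluster of imported compactness tools), so there is no in-paper proof to compare against. Your argument is a correct and complete proof, of the now-standard Lions vanishing-exclusion type: the super-level-set hypothesis forces a uniform lower bound on $\|u_n\|_{L^{q}}$ for the exponent $q=2+\tfrac{4}{d}$, the Gagliardo--Nirenberg estimate on a unit-cube tiling then localizes a definite amount of $L^{2}$-mass in some cube uniformly in $n$ (the exponent is chosen so the cube-sum collapses exactly to $\|u_n\|_{H^1}^2$, which you correctly flag as the only nontrivial bookkeeping), and translating that cube to the origin plus Rellich yields a nonzero weak limit. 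The numerology $q\theta=2$, $q(1-\theta)=4/d$, and the subcriticality $2+\tfrac{4}{d}<\tfrac{2d}{d-2}$ for $d\ge 3$ all check out, and the low-dimensional cases $d=1,2$ are indeed unproblematic since $q=6$ resp.\ $q=4$ are finite. Lieb's original 1983 proof is phrased somewhat differently (via a ball covering and a direct measure-theoretic estimate rather than a cube tiling with Gagliardo--Nirenberg on each cell), but the underlying mechanism is the same, and your version is the one most commonly reproduced today; either route is acceptable here.
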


\begin{lemma}[Brezis and 
Lieb~\cite{Brezis-Lieb}]\label{thm-bl}
Let $d\ge 1$, $1\le r <\infty$, 
$u_{\infty}\in H^{1}(\mathbb{R}
^{d})$, and let $\{u_{n}\}$ be a bounded sequence in $\dot{H}^{1}(\mathbb{R}^{d})$ such that 
\begin{align}
\label{19/9/8/12:8}
\lim_{n\to \infty}u_{n}
&=u_{\infty} 
\quad 
\mbox{weakly in $\dot{H}^{1}(\mathbb{R}^{d})$, and almost everywhere in $\mathbb{R}^{d}$},
\\[6pt]
\sup_{n\ge 1}\|u_{n}\|_{L^{r}}
&<\infty. 
\end{align}
Then, we have:
\begin{equation}\label{16/04/21/13:15}
\lim_{n\to \infty} 
\int_{\mathbb{R}^{d}} 
\Bigm\{ 
|\nabla u_{n}|^{2}-|\nabla \{ u_{n}-u_{\infty}\}|^{2}-|\nabla u_{\infty}|^{2} \Bigm\} \,dx =0,
\end{equation}
and 
\begin{equation}
\label{16/03/25/11:12}
\lim_{n\to \infty}
\int_{\mathbb{R}^{d}} 
\Bigm| |u_{n}|^{r}-|u_{n}-u_{\infty}|^{r}-|u_{\infty}|^{r} 
\Bigm| \,dx =0.
\end{equation}
\end{lemma}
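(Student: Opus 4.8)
The plan is to prove the two assertions separately: the gradient identity \eqref{16/04/21/13:15} is a soft consequence of weak $\dot{H}^{1}$ convergence, while the $L^{r}$ statement \eqref{16/03/25/11:12} requires the genuine Brezis--Lieb truncation argument. For \eqref{16/04/21/13:15} I would expand $|\nabla u_{n}|^{2}=|\nabla(u_{n}-u_{\infty})|^{2}+2\,\nabla(u_{n}-u_{\infty})\cdot\nabla u_{\infty}+|\nabla u_{\infty}|^{2}$ and integrate over $\mathbb{R}^{d}$, so that the quantity in \eqref{16/04/21/13:15} equals $2\int_{\mathbb{R}^{d}}\nabla(u_{n}-u_{\infty})\cdot\nabla u_{\infty}\,dx$. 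Since $u_{n}\to u_{\infty}$ weakly in $\dot{H}^{1}(\mathbb{R}^{d})$, the gradients $\nabla u_{n}$ converge weakly to $\nabla u_{\infty}$ in $L^{2}(\mathbb{R}^{d})$, and pairing against the fixed function $\nabla u_{\infty}\in L^{2}(\mathbb{R}^{d})$ shows that this integral tends to $0$.

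For \eqref{16/03/25/11:12}, the first step is the elementary pointwise bound: for every $\varepsilon>0$ there is $C_{\varepsilon}>0$, depending only on $r$ and $\varepsilon$, such that
\begin{equation*}
\big|\,|a+b|^{r}-|a|^{r}\,\big|\le\varepsilon\,|a|^{r}+C_{\varepsilon}\,|b|^{r}\qquad\text{for all }a,b\in\mathbb{C}.
\end{equation*}
I would prove this by splitting into the regimes $|b|\le\delta|a|$ and $|b|>\delta|a|$: in the first, the mean value theorem applied to $s\mapsto s^{r}$ together with $\big||a+b|-|a|\big|\le|b|$ and $|a+b|\le(1+\delta)|a|$ gives $\big||a+b|^{r}-|a|^{r}\big|\le r(1+\delta)^{r-1}\delta\,|a|^{r}$, which is $\le\varepsilon|a|^{r}$ once $\delta=\delta(\varepsilon,r)$ is small enough; in the second, $\big||a+b|^{r}-|a|^{r}\big|\le(|a|+|b|)^{r}+|a|^{r}\le C_{\delta}\,|b|^{r}$ since $|a|<|b|/\delta$. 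Applying this with $a=u_{n}-u_{\infty}$, $b=u_{\infty}$ (so $a+b=u_{n}$) and writing $f_{n}:=\big|\,|u_{n}|^{r}-|u_{n}-u_{\infty}|^{r}-|u_{\infty}|^{r}\,\big|$, we obtain the pointwise bound $f_{n}\le\varepsilon|u_{n}-u_{\infty}|^{r}+(C_{\varepsilon}+1)|u_{\infty}|^{r}$.

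The final step is truncation plus dominated convergence. First, by Fatou's lemma and the a.e.\ convergence, $u_{\infty}\in L^{r}(\mathbb{R}^{d})$, hence $K:=\sup_{n}\|u_{n}-u_{\infty}\|_{L^{r}}^{r}<\infty$. Set $g_{n}^{\varepsilon}:=\big(f_{n}-\varepsilon|u_{n}-u_{\infty}|^{r}\big)_{+}$; then $0\le g_{n}^{\varepsilon}\le(C_{\varepsilon}+1)|u_{\infty}|^{r}\in L^{1}(\mathbb{R}^{d})$, and since $u_{n}\to u_{\infty}$ almost everywhere one has $f_{n}\to 0$ a.e., so $g_{n}^{\varepsilon}\to 0$ a.e.; dominated convergence then gives $\int_{\mathbb{R}^{d}}g_{n}^{\varepsilon}\,dx\to 0$ as $n\to\infty$ for each fixed $\varepsilon$. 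Since $f_{n}\le g_{n}^{\varepsilon}+\varepsilon|u_{n}-u_{\infty}|^{r}$ pointwise, it follows that $\limsup_{n\to\infty}\int_{\mathbb{R}^{d}}f_{n}\,dx\le\varepsilon K$, and letting $\varepsilon\downarrow 0$ yields \eqref{16/03/25/11:12}.

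The argument involves no serious obstacle, since the result is classical; the one point that genuinely needs care is that $f_{n}$ itself is \emph{not} dominated by a fixed $L^{1}$ function uniformly in $n$, which is precisely why one subtracts off the harmless term $\varepsilon|u_{n}-u_{\infty}|^{r}$ before invoking dominated convergence. Consequently the limits must be taken in the right order: the dominating function $(C_{\varepsilon}+1)|u_{\infty}|^{r}$ is independent of $n$ (all that dominated convergence requires) but blows up as $\varepsilon\downarrow 0$, so one sends $n\to\infty$ first and $\varepsilon\downarrow 0$ afterwards; everything else is routine.
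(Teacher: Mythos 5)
Your proof is correct: the paper itself gives no proof of this lemma, citing it directly to Brezis--Lieb, and your argument is precisely the classical one from that reference (bilinear expansion plus weak $L^{2}$ convergence of the gradients for \eqref{16/04/21/13:15}, and the $\varepsilon$-truncated pointwise inequality with dominated convergence for \eqref{16/03/25/11:12}). The only cosmetic point is that for complex-valued $u_{n}$ the cross term in the gradient expansion should carry a real part, $2\Re\int\nabla(u_{n}-u_{\infty})\cdot\overline{\nabla u_{\infty}}\,dx$, which changes nothing in the argument.
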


Now, we shall give a proof of Proposition \ref{thm-conv}: 
\begin{proof}[Proof of Proposition \ref{thm-conv}]
Let $\omega>\omega_{c}^{\mathcal{N}}$, and assume $m_{\omega}^{S}=m_{\omega}^{\mathcal{N}}$. Furthermore, let $\{u_{n}\}$ be a minimizing sequence for $m_{\omega}^{S}$ (hence each $u_{n}$ is a solution to \eqref{sp}). 
Note that for any $n \in \mathbb{N}$, 
\begin{equation}\label{19/9/6/15:52}
\mathcal{N}_{\omega}(u_{n})=0. 
\end{equation} 
It follows from Proposition \ref{thm4-2} and 
\eqref{sob-bc-rela} that 
\begin{equation}\label{19/9/6/15:35}
\lim_{n \to \infty} \mathcal{S}_{\omega}(u_{n})
=
m_{\omega}^{S}=m_{\omega}^{\mathcal{N}}=m_{\infty}
=\frac{1}{3}\sigma^{\frac{3}{2}}. 
\end{equation}

We divide the proof into several steps:
\\ 
\noindent 
{\bf Step 1.}~We shall show that the sequence $\{u_{n}\}$ is bounded in $H^{1}(\mathbb{R}^{3})$; more precisely, there exists a number $N_{1}$ such that for any $n\ge N_{1}$, 
\begin{equation}\label{thm-bound}
\|\nabla u_{n}\|_{L^{2}}^{2} + \omega \|u_{n}\|_{L^{2}}^{2} 
\le 
\frac{4(p+1)}{p-1}m_{\infty}
.
\end{equation} 

We see from \eqref{19/9/6/15:35} and \eqref{19/9/6/15:52} that there exists a number $N_{1}$ such that for any $n \ge N_{1}$, 
\begin{equation}\label{19/9/6/16:11}
\begin{split}
2 m_{\infty} 
&\ge \mathcal{S}_{\omega} (u_{n}) 
= \mathcal{S}_{\omega} (u_{n}) - \frac{1}{p+1}\mathcal{N}_{\omega}(u_{n}) 
\\[6pt]
& = \left(\frac{1}{2} - \frac{1}{p+1}\right)
\left\{\|\nabla u_{n}\|_{L^{2}}^{2} + \omega \|u_{n}\|_{L^{2}}^{2} 
\right\} 
+ \left(\frac{1}{p+1} - \frac{1}{6}\right)\|u_{n}\|_{L^{6}}^{6}
.
\end{split}
\end{equation}
This gives us the desired estimate \eqref{thm-bound}. 

\noindent 
{\bf Step 2.}~We shall show that 
\begin{equation} \label{eq-p+1}
\lim_{n \to \infty} \|u_{n}\|_{L^{p+1}} = 0.
\end{equation} 

Suppose to the contrary that the claim \eqref{eq-p+1} was false. 
Then, passing to some subsequence, we may assume that there exists $C>0$ such that 
\begin{equation} \label{lower-bound}
\inf_{n \in \mathbb{N}}\|u_{n}\|_{L^{p+1}} \ge C.
\end{equation} 
Furthermore, we see from \eqref{lower-bound}, \eqref{thm-bound}, Lemma \ref{thm-fll} and Lemma \ref{thm-l} that there exist a nontrivial function $u_{\infty} \in H^{1}(\mathbb{R}^{3})$ and a sequence $\{y_{n}\}$ in $\mathbb{R}^{3}$ 
such that, passing to some subsequence, we have 
\begin{equation}\label{19/9/7/12:9}
\lim_{n \to \infty} u_{n}(\cdot + y_{n}) = u_{\infty} 
\quad \mbox{weakly in $H^{1}(\mathbb{R}^{3})$, and almost everywhere in $\mathbb{R}^{3}$}. 
\end{equation} 
Here, Lemma \ref{thm-bl} together with \eqref{thm-bound}, \eqref{19/9/7/12:9} and \eqref{19/9/6/15:52} shows that 
\begin{equation}\label{eq-N2} 
\lim_{n \to \infty} 
\mathcal{N}_{\omega}\big(u_{n}(\cdot+y_{n}) - u_{\infty}\big) 
= - \mathcal{N}_{\omega}(u_{\infty}). 
\end{equation}
Moreover, Lemma \ref{thm-bl} together with 
\eqref{19/9/7/12:9}, \eqref{re-S-J}, 
$\mathcal{J} =\mathcal{S}_{\omega} - 
\frac{1}{2}\mathcal{N}_{\omega}$, \eqref{19/9/6/15:52} and $u_{\infty}\not \equiv 0$ shows that 
\begin{equation}\label{19/9/7/14:35}
0 < \mathcal{J}(u_{\infty}) = 
m_{\omega}^{\mathcal{N}} - \mathcal{J}\big(u_{n}(\cdot+y_{n}) - u_{\infty}\big)
+o_{n}(1). 
\end{equation}

Now, suppose for contradiction that $\mathcal{N}_{\omega}(u_{\infty})>0$. 
Then, it follows from \eqref{eq-N2} that for any sufficiently large $n$, 
\begin{equation}\label{19/9/7/14:37}
\mathcal{N}_{\omega}\big(u_{n}(\cdot+y_{n}) - u_{\infty}\big) < 0
.
\end{equation}
Furthermore, Lemma \ref{variational-character4} together with \eqref{19/9/7/14:37} shows that for any sufficiently large $n$,
\begin{equation}\label{19/9/7/14:38}
m_{\omega}^{\mathcal{N}} 
\le 
\mathcal{J}\big(u_{n}(\cdot+y_{n}) - u_{\infty}\big)
. 
\end{equation} 
However, plugging \eqref{19/9/7/14:38} into \eqref{19/9/7/14:35} reaches a contradiction. Thus, we have $\mathcal{N}_{\omega}(u_{\infty}) \le 0$. Then, we see
from Lemma \ref{variational-character4}, the lower semicontinuity of the weak limit, \eqref{19/9/6/15:35} and \eqref{19/9/6/15:52} that
\begin{equation*}
m_{\omega}^{\mathcal{N}} 
\le 
\mathcal{J}(u_{\infty}) 
\le 
\liminf_{n \to \infty} \mathcal{J}(u_{n}) 
=
\liminf_{n \to \infty} \big\{ \mathcal{S}_{\omega}(u_{n})-\frac{1}{p+1}
\mathcal{N}_{\omega}(u_{n})
\big\}
= 
m_{\omega}^{\mathcal{N}}, 
\end{equation*}
so that 
\begin{equation}\label{19/9/7/14:55} 
\mathcal{J}(u_{\infty}) = m_{\omega}^{\mathcal{N}}.
\end{equation} 
Furthermore, we have $\mathcal{N}_{\omega}(u_{\infty}) = 0$; otherwise 
(namely when $\mathcal{N}_{\omega}(u_{\infty}) < 0$) $u_{\infty}$ 
must obey $m_{\omega}^{\mathcal{N}} < \mathcal{J}(u_{\infty})$ 
by the same argument as the proof of Lemma \ref{variational-character4}. 
Thus, $u_{\infty}$ is a minimizer for $m_{\omega}^{\mathcal{N}}$ 
(ground state). 
However, since $\omega>\omega_{c}^{\mathcal{N}}$, 
this contradicts Theorem \ref{thm-0} {\rm (ii)}. 
Hence, the claim \eqref{eq-p+1} must hold.

\noindent 
{\bf Step 3.}~We shall show that 
\begin{equation} \label{eq-lq-conv}
\lim_{n \to \infty} \|u_{n}\|_{L^{2}} = 0.
\end{equation}

Since $u_{n}$ is an $H^{1}$-solution to \eqref{sp}, we have $\mathcal{K}(u_{n})=0$ (see \eqref{func-k}) and $\mathcal{N}(u_{n})=0$. Furthermore, the difference between the equations $\mathcal{K}(u_{n})=0$ and $\mathcal{N}(u_{n})=0$ yields the following identity (Pohozaev's identity):
\begin{equation}\label{19/9/8/14:32}
\omega \|u_{n}\|_{L^{2}}^{2} 
= 
\frac{5-p}{2(p+1)} 
\|u_{n}\|_{L^{p+1}}^{p+1}. 
\end{equation}
This together with \eqref{eq-p+1} shows \eqref{eq-lq-conv}.

\noindent 
{\bf Step 4.}~We shall show that 
\begin{equation} \label{eq-6}
\lim_{n \to \infty} \|u_{n}\|_{L^{6}}^{6} 
= 3 m_{\infty}. 
\end{equation}

We see from \eqref{19/9/6/15:35}, \eqref{19/9/6/15:52} 
$\mathcal{J}=\mathcal{S}_{\omega}-\frac{1}{2}\mathcal{N}_{\omega}$
and \eqref{eq-p+1} 
that 
\begin{equation}\label{19/9/7/16:51}
m_{\infty} 
= 
\lim_{n \to \infty} \mathcal{J}(u_{n})
= 
\frac{1}{3} \lim_{n \to \infty} \|u_{n}\|_{L^{6}}^{6}, 
\end{equation}
so that \eqref{eq-6} holds. 

\noindent 
{\bf Step 5.}~We shall show that 
\begin{equation} \label{eq-infty}
\lim_{n \to \infty} \|u_{n}\|_{L^{\infty}} = \infty. 
\end{equation}

Suppose to the contrary that 
$\liminf_{n \to \infty} \|u_{n}\|_{L^{\infty}}<\infty$. Then, passing to some subsequence, we have $\sup_{n \ge 1} \|u_{n}\|_{L^{\infty}} < \infty$. Furthermore, we see from 
$m_{\infty}=m_{\omega}^{\mathcal{N}}$ (see \eqref{19/9/6/15:52}), 
\eqref{eq-lq-conv} and \eqref{eq-6} that the subsequence satisfies 
\begin{equation}\label{19/9/7/17:23}
3 m_{\infty} = \lim_{n \to \infty}\|u_{n}\|_{L^{6}}^{6}
\le
\sup_{n\ge 1}\|u_{n}\|_{L^{\infty}}^{4} 
\lim_{n \to \infty} \|u_{n}\|_{L^{2}}^{2} = 0
. 
\end{equation}
However, this contradicts Lemma \ref{19/9/7/17:12}. Thus, \eqref{eq-infty} holds. 

\noindent 
{\bf Step 6.}~We shall finish the proof of Proposition \ref{thm-conv}. 

By Lemma \ref{re-regularity}, 
we can take a sequence 
$\{(x_{n}, \theta_{n})\}$ 
in $\mathbb{R}^{3} 
\times [-\pi, \pi)$ such that $e^{i\theta_{n}}u_{n}(x_{n})=\|u_{n}\|_{L^{\infty}}$. Then, define $M_{n}$ and $\widetilde{u}_{n}$ 
by 
$M_{n}:= \|u_{n}\|_{L^{\infty}}$, and $\widetilde{u}_{n}:=T_{M_{n}}[e^{i\theta_{n}}u_{n} (\cdot+x_{n})]=M_{n}^{-1}e^{i\theta_{n}}u_{n}(M_{n}^{-2}\cdot +x_{n})$. We can verify that $\widetilde{u}_{n}$ satisfies 
\begin{align}
\label{eq-msp}
- \Delta \widetilde{u}_{n} + \alpha_{n} \widetilde{u}_{n} 
- \beta_{n} |\widetilde{u}_{n}|^{p-1} \widetilde{u}_{n} 
- |\widetilde{u}_{n}|^{4} \widetilde{u}_{n} 
&= 0 
, 
\\[6pt]
\label{19/9/6/17:45}
\|\widetilde{u}_{n}\|_{L^{\infty}}&=1
, 
\end{align}
where 
\begin{equation} \label{eq-coeffi}
\alpha_{n} := \omega M_{n}^{-4}, \qquad 
\beta_{n} := M_{n}^{p-5}. 
\end{equation}
Observe from \eqref{eq-infty} that 
\begin{equation}\label{19/9/6/17:35}
\lim_{n \to \infty} \alpha_{n} 
= \lim_{n \to \infty} \beta_{n} 
= 0,
\end{equation}
and from \eqref{thm-bound} that for any $n\ge N_{1}$, 
\begin{equation}\label{19/9/6/17:18}
\|\nabla \widetilde{u}_{n}\|_{L^{2}}^{2} 
\le 
\frac{4(p+1)}{p-1}m_{\infty}
.
\end{equation} 
Moreover, multiplying the equation \eqref{eq-msp} by 
the complex conjugate of $\widetilde{u}_{n}$, and 
integrating the resulting equation 
over $\mathbb{R}^{3}$, we obtain 
\begin{equation}\label{19/9/8/14:15}
\| \nabla \widetilde{u}_{n}\|_{L^{2}}^{2}
=
\| \widetilde{u}_{n}\|_{L^{6}}^{6}
-
\alpha_{n} \| \widetilde{u}_{n}\|_{L^{2}}^{2}
+
\beta_{n}
\| \widetilde{u}_{n}\|_{L^{p+1}}^{p+1}
.
\end{equation}

Next, use \eqref{19/9/6/17:18}, $\|\widetilde{u}_{n}\|_{L^{\infty}}=1$, the $W_{loc}^{2,q}$ estimate, and Schauder's estimate (see \cite{GT}) to verify that there exists $u_{\infty} \in \dot{H}^{1}(\mathbb{R}^{3})$ such that, passing to some subsequence, 
\begin{equation}\label{19/9/6/16:47}
\lim_{n\to \infty}\widetilde{u}_{n} 
= u_{\infty} 
\quad 
\text{weakly in $\dot{H}^1(\mathbb{R}^{3})$, and strongly in 
$C^2_{\rm loc}(\mathbb{R}^{3})$}.
\end{equation}
Furthermore, Lemma \ref{thm-bl} together with \eqref{19/9/6/17:18} and \eqref{19/9/6/16:47} shows that 
\begin{align}
\label{eq-bl-H}
& \lim_{n \to \infty} \left\{ 
\mathcal{H}^{\ddagger}(\widetilde{u}_{n}) 
- \mathcal{H}^{\ddagger}(\widetilde{u}_{n} - u_{\infty}) 
- \mathcal{H}^{\ddagger}(u_{\infty})
\right\} = 0, 
\\[6pt]
\label{eq-bl-K}
& \lim_{n \to \infty} \left\{ 
\mathcal{N}^{\ddagger}(\widetilde{u}_{n}) 
- \mathcal{N}^{\ddagger}(\widetilde{u}_{n} - u_{\infty}) 
- \mathcal{N}^{\ddagger}(u_{\infty})
\right\} = 0, 
\end{align}
where $\mathcal{H}^{\ddagger}$ and $\mathcal{N}^{\ddagger}$ are the functional defined by \eqref{h-infty} and \eqref{n-infty}, respectively.

We see from \eqref{eq-msp}, \eqref{19/9/6/16:47}, \eqref{19/9/6/17:35} and \eqref{19/9/6/17:45} that the limit $u_{\infty}$ satisfies 
\begin{equation}\label{eq-talenti}
\left\{ \begin{array}{l}
-\Delta u_{\infty} 
= |u_{\infty}|^{4}u_{\infty} \quad {\rm in} \ \mathbb{R}^{3},
\\[6pt]
u_{\infty}(0)=1.
\end{array} \right. 
\end{equation}
Moreover, it is easy to verify that 
\begin{equation}\label{19/9/8/11:37}
\mathcal{N}^{\ddagger}(u_{\infty})
=
\|\nabla u_{\infty}\|_{L^{2}}^{2} 
- 
\|u_{\infty}\|_{L^{6}}^{6}
=0.
\end{equation}

Now, assume that $u_{\infty}$ attains the equality in Sobolev's inequality \eqref{19/9/7/15:31}:
\begin{equation}\label{19/9/8/11:18}
\|\nabla u_{\infty}\|_{L^{2}}^{2} 
= 
\sigma \|u_{\infty}\|_{L^{6}}^{2}.
\end{equation}
Then, by the results in \cite{Aubin, Talenti}, $u_{\infty}= z_{0} \lambda_{0}^{-1} W (\lambda_{0}^{-2} \cdot + x_{0})$ for some $\lambda_{0}>0$, $x_{0}\in \mathbb{R}^{3}$ and $z_{0} \in \mathbb{C}$. Furthermore, it follows from 
$\|u_{\infty}\|_{L^{\infty}}=1$, \eqref{19/9/8/11:37} and $1=W(0)>W(x)$ for all $x\in \mathbb{R}^{3}\setminus \{0\}$ that $\lambda_{0}=z_{0}=1$ and $x_{0}=0$. Thus, \eqref{19/9/8/11:18} implies that $u_{\infty}=W$.

We shall prove \eqref{19/9/8/11:18}. We see from Sobolev's inequality \eqref{19/9/7/15:31} and \eqref{19/9/8/11:37} that 
\begin{equation} \label{sobo-eq1} 
\sigma\|u_{\infty}\|_{L^{6}}^{2} \le \|u_{\infty}\|_{L^{6}}^{6}.
\end{equation}
By \eqref{19/9/8/11:37}, \eqref{sobo-eq1} and 
\eqref{sob-bc-rela}, we see that 
\begin{equation}\label{eq-energy}
\mathcal{H}^{\ddagger}(u_{\infty}) 
= 
\frac{1}{2} \|\nabla u_{\infty}\|_{L^{2}}^{2}
-
\frac{1}{6} \|u_{\infty}\|_{L^{6}}^{6} 
=
\frac{1}{3} \|u_{\infty}\|_{L^{6}}^{6} 
\ge 
\frac{1}{3}\sigma^{\frac{3}{2}}
=
m_{\infty}
.
\end{equation}
On the other hand, we see from \eqref{19/9/8/14:15}, 
\eqref{eq-coeffi}, 
\eqref{eq-lq-conv}, \eqref{eq-p+1} and \eqref{eq-6} that 
\begin{equation} \label{eq-H-n}
\begin{split}
\lim_{n \to \infty}
\mathcal{H}^{\ddagger}(\widetilde{u}_{n}) 
& = 
\lim_{n \to \infty} 
\Bigm\{ 
\frac{1}{3} \|\widetilde{u}_{n}\|_{L^{6}}^{6} 
- \frac{\alpha_{n}}{2} 
\|\widetilde{u}_{n}\|_{L^{2}}^{2} 
+ 
\frac{\beta_{n}}{2} 
\|\widetilde{u}_{n}\|_{L^{p+1}}^{p+1}
\Bigm\} 
\\[6pt]
& = \lim_{n \to \infty} 
\Bigm\{ 
\frac{1}{3} \|u_{n}\|_{L^{6}}^{6} - \frac{\omega}{2} 
\|u_{n}\|_{L^{2}}^{2} + \frac{1}{2} 
\|u_{n}\|_{L^{p+1}}^{p+1}
\Bigm\} 
= 
m_{\infty}
. 
\end{split}
\end{equation}
Similarly, we can verify that 
\begin{equation} \label{eq-K-n}
\lim_{n \to \infty}
\mathcal{N}^{\ddagger}(\widetilde{u}_{n}) 
= 
\lim_{n \to \infty} 
\bigm\{ 
- \alpha_{n} 
\|\widetilde{u}_{n}\|_{L^{2}}^{2} + \beta_{n}
\|\widetilde{u}_{n}\|_{L^{p+1}}^{p+1}
\bigm\} 
= 0. 
\end{equation}

Putting \eqref{eq-bl-K}, \eqref{eq-K-n} and \eqref{19/9/8/11:37} together, we obtain 
\begin{equation}\label{19/9/8/14:57}
\lim_{n \to \infty} \mathcal{N}^{\ddagger}(\widetilde{u}_{n} - u_{\infty}) 
= 
\lim_{n \to \infty}
\bigm\{ 
\|\nabla (\widetilde{u}_{n} - u_{\infty})\|_{L^{2}}^{2}
-
\|\widetilde{u}_{n} - u_{\infty}\|_{L^{6}}^{6}
\bigm\}
=
0.
\end{equation} 
Furthermore, we see from \eqref{19/9/8/14:57} that 
\begin{equation} \label{eq-H-n2}
\mathcal{H}^{\ddagger}(\widetilde{u}_{n} - u_{\infty}) 
= 
\frac{1}{3} \| \nabla ( \widetilde{u}_{n} - u_{\infty})\|_{L^{2}}^{2} +
o_{n}(1)
. 
\end{equation}

Now, we find from \eqref{eq-energy}, \eqref{eq-bl-H}, \eqref{eq-H-n2} and 
\eqref{eq-H-n} 
that 
\begin{equation}\label{19/9/8/15:10}
\begin{split}
m_{\infty} 
\le
\mathcal{H}^{\ddagger}(u_{\infty}) 
&=
\mathcal{H}^{\ddagger}(\widetilde{u}_{n}) 
- \mathcal{H}^{\ddagger}(\widetilde{u}_{n} - u_{\infty})
+
o_{n}(1)
\\[6pt]
&\le
\mathcal{H}^{\ddagger}(\widetilde{u}_{n}) 
+
o_{n}(1)
= 
m_{\infty}
+
o_{n}(1)
. 
\end{split} 
\end{equation} 
Using \eqref{19/9/8/11:37}, and taking $n\to \infty$ in \eqref{19/9/8/15:10},
we see that 
\begin{equation}\label{19/9/8/16:52}
\frac{1}{3}\|u_{\infty}\|_{L^{6}}^{6}=\mathcal{H}^{\ddagger}(u_{\infty}) = m_{\infty}
.
\end{equation} 
Furthermore, this together with \eqref{19/9/8/11:37} and \eqref{sob-bc-rela} implies that 
\begin{equation}\label{19/9/8/16:45}
\|\nabla u_{\infty}\|_{L^{2}}^{2}=\|u_{\infty}\|_{L^{6}}^{6}=\sigma^{\frac{3}{2}}.
\end{equation}
Then, the claim \eqref{19/9/8/11:18} follows from \eqref{19/9/8/16:45}, and 
therefore $u_{\infty}=W$. In addition, it follows from \eqref{eq-bl-H}, \eqref{eq-H-n}, \eqref{eq-H-n2} and \eqref{19/9/8/16:52} that 
\begin{equation}\label{19/9/8/17:05}
\frac{1}{3}\|\nabla (\widetilde{u}_{n} -u_{\infty}) \|_{L^{2}}^{2}
=o_{n}(1),
\end{equation}
so that $\lim_{n\to \infty}\widetilde{u}_{n} =u_{\infty}=W$ strongly in $\dot{H}^{1}(\mathbb{R}^{3})$. Thus, we have completed the proof. 
\end{proof}

We are now in a position to give a proof of 
Theorem \ref{thm-2mini-1}: 
\begin{proof}[Proof of Theorem \ref{thm-2mini-1}]
Suppose to the contrary that the claim of Theorem \ref{thm-2mini-1} is false. Then, one sees that: if $1<p<3$, then there exists $\omega_{2} > \omega_{c}^{\mathcal{N}}$ such that $m_{\omega_{2}}^{S}=m_{\omega_{2}}^{\mathcal{N}}$; and if $p=3$, then there exists a sequence $\{\omega_{k}\}$ such that $m_{\omega_{k}}^{S}=m_{\omega_{k}}^{\mathcal{N}}$, and $\lim_{k\to \infty}\omega_{k}=\infty$. When $1<p<3$, we define $\omega_{k}\equiv \omega_{2}$, so that $m_{\omega_{k}}^{S}=m_{\omega_{k}}^{\mathcal{N}}$, and $\liminf_{k\to \infty}\omega_{k}=\omega_{2}>\omega_{c}^{\mathcal{N}}>0$ (see Theorem \ref{thm-0}). 

Now, for each $k$, let $\{u_{n}^{k}\}$ be a minimizing sequence for $m_{\omega_{k}}^{S}$. Notice that $u_{n}^{k}$ is a solution to \eqref{sp} with $\omega=\omega_{k}$. Then, applying Proposition \ref{thm-conv} 
to the sequence $\{u_{n}^{k} \}$ 
and taking a number $n_{k}$ such that 
$\omega_{k} \|u_{n}^{k}\|_{L^{\infty}}^{-4}\le \frac{1}{k}$, 
one can obtain sequences $\{(u_{n},\omega_{n}) \}$ and $\{(x_{n},\theta_{n})\}$ such that: $u_{n}$ is a solution to \eqref{sp} with $\omega=\omega_{n}$; $\liminf_{n\to \infty}\omega_{n}=\omega_{2}>0$ if $1<p<3$, and $\lim_{n\to \infty}\omega_{n}=\infty$ if $p=3$; and putting 
$M_{n}:=\|u_{n}\|_{L^{\infty}}$, $\alpha_{n}:=\omega_{n} M_{n}^{-4}$, and $\widetilde{u}_{n}:=T_{M_{n}}[e^{i\theta_{n}}u_{n}(\cdot+x_{n})]$, we have 
\begin{align}
\label{19/9/16/16:51}
\lim_{n\to \infty}M_{n}&=\infty, 
\quad 
\lim_{n\to \infty}\alpha_{n}=0, 
\\[6pt]
\label{19/9/16/16:49} 
\lim_{n \to \infty} \widetilde{u}_{n} 
&= 
W \quad \text{strongly in $\dot{H}^{1}(\mathbb{R}^{3})$}. 
\end{align}
However, Theorem \ref{non-seq} forbids such a sequence 
$\{(\widetilde{u}_{n}, \omega_{n})\}$ to exist. Thus, the claim of Theorem \ref{thm-2mini-1} must be true. 
\end{proof}

\section{Proof of Theorem 
\ref{ex-critical-o}}
In this section, we shall give a proof of 
Theorem \ref{ex-critical-o}. 
For this purpose, we prepare the following lemma: 
\begin{lemma} \label{conti-m}
Assume $1<p< 5$. 
Then, $m_{\omega}^{\mathcal{N}}$ 
is continuous at $\omega=\omega_{c}^{\mathcal{N}}$, namely 
for any sequence $\{\omega_{n}\}$ in 
$(0, \omega_{c}^{\mathcal{N}})$ with 
$\lim_{n \to \infty} \omega_{n} 
= \omega_{c}^{\mathcal{N}}$, 
we have $\lim_{n \to \infty} m_{\omega_{n}}^{\mathcal{N}} 
= m_{\omega_{c}^{\mathcal{N}}}^{\mathcal{N}}$. 
\end{lemma}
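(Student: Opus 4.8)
The plan is to exploit that $m_{\omega}^{\mathcal{N}}$ is nondecreasing in $\omega$ (Proposition \ref{thm4-1}): for any sequence $\{\omega_{n}\}\subset(0,\omega_{c}^{\mathcal{N}})$ with $\omega_{n}\to\omega_{c}^{\mathcal{N}}$ the limit $L:=\lim_{n\to\infty}m_{\omega_{n}}^{\mathcal{N}}$ exists and satisfies $L\le m_{\omega_{c}^{\mathcal{N}}}^{\mathcal{N}}$. Hence the only thing to prove is the reverse inequality $L\ge m_{\omega_{c}^{\mathcal{N}}}^{\mathcal{N}}$. Throughout I would use $0<\omega_{c}^{\mathcal{N}}$, so that $\omega_{n}\ge\frac{1}{2}\omega_{c}^{\mathcal{N}}$ for all large $n$; this keeps the frequencies in a fixed compact subinterval of $(0,\infty)$, which is what makes all the constants below uniform in $n$.

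First, for each $n$ I would use the variational characterization of Lemma \ref{variational-character4} to choose $u_{n}\in H^{1}(\mathbb{R}^{3})\setminus\{0\}$ with $\mathcal{N}_{\omega_{n}}(u_{n})\le 0$ and $\mathcal{J}(u_{n})\le m_{\omega_{n}}^{\mathcal{N}}+\frac{1}{n}$. Since $m_{\omega_{n}}^{\mathcal{N}}\le m_{\omega_{c}^{\mathcal{N}}}^{\mathcal{N}}$, the definition of $\mathcal{J}$ gives $\|u_{n}\|_{L^{6}}^{6}+\|u_{n}\|_{L^{p+1}}^{p+1}\lesssim\mathcal{J}(u_{n})\le C$; inserting this into $\mathcal{N}_{\omega_{n}}(u_{n})\le 0$ and using $\omega_{n}\ge\frac{1}{2}\omega_{c}^{\mathcal{N}}$ yields $\|u_{n}\|_{L^{2}}^{2}+\|\nabla u_{n}\|_{L^{2}}^{2}\le C$. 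Repeating the computation in the proof of Lemma \ref{19/9/7/17:12} on $u_{n}$ (absorb the $L^{2}$- and $L^{p+1}$-terms, then apply Sobolev's inequality \eqref{19/9/7/15:31}, then divide by $\|u_{n}\|_{L^{6}}^{2}\neq 0$) produces a uniform lower bound $\|u_{n}\|_{L^{6}}\ge c_{1}>0$, where $c_{1}$ depends only on $\sigma$ and on the supremum over $[\frac{1}{2}\omega_{c}^{\mathcal{N}},\omega_{c}^{\mathcal{N}}]$ of the constant $C(\omega)$ of that lemma (which is a negative power of $\omega$ times a $p$-dependent constant, hence bounded on that interval).

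Next I would compare the Nehari functionals at $\omega_{n}$ and $\omega_{c}^{\mathcal{N}}$:
\[
\mathcal{N}_{\omega_{c}^{\mathcal{N}}}(u_{n})=\mathcal{N}_{\omega_{n}}(u_{n})+(\omega_{c}^{\mathcal{N}}-\omega_{n})\|u_{n}\|_{L^{2}}^{2}\le(\omega_{c}^{\mathcal{N}}-\omega_{n})\|u_{n}\|_{L^{2}}^{2}=:\delta_{n}\longrightarrow 0.
\]
For each $n$ the map $t\mapsto t^{-2}\mathcal{N}_{\omega_{c}^{\mathcal{N}}}(tu_{n})=\|\nabla u_{n}\|_{L^{2}}^{2}+\omega_{c}^{\mathcal{N}}\|u_{n}\|_{L^{2}}^{2}-t^{p-1}\|u_{n}\|_{L^{p+1}}^{p+1}-t^{4}\|u_{n}\|_{L^{6}}^{6}$ is continuous, strictly decreasing on $(0,\infty)$, positive near $0$ and $\to-\infty$ as $t\to\infty$, so there is a unique $t_{n}>0$ with $\mathcal{N}_{\omega_{c}^{\mathcal{N}}}(t_{n}u_{n})=0$. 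I claim $t_{n}\le 2$ for all large $n$ and $\mathcal{J}(t_{n}u_{n})\le\mathcal{J}(u_{n})+C(\omega_{c}^{\mathcal{N}}-\omega_{n})$: when $t_{n}\le 1$ this holds because $t\mapsto\mathcal{J}(tu_{n})$ is increasing (both terms of $\mathcal{J}$ have positive coefficients); when $t_{n}>1$, the $t$-derivative of $-t^{-2}\mathcal{N}_{\omega_{c}^{\mathcal{N}}}(tu_{n})$ is $\ge 4\|u_{n}\|_{L^{6}}^{6}\ge 4c_{1}^{6}$ for $t\ge 1$, so $\mathcal{N}_{\omega_{c}^{\mathcal{N}}}(u_{n})\le\delta_{n}$ forces $t_{n}-1\le\delta_{n}/(4c_{1}^{6})$, and then $\mathcal{J}(t_{n}u_{n})-\mathcal{J}(u_{n})\le C(t_{n}-1)\bigl(\|u_{n}\|_{L^{6}}^{6}+\|u_{n}\|_{L^{p+1}}^{p+1}\bigr)\le C'\delta_{n}\le C''(\omega_{c}^{\mathcal{N}}-\omega_{n})$. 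Since $\mathcal{N}_{\omega_{c}^{\mathcal{N}}}(t_{n}u_{n})=0$, Lemma \ref{variational-character4} gives in either case
\[
m_{\omega_{c}^{\mathcal{N}}}^{\mathcal{N}}\le\mathcal{J}(t_{n}u_{n})\le\mathcal{J}(u_{n})+C''(\omega_{c}^{\mathcal{N}}-\omega_{n})\le m_{\omega_{n}}^{\mathcal{N}}+\frac{1}{n}+C''(\omega_{c}^{\mathcal{N}}-\omega_{n}),
\]
and letting $n\to\infty$ yields $m_{\omega_{c}^{\mathcal{N}}}^{\mathcal{N}}\le L$; together with $L\le m_{\omega_{c}^{\mathcal{N}}}^{\mathcal{N}}$ this is the assertion.

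I do not expect a genuine obstacle here: the argument is a soft, compactness-free bootstrap followed by a one-parameter rescaling. The one point that needs real care is the uniform lower bound $\|u_{n}\|_{L^{6}}\ge c_{1}>0$ — it must be independent of $n$, which is precisely why the frequencies are confined to a compact subinterval of $(0,\infty)$ and why the estimate of Lemma \ref{19/9/7/17:12} is applied with its constant taken uniformly over that interval. The only other slightly delicate step is the quantitative rate $t_{n}-1=O(\omega_{c}^{\mathcal{N}}-\omega_{n})$, which upgrades the qualitative fact $t_{n}\to1$ to something strong enough to close the estimate.
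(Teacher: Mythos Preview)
Your argument is correct. The core strategy coincides with the paper's: take test functions at frequency $\omega_{n}$, rescale them by a factor $t_{n}\to 1$ to land on the Nehari manifold at $\omega_{c}^{\mathcal{N}}$, and control the resulting error in the variational value. The paper differs in two ways. First, it uses the actual ground states $\Phi_{\omega_{n}}$ (available by Proposition~\ref{thm-0-1}, since $m_{\omega_{n}}^{\mathcal{N}}<m_{\infty}$) rather than near-minimizers chosen via Lemma~\ref{variational-character4}; second, it argues by contradiction, assuming $\mu_{c}^{\mathcal{N}}:=m_{\infty}-\lim_{n}m_{\omega_{n}}^{\mathcal{N}}>0$ and using Proposition~\ref{thm4-2} ($m_{\omega_{c}^{\mathcal{N}}}^{\mathcal{N}}=m_{\infty}$) to close, which lets it avoid the quantitative rate $t_{n}-1=O(\delta_{n})$ and the explicit lower bound $\|u_{n}\|_{L^{6}}\ge c_{1}$. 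Your route is slightly more self-contained---it needs neither the existence of ground states nor Proposition~\ref{thm4-2}---at the cost of the extra quantitative work you flagged; the paper's route is softer but leans on more of the surrounding machinery.
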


\begin{proof}
By Proposition \ref{thm4-2}, it suffices to show the left-continuity.

Let $\{\omega_{n}\}$ be a sequence in $[\frac{\omega_{c}^{\mathcal{N}}}{2}, \omega_{c}^{\mathcal{N}})$ such that 
$\lim_{n\to \infty}\omega_{n}=\omega_{c}^{\mathcal{N}}$. 
It follows from Proposition \ref{thm4-1} and 
the definition of $\omega_{c}^{\mathcal{N}}$ that 
\begin{equation}\label{21/3/19/14:49}
m_{\omega_{n}}^{\mathcal{N}} < m_{\infty}
\quad
\mbox{for all $n\ge 1$}
.
\end{equation} 
By \eqref{21/3/19/14:49} and Proposition \ref{thm-0-1}, 
we see that 
there exists a positive 
radial ground state $\Phi_{\omega_{n}}$ 
for $m_{\omega_{n}}^{\mathcal{N}}$. 
Note that $\mathcal{S}_{\omega_{n}}(\Phi_{\omega_{n}}) 
= m_{\omega_{n}}^{\mathcal{N}}<m_{\infty}$ and $\mathcal{N}_{\omega_{n}}(\Phi_{\omega_{n}})=0$. 
Here, we may assume that $\frac{1}{2} \omega_{c} 
\le \omega_{n} <\omega_{c}$ for all $n$. 
Then, a computation similar to \eqref{19/9/6/16:11} 
shows that there exists $C_{1}>0$ 
depending only on $p$ such that for any $n$, 
\begin{equation} \label{upper-b-p-1}
\|\Phi_{\omega_{n}}\|_{H^{1}}^{2} \leq C_{1}, 
\end{equation}
Here, suppose to the contrary that 
there existed a subsequence 
of $\{\omega_{n}\}$ 
(still denoted by the same symbol) such that 
$\lim_{n \to \infty} 
m_{\omega_{n}}^{\mathcal{N}} 
< m_{\infty}$. 
Put 
$\mu_{c}^{\mathcal{N}}:= m_{\infty} - 
\lim_{n \to \infty} 
m_{\omega_{n}}^{\mathcal{N}} >0$, 

We see from $\mathcal{N}_{\omega_{n}}
(\Phi_{\omega_{n}})=0$ and 
\eqref{upper-b-p-1} that 
\begin{equation*}
\lim_{n\to \infty} \mathcal{N}_{\omega_{c}
^{\mathcal{N}}}(\Phi_{\omega_{n}}) 
= 
\lim_{n\to \infty} 
(\omega_{n} - \omega_{c}^{\mathcal{N}}) 
\|\Phi_{\omega_{n}}\|_{L^{2}}^{2} 
= 0. 
\end{equation*} 
This implies that there exists a sequence $\{t_{n}\}$ in $(0,\infty)$ such that 
\begin{align} 
\label{18/07/01/11:47-1}
& \mathcal{N}_{\omega_{c}^{\mathcal{N}}}
(t_{n}\Phi_{\omega_{n}}) = 0
\quad 
\mbox{for all $n\ge 1$}, 
\\[6pt]
\label{18/07/01/11:48-1} 
&\lim_{n \to \infty}t_{n} = 1.
\end{align} 
From \eqref{upper-b-p-1} and 
\eqref{18/07/01/11:47-1}, 
we can take a number $n_{c}$ such that for any $n\ge n_{c}$, 
\begin{align}
& 
\big| 
\mathcal{S}_{\omega_{n}} 
(t_{n}\Phi_{\omega_{n}}) - 
\mathcal{S}_{\omega_{c}^{\mathcal{N}}}
(t_{n}\Phi_{\omega_{n}}) 
\big|
= 
\frac{t_{n}^{2}}{2}|\omega_{c}^{\mathcal{N}} - \omega_{n}|
\|\Phi_{\omega_{n}}\|_{L^{2}}^{2}< 
\frac{1}{4}\mu_{c}^{\mathcal{N}}, 
\label{18/07/01/11:51-1} \\
& 
\big|
\mathcal{S}_{\omega_{n}} 
(t_{n}\Phi_{\omega_{n}}) 
- \mathcal{S}_{\omega_{n}} 
(\Phi_{\omega_{n}})
\big| 
< \frac{1}{4}\mu_{c}^{\mathcal{N}}. 
\label{18/07/01/11:51-22}
\end{align}
Observe from 
the definition of $m_{\omega}^{\mathcal{N}}$, 
$m_{\omega_{c}}^{\mathcal{N}} = m_{\infty}$,
\eqref{18/07/01/11:51-1}, 
$\mathcal{S}_{\omega_{n}}
(\Phi_{n}) =m_{\omega_{n}}^{\mathcal{N}}$ and 
\eqref{18/07/01/11:51-22} that
for $n \geq n_{c}$, 
\begin{equation}
\label{18/07/01/11:59-1}
\begin{split}
m_{\infty} \leq 
\mathcal{S}_{\omega_{c}^{\mathcal{N}}} 
(t_{n}\Phi_{\omega_{n}})
& \leq \mathcal{S}_{\omega_{n}} 
(\Phi_{\omega_{n}}) 
+ |\mathcal{S}_{\omega_{n}} 
(t_{n}\Phi_{\omega_{n}}) 
- \mathcal{S}_{\omega_{n}} 
(\Phi_{\omega_{n}})| \\
& \quad 
+ |\mathcal{S}_{\omega_{n}} 
(t_{n}\Phi_{\omega_{n}}) - 
\mathcal{S}_{\omega_{c}^{\mathcal{N}}} 
(t_{n}\Phi_{\omega_{n}})| \\
&< 
m_{\omega_{n}}^{\mathcal{N}} 
+ \frac{1}{2}\mu_{c}^{\mathcal{N}}.
\end{split}
\end{equation}
Since $\mu_{c}^{\mathcal{N}} = m_{\infty} - 
\lim_{n \to \infty}m_{\omega_{n}}^{\mathcal{N}}$, 
we have 
\begin{equation}
m_{\infty} 
\leq \lim_{n \to \infty} 
m_{\omega_{n}}^{\mathcal{N}}
+ \frac{1}{2}\mu_{c}^{\mathcal{N}}
= m_{\infty} - \frac{1}{2}\mu_{c}^{\mathcal{N}}. 
\end{equation}
However, this is a contradiction. 
Thus, we have completed the proof. 
\end{proof}

We are now in a position to prove 
Theorem \ref{ex-critical-o}. 
\begin{proof}[Proof of Theorem \ref{ex-critical-o}]
Suppose to the contrary that 
there is no ground state for 
$m_{\omega_{c}}^{\mathcal{N}}$. 
Let $\{(\Phi_{\omega_{n}}, \omega_{n})\}$ 
be a sequence in $H^{1}(\mathbb{R}^{3}) \times (0, \omega_{c}^{\mathcal{N}})$ 
with the following properties:
\begin{enumerate}
\item[{\rm (i)}]~$\Phi_{n}$ is a positive 
radial ground state for $m_{\omega_{n}}^{\mathcal{N}}$, 
\item[{\rm (ii)}]~ 
$\lim_{n\to \infty}\omega_{n}= 
\omega_{c}^{\mathcal{N}} > 0$. 
\end{enumerate}
Note that $\mathcal{N}_{\omega_{n}}(\Phi_{\omega_{n}})=0$ 
for all $n \ge 1$.
From Lemma \ref{conti-m} and Proposition \ref{thm4-2}, 
we see that 
\begin{equation} \label{eqc-10}
\lim_{n \to \infty} 
\mathcal{S}_{\omega_{n}}(\Phi_{\omega_{n}}) 
=
\lim_{n \to \infty} m_{\omega_{n}}^{\mathcal{N}} 
= m_{\omega_{c}^{\mathcal{N}}}^{\mathcal{N}} 
= m_{\infty}. 
\end{equation}
Furthermore, define $M_{n}:=\|\Phi_{n}\|_{L^{\infty}}$ and 
$\widetilde{\Phi}_{n}:=T_{M_{n}} \Phi_{n} 
=M_{n}^{-1}\Phi_{n}(M_{n}^{-2}\cdot)$. 
We emphasize that \eqref{eqc-10} corresponds 
to \eqref{19/9/6/15:35}. 
Furthermore, the hypothesis that 
there is no ground state for $m_{\omega_{c}^{\mathcal{N}}}$ 
leads us to $\lim_{n\to \infty}
\|\Phi_{\omega_{n}}\|_{L^{p+1}}=0$, 
as well as \eqref{lower-bound}. 
Then, we can verify that 
$\widetilde{\Phi}_{n}$ 
obeys 
\begin{equation}
- \Delta \widetilde{\Phi}_{n} 
+ \alpha_{n} \widetilde{\Phi}_{n}
- \beta_{n} |\widetilde{\Phi}_{n}|^{p-1}\widetilde{\Phi}_{n} 
- |\widetilde{\Phi}_{n}|^{4}\widetilde{\Phi}_{n} 
= 0 
.
\end{equation}
where 
$\alpha_{n}:=\omega_{n}M_{n}^{- 4}$, 
and $\beta_{n}:=M_{n}^{p - 5}$. 

\par 
Now, we suppose for contradiction that 
there is no ground state (no minimizer) 
for $m_{\omega_{c}^{\mathcal{N}}}^{\mathcal{N}}$.
Then, by a similar argument to the proof of 
Proposition \ref{thm-conv}, we can verify the 
following: 
\begin{enumerate}
\item[{\rm (iii)}]~$\lim_{n \to \infty} M_{n}= \infty$,
\quad 
$\lim_{n \to \infty}\alpha_{n}=0$, 
\item[{\rm (iv)}]~$\lim_{n \to \infty}\widetilde{\Phi}_{n} = W$
\quad strongly in 
$\dot{H}^{1}(\mathbb{R}^{3})$.
\end{enumerate}

However, Theorem \ref{non-seq} 
does not allow such a sequence to exist. Thus, there exists a ground state for 
$m_{\omega_{c}^{\mathcal{N}}}^{\mathcal{N}}$. 
\end{proof}


\section{Proof of Theorem \ref{thm-1}}\label{sec-7}
In this section, we give a proof of Theorem \ref{thm-1}. 
The following lemma follows from the facts that 
the definition of $\mathcal{K}$ does not 
include $\omega$, and $\mathcal{S}_{\omega_{1}} < \mathcal{S}_{\omega_{2}}$ for all $\omega_{1}<\omega_{2}$: 
\begin{lemma} \label{monotonicity-k}
Assume $d \geq 3$ and 
$1+\frac{4}{d} < p < \frac{d+2}{d-2}$. Then, the value $m_{\omega}^{\mathcal{K}}$ is nondecreasing with respect to $\omega$ on $(0, \infty)$. 
\end{lemma}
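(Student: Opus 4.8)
The plan is to exploit the fact that the constraint $\mathcal{K}(u)=0$ carries no $\omega$, so the competitor set appearing in the definition \eqref{mini-k} of $m_{\omega}^{\mathcal{K}}$ is the same for every $\omega>0$, while the objective functional $\mathcal{S}_{\omega}$ is pointwise nondecreasing in $\omega$. This mirrors the argument for Proposition \ref{thm4-1}.

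First I would fix $0<\omega_{1}<\omega_{2}$ and set $A:=\{u\in H^{1}(\mathbb{R}^{d})\setminus\{0\}\colon \mathcal{K}(u)=0\}$. Since $\mathcal{K}$ (see \eqref{func-k}) contains no $\omega$-dependent term, $A$ is independent of $\omega$; one also checks that $A\neq\emptyset$, e.g. by rescaling a fixed nontrivial $H^{1}$ function and using $1+\frac{4}{d}<p<\frac{d+2}{d-2}$ (if $A=\emptyset$ there is nothing to prove, since both values are $+\infty$). Next, for any $u\in A$ I would compute directly from \eqref{eq-action} that
\begin{equation*}
\mathcal{S}_{\omega_{2}}(u)-\mathcal{S}_{\omega_{1}}(u)=\frac{\omega_{2}-\omega_{1}}{2}\|u\|_{L^{2}}^{2}\ge 0,
\end{equation*}
hence $\mathcal{S}_{\omega_{1}}(u)\le\mathcal{S}_{\omega_{2}}(u)$ for every $u\in A$. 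Taking the infimum over $u\in A$ on both sides then gives $m_{\omega_{1}}^{\mathcal{K}}\le m_{\omega_{2}}^{\mathcal{K}}$, and since $0<\omega_{1}<\omega_{2}$ were arbitrary this is precisely the asserted monotonicity on $(0,\infty)$.

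There is essentially no analytic obstacle here; the only point deserving a line of care is the passage from the pointwise inequality to the infima, which is legitimate because $\mathcal{S}_{\omega_{1}}\le\mathcal{S}_{\omega_{2}}$ holds on the \emph{same} set $A$, so that $\inf_{A}\mathcal{S}_{\omega_{1}}\le\inf_{A}\mathcal{S}_{\omega_{2}}$. (Note that $\|u\|_{L^{2}}^{2}>0$ strictly for $u\in A$, which gives strict monotonicity at the level of competitor values, though not necessarily of the infima themselves.)
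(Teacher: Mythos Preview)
Your proof is correct and is exactly the argument the paper intends: the paper does not write out a formal proof but simply notes that the lemma ``follows from the facts that the definition of $\mathcal{K}$ does not include $\omega$, and $\mathcal{S}_{\omega_{1}} < \mathcal{S}_{\omega_{2}}$ for all $\omega_{1}<\omega_{2}$.'' You have just made this one-line observation explicit.
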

First, notice that by the definition of $\omega_{c}^{\mathcal{K}}$ (see \eqref{threshold-2}) and Lemma \ref{monotonicity-k}, 

Next, recall the following result obtained in \cite{AIKN}: 
\begin{proposition}[Proposition 2.1 in \cite{AIKN}]\label{thm-0-2}
Assume $d \geq 3$, 
$1+\frac{4}{d}<p<\frac{d+2}{d-2}$ 
and $\omega > 0$. 
If $m_{\omega}^{\mathcal{K}} < m_{\infty}$, 
then a minimizer for $m_{\omega}^{\mathcal{K}}$ exists. 
\end{proposition}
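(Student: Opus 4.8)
The plan is to run a concentration--compactness argument on a minimizing sequence for $m_{\omega}^{\mathcal{K}}$, with the hypothesis $m_{\omega}^{\mathcal{K}}<m_{\infty}$ used only to exclude the formation of a Sobolev (Talenti) bubble. The algebraic device is that, since $1+\frac{4}{d}<p<\frac{d+2}{d-2}$, the multiplier $a:=\frac{2}{d(p-1)}$ satisfies $\frac{d-2}{2d}<a<\frac{1}{2}$, so that
\[
\mathcal{P}(u):=\mathcal{S}_{\omega}(u)-a\,\mathcal{K}(u)
=\Big(\tfrac12-a\Big)\|\nabla u\|_{L^{2}}^{2}
+\tfrac{\omega}{2}\|u\|_{L^{2}}^{2}
+\Big(a-\tfrac{d-2}{2d}\Big)\|u\|_{L^{\frac{2d}{d-2}}}^{\frac{2d}{d-2}}
\]
carries no $L^{p+1}$ term and has both surviving coefficients strictly positive. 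Hence $\mathcal{P}\ge 0$, $t\mapsto\mathcal{P}(tu)$ is strictly increasing on $[0,\infty)$ for $u\neq0$, and $\mathcal{S}_{\omega}=\mathcal{P}$ on $\{\mathcal{K}=0\}$, so $m_{\omega}^{\mathcal{K}}=\inf\{\mathcal{P}(u):u\in H^{1}(\mathbb{R}^{d})\setminus\{0\},\ \mathcal{K}(u)=0\}$. Combining $\mathcal{K}(u)=0$ with the Gagliardo--Nirenberg and Sobolev inequalities, and using that the exponents $\frac{d(p-1)}{2}$ and $\frac{2d}{d-2}$ are both $>2$ (the former by $p>1+\frac{4}{d}$), one gets $\|u\|_{H^{1}}\ge\delta_{0}>0$ on the constraint set; in particular $m_{\omega}^{\mathcal{K}}>0$, and along any sequence in $\{\mathcal{K}=0\}$ with bounded $L^{2}$-norm, $\|\nabla u_{n}\|_{L^{2}}$ stays bounded away from $0$ (otherwise $\mathcal{K}(u_{n})=0$ would read $\|\nabla u_{n}\|_{L^{2}}^{2}=o_{n}(1)\|\nabla u_{n}\|_{L^{2}}^{2}$).

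I would then take a minimizing sequence $\{u_{n}\}$ with $\mathcal{K}(u_{n})=0$ and $\mathcal{S}_{\omega}(u_{n})=\mathcal{P}(u_{n})\to m_{\omega}^{\mathcal{K}}$, which is bounded in $H^{1}(\mathbb{R}^{d})$ by coercivity of $\mathcal{P}$. The key step is to rule out vanishing by showing $\liminf_{n}\|u_{n}\|_{L^{p+1}}>0$. If this fails, then along a subsequence $\|u_{n}\|_{L^{p+1}}\to0$; eliminating the critical term via $\mathcal{K}(u_{n})=0$ gives $\mathcal{S}_{\omega}(u_{n})=\frac{1}{d}\|\nabla u_{n}\|_{L^{2}}^{2}+\frac{\omega}{2}\|u_{n}\|_{L^{2}}^{2}+\frac{(d-2)(p-1)-4}{4(p+1)}\|u_{n}\|_{L^{p+1}}^{p+1}$, and since $\|\nabla u_{n}\|_{L^{2}}$ is bounded below while $\mathcal{K}(u_{n})=0$ together with $\|u_{n}\|_{L^{p+1}}\to0$ and Sobolev's inequality force $\liminf_{n}\|\nabla u_{n}\|_{L^{2}}^{2}\ge\sigma^{d/2}$, one would obtain $m_{\omega}^{\mathcal{K}}=\lim_{n}\mathcal{S}_{\omega}(u_{n})\ge\frac{1}{d}\sigma^{d/2}=m_{\infty}$, contradicting the hypothesis. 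With $\liminf_{n}\|u_{n}\|_{L^{p+1}}>0$ established, Lemma~\ref{thm-fll} (valid because $2<p+1<\frac{2d}{d-2}$ and $\{u_{n}\}$ is bounded in $L^{2}$ and $L^{\frac{2d}{d-2}}$) and then Lemma~\ref{thm-l} produce translations $\{y_{n}\}$ and a nontrivial $u_{\infty}\in H^{1}(\mathbb{R}^{d})$ with $u_{n}(\cdot+y_{n})\rightharpoonup u_{\infty}$ weakly in $H^{1}$ and a.e.; I replace $u_{n}$ by $u_{n}(\cdot+y_{n})$.

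To conclude, I would apply Lemma~\ref{thm-bl} to each homogeneous term of $\mathcal{P}$ and $\mathcal{K}$ to obtain the splittings $\mathcal{P}(u_{n})=\mathcal{P}(u_{\infty})+\mathcal{P}(u_{n}-u_{\infty})+o_{n}(1)$ and $\mathcal{K}(u_{n})=\mathcal{K}(u_{\infty})+\mathcal{K}(u_{n}-u_{\infty})+o_{n}(1)$, so that $m_{\omega}^{\mathcal{K}}=\mathcal{P}(u_{\infty})+\lim_{n}\mathcal{P}(u_{n}-u_{\infty})$ and $\mathcal{K}(u_{\infty})=-\lim_{n}\mathcal{K}(u_{n}-u_{\infty})$. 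Writing $v_{n}:=u_{n}-u_{\infty}$ (if $v_{n}\equiv0$ we are already done), I split on the sign of $\mathcal{K}(u_{\infty})$. If $\mathcal{K}(u_{\infty})>0$, then $\mathcal{K}(v_{n})<0$ for large $n$, so there is $s_{n}\in(0,1)$ with $\mathcal{K}(s_{n}v_{n})=0$, whence $m_{\omega}^{\mathcal{K}}\le\mathcal{S}_{\omega}(s_{n}v_{n})=\mathcal{P}(s_{n}v_{n})\le\mathcal{P}(v_{n})$; letting $n\to\infty$ gives $\mathcal{P}(u_{\infty})\le0$, impossible since $u_{\infty}\neq0$. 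Hence $\mathcal{K}(u_{\infty})\le0$, and choosing $t_{0}\in(0,1]$ with $\mathcal{K}(t_{0}u_{\infty})=0$ (possible since $\mathcal{K}(tu_{\infty})>0$ for small $t>0$) yields $m_{\omega}^{\mathcal{K}}\le\mathcal{S}_{\omega}(t_{0}u_{\infty})=\mathcal{P}(t_{0}u_{\infty})\le\mathcal{P}(u_{\infty})\le m_{\omega}^{\mathcal{K}}$; the equalities force $t_{0}=1$ by strict monotonicity of $t\mapsto\mathcal{P}(tu_{\infty})$, so $\mathcal{K}(u_{\infty})=0$ and $\mathcal{S}_{\omega}(u_{\infty})=\mathcal{P}(u_{\infty})=m_{\omega}^{\mathcal{K}}$, i.e. $u_{\infty}$ is a minimizer. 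The main obstacle is the exclusion of vanishing: it is the only place where the strict inequality $m_{\omega}^{\mathcal{K}}<m_{\infty}$ enters, and it hinges on the fact that, because $p>1+\frac{4}{d}$, both nonlinearities are super-quadratic in $\|\nabla u\|_{L^{2}}$, so a would-be spreading/bubbling profile is forced onto the Sobolev-critical Nehari set and thus carries energy at least $m_{\infty}$.
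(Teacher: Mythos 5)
Your argument is correct and, given that this paper defers the proof of Proposition~\ref{thm-0-2} to \cite{AIKN}, the natural comparison is to the paper's own Appendix~\ref{exisistence-groundstate} proof of the companion statement Proposition~\ref{thm-0-1} for $m_{\omega}^{\mathcal{N}}$. The two proofs share the same skeleton — replace the action on the constraint set by a non\-negative surrogate functional with no $\omega$-free sign ambiguity ($\mathcal{J}$ there, your $\mathcal{P}=\mathcal{S}_{\omega}-\tfrac{2}{d(p-1)}\mathcal{K}$ here), bound the minimizing sequence, extract a non\-trivial weak limit, apply Brezis--Lieb splittings to both the surrogate and the constraint functional, and finally promote the weak limit to a minimizer using scaling on the constraint set. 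The real divergence is in how compactness and non\-triviality of the limit are obtained. The paper's Appendix~\ref{exisistence-groundstate} uses Schwarz symmetrization and the compact radial embedding into $L^{p+1}$ (this works equally well for $\mathcal{K}$ since symmetrization preserves all $L^{q}$ norms and only decreases $\|\nabla\cdot\|_{L^{2}}$, hence preserves $\mathcal{K}\le 0$); you instead run the Fr\"ohlich--Lieb--Loss / Lieb concentration--compactness route (the paper's Lemmas~\ref{thm-fll} and \ref{thm-l}, which it itself uses in Section~\ref{section-gap}), which costs you an explicit vanishing\-exclusion step. That step is where $m_{\omega}^{\mathcal{K}}<m_{\infty}$ enters in your proof, and your chain — $\|\nabla u_n\|_{L^2}$ bounded below via Gagliardo--Nirenberg with exponent $\tfrac{d(p-1)}{2}>2$, hence $\mathcal{K}(u_n)=0$ plus Sobolev forces $\liminf\|\nabla u_n\|_{L^2}^2\ge\sigma^{d/2}$ once $\|u_n\|_{L^{p+1}}\to0$, hence $\liminf\mathcal{S}_{\omega}(u_n)\ge\tfrac{1}{d}\sigma^{d/2}=m_{\infty}$ — is the exact analogue of the paper's argument that $Q_{\omega}\not\equiv0$ in Appendix~\ref{exisistence-groundstate}. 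The sign check on $\mathcal{P}$ is also right: $a=\tfrac{2}{d(p-1)}\in\bigl(\tfrac{d-2}{2d},\tfrac12\bigr)$ precisely when $1+\tfrac4d<p<\tfrac{d+2}{d-2}$. Net gain of your route: no reliance on radial symmetry, so it transparently extends to non\-radial perturbations; net gain of the paper's route: less machinery, since symmetrization plus compact radial embedding replaces the FLL/Lieb pair and the vanishing dichotomy altogether. One cosmetic point worth tightening in a final write-up: phrase the "if $v_n\equiv0$ we are done" clause as "if $\lim_n\|\nabla(u_n-u_\infty)\|_{L^2}=0$ we are done by continuity of $\mathcal{S}_\omega$ and $\mathcal{K}$", since what actually occurs in that degenerate case is strong $\dot H^1$ convergence of the translated sequence, not pointwise equality.
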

Now, observe from Lemma \ref{monotonicity-k} that 
the set 
$\{\omega>0 \colon m_{\omega}^{\mathcal{K}} <m_{\infty}\}$ 
is connected, so that 
\begin{equation}\label{19/9/16/21:41}
m_{\omega}^{\mathcal{K}} < m_{\infty}
\quad 
\mbox{for any $0< \omega < 
\omega_{c}^{\mathcal{K}}$}.
\end{equation}

Moreover, we can prove the following proposition by the same argument as in Section 2 of \cite{AKN}:
\begin{proposition}\label{19/9/16/21:16}
Assume $d \geq 3$ and 
$1+\frac{4}{d}<p<\frac{d+2}{d-2}$, 
and let $\omega>0$. Then, any minimizer for $m_{\omega}^{\mathcal{K}}$ becomes a solution to \eqref{sp}. 
\end{proposition}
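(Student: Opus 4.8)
The statement is of ``constrained minimizer $\Rightarrow$ free critical point'' type, and the plan is to run a Lagrange-multiplier argument in which the dilation structure behind $\mathcal{K}$ forces the multiplier to vanish. Let $u$ be a minimizer for $m_{\omega}^{\mathcal{K}}$, so $u\neq0$ and $\mathcal{K}(u)=0$. First I would check that $\mathcal{K}$ is a genuine constraint at $u$: inserting $\|\nabla u\|_{L^{2}}^{2}=\frac{d(p-1)}{2(p+1)}\|u\|_{L^{p+1}}^{p+1}+\|u\|_{L^{2d/(d-2)}}^{2d/(d-2)}$ (which is $\mathcal{K}(u)=0$) into $\langle\mathcal{K}'(u),u\rangle$ gives
\begin{equation*}
\langle\mathcal{K}'(u),\,u\rangle
=-\frac{d(p-1)^{2}}{2(p+1)}\|u\|_{L^{p+1}}^{p+1}
-\frac{4}{d-2}\|u\|_{L^{2d/(d-2)}}^{2d/(d-2)}<0,
\end{equation*}
so $\mathcal{K}'(u)\neq0$ in $H^{-1}(\mathbb{R}^{d})$ and the Lagrange multiplier theorem yields $\Lambda\in\mathbb{R}$ with $\mathcal{S}_{\omega}'(u)=\Lambda\mathcal{K}'(u)$; that is, $u$ weakly solves $(1-2\Lambda)(-\Delta u)+\omega u=(1-\frac{d(p-1)}{2}\Lambda)|u|^{p-1}u+(1-\frac{2d}{d-2}\Lambda)|u|^{4/(d-2)}u$.

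Before extracting information I would dispose of the degenerate value $\Lambda=\frac12$: there the principal part disappears and the equation becomes the pointwise relation $\omega u=(1-\frac{d(p-1)}{4})|u|^{p-1}u+(1-\frac{d}{d-2})|u|^{4/(d-2)}u$ a.e., whose two coefficients are both strictly negative because $p>1+\frac4d$ and $d\geq3$; this forces $u\equiv0$, a contradiction. Hence $1-2\Lambda\neq0$, the Euler--Lagrange equation is elliptic, and the usual bootstrap (exactly as in the proof of Lemma \ref{re-regularity}, using the subcriticality $p+1<\frac{2d}{d-2}$) gives $u\in W^{2,q}_{\mathrm{loc}}(\mathbb{R}^{d})$ for every $q<\infty$.

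The heart of the argument is that $\mathcal{K}$ is the $\lambda=1$ derivative of $\mathcal{S}_{\omega}$ along the $L^{2}$-invariant dilation $v_{\lambda}(x):=\lambda^{d/2}v(\lambda x)$; equivalently, writing $\mathcal{D}v:=\frac{d}{2}v+x\cdot\nabla v$ for the dilation generator, one has $\langle\mathcal{S}_{\omega}'(v),\mathcal{D}v\rangle=\mathcal{K}(v)$. I would make this precise at $u$ --- where $\mathcal{D}u$ need not lie in $H^{1}$ --- by multiplying the Euler--Lagrange equation by $\mathcal{D}u$, integrating over a ball $B_{R}$, integrating by parts, and letting $R\to\infty$ along a sequence on which the boundary terms vanish; such a sequence exists because $\|\nabla u\|_{L^{2}}^{2}$, $\|u\|_{L^{2}}^{2}$, $\|u\|_{L^{p+1}}^{p+1}$ and $\|u\|_{L^{2d/(d-2)}}^{2d/(d-2)}$ are finite. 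The same computation for $\mathcal{K}'(u)$, combined once more with $\mathcal{K}(u)=0$, gives
\begin{equation*}
\langle\mathcal{K}'(u),\,\mathcal{D}u\rangle
=\frac{d(p-1)}{p+1}\Bigl(1-\frac{d(p-1)}{4}\Bigr)\|u\|_{L^{p+1}}^{p+1}
-\frac{4}{d-2}\|u\|_{L^{2d/(d-2)}}^{2d/(d-2)}<0,
\end{equation*}
the strict negativity being precisely where $p>1+\frac4d$ is used. Pairing $\mathcal{S}_{\omega}'(u)=\Lambda\mathcal{K}'(u)$ with $\mathcal{D}u$ now gives $0=\mathcal{K}(u)=\langle\mathcal{S}_{\omega}'(u),\mathcal{D}u\rangle=\Lambda\langle\mathcal{K}'(u),\mathcal{D}u\rangle$, hence $\Lambda=0$, hence $\mathcal{S}_{\omega}'(u)=0$: $u$ solves \eqref{sp}.

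The one genuinely delicate step --- and the main obstacle --- is the justification of the two Pohozaev-type identities involving $\mathcal{D}u$, since $x\cdot\nabla u$ is not an admissible test function for a generic $H^{1}$ map; this is what makes the regularity preliminary and the truncation/limit argument necessary. Both hypotheses enter through this scheme: the lower bound $p>1+\frac4d$ gives $\langle\mathcal{K}'(u),\mathcal{D}u\rangle$ (and the scaled action's second variation) a definite sign, while the upper bound $p<\frac{d+2}{d-2}$ provides the Sobolev-subcriticality of the $|u|^{p-1}u$ term needed for well-definedness, for $\langle\mathcal{K}'(u),u\rangle<0$, and for the elliptic bootstrap. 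This is the argument of Section~2 of \cite{AKN}.
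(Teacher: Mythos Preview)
Your argument is correct and is precisely the Lagrange-multiplier-plus-Pohozaev scheme that the paper invokes by reference (``the same argument as in Section~2 of \cite{AKN}''): verify $\langle\mathcal{K}'(u),u\rangle<0$ to obtain a multiplier, rule out the degenerate value, and use the $L^{2}$-scaling identity $\langle\mathcal{S}_{\omega}'(u),\mathcal{D}u\rangle=\mathcal{K}(u)=0$ together with $\langle\mathcal{K}'(u),\mathcal{D}u\rangle<0$ (which is where $p>1+\tfrac4d$ enters) to force $\Lambda=0$. Your computations check out, and your treatment of the regularity/boundary-term issues is the standard one.
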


\begin{proposition}\label{super-omega-K}
Assume $d = 3$ 
and $\frac{7}{3} <p < 5$. 
Then, for any $\omega \ge \omega_{c}^{\mathcal{K}}$, we have
$m_{\omega}^{\mathcal{K}} = m_{\infty}$. 
\end{proposition}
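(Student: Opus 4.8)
The plan is to follow the proof of Proposition~\ref{thm4-2} almost verbatim, with $\mathcal{N}_{\omega}$ replaced everywhere by $\mathcal{K}$; if $\omega_{c}^{\mathcal{K}}=\infty$ the assertion is vacuous, so we may assume $\omega_{c}^{\mathcal{K}}<\infty$. The only computation genuinely specific to $\mathcal{K}$ is the algebraic identity (valid for $d=3$)
\[
\mathcal{S}_{\omega}(u)-\tfrac12\mathcal{K}(u)
=
\frac{\omega}{2}\|u\|_{L^{2}}^{2}
+
\frac{3p-7}{4(p+1)}\|u\|_{L^{p+1}}^{p+1}
+
\frac{1}{3}\|u\|_{L^{6}}^{6},
\]
which plays here the role of \eqref{re-S-J}. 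The hypothesis $p>\frac{7}{3}$ makes the right-hand side a sum of nonnegative terms, and it is exactly the range in which Lemma~\ref{monotonicity-k} and Proposition~\ref{thm-0-2} are stated.

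\emph{Step 1: $m_{\omega}^{\mathcal{K}}\le m_{\infty}$ for every $\omega>0$.} I would use the truncated Talenti functions $V_{\varepsilon}$ of \eqref{modi-talenti} together with the $\varepsilon$-expansions \eqref{est-Talenti3}, \eqref{est-Talenti1}, \eqref{est-Talenti2} (and $\|V_{\varepsilon}\|_{L^{2}}^{2}=o_{\varepsilon}(1)$, as in \eqref{18/07/01/10:37}). Since
\[
t^{-2}\mathcal{K}(tV_{\varepsilon})
=\|\nabla V_{\varepsilon}\|_{L^{2}}^{2}
-t^{p-1}\tfrac{3(p-1)}{2(p+1)}\|V_{\varepsilon}\|_{L^{p+1}}^{p+1}
-t^{4}\|V_{\varepsilon}\|_{L^{6}}^{6}
\]
is positive for small $t>0$ and tends to $-\infty$ as $t\to\infty$, there is $t_{\varepsilon}>0$ with $\mathcal{K}(t_{\varepsilon}V_{\varepsilon})=0$, and the expansions force $t_{\varepsilon}\to1$ as $\varepsilon\to0$. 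Hence, by the displayed identity and \eqref{sob-bc-rela},
\[
m_{\omega}^{\mathcal{K}}\le\mathcal{S}_{\omega}(t_{\varepsilon}V_{\varepsilon})
=\mathcal{S}_{\omega}(t_{\varepsilon}V_{\varepsilon})-\tfrac12\mathcal{K}(t_{\varepsilon}V_{\varepsilon})
=\tfrac13\sigma^{\frac{3}{2}}+o_{\varepsilon}(1)=m_{\infty}+o_{\varepsilon}(1),
\]
and letting $\varepsilon\to0$ gives the claim.

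\emph{Step 2: $m_{\omega}^{\mathcal{K}}\ge m_{\infty}$ for $\omega\ge\omega_{c}^{\mathcal{K}}$.} For $\omega>\omega_{c}^{\mathcal{K}}$ this is immediate from Lemma~\ref{monotonicity-k} and the definition \eqref{threshold-2}, so $m_{\omega}^{\mathcal{K}}=m_{\infty}$ there by Step 1. For $\omega=\omega_{c}^{\mathcal{K}}$ I would argue by contradiction: suppose $m_{\omega_{c}^{\mathcal{K}}}^{\mathcal{K}}<m_{\infty}$. Then Proposition~\ref{thm-0-2} furnishes a minimizer $\Phi$, so $\mathcal{K}(\Phi)=0$ and $\mathcal{S}_{\omega_{c}^{\mathcal{K}}}(\Phi)=m_{\omega_{c}^{\mathcal{K}}}^{\mathcal{K}}$. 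Since $\mathcal{K}$ does not involve $\omega$ (see \eqref{func-k}), the constraint $\mathcal{K}(\Phi)=0$ persists at every frequency; hence, for any sequence $\omega_{n}\downarrow\omega_{c}^{\mathcal{K}}$ with $\omega_{n}>\omega_{c}^{\mathcal{K}}$, using the case just settled,
\[
m_{\infty}=m_{\omega_{n}}^{\mathcal{K}}\le\mathcal{S}_{\omega_{n}}(\Phi)
=m_{\omega_{c}^{\mathcal{K}}}^{\mathcal{K}}+\frac{\omega_{n}-\omega_{c}^{\mathcal{K}}}{2}\|\Phi\|_{L^{2}}^{2},
\]
and letting $n\to\infty$ gives $m_{\infty}\le m_{\omega_{c}^{\mathcal{K}}}^{\mathcal{K}}$, a contradiction. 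Thus $m_{\omega_{c}^{\mathcal{K}}}^{\mathcal{K}}\ge m_{\infty}$, and combined with Step 1 we conclude $m_{\omega_{c}^{\mathcal{K}}}^{\mathcal{K}}=m_{\infty}$.

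I expect this to be slightly \emph{shorter} than the proof of Proposition~\ref{thm4-2}: because $\mathcal{K}$ carries no $\omega$, no auxiliary rescaling $t_{n}\Phi$ (and hence no continuity-type estimate for the $t_{n}$) is required in Step 2, and Proposition~\ref{19/9/16/21:16} is not needed at all. The main — though modest — points to get right are the bookkeeping of the $\varepsilon$-expansions in Step 1 and the observation in Step 2 that a minimizer for $m_{\omega_{c}^{\mathcal{K}}}^{\mathcal{K}}$ is available precisely when $m_{\omega_{c}^{\mathcal{K}}}^{\mathcal{K}}<m_{\infty}$, which is exactly what makes the contradiction close.
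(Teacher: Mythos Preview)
Your proposal is correct and follows essentially the same approach as the paper: both use the truncated Talenti test functions $V_{\varepsilon}$ for the upper bound and the monotonicity Lemma~\ref{monotonicity-k} together with a contradiction argument involving a minimizer at $\omega_{c}^{\mathcal{K}}$ (produced by Proposition~\ref{thm-0-2}) for the critical case. Your observation that Proposition~\ref{19/9/16/21:16} is not needed here is accurate---the paper invokes it but never actually uses the resulting fact $\mathcal{N}_{\omega_{c}^{\mathcal{K}}}(\Phi)=0$; and your organization (proving the universal upper bound $m_{\omega}^{\mathcal{K}}\le m_{\infty}$ once in Step~1, and passing to the limit directly in Step~2 rather than through a $\mu_{c}^{\mathcal{K}}/2$ argument) is indeed a mild streamlining of the paper's presentation.
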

\begin{proof}
First, consider the case where $\omega>\omega_{c}^{\mathcal{K}}$. We see from Lemma \ref{monotonicity-k} 
and the definition of $\omega_{c}^{\mathcal{K}}$ 
(see \eqref{threshold-2}).
that $m_{\omega}^{\mathcal{K}} 
\ge m_{\omega_{c}^{\mathcal{K}}}^{\mathcal{K}} 
\ge m_{\infty}$ for any $\omega >\omega_{c}^{\mathcal{K}}$. 
Here, suppose to the contrary that there exists $\omega_{2}>\omega_{c}^{\mathcal{K}}$ such that $m_{\omega_{2}}^{\mathcal{K}}> m_{\infty}$. Put $\mu_{2}:=m_{\omega_{2}}^{\mathcal{K}}- m_{\infty} (>0)$. Then, as well as \eqref{18/07/01/10:37} in the proof of Proposition \ref{thm4-2}, we can verify that for any $\varepsilon>0$ and any $t > 0$, 
\begin{equation}\label{18/07/01/10:37-1}
t^{-2} \mathcal{K}(t V_{\varepsilon}) 
= 
\sigma^{\frac{3}{2}} + O(\varepsilon^{2}) 
- 
t^{p-1} O(\varepsilon^{5-p})
- 
t^{4} \sigma^{\frac{3}{2}}
+ 
t^{4} O(\varepsilon^{6}),
\end{equation}
where $V_{\varepsilon}$ is the same function as in \eqref{modi-talenti}. Furthermore, we find from \eqref{18/07/01/10:37-1} that for any sufficiently small $\varepsilon>0$, there exists $t_{\varepsilon} > 0$ such that 
\begin{equation}\label{18/07/01/10:41-2}
\mathcal{K}(t_{\varepsilon} V_{\varepsilon}) = 0, \qquad 
t_{\varepsilon} = 1 + O(\varepsilon^{2}).
\end{equation}
Hence, we see from \eqref{18/07/01/10:41-2}, the definition of $m_{\omega_{2}}^{\mathcal{K}}$ (see \eqref{mini-k}), \eqref{est-Talenti1}, \eqref{est-Talenti2}, $p \leq 2$, 
and the hypothesis $m_{\omega_{2}}^{\mathcal{K}}>m_{\infty}$ 
that if $\varepsilon^{2}\ll \mu_{2}$, then
one has 
\begin{equation}
\begin{split}
m_{\omega_{2}}^{\mathcal{K}}
& \le \mathcal{\mathcal{S}}_{\omega_{2}}
(t_{\varepsilon} V_{\varepsilon}) 
=
\mathcal{\mathcal{S}}_{\omega_{2}}
(t_{\varepsilon} V_{\varepsilon}) 
- \frac{1}{2} \mathcal{K}(t_{\varepsilon} V_{\varepsilon}) \\
& = 
\frac{\omega_{2}}{2} 
t_{\varepsilon}^{2}\|V_{\varepsilon}\|_{L^{2}}^{2} 
+ \left(\frac{3(p-1)}{4(p+1)} - \frac{1}{p+1}\right) 
t_{\varepsilon}^{p+1} \|V_{\varepsilon}\|_{L^{p+1}}^{p+1} 
+ \frac{1}{3} t_{\varepsilon}^{6} \|V_{\varepsilon}\|_{L^{6}}^{6} \\
& = 
t_{\varepsilon}^{2} \, O(\varepsilon^{2}) 
+ 
t_{\varepsilon}^{p+1}
O(\varepsilon^{5-p}) 
+ t_{\varepsilon}^{6} m_{\infty} 
+ O(\varepsilon^{6})
\\[6pt]
&= 
m_{\infty} + O(\varepsilon^{2})
\le 
m_{\infty} + \frac{1}{2}\mu_{2}
=
\frac{1}{2}
m_{\omega_{2}}^{\mathcal{K}}
+
\frac{1}{2}m_{\infty}
< 
m_{\omega_{2}}^{\mathcal{K}}, 
\end{split}
\end{equation}
which is a contradiction. 
Thus, we have proved that 
$m_{\omega}^{\mathcal{K}} = m_{\infty}$ for all 
$\omega > \omega_{c}^{\mathcal{N}}$. 
Next, we consider the case where $\omega = \omega_{c}^{\mathcal{K}}$. 
We see from Lemma \ref{monotonicity-k} and 
the definition of $\omega_{c}^{\mathcal{K}}$ 
that $m_{\omega_{c}^{\mathcal{K}}}^{\mathcal{K}} \le m_{\infty}$. 
Suppose to the contrary that 
$m_{\omega_{c}^{\mathcal{K}}}
^{\mathcal{K}} < m_{\infty}$. Then, 
applying Propositions \ref{thm-0-2} and 
\ref{19/9/16/21:16}, we find that a minimizer 
$\Phi_{\omega_{c}}^{\mathcal{K}}$ 
for $m_{\omega_{c}^{\mathcal{K}}}^{\mathcal{K}}$ exists 
and satisfies \eqref{sp}. 
Note that $\mathcal{S}_{\omega_{c}^{\mathcal{K}}}
(\Phi_{\omega_{c}^{\mathcal{K}}}) = 
m_{\omega_{c}^{\mathcal{K}}}^{\mathcal{K}}$ 
and 
$\mathcal{N}_{\omega_{c}^{\mathcal{K}}}
(\Phi_{\omega_{c}^{\mathcal{K}}}) 
= \langle \mathcal{S}_{\omega_{c}^{\mathcal{K}}}^{\prime}
(\Phi_{\omega_{c}^{\mathcal{K}}}), 
\Phi_{\omega_{c}^{\mathcal{K}}} \rangle =0$. 
Put $\mu_{c}^{\mathcal{K}} := m_{\infty} 
- m_{\omega_{c}^{\mathcal{K}}}^{\mathcal{K}}$. 
Moreover, let $\{\omega_{n}\}$ be a sequence of positive numbers such that 
$\lim_{n \to \infty}\omega_{n} = \omega_{c}^{\mathcal{K}}$, and $\omega_{n} > \omega_{c}^{\mathcal{K}}$ for all $n \ge 1$. 
We have already proved that for any $n \ge 1$, 
\begin{equation}\label{18/07/01/11:45-2}
m_{\omega_{n}}^{\mathcal{K}} = m_{\infty}.
\end{equation} 
Furthermore, we can take a number $n_{c}$ 
such that for any $n\ge n_{c}$, 
\begin{align}
\label{18/07/01/11:51-2}
\big| 
\mathcal{S}_{\omega_{n}} (\Phi_{\omega_{c}^{\mathcal{K}}}) - \mathcal{S}_{\omega_{c}^{\mathcal{K}}}(\Phi_{\omega_{c}^{\mathcal{K}}}) 
\big|
= 
\frac{1}{2}|\omega_{c}^{\mathcal{K}} - \omega_{n}|
\|\Phi_{\omega_{c}^{\mathcal{K}}}\|_{L^{2}}^{2}< 
\frac{1}{2}\mu_{c}^{\mathcal{K}},
\end{align}
Using \eqref{18/07/01/11:45-2}, 
the definition of $m_{\omega}^{\mathcal{K}}$, 
$\mathcal{K} (\Phi_{\omega_{c}^{\mathcal{K}}})=0$, 
\eqref{18/07/01/11:51-2}
and $\mu_{c}^{\mathcal{K}} 
= m_{\infty} - 
m_{\omega_{c}^{\mathcal{K}}} 
= m_{\infty} - 
\mathcal{S}_{\omega_{c}^{\mathcal{K}}}
(\Phi_{\omega_{c}^{\mathcal{K}}}) $, 
we find that for any $n \ge n_{c}$, 
\begin{equation}
m_{\infty} 
= 
m_{\omega_{n}}^{\mathcal{K}} 
\leq \mathcal{S}_{\omega_{n}} (\Phi_{\omega_{c}^{\mathcal{K}}})
< 
\mathcal{S}_{\omega_{c}^{\mathcal{K}}}(\Phi_{\omega_{c}^{\mathcal{K}}}) 
+ \frac{1}{2}\mu_{c}^{\mathcal{K}} 
= m_{\infty} - \frac{1}{2}\mu_{c}^{\mathcal{K}}. 
\end{equation}
However, this is a contradiction. 
Thus, we have completed the proof. 
\end{proof}

Now, we are in a position to prove Theorem \ref{thm-1}: 
\begin{proof}[Proof of Theorem \ref{thm-1}]
We shall prove the claim {\rm (i)}. 

We shall show that $\omega_{c}^{\mathcal{K}} 
\le \omega_{c}^{\mathcal{N}}$. 
Suppose to the contrary that $\omega_{c}^{\mathcal{K}} > \omega_{c}^{\mathcal{N}}$. Then, we can take $\omega_{2}$ such that $\omega_{c}^{\mathcal{N}} <\omega_{2} < \omega_{c}^{\mathcal{K}}$, and Proposition \ref{thm-0-2} together with \eqref{19/9/16/21:41} shows that a minimizer $Q$ for $m_{\omega_{2}}^{\mathcal{K}}$ exists. 
Note that $\mathcal{S}_{\omega_{2}}(Q)=m_{\omega_{2}}^{\mathcal{K}}$ 
and $\mathcal{K}(Q) = 0$. 
Furthermore, it follows from Proposition \ref{19/9/16/21:16} that 
$\mathcal{N}_{\omega_{2}}(Q) 
= \langle 
\mathcal{S}_{\omega}^{\prime}(Q), Q \rangle =0$. 

Now, we find from $\omega_{c}^{\mathcal{N}}<\omega_{2}$, Proposition 
\ref{thm4-2}, $\mathcal{N}_{\omega_{2}}(Q)=0$, the definition of 
$m_{\omega}^{\mathcal{N}}$ (see \eqref{m-value}), $\mathcal{S}
_{\omega_{2}}(Q)=m_{\omega_{2}}^{\mathcal{K}}$, $\omega_{2} < 
\omega_{c}^{\mathcal{K}}$ and \eqref{19/9/16/21:41} that 
\begin{equation}
m_{\infty} = m_{\omega_{2}}^{\mathcal{N}} 
\leq \mathcal{S}_{\omega_{2}}(Q) 
= m_{\omega_{2}}^{\mathcal{K}} < 
m_{\infty}, 
\end{equation}
which is a contradiction. Thus, we have proved the claim.

Suppose to the contrary that $\omega_{c}^{\mathcal{K}} 
< \omega_{c}^{\mathcal{N}}$. 
Let $\omega \in (\omega_{c}^{\mathcal{K}}, 
\omega_{c}^{\mathcal{N}})$. 
Then, we see from Proposition \ref{thm-0-1} that 
there exists a positive radial 
ground state $\Phi_{\omega}^{\mathcal{N}}$ of 
$m_{\omega}^{\mathcal{N}}$. 
Then, we can prove in a way similar to Lemma 2 of \cite{Berestycki-Lions} that 
$\Phi_{\omega}^{\mathcal{N}}$ and the first and the 
second derivatives decays exponentially. In 
particular, we have $x \cdot \nabla 
\Phi_{\omega}^{\mathcal{N}} \in H^{1}
(\mathbb{R}^{3})$. 
Hence, we see that 
\begin{equation} \label{identity-K}
\mathcal{K}(\Phi_{\omega}^{\mathcal{N}}) 
= \langle \mathcal{S}_{\omega}^{\prime}
(\Phi_{\omega}^{\mathcal{N}}), 
\frac{3}{2} \Phi_{\omega}^{\mathcal{N}} 
+ x \cdot 
\nabla \Phi_{\omega}^{\mathcal{N}} \rangle = 0.
\end{equation} 
It follows from the definition of $m_{\omega}^{\mathcal{K}}$, $\omega > \omega_{c}^{\mathcal{K}}$ and Proposition \ref{super-omega-K} that 
\begin{equation} \label{vari-value-ineq}
m_{\infty} = m_{\omega}^{\mathcal{K}} 
\leq \mathcal{S}_{\omega} (\Phi_{\omega}^{\mathcal{N}}). 
\end{equation}
On the other hand, we see from 
$\omega < \omega_{c}^{\mathcal{N}}$ 
and the definition of $\omega_{c}^{\mathcal{N}}$ that 
\begin{equation}
\mathcal{S}_{\omega} (\Phi_{\omega}^{\mathcal{N}}) < m_{\infty},
\end{equation}
which contradicts \eqref{vari-value-ineq}. 
Thus, we have $\omega_{c}^{\mathcal{N}} = \omega_{c}^{\mathcal{K}}$. 

Next, we shall prove the claim {\rm (ii)}. 
Put $\omega_{c} := \omega_{c}^{\mathcal{N}} 
= \omega_{c}^{\mathcal{K}}$, and let $\omega < \omega_{c}$. 
Then, it follows from Propositions \ref{thm-0-1} 
and \ref{thm-0-2} that there exist a minimizer 
$\Phi_{\omega}^{\mathcal{N}}$ of $m_{\omega}^{\mathcal{N}}$ and 
a minimizer 
$\Phi_{\omega}^{\mathcal{K}}$ of $m_{\omega}^{\mathcal{K}}$. 
Note that 
$\mathcal{N}_{\omega}(\Phi_{\omega}^{\mathcal{K}}) = 0$ 
and $\mathcal{K}(\Phi_{\omega}^{\mathcal{N}})=0$ 
(see \eqref{identity-K}). 
Hence, we have 
\begin{equation} \label{vari-ineq-2}
m_{\omega}^{\mathcal{N}} \leq \mathcal{S}_{\omega}(\Phi_{\omega}^{\mathcal{K}}) 
= m_{\omega}^{\mathcal{K}}, 
\end{equation}
and 
\begin{equation} \label{vari-ineq-3}
m_{\omega}^{\mathcal{K}} \leq \mathcal{S}_{\omega}(\Phi_{\omega}^{\mathcal{N}}) 
= m_{\omega}^{\mathcal{N}}. 
\end{equation}

Note that $\omega_{c}=\infty$ for $3<p<5$.
Hence, we may assume that 
$\frac{7}{3}<p\le 3$. 
By \eqref{vari-ineq-2} and \eqref{vari-ineq-3}, 
we have $m_{\omega}^{\mathcal{N}} = m_{\omega}^{\mathcal{K}}$ for any 
$\omega < \omega_{c}$. 
Then, for any $\omega \geq \omega_{c}$, it follows from 
Propositions \ref{thm4-2} and \ref{super-omega-K} that 
\begin{equation}
m_{\omega}^{\mathcal{N}} = m_{\omega}^{\mathcal{K}} = 
m_{\infty}. 
\end{equation}
Thus, we obtain $m_{\omega}^{\mathcal{N}} 
= m_{\omega}^{\mathcal{K}}$ 
for any $\omega > 0$. 

Next, we shall prove the claim {\rm (iii)}. From Proposition \ref{thm-0-2}, 
there exists a minimizer $\Phi_{\omega}^{\mathcal{K}}$ 
of $m_{\omega}^{\mathcal{K}}$
for $\omega \in (0, \omega_{c})$. 
We shall show that 
there exists no minimizer of $m_{\omega}^{\mathcal{K}}$ 
for $\omega \in 
(\omega_{c}, \infty)$. 
Suppose to the contrary that 
there exists a $\omega_{*} \in 
(\omega_{c}^{\mathcal{K}}, \infty)$ such that 
$m_{\omega_{*}}^{\mathcal{K}}$ has a minimizer 
$\Phi_{\omega_{*}}^{\mathcal{K}}$. 
By $\omega_{*} > \omega_{c}^{\mathcal{K}}, 
\Phi_{\omega_{*}}^{\mathcal{K}} \neq 0$, 
Lemma \ref{monotonicity-k} and
Proposition \ref{super-omega-K}, 
we have 
\begin{equation}
m_{\infty} 
= \mathcal{S}_{\omega_{*}}(\Phi_{\omega_{*}}^{\mathcal{K}}) 
> \mathcal{S}_{\omega_{c}}(\Phi_{\omega_{*}}^{\mathcal{K}})
\geq m_{\omega_{c}^{\mathcal{K}}}^{\mathcal{K}} 
= m_{\infty}, 
\end{equation} 
which is a contradiction. 

Finally, we will give a proof of (\textrm{iv}). 
Assume $\frac{7}{3}<p<3$.
It follows from Theorem \ref{ex-critical-o} 
and $m_{\omega}^{\mathcal{N}}=m_{\omega}^{\mathcal{K}}$ (see Claim (ii)) that 
there exists a minimizer $\Phi_{\omega_{c}}
^{\mathcal{N}}$ such that 
$\mathcal{S}_{\omega}(\Phi_{\omega_{c}}
^{\mathcal{N}}) = m_{\infty}$. 
Since $\Phi_{\omega_{c}}^{\mathcal{N}}$ 
satisfies $\mathcal{K}(\Phi_{\omega_{c}}
^{\mathcal{N}}) = 0$, one has, by Proposition 
\ref{super-omega-K}, that 
\begin{equation}
m_{\infty} = m_{\omega_{c}}^{\mathcal{K}} 
\leq \mathcal{S}_{\omega_{c}}(\Phi_{\omega_{c}}
^{\mathcal{N}}) = m_{\infty}. 
\end{equation}
Thus, we see that $\Phi_{\omega_{c}}
^{\mathcal{N}}$ is also a minimizer for 
$m_{\omega_{c}}^{\mathcal{K}}$. 
This completes the proof. 
\end{proof}


\section{Proofs of Theorems \ref{thm-ufe} and \ref{thm-imp}}\label{19/9/24/6:1}
\label{18/11/11/10:47}

In this section, we give a proof of Theorems \ref{thm-ufe} 
and \ref{thm-imp}.
\subsection{Preliminaries} 
First, let us recall the information about the spectrum of $L_{+}$ (see, e.g., Section 5.2 of \cite{DM}, and Proposition 2.2 of \cite{CMR}): 
\begin{lemma}\label{18/11/14/10:43}
Assume $d\ge 3$. Then, there exists $e_{0}>0$ such that $-e_{0}$ is the only one negative eigenvalue of the operator $L_{+}:=-\Delta +V_{+}$. Furthermore, the spectrum $\sigma(L_{+})$ of $L_{+}$ is contained in $\{-e_{0} \}\cup [0,\infty)$, and 
\begin{equation}\label{18/11/11/11:01}
\{ u \in \dot{H}^{1}(\mathbb{R}^{d}) \colon \mbox{$L_{+}u =0$ in the distribution sense}\} =\operatorname{span}_{\mathbb{C}}
\{\Lambda W, \partial_{1}W, \ldots, \partial_{d}W \},
\end{equation}
where $\partial_{j}W$ means the partial derivative of $W$ with respect to the $j$-th variable for each $1\le j \le d$. 
\end{lemma}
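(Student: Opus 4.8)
The plan is the classical route for the non-degeneracy of the Aubin--Talenti bubble: locate the essential spectrum by Weyl's theorem, produce one negative eigenvalue with a trial function, and then pin down the negative spectrum and the $\dot{H}^{1}$-kernel by decomposing $L_{+}$ into spherical harmonic sectors and applying Sturm oscillation theory.

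First I would record that $V_{+}$ (which is $-5W^{4}$ when $d=3$, and $-\tfrac{d+2}{d-2}W^{4/(d-2)}$ in general) is bounded and, since $W(x)=O(|x|^{-(d-2)})$ at infinity, satisfies $V_{+}(x)=O(|x|^{-4})$; in particular $V_{+}\in L^{d/2}(\mathbb{R}^{d})$ and multiplication by $V_{+}$ is a relatively compact perturbation of $-\Delta$. Weyl's theorem then gives $\sigma_{\mathrm{ess}}(L_{+})=\sigma_{\mathrm{ess}}(-\Delta)=[0,\infty)$, and the Cwikel--Lieb--Rozenblum bound shows $L_{+}$ has only finitely many eigenvalues below $0$. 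To see there is at least one, I would test the quadratic form with $W$: using $-\Delta W=W^{(d+2)/(d-2)}$ and \eqref{19/9/18/11:15} one gets $\langle L_{+}W,W\rangle=\|\nabla W\|_{L^{2}}^{2}-\tfrac{d+2}{d-2}\|W\|_{L^{2d/(d-2)}}^{2d/(d-2)}=-\tfrac{4}{d-2}\|\nabla W\|_{L^{2}}^{2}<0$; when $d=3,4$ (so that $W\notin L^{2}$) one applies this to $\chi(\cdot/R)W$ and lets $R\to\infty$, the corrections being supported on $\{R\le|x|\le 2R\}$ and of size $O(R^{2-d})$. Hence $\inf\sigma(L_{+})<0$, and $L_{+}$ has an eigenvalue strictly below $\inf\sigma_{\mathrm{ess}}(L_{+})=0$.

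Next I would decompose $L^{2}(\mathbb{R}^{d})=\bigoplus_{\ell\ge0}L^{2}((0,\infty),r^{d-1}\,dr)\otimes Y_{\ell}$, with $Y_{\ell}$ the degree-$\ell$ spherical harmonics, so that $L_{+}=\bigoplus_{\ell}L_{+,\ell}$ with $L_{+,\ell}=-\partial_{r}^{2}-\tfrac{d-1}{r}\partial_{r}+\tfrac{\ell(\ell+d-2)}{r^{2}}+V_{+}$. Since $\ell\mapsto\ell(\ell+d-2)$ is increasing, $L_{+,\ell}\ge L_{+,1}$ for $\ell\ge1$, and everything reduces to the sectors $\ell=0,1$. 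By \eqref{19/10/9/10:22}, $\partial_{j}W$ and $\Lambda W$ lie in the kernel of $L_{+}$; now $\partial_{j}W=|x|^{-1}W'(|x|)\,x_{j}$ sits in the $\ell=1$ sector, with radial profile $W'<0$ having \emph{no} zero on $(0,\infty)$, so by Sturm oscillation $0=\inf\sigma(L_{+,1})$, whence $L_{+,\ell}\ge0$ for all $\ell\ge1$ and $\langle L_{+,\ell}u,u\rangle>0$ for $u\neq0$ when $\ell\ge2$. In the radial sector, the explicit formula $\Lambda W=\tfrac{d-2}{2}\bigl(1-\tfrac{|x|^{2}}{d(d-2)}\bigr)\bigl(1+\tfrac{|x|^{2}}{d(d-2)}\bigr)^{-d/2}$, obtained directly from \eqref{F-Talenti-1}, shows that $\Lambda W$ has exactly one zero, the sphere $|x|=\sqrt{d(d-2)}$; being a zero-energy solution of $L_{+,0}$ with exactly one node (a genuine eigenfunction when $d\ge5$, a zero-energy resonance when $d=3,4$, since then $\Lambda W\notin L^{2}$), it forces, via Sturm oscillation, $L_{+,0}$ to have exactly one eigenvalue below $0$, with $0$ realized by the ``second'' radial mode and all higher radial levels positive. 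Combining the sectors yields $\sigma(L_{+})\subset\{-e_{0}\}\cup[0,\infty)$, where $-e_{0}:=\inf\sigma(L_{+})<0$ is the unique negative eigenvalue, simple because it is attained only in the one-dimensional sector $Y_{0}$. For the kernel, in each sector the zero-energy solutions of the second-order radial ODE form a two-dimensional space, of which the part lying in $\dot{H}^{1}$ (regular at the origin and decaying at infinity) is at most one-dimensional; it is spanned by $\Lambda W$ for $\ell=0$, by the radial profile of $\partial_{j}W$ for $\ell=1$ (giving $\partial_{1}W,\dots,\partial_{d}W$ after tensoring with $Y_{1}$), and is trivial for $\ell\ge2$. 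Hence $\{u\in\dot{H}^{1}(\mathbb{R}^{d}):L_{+}u=0\}=\operatorname{span}_{\mathbb{C}}\{\Lambda W,\partial_{1}W,\dots,\partial_{d}W\}$.

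The delicate point is the oscillation/counting argument for $L_{+,0}$ at the threshold $0=\inf\sigma_{\mathrm{ess}}(L_{+})$, where in low dimensions the distinguished zero-energy state $\Lambda W$ is only a resonance, so one has to argue with generalized eigenfunctions rather than $L^{2}$-eigenfunctions. A clean way to handle the kernel is the conformal change of variables: under stereographic projection, conjugation by the conformal weight identifies the equation $L_{+}u=0$ with $(-\Delta_{S^{d}}-d)\psi=0$ on the compact sphere $S^{d}$, whose Laplacian has spectrum $\{\ell(\ell+d-1)\}_{\ell\ge0}$ with eigenspaces the degree-$\ell$ spherical harmonics; thus $\ker(-\Delta_{S^{d}}-d)$ is the $(d+1)$-dimensional space of degree-one harmonics, which pulls back to exactly $\partial_{1}W,\dots,\partial_{d}W$ (translations) together with $\Lambda W$ (dilation). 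This is essentially the content of the references cited with the statement (Section~5.2 of \cite{DM} and Proposition~2.2 of \cite{CMR}), so in the paper it suffices to invoke those.
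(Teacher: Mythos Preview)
Your sketch is correct and in fact supplies far more than the paper does: the paper gives no proof of this lemma at all, merely citing Section~5.2 of \cite{DM} and Proposition~2.2 of \cite{CMR} and stating the result. You yourself anticipated this in your final sentence. The argument you outline---Weyl's theorem for the essential spectrum, a trial function (cut-off $W$ in low dimensions) for the existence of a negative eigenvalue, spherical-harmonic decomposition together with Sturm oscillation in the $\ell=0,1$ sectors, and the conformal identification with $-\Delta_{S^{d}}-d$ to pin down the $\dot H^{1}$-kernel---is exactly the classical route found in those references (and earlier in Rey, Bianchi--Egnell), so there is no discrepancy in approach, only in the level of detail supplied.
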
 

We can also derive the following information about the spectrum of $L_{-}$:
\begin{lemma}\label{Lem-ker-L-}
Assume $d\ge 3$. Then, the spectrum $\sigma(L_{-})$ of $L_{-}$ equals $[0,\infty)$, and 
\begin{equation}\label{ker-L-}
\begin{split}
& 
\{ u \in \dot{H}^{1}(\mathbb{R}^{d}) \colon \mbox{$L_{-}u =0$ in the distribution sense}\} 
\\ 
& 
=\operatorname{span}_{\mathbb{C}}\{
W \colon 
(y, \nu, \theta) \in \mathbb{R}^{d} \times 
(0, \infty) \times [-\pi, \pi) \}.
\end{split}
\end{equation}
\end{lemma}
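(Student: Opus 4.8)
The statement to prove is Lemma \ref{Lem-ker-L-}: the spectrum of $L_{-} = -\Delta + V_{-}$ with $V_{-} = -W^{4}$ equals $[0,\infty)$, and its kernel in $\dot{H}^{1}(\mathbb{R}^{d})$ is spanned by $W$ (together with the images of $W$ under the scaling/translation/phase symmetries, which in fact all reduce to multiples of $W$ and $\partial_j W$ once one differentiates the symmetry orbit — but the cleaner statement is that the kernel is $\operatorname{span}_{\mathbb{C}}\{W\}$ plus translations; I'll treat the kernel as generated by $W$ and the translation-derivatives $\partial_j W$, matching the structure of Lemma \ref{18/11/14/10:43}).

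My plan. First I would establish $\sigma(L_{-}) \subseteq [0,\infty)$ by showing $L_{-} \ge 0$ as a quadratic form. The key is the identity $V_{-}W = \Delta W$ from \eqref{19/10/9/10:22}, i.e. $L_{-}W = -\Delta W + V_{-}W = 0$, so $W > 0$ is a positive zero-energy solution of $L_{-}$. By the ground-state representation (Allegretto–Piepenbrink / the standard substitution $u = W v$), for $u \in C_c^\infty$ one writes $u = Wv$ and computes
\begin{equation}\label{eq-gsr}
\langle L_{-} u, u\rangle = \int_{\mathbb{R}^{d}} W^{2} |\nabla v|^{2}\, dx \ge 0,
\end{equation}
using $-\Delta W = -V_{-}W = W^{5}$. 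This shows $L_{-} \ge 0$, hence $\sigma(L_{-}) \subseteq [0,\infty)$. Combined with the fact that $V_{-} \to 0$ at infinity (so the essential spectrum is $[0,\infty)$ by a Weyl-sequence / relatively-compact-perturbation argument), we get $\sigma(L_{-}) = [0,\infty)$, and moreover $0$ is not an eigenvalue in $L^{2}$ (any $L^2$ zero mode would force $\nabla v \equiv 0$, i.e. $u$ a multiple of $W \notin L^2$).

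Next, for the kernel in $\dot{H}^{1}$: suppose $u \in \dot{H}^{1}(\mathbb{R}^{d})$ satisfies $L_{-}u = 0$ distributionally. Splitting into real and imaginary parts (since $L_{-}$ is real), it suffices to treat real $u$. Write $u = Wv$; the equation $L_- u = 0$ together with $-\Delta W = W^5$ gives $\operatorname{div}(W^{2}\nabla v) = 0$ weakly. Testing against $v$ itself (after a cutoff argument to justify integration by parts in $\dot H^1$ — here one uses $W^2 \sim |x|^{-2(d-2)}$ decay and the Hardy/Sobolev bounds on $u$) yields $\int W^{2}|\nabla v|^{2} = 0$, so $v$ is constant and $u = cW$. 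This is the heart of the argument. Finally I would note that the symmetry orbit $(y,\nu,\theta) \mapsto \nu^{-1} e^{i\theta} W(\nu^{-2/(d-2)}\cdot + y)$ consists of zero modes by scaling invariance of the critical equation, and differentiating at the identity recovers $W$ (from the $\theta$ and $\nu$ directions) and $\partial_j W$ (from translations) — though a subtlety is that $\partial_j W$ solves $L_+ \partial_j W = 0$, not $L_- \partial_j W = 0$; rather $L_-$ acting on a translation of $W$ is zero only because $W(\cdot + y)$ itself is a ground state of the translated potential, so the honest statement is $\operatorname{Ker}_{\dot H^1} L_- = \operatorname{span}_{\mathbb C}\{W\}$, matching the displayed span once one observes all the listed generators collapse to multiples of $W$.

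The main obstacle is the rigorous integration-by-parts in \eqref{eq-gsr} and its kernel analogue at the level of $\dot{H}^{1}$ rather than $H^{1}$: the substitution $u = Wv$ must be justified when $u$ only has finite Dirichlet energy and $W$ decays polynomially, so one needs a careful cutoff-and-pass-to-the-limit argument controlling the boundary terms $\int_{|x| = R} W^2 v \partial_r v$, using $u \in L^{2^*}$ (Sobolev) and the explicit decay of $W, \nabla W$. The spectral statement $\sigma_{\mathrm{ess}}(L_-) = [0,\infty)$ is routine (Weyl's theorem, since multiplication by $W^4$ is $(-\Delta)$-compact in $d \ge 3$), so I would dispatch that quickly and concentrate the writing on the Dirichlet-form positivity and the kernel computation.
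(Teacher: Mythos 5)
Your proposal is correct, and it takes a genuinely different route from the paper's. The paper shows $L_{-}\ge 0$ by testing $L_{-}u=-\mu u$ against $u$ and running a sharp H\"older--Sobolev chain (H\"older with $\|W^{4}\|_{L^{3/2}}=\sigma$ followed by Sobolev $\sigma\|u\|_{L^{6}}^{2}\le\|\nabla u\|_{L^{2}}^{2}$) to reach $\|\nabla u\|_{L^{2}}^{2}<\|\nabla u\|_{L^{2}}^{2}$ for $\mu>0$; for $\mu=0$ it then invokes the rigidity of the Sobolev extremizers \eqref{scale-W} together with $L_{-}u=0$ to force $\lambda_{0}=1$, $x_{0}=0$, hence $u=zW$. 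You instead use the Allegretto--Piepenbrink/ground-state factorization $u=Wv$, which turns the quadratic form into $\int W^{2}|\nabla v|^{2}$ and the kernel equation into $\operatorname{div}(W^{2}\nabla v)=0$; equality then forces $v$ constant. Your approach does not need the sharp Sobolev constant nor the classification of Sobolev extremizers, only $W>0$ and $-\Delta W=W^{5}$, so it is more robust and self-contained; the trade-off is precisely the cutoff integration-by-parts you identify as the main obstacle, i.e.\ justifying $\int W^{2}\nabla v\cdot\nabla(\chi_{R}v)\to\int W^{2}|\nabla v|^{2}$ for $u\in\dot{H}^{1}$ only. That step is genuinely needed and your sketch (control the annulus terms by $R^{-2}\int_{|x|\sim R}u^{2}\lesssim\|u\|_{L^{2^{*}}(|x|\sim R)}^{2}\to0$) is the right way to close it, but it is the one place the proposal is still a sketch rather than a proof.

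Two smaller points. First, the opening paragraph is muddled: you start by announcing you will treat the kernel as generated by $W$ and the $\partial_{j}W$, which is false ($L_{-}\partial_{j}W\ne0$; those are zero modes of $L_{+}$), and you only retract this at the end. The displayed set in \eqref{ker-L-}, despite its odd notation, is simply $\operatorname{span}_{\mathbb{C}}\{W\}$, and your proof shows exactly that; lead with it. Second, the aside that ``$0$ is not an eigenvalue in $L^{2}$'' is not part of the lemma and is actually false for $d\ge5$, where $W\in L^{2}(\mathbb{R}^{d})$; it happens to be harmless because $0\in\sigma_{\mathrm{ess}}(L_{-})$ regardless, so the spectral conclusion $\sigma(L_{-})=[0,\infty)$ is unaffected, but the remark should either be restricted to $d\in\{3,4\}$ or dropped.
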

\begin{proof}[Proof of Lemma \ref{Lem-ker-L-}]
It is well known that the essential spectrum of $L_{-}$ equals $[0,\infty)$. 
Let $u\in H^{2}(\mathbb{R}^{3})$ be a function such that $L_{-}u=-\mu u $ for some $\mu > 0$. 
Then, it follows from H\"older's inequality, \eqref{19/9/18/11:15} and 
Sobolev's inequality \eqref{19/9/7/15:31} 
that 
\begin{equation}\label{19/9/24/7:5}
\begin{split}
\| \nabla u \|_{L^{2}}^{2}
&=
\langle -\Delta u, u \rangle
=
\langle -V_{-} u, u \rangle
-
\langle \mu u, u \rangle
\\[6pt]
&\le 
\|W^{4}\|_{L^{\frac{3}{2}}}
\|u^{2}\|_{L^{\frac{d}{d-2}}}
-
\mu \|u\|_{L^{2}}^{2}
=
\sigma \|u\|_{L^{6}}^{2}
-
\mu \|u\|_{L^{2}}^{2}
< \| \nabla u \|_{L^{2}}^{2}, 
\end{split} 
\end{equation}
which is a contradiction. 
In particular, there is no eigenvalue of $L_{-}$ 
for $\mu > 0$. 
When $\mu=0$, \eqref{19/9/24/7:5} means 
that $u$ attains the equality 
in Sobolev's inequality \eqref{19/9/7/15:31}, 
which together with \eqref{scale-W} and $L_{-} u = 0$ 
implies that 
$u$ must be of the form in \eqref{scale-W}, 
that is, $u = z W$ for some $z \in \mathbb{C}$. 
\end{proof}

For $r\ge 1$, we define 
\begin{equation}\label{18/11/06/13:57}
X_{r}:=
\bigm\{
f\in L^{r}(\mathbb{R}^{3}) \colon 
\langle f,\, V_{+} \nabla W \rangle 
=0,
\ 
\langle f,\, V_{+} \Lambda W \rangle
=
0,
\ 
\langle f,\, i V_{-} W \rangle
=0
\bigm\}
.
\end{equation}
Moreover, we define the operators $G_{0}$ and $G$ by 
\begin{align}
\label{19/10/7/10:51}
G_{0} f 
&:= 
(-\Delta)^{-1}V_{+}\Re[f]
+ 
i (-\Delta)^{-1}V_{-} \Im[f],
\\[6pt]
\label{19/10/6/17:1}
G f 
&:= (1+G_{0})f
=
\big\{ 1+(-\Delta)^{-1}V_{+} \big\} \Re[f]
+ 
i \big\{ 1+ (-\Delta)^{-1}V_{-} \big\} \Im[f]
.
\end{align}
Notice that by \eqref{first-res}, 
\begin{equation}\label{19/01/27/18:15}
G(\alpha)
=
G -\alpha (-\Delta+\alpha)^{-1}G_{0}
,
\end{equation}
where $G(\alpha)$ is the operator defined by \eqref{def-G}.

It follows from the Hardy-Littlewood-
Sobolev inequality (\eqref{HLS-ineq} 
with $s=2$) that for $3 < r < \infty$ 
and any $f \in L^{r}(\mathbb{R}^{3})$,
\begin{equation}\label{18/11/13/14:47}
\begin{split}
\|G_{0}f\|_{L^{r}}
&\lesssim 
\|V_{+} \Re[f]
\|_{L^{\frac{3r}{2r+3}}}
+
\|V_{+} \Im[f]\|_{L^{\frac{3r}
{2r+3}}}
\\[6pt]
&\le 
\| V_{+} \|_{L^{\frac{3}{2}}} \|\Re[f]
\|_{L^{r}}
+
\| V_{-} \|_{L^{\frac{3}{2}}} \|\Im[f]\|
_{L^{r}}
\lesssim 
\|f\|_{L^{r}}, 
\end{split} 
\end{equation}
where the implicit constants depend only on $r$. 
Hence, or any $3 < r<\infty$, 
$G_{0}$ is a bounded operator from $L^{r}(\mathbb{R}^{3})$ to itself. Furthermore, we have:
\begin{lemma}\label{19/10/8/9:52}
Let $3 < r <\infty$. 
Then, $G_{0}$ maps $X_{r}$ into itself.
\end{lemma}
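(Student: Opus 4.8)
The plan is to show that if $f \in X_r$, then $G_0 f$ again satisfies the three orthogonality conditions defining $X_r$, namely $\langle G_0 f, V_+ \partial_j W\rangle = 0$ for $1 \le j \le 3$, $\langle G_0 f, V_+ \Lambda W\rangle = 0$, and $\langle G_0 f, iV_- W\rangle = 0$. We already know from \eqref{18/11/13/14:47} that $G_0$ is bounded on $L^r(\mathbb{R}^3)$ for $3 < r < \infty$, so only the orthogonality needs checking. The key idea is to move the operator $(-\Delta)^{-1}$ onto the other slot of the pairing and use the resonance identities \eqref{19/10/9/10:22}, i.e. $V_+ \partial_j W = \Delta \partial_j W$, $V_+ \Lambda W = \Delta \Lambda W$, and $V_- W = \Delta W$.

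First I would write out, for the real part, $\langle G_0 f, V_+ \partial_j W\rangle = \langle (-\Delta)^{-1} V_+ \Re[f], V_+ \partial_j W\rangle$. Since $(-\Delta)^{-1}$ is (formally) self-adjoint, this equals $\langle V_+ \Re[f], (-\Delta)^{-1} V_+ \partial_j W\rangle$. Now invoke $V_+ \partial_j W = \Delta \partial_j W$, so $(-\Delta)^{-1} V_+ \partial_j W = (-\Delta)^{-1}\Delta \partial_j W = -\partial_j W$. Hence the pairing becomes $-\langle V_+ \Re[f], \partial_j W\rangle = -\langle \Re[f], V_+ \partial_j W\rangle = -\Re\langle f, V_+\partial_j W\rangle$, which vanishes because $f \in X_r$. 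The same computation with $\Lambda W$ in place of $\partial_j W$ (again using $V_+ \Lambda W = \Delta \Lambda W$) gives $\langle G_0 f, V_+ \Lambda W\rangle = -\langle \Re[f], V_+ \Lambda W\rangle = 0$. For the third condition, the imaginary part of $G_0 f$ is $i(-\Delta)^{-1} V_- \Im[f]$, so $\langle G_0 f, iV_- W\rangle = \langle i(-\Delta)^{-1} V_-\Im[f], iV_- W\rangle = \langle (-\Delta)^{-1} V_-\Im[f], V_- W\rangle = \langle V_-\Im[f], (-\Delta)^{-1} V_- W\rangle = \langle V_-\Im[f], (-\Delta)^{-1}\Delta W\rangle = -\langle V_-\Im[f], W\rangle = -\langle \Im[f], V_- W\rangle = -\Im\langle f, V_- W\rangle/\text{(const)}$, again $0$ since $f \in X_r$. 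Note that the cross terms vanish because the pairing $\langle\cdot,\cdot\rangle$ takes real parts and $V_+\partial_j W$, $V_+\Lambda W$ are real-valued while $iV_-W$ is purely imaginary, so $\langle \Re[G_0 f], iV_-W\rangle = 0$ and $\langle i\Im[G_0 f], V_+\partial_j W\rangle = 0$ automatically.

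The main obstacle is making the formal manipulations rigorous — in particular justifying the integration by parts $\langle (-\Delta)^{-1} g, \Delta h\rangle = -\langle g, h\rangle$ when $h \in \{\partial_j W, \Lambda W, W\}$ and $h$ need not lie in $L^2$ (recall $\Lambda W \notin L^2(\mathbb{R}^3)$, and $W \notin L^2$ in three dimensions). One has to check the decay: $V_+ = -5W^4$ and $V_- = -W^4$ decay like $|x|^{-8}$, so $V_+ \Re[f] \in L^{3r/(2r+3)}$ with integrability margin to spare, and the Hardy--Littlewood--Sobolev / weak Young estimates from Lemma~\ref{Delta-ineq} guarantee $(-\Delta)^{-1}V_+\Re[f]$ has enough decay at infinity to pair against the polynomially-decaying functions $\partial_j W \sim |x|^{-(d-1)}$, $\Lambda W \sim |x|^{-(d-2)}$, $W \sim |x|^{-(d-2)}$; one should verify the product is in $L^1$ and that the boundary terms in the integration by parts vanish, e.g. via an approximation/cutoff argument or by appealing to the explicit kernel representations \eqref{delta0-inv}. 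Once these technical integrability checks are in place, combining them with the identities \eqref{19/10/9/10:22} closes the argument; this is the part I expect to require the most care, though it is essentially routine.
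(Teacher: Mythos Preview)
Your proposal is correct and follows essentially the same approach as the paper: both use the boundedness \eqref{18/11/13/14:47} for the $L^r$ part and then verify the orthogonality conditions by moving $(-\Delta)^{-1}$ across the pairing and invoking the identities \eqref{19/10/9/10:22}. The paper's proof is just a terser version of your computation (it does not pause over the integration-by-parts justification or the real/imaginary cross-term discussion), and your awkward ``$/\text{(const)}$'' in the $iV_-W$ case is unnecessary since $\langle f, iV_-W\rangle = \langle \Im[f], V_-W\rangle$ directly.
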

\begin{proof}[Proof of Lemma \ref{19/10/8/9:52}]
Let $f \in X_{r}$. By \eqref{18/11/13/14:47}, it suffices to show that $G_{0}f$ satisfies the orthogonality conditions of $X_{r}$. Since $V_{+}\partial_{j}W = \Delta \partial_{j}W$ ($1\le j \le 3$) 
(see Lemma \ref{18/11/14/10:43})
and $\langle f ,\, V_{+}\partial_{j} W\rangle=0$, one sees that 
\begin{equation}\label{18/11/13/14:35}
\langle G_{0} f ,\, V_{+} \partial_{j} W\rangle
=
\langle (-\Delta)^{-1}V_{+} \Re[f] ,\, V_{+} \partial_{j} W\rangle
=
\langle V_{+} f ,\, \partial_{j} W\rangle
=0
.
\end{equation}
Similarly, one can verify that 
\begin{equation}\label{19/9/19/15:31}
\begin{split} 
\langle G_{0} f ,\, V_{+} \Lambda W\rangle
=
\langle G_{0} f ,\, i V_{-} W\rangle
= 0.
\end{split} 
\end{equation}
Thus, we have proved that $G_{0}f\in X_{r}$. 
\end{proof}

\begin{lemma}
\label{18/11/14/11:15}
For any $6 < r < \infty$, 
$G_{0}$ is a compact operator from 
$X_{r}$ to itself. 
\end{lemma}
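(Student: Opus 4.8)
The plan is to reduce the statement to the compactness of a single scalar operator and then prove that by a truncation argument. Since $V_{+}=-5W^{4}$ and $V_{-}=-W^{4}$, one has $G_{0}f=-5(-\Delta)^{-1}\big(W^{4}\Re[f]\big)-i(-\Delta)^{-1}\big(W^{4}\Im[f]\big)$, so it suffices to show that the operator $T\colon L^{r}(\mathbb{R}^{3})\to L^{r}(\mathbb{R}^{3})$ given by $Tg:=(-\Delta)^{-1}\big(W^{4}g\big)$ is compact. Indeed $f\mapsto\Re[f]$ and $f\mapsto\Im[f]$ are bounded on $L^{r}(\mathbb{R}^{3})$, a composition of a compact operator with bounded ones is compact, and $G_{0}$ maps the closed subspace $X_{r}$ into itself by Lemma \ref{19/10/8/9:52}; hence the restriction $G_{0}\colon X_{r}\to X_{r}$ inherits compactness once $T$ is known to be compact on $L^{r}(\mathbb{R}^{3})$.

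To prove $T$ is compact I would split $T=T_{R}+(T-T_{R})$ with $T_{R}g:=(-\Delta)^{-1}\big(W^{4}\mathbf{1}_{\{|x|\le R\}}g\big)$. For the remainder, the Hardy--Littlewood--Sobolev inequality applied exactly as in \eqref{18/11/13/14:47}, followed by H\"older's inequality, gives $\|(T-T_{R})g\|_{L^{r}}\lesssim\big\|W^{4}\mathbf{1}_{\{|x|>R\}}\big\|_{L^{3/2}}\|g\|_{L^{r}}$, and $\big\|W^{4}\mathbf{1}_{\{|x|>R\}}\big\|_{L^{3/2}}\to0$ as $R\to\infty$ because $W^{4}\in L^{3/2}(\mathbb{R}^{3})$; thus $T_{R}\to T$ in operator norm, and it suffices to show each $T_{R}$ is compact. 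Given a bounded sequence $\{g_{n}\}$ in $L^{r}(\mathbb{R}^{3})$, the functions $h_{n}:=W^{4}\mathbf{1}_{\{|x|\le R\}}g_{n}$ are supported in $\overline{B_{R}}$ and, using $r>6>3$ and H\"older, bounded in $L^{3}(B_{R})\cap L^{1}(B_{R})$. Writing $T_{R}g_{n}(x)=c\int_{B_{R}}|x-y|^{-1}h_{n}(y)\,dy$ and $\nabla T_{R}g_{n}(x)=c\int_{B_{R}}|x-y|^{-3}(y-x)\,h_{n}(y)\,dy$, the weak Young inequality (with $|x|^{-2}\in L_{\rm weak}^{3/2}(\mathbb{R}^{3})$) bounds $\|\nabla T_{R}g_{n}\|_{L^{\infty}}$ by a constant times $\|h_{n}\|_{L^{3}}$, H\"older in $y$ bounds $\|T_{R}g_{n}\|_{L^{\infty}}$ by an $R$-dependent constant times $\|h_{n}\|_{L^{r}}$, and for $|x|\ge2R$ one has the uniform decay $|T_{R}g_{n}(x)|\le 2c\,|x|^{-1}\|h_{n}\|_{L^{1}}$. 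Hence $\{T_{R}g_{n}\}$ is equibounded, equicontinuous and uniformly decaying, so by the Arzel\`a--Ascoli theorem a subsequence converges uniformly to some $v\in C_{0}(\mathbb{R}^{3})$; since moreover the whole family is pointwise dominated by a fixed function of the form $C\langle x\rangle^{-1}\in L^{r}(\mathbb{R}^{3})$ (here $r>3$ is used), the dominated convergence theorem upgrades this to convergence in $L^{r}(\mathbb{R}^{3})$. Thus $T_{R}$ is compact, $T$ is compact as an operator-norm limit of compact operators, and the lemma follows by the reduction above.

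The genuinely routine parts are the Hardy--Littlewood--Sobolev and H\"older bookkeeping; the one place that needs care is the compactness of the core operator $T_{R}$, where one must combine the equi-Lipschitz bound, the uniform $|x|^{-1}$ tail, and the domination by a single $L^{r}$ function so that $C_{0}$-precompactness promotes to $L^{r}$-precompactness. (The hypothesis $6<r<\infty$ is comfortably more than the argument requires; $r>3$ already suffices.)
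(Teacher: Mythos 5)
Your overall strategy is sound and genuinely different from the paper's. The paper proves compactness by verifying the hypotheses of the Kolmogorov--Riesz criterion (Lemma \ref{18/11/10/13:29}) directly for $\{G_{0}f_{n}\}$: boundedness, $L^{r}$-equicontinuity via the gradient bound \eqref{18/11/15/16:27}, and tightness via the splitting argument \eqref{18/11/15/12:02} (which is where the restriction $r>6$ enters, through the Hardy--Littlewood--Sobolev exponent $\frac{6r}{5r+6}>1$). You instead reduce to a single scalar operator $T=(-\Delta)^{-1}(W^{4}\,\cdot\,)$ on all of $L^{r}$, approximate it in operator norm by truncations $T_{R}$, and show each $T_{R}$ is compact by Arzel\`a--Ascoli in $C_{0}$ plus a domination argument to upgrade to $L^{r}$-convergence. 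The reduction to $L^{r}$ and then restriction to the closed subspace $X_{r}$ is legitimate, and the truncation scheme is a valid alternative; as you remark, it only requires $r>3$ rather than $r>6$.

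There is, however, one unjustified step. You claim that the weak Young inequality with $|x|^{-2}\in L^{3/2}_{\mathrm{weak}}(\mathbb{R}^{3})$ bounds $\|\nabla T_{R}g_{n}\|_{L^{\infty}}$ by $\|h_{n}\|_{L^{3}}$. The weak Young inequality does not reach the endpoint $r=\infty$: convolving an $L^{3}$ function with an $L^{3/2}_{\mathrm{weak}}$ kernel does not in general produce a bounded function (and indeed $\int_{|z|\le 1}|z|^{-3}\,dz$ diverges, so no $L^{3}\times L^{3/2}$ H\"older pairing rescues it). The equi-Lipschitz estimate you want is still true, but for a different reason: because $h_{n}$ is supported in $\overline{B_{R}}$ and bounded in $L^{q}(B_{R})$ for every $q\le r$ (and $r>6>3$), you may pick any $q\in(3,r]$, note that $q'<\tfrac{3}{2}$ so $|z|^{-2}\in L^{q'}_{\mathrm{loc}}$, and apply ordinary H\"older to get $\|\nabla T_{R}g_{n}\|_{L^{\infty}}\lesssim_{R}\|h_{n}\|_{L^{q}(B_{R})}$. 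With that substitution, the Arzel\`a--Ascoli argument and the dominated-convergence upgrade go through as you describe, and your proof is complete.
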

\begin{proof}[Proof of Lemma 
\ref{18/11/14/11:15}]
It follows from \eqref{delta0-inv}, 
$|V_{+}(x)|\sim |V_{-}(x)| \sim (1+|x|)^{- 4}$ and the Hardy-Littlewood-Sobolev inequality 
(\eqref{HLS-ineq} with $s = 1$) that for 
any $f \in L^{r}(\mathbb{R}^{3})$, 
\begin{align}\label{18/11/15/16:27}
\begin{split}
\|\nabla G_{0} f\|_{L^{r}}
&\le 
\|\nabla (-\Delta)^{-1} V_{+} \Re[f]\|_{L^{r}}
+
\|\nabla (-\Delta)^{-1} V_{-} \Im[f] \|_{L^{r}}
\\[6pt]
&\lesssim 
\| \int_{\mathbb{R}^{3}} |x-y|^{-2} 
(1+|y|)^{-4} |f(y)|\,dy \|_{L^{r}}
\\[6pt]
&\lesssim 
\| (1+|y|)^{-4} f\|_{L^{\frac{3r}
{r+3}}}
\le 
\| (1+|y|)^{-4} \|_{L^{3}} \|f\|_{L^{r}}
\lesssim 
\|f\|_{L^{r}}, 
\end{split} 
\end{align}
where the implicit constants depend only on $r$. 
Let $\{f_{n}\}$ be a bounded sequence in $X_{r}$. Then, \eqref{18/11/13/14:47} shows that $\{ G_{0} f_{n}\}$ is also a bounded sequence in $X_{r}$. Moreover, by the fundamental theorem of calculus and \eqref{18/11/15/16:27}, one can see that for any $y\in \mathbb{R}^{3}$,
\begin{equation}\label{18/11/15/12:01}
\begin{split}
&\| G_{0} f_{n} - G_{0} f_{n}(\cdot+y) \|_{L^{r}}
\\[6pt]
&\le 
\| (-\Delta)^{-1} V_{+} \Re[f_{n}] - (-\Delta)^{-1} V_{+} 
\Re[f_{n}](\cdot+y) \|_{L^{r}}
\\[6pt]
&\quad +
\| (-\Delta)^{-1} V_{-} \Im[f_{n}] - (-\Delta)^{-1} V_{-} \Im[f_{n}](\cdot+y) \|_{L^{r}}
\\[6pt]
&=
\| 
\int_{0}^{1} \frac{3}{3\theta} (-
\Delta)^{-1} V_{+} 
\Re[f_{n}](\cdot+\theta y) \,d\theta
\|_{L^{r}} 
\\[6pt]
&\quad +
\| 
\int_{0}^{1} \frac{3}{3 \theta} 
(-\Delta)^{-1} V_{-} \Im[f_{n}]
(\cdot+\theta y) \,d\theta
\|_{L^{r}} 
\\[6pt]
&\le 
|y|
\| \nabla (-\Delta)^{-1} V_{+} 
\Re[f_{n}] \|_{L^{r}}
+ |y|
\| \nabla (-\Delta)^{-1} V_{-} \Im[f_{n}] \|_{L^{r}} 
\\[6pt]
&
\lesssim 
|y| \|\nabla G_{0} f_{n}\|_{L^{r}}
\lesssim 
|y| 
\sup_{n\ge 1}\| f_{n} \|_{L^{r}},
\end{split}
\end{equation}
which implies the equicontinuity of $\{G_{0} f_{n}\}$ in $X_{r}$. 

From the point of view of Lemma \ref{18/11/10/13:29}, it suffices to show the tightness of $\{G_{0} f_{n}\}$ in $L^{r}(\mathbb{R}^{3})$. 
Notice that if $|x-y| \le \frac{1}{2}|x|$, 
then $|y|\ge |x|-|x-y|\ge \frac{1}{2}|x|$. 
Hence, we see from \eqref{delta0-inv}, 
the Hardy-Littlewood-Sobolev inequality (\eqref{HLS-ineq} with $s = 2$), 
H\"older's inequality and $|V{+}(x)|\sim |V_{-}(x)| 
\lesssim (1+|x|)^{- 4}$ that for any $R\ge 1$, 
\begin{equation}\label{18/11/15/12:02}
\begin{split}
&\int_{|x|\ge R} 
| G_{0} f_{n}(x) |^{r}\,dx
\\[6pt]
&\lesssim 
\int_{|x|\ge R} 
\Bigm| (-\Delta)^{-1} V_{+} \Re[f_{n}(x)] \Bigm|^{r}\,dx
+ 
\int_{|x|\ge R} 
\Bigm| (-\Delta)^{-1} V_{-}\Im[f_{n}(x)] \Bigm|^{r}\,dx
\\[6pt]
&\lesssim 
\int_{|x|\ge R}
\Big\{
\int_{\frac{1}{2}|x| \le |x-y|} |x-y|^{-1} 
(1+|y|)^{- 4} |f_{n}(y)|\,dy
\Big\}^{r}\,dx 
\\[6pt]
&\quad + 
\int_{|x|\ge R}
\Big\{
\int_{|x-y|\le \frac{1}{2}|x|} |x-y|^{-1} 
(1+|y|)^{- 4} |f_{n}(y)|\,dy
\Big\}^{r}\,dx 
\\[6pt]
&\lesssim 
R^{-\frac{r}{2}}
\bigm\|
\int_{\mathbb{R}^{3}} |x-y|^{- \frac{1}{2}} 
(1+|y|)^{- 4} 
|f_{n}(y)|\,dy
\bigm\|_{L^{r}}^{r} 
\\[6pt]
&\quad + 
R^{-\frac{r}{2}}
\bigm\| 
\int_{\mathbb{R}^{3}} |x-y|^{-1} 
|y|^{\frac{1}{2}}(1+|y|)^{-4} 
|f_{n}(y)| \,dy
\bigm\|_{L^{r}}^{r} 
\\[6pt]
&\lesssim 
R^{-\frac{r}{2}}
\| (1+|x|)^{- 4} 
f_{n} \|_{L^{\frac{6r}{5r+6}}}^{r} 
+ 
R^{-\frac{r}{2}}
\bigm\| 
|x|^{\frac{1}{2}} (1+|x|)^{- 4} |f_{n}| \bigm\|_{L^{\frac{3r}{2r+3}}}^{r}
\\[6pt]
&\le 
R^{-\frac{r}{2}}
\| (1+|x|)^{-4} 
\|_{L^{\frac{6}{5}}}^{r} \|f_{n}\|_{L^{r}}^{r} 
+ 
R^{-\frac{r}{2}}
\| |x|^{\frac{1}{2}}(1+|x|)^{-4} 
\|_{L^{\frac{3}{2}}}^{r}
\|f_{n} \|_{L^{r}}^{r} 
\\[6pt]
&\lesssim 
R^{-\frac{r}{2}}
\|f_{n}\|_{L^{r}}^{r}, 
\end{split}
\end{equation}
where the implicit constants depend only on $r$, so that 
the tightness in $L^{r}(\mathbb{R}^{3})$ holds. Thus, the compactness follows from Lemma \ref{18/11/10/13:29}.
\end{proof}

\begin{lemma}\label{18/11/13/14:25}
Assume $6 <r < \infty$. Then, the inverse of $G$ exists as a bounded operator from $X_{r}$ to itself.
\end{lemma}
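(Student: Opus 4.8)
The goal is to show that $G=1+G_0$ is invertible on $X_r$ for $6<r<\infty$. By Lemma \ref{18/11/14/11:15}, $G_0$ is a compact operator from $X_r$ to itself, so the Fredholm alternative applies: it suffices to prove that $G$ is \emph{injective} on $X_r$, i.e.\ that $\Ker(1+G_0)\cap X_r=\{0\}$, and then $G$ is automatically a bounded bijection of $X_r$ with bounded inverse (the boundedness of $G$ itself follows from \eqref{18/11/13/14:47}).

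So the heart of the matter is the following: suppose $f\in X_r$ with $Gf=0$, that is, $f=-G_0 f$. First I would argue that $f$ is then more regular and in fact lies in $\dot H^1(\mathbb{R}^3)$: since $f=-G_0f=-(-\Delta)^{-1}V_+\Re[f]-i(-\Delta)^{-1}V_-\Im[f]$ and $V_\pm\in L^{3/2}$ with the decay $|V_\pm(x)|\lesssim(1+|x|)^{-4}$, the estimate \eqref{18/11/15/16:27} (with $f$ in place of $f_n$) gives $\nabla G_0 f\in L^r$, and a bootstrap using \eqref{HLS-ineq} upgrades $f$ to a function in $\dot H^1$ (and indeed $H^2_{\rm loc}$). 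Applying $-\Delta$ to $f=-G_0f$ then yields $-\Delta f=-V_+\Re[f]-iV_-\Im[f]$, i.e.
\begin{equation*}
L_+\Re[f]=0,\qquad L_-\Im[f]=0\qquad\text{in the distribution sense.}
\end{equation*}
By Lemma \ref{18/11/14/10:43}, $\Re[f]\in\Span_{\mathbb{R}}\{\Lambda W,\partial_1 W,\dots,\partial_d W\}$, and by Lemma \ref{Lem-ker-L-}, $\Im[f]\in\Span_{\mathbb{R}}\{W\}$. Thus $f=a_0\Lambda W+\sum_{j=1}^d a_j\partial_j W+ib_0 W$ for some real constants $a_0,\dots,a_d,b_0$.

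It remains to force all these constants to vanish using the orthogonality conditions defining $X_r$. I would compute the pairings $\langle f,V_+\partial_k W\rangle$, $\langle f,V_+\Lambda W\rangle$, $\langle f,iV_-W\rangle$ directly on the basis. Using \eqref{19/10/9/10:22}, namely $V_+\partial_j W=\Delta\partial_j W$, $V_+\Lambda W=\Delta\Lambda W$, $V_-W=\Delta W$, these pairings become (after an integration by parts, legitimate since the functions decay and $f\in\dot H^1$) quadratic-form pairings such as $\langle \Lambda W,\Delta\Lambda W\rangle=-\|\nabla\Lambda W\|_{L^2}^2$, $\langle\partial_j W,\Delta\partial_k W\rangle=-\langle\nabla\partial_j W,\nabla\partial_k W\rangle$, and the mixed term $\langle\Lambda W,\Delta\partial_j W\rangle$. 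The key structural facts are: $\Lambda W$ and the $\partial_j W$ are $L^2$-orthogonal (one is even under a suitable reflection pairing, or one uses $\langle\Lambda W,\partial_j W\rangle=\tfrac12\partial_j\!\int|W|^2=0$ by oddness after noting $\Lambda W$ is radial and $\partial_j W$ is odd in $x_j$); the $\partial_j W$ are mutually $L^2$-orthogonal by parity; $\langle\nabla\Lambda W,\nabla\Lambda W\rangle>0$; and $\langle\nabla\partial_j W,\nabla\partial_k W\rangle=c\,\delta_{jk}$ with $c>0$. Hence the real part of the three orthogonality relations forces $a_0=0$ and $a_1=\cdots=a_d=0$, while the imaginary relation $\langle f,iV_-W\rangle=b_0\langle iW,iV_-W\rangle=b_0\langle W,\Delta W\rangle=-b_0\|\nabla W\|_{L^2}^2=0$ forces $b_0=0$. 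Therefore $f=0$, establishing injectivity, and the Fredholm alternative completes the proof.

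**Main obstacle.** The analytically delicate point is the first step — justifying that a solution $f\in L^r$ of $f=-G_0 f$ actually lies in $\dot H^1$ (indeed in $H^2_{\rm loc}$) so that Lemmas \ref{18/11/14/10:43} and \ref{Lem-ker-L-}, which are stated for $\dot H^1$ distributional solutions, can be invoked. This is the regularity bootstrap: $f=-G_0f$ with $V_\pm$ integrable enough gives, via Hardy–Littlewood–Sobolev applied iteratively to gain integrability and then via \eqref{18/11/15/16:27} to gain a derivative, that $f\in\dot H^1$; elliptic regularity for $-\Delta f=-V_+\Re f-iV_-\Im f$ then gives $f\in H^2_{\rm loc}$ and decay at infinity. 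Once $f$ is known to be a genuine $\dot H^1$ null vector of $L_+\oplus L_-$ (acting on real and imaginary parts), the kernel characterizations and the elementary parity/positivity computations of the pairings are routine.
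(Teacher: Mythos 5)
Your proposal is correct and follows essentially the same route as the paper: compactness of $G_{0}$ on $X_{r}$ (Lemma \ref{18/11/14/11:15}) reduces invertibility to injectivity via the Riesz--Schauder / Fredholm alternative; injectivity is then obtained by showing a null vector of $G$ in $X_{r}$ must lie in the kernel of $H=(-\Delta)G$, hence in the span described by Lemmas \ref{18/11/14/10:43} and \ref{Lem-ker-L-}, and the orthogonality constraints built into $X_{r}$ annihilate every basis element. The one place where you go beyond the paper's written proof is the explicit regularity bootstrap promoting $f\in L^{r}$ with $f=-G_{0}f$ to $f\in\dot H^{1}(\mathbb{R}^{3})$: the paper passes directly from the distributional identity $\langle Hu,\phi\rangle=0$ to the $\dot H^{1}$ kernel characterizations without commenting on why $u\in\dot H^{1}$. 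Your sketch of the bootstrap (using $|V_{\pm}(x)|\lesssim(1+|x|)^{-4}$, H\"older, Hardy--Littlewood--Sobolev and the gradient bound \eqref{18/11/15/16:27}) is the right way to fill that small gap, so your write-up is if anything a bit more careful than the original at that point.
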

\begin{proof}[Proof of Lemma \ref{18/11/13/14:25}]
It follows from Lemma \ref{18/11/14/11:15} that $G_{0}$ is a compact operator from $X_{r}$ to itself. Hence, 
it follows from the Riesz-Schauder theorem (see, e.g., Theorem VI.15 of \cite{RS}) that if $-1$ is not an eigenvalue of 
the operator $G_{0}: X_{r} \to X_{r}$, 
then the inverse $G^{-1}=(1+G_{0})^{-1}$ exists as a bounded operator from $X_{r}$ to itself, namely $-1 \in \rho(G_{0})$. 
Suppose that $u \in X_{r}$ satisfies $G u =0$. Then, for any $\phi \in C_{c}^{\infty}(\mathbb{R}^{3})$, 
\begin{equation}\label{18/12/10/21:27}
\langle H u, \, \phi \rangle 
= 
\langle (-\Delta) G u, \, \phi \rangle
=
\langle G u, \, -\Delta \phi \rangle
=0. 
\end{equation}
Hence, it follows from Lemma \ref{18/11/14/10:43}, 
Lemma \ref{Lem-ker-L-} and \eqref{18/12/10/21:27} that 
\begin{equation}\label{19/10/7/17:22}
\Re[u]=
\vec{a} \cdot \nabla W + b \Lambda W,
\quad 
\Im[u]=c W 
\end{equation} 
for some $\vec{a}\in \mathbb{R}^{3}$ and $b,c \in \mathbb{R}$. 
Observe that 
$\langle \nabla W, V_{+}\Lambda W \rangle 
=\langle \Lambda W, V_{+}\nabla W=0$. 
Then, \eqref{19/10/7/17:22} together 
with $u\in X_{r}$ implies that 
\begin{equation}\label{19/9/19/14:50}
\langle \vec{a} \cdot \nabla W + b \Lambda W, \ V_{+} \Lambda W \rangle = 0,
\quad 
\langle \vec{a} \cdot \nabla W + b\Lambda W, \ 
V_{+} \nabla W \rangle = 0
\quad 
\langle c W, \ V_{-} W \rangle = 0,
\end{equation}
so that $\vec{a}=0$ and $b=c=0$. Thus, we find that $u$ must be trivial and therefore $-1$ is not an eigenvalue of $(-\Delta)^{-1}V$. 
\end{proof}

Now, for $1\le r \le \infty$, we introduce 
the operators $P$ and $\Pi$ on $L^{r}(\mathbb{R}^{3})$ 
as 
\begin{align}
\label{19/01/27/15:38}
Pf&:=
\sum_{j=1}^{3}
\frac{\langle f, V_{+} \partial_{j} W \rangle}{\langle \partial_{j} W, 
V_{+} \partial_{j} W \rangle}
\partial_{j} W
+
\frac{\langle f, V_{+} \Lambda W \rangle}{\langle \Lambda W, 
V_{+} \Lambda W \rangle}\Lambda W
+
\frac{\langle f, i V_{-} W \rangle}{\langle iW, i V_{-} W \rangle}
(i W),
\\[6pt]
\label{19/9/20/15:50}
\Pi f&:=
\sum_{j=1}^{3}
\frac{\langle f, 
V_{+} \partial_{j} W \rangle}{\| V_{+} \partial_{j} W \|_{L^{2}}^{2}}
V_{+}\partial_{j} W
+
\frac{\langle f, V_{+} \Lambda W \rangle}{\|V_{+} \Lambda W \|_{L^{2}}^{2}}
V_{+}\Lambda W
+
\frac{\langle f, i V_{-} W \rangle}{\| V_{-} W \|_{L^{2}}^{2}}
(i V_{-} W).
\end{align}

Observe that $P^{2}=P$, 
$(1-P)^{2}=(1-P)$, 
and for any 
$3 \le r \le \infty$ and 
any $f\in L^{r}(\mathbb{R}^{3})$, 
\begin{equation}\label{19/01/28/15:30}
\| Pf \|_{L^{r}}+ \|(1-P)f\|_{L^{r}} 
+ \|\Pi f \|_{L^{r}}
+ \|(1-\Pi) f \|_{L^{r}}
\lesssim 
\|f\|_{L^{r}}
\lesssim 
\|f\|_{L^{r}},
\end{equation}
where the implicit constant depends only on $r$. 
Moreover, it follows from \eqref{19/10/9/10:22} that 
\begin{equation}\label{19/01/28/09:53}
G P = \Pi G =0.
\end{equation}

\begin{lemma}\label{19/01/31/14:17}
Let $3 < r \le \infty$. 
Then, 
\begin{align}\label{19/10/7/8:25}
(1-P)L^{r}(\mathbb{R}^{3})
&:=\{(1-P)f \colon f\in L^{r}(\mathbb{R}^{3})\}
=X_{r},
\\[6pt]
(1-\Pi)L^{r}(\mathbb{R}^{3})
&:=\{(1-\Pi)f \colon f\in L^{r}(\mathbb{R}^{3})\}
=X_{r}.
\end{align}
Furthermore, $(1-P)f = (1-\Pi)f =f$ 
for all $f\in X_{r}$. 
\end{lemma}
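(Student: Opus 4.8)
The plan is to verify the three assertions in turn, exploiting that $P$ and $\Pi$ are the projections onto the span of the resonance/eigenfunctions that appear in the orthogonality conditions defining $X_{r}$. First I would record the elementary algebraic facts that $P$ and $\Pi$ are idempotent and bounded on $L^{r}(\mathbb{R}^{3})$ for $3<r\le\infty$ (this is \eqref{19/01/28/15:30}), and that, by the defining formulas \eqref{19/01/27/15:38}–\eqref{19/9/20/15:50} together with the pairwise orthogonality relations $\langle \partial_{j}W, V_{+}\Lambda W\rangle = \langle \Lambda W, V_{+}\partial_{j}W\rangle = 0$ (and $\langle iW, iV_{-}W\rangle = \|W\|$-type normalizations), one has $Pf = 0$ and $\Pi f = 0$ precisely when $f$ satisfies the three orthogonality conditions $\langle f, V_{+}\partial_{j}W\rangle = 0$ ($1\le j\le 3$), $\langle f, V_{+}\Lambda W\rangle = 0$, $\langle f, iV_{-}W\rangle = 0$; that is, $\Ker P\cap L^{r} = \Ker \Pi\cap L^{r} = X_{r}$.

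Next I would prove the last sentence, namely $(1-P)f = (1-\Pi)f = f$ for $f\in X_{r}$: this is immediate because if $f\in X_{r}$ then all the coefficients in \eqref{19/01/27/15:38} and \eqref{19/9/20/15:50} vanish, so $Pf = \Pi f = 0$. Then for the equalities of images: to see $(1-P)L^{r}(\mathbb{R}^{3}) = X_{r}$, first note that for any $f\in L^{r}$ the element $(1-P)f$ lies in $X_{r}$, because applying any of the three test functionals to $(1-P)f$ and using $P^{2} = P$ (more precisely, that $P$ reproduces each of $\partial_{j}W$, $\Lambda W$, $iW$ up to the orthogonality relations, so that the functionals $\langle\,\cdot\,,V_{+}\partial_{j}W\rangle$ etc. vanish on the range of $1-P$) gives zero; conversely, if $g\in X_{r}$ then $g = (1-P)g$ by the sentence just proved, so $g\in(1-P)L^{r}$. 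The argument for $(1-\Pi)L^{r}(\mathbb{R}^{3}) = X_{r}$ is word-for-word the same with $\Pi$ in place of $P$.

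The only point requiring a little care — and the main (mild) obstacle — is checking that the three linear functionals $\langle\,\cdot\,,V_{+}\partial_{j}W\rangle$, $\langle\,\cdot\,,V_{+}\Lambda W\rangle$, $\langle\,\cdot\,,iV_{-}W\rangle$ annihilate the range of $1-P$ (equivalently, that applying these functionals to $Pf$ returns the same values as applying them to $f$). This is where one must use the specific normalizing denominators in \eqref{19/01/27/15:38} together with the orthogonality relations $\langle \partial_{j}W, V_{+}\Lambda W\rangle = 0$, $\langle \partial_{j}W, V_{+}\partial_{k}W\rangle = 0$ for $j\ne k$ (by parity), and $\langle \Lambda W, V_{+}\partial_{j}W\rangle = 0$; together with the fact that the real and imaginary parts decouple so that the $iW$ term interacts only with the functional $\langle\,\cdot\,,iV_{-}W\rangle$. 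Once these cross terms are confirmed to vanish, $P$ is genuinely the projection with kernel $X_{r}$ and range $\Span_{\C}\{\partial_{1}W,\partial_{2}W,\partial_{3}W,\Lambda W, iW\}$, and likewise for $\Pi$ (with range $\Span_{\C}\{V_{+}\partial_{j}W, V_{+}\Lambda W, iV_{-}W\}$), and all three assertions of the lemma follow. Everything else is bookkeeping with boundedness constants, already supplied by \eqref{19/01/28/15:30}.
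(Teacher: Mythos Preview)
Your proposal is correct and follows essentially the same approach as the paper: show $(1-P)L^{r}\subset X_{r}$ by verifying the orthogonality conditions directly on $(1-P)f$, and show $X_{r}\subset(1-P)L^{r}$ by noting $f=(1-P)f$ when $f\in X_{r}$ (since all coefficients in $Pf$ vanish), then repeat for $\Pi$. The paper simply asserts the orthogonality check as ``not difficult to see,'' whereas you spell out the cross-term vanishings (parity of $\partial_{j}W$, radiality of $\Lambda W$, real/imaginary decoupling) that make it work; this is the same argument with more detail, not a different route.
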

\begin{proof}[Proof of Lemma \ref{19/01/31/14:17}]
Let $g \in (1-P)L^{r}(\mathbb{R}^{3})$. 
Then, there exists $f \in L^{r}(\mathbb{R}^{3})$ such that $g=(1-P)f$. 
By \eqref{19/01/28/15:30}, $g=(1-P)f \in L^{r}(\mathbb{R}^{3})$. 
Furthermore, it is not difficult to see that 
\begin{equation}\label{19/01/27/16:25}
\langle (1-P) f,\ V_{+}\partial_{j} W \rangle=0, 
\quad 
\langle (1-P) f,\ V_{+}\Lambda W \rangle=0,
\quad 
\langle (1-P) f,\ i V_{-}W \rangle=0.
\end{equation}
Hence, $(1-P)L^{r}(\mathbb{R}^{3})\subset X_{r}$. On the other hand, let $f\in X_{r}$. Then, it is obvious that $f=Pf+(1-P)f=(1-P)f$. Thus, $X_{r} \subset (1-P)L^{r}(\mathbb{R}^{3})$. Similarly, we can prove that $(1-\Pi)L^{r}(\mathbb{R}^{3})=X_{r}$.
\end{proof}
\subsection{Estimates of scalar products}
Using the fundamental theorem of calculus 
and \eqref{F-Talenti-1}, one can rewrite the Talenti function $W$ as 
\begin{equation}\label{19/9/26/15:7}
W (x)
= 
\Big( \frac{|x|^{2}}{3} \Big)^{-\frac{1}{2}}
-
\frac{1}{2}\int_{0}^{1} 
\Big(\theta +\frac{|x|^{2}}{3} 
\Big)^{-\frac{3}{2}}\,d\theta.
\end{equation}
Furthermore, it is known (see, e.g., Corollary 5.10 of \cite{Lieb-Loss}) that 
\begin{equation}\label{18/12/22/10:07}
\mathcal{F}\bigm[|x|^{-1}\bigm](\xi)
=
\frac{4 \pi^{\frac{3}{2}}}{\Gamma(\frac{1}{2})}
|\xi|^{-2} ,
\end{equation}
where $\Gamma$ denotes the gamma function.

\begin{proposition}\label{18/12/02/11:25}
There exist $A_{+}>0$ 
and $A_{-}>0$ 
such that for any $0<\alpha <1$, $2\le q<\infty$ and 
$g \in L^{q}(\mathbb{R}^{3})$, 
the following estimates hold: 
\begin{equation}\label{18/12/02/11:26}
\big|
\langle (-\Delta + \alpha)^{-1} V_{+}g , \Lambda W \rangle
+ A_{+} \Re \mathcal{F}[V_{+}g](0) \alpha^{-\frac{1}{2}} 
\big|
\lesssim 
\min{\{ \| g \|_{L^{q}},\, \|(1+|x|)^{-1}g\|_{L^{2}}\}},
\end{equation}
\begin{equation}\label{19/10/1/10:17}
\big|
\langle (-\Delta + \alpha)^{-1} V_{-}g , i W \rangle
- A_{-} \Im \mathcal{F}[V_{-}g](0) \alpha^{-\frac{1}{2}} 
\big|
\lesssim 
\min{\{ \| g \|_{L^{q}},\, 
\|(1+|x|)^{-1}g\|_{L^{2}}\}},
\end{equation}
where the implicit constants depend only on $q$.
\end{proposition}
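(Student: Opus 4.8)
The plan is to move the resolvent off $V_\pm g$ and onto $W$ (resp.\ $\Lambda W$), to split off the slowly decaying part of $W$ and $\Lambda W$, and to extract the $\alpha^{-1/2}$ divergence from the resolvent of that part by hand. For $g\in L^q(\mathbb{R}^3)$ with $2\le q<\infty$ we have $V_\pm g\in L^1(\mathbb{R}^3)$ (since $V_\pm$ is bounded near the origin and $|V_\pm(x)|\sim(1+|x|)^{-4}$ at infinity, so $V_\pm\in L^r$ for every $1\le r\le\infty$), and a Fubini argument based on the kernel \eqref{explicit-1} gives
\[
\langle(-\Delta+\alpha)^{-1}V_+g,\Lambda W\rangle=\langle V_+g,(-\Delta+\alpha)^{-1}\Lambda W\rangle,\qquad
\langle(-\Delta+\alpha)^{-1}V_-g,iW\rangle=\Im\!\int_{\mathbb{R}^3}V_-g\,(-\Delta+\alpha)^{-1}W\,dx .
\]
Now fix a radial $\chi\in C_c^{\infty}(\mathbb{R}^3)$ with $\chi\equiv1$ on $\{|x|\le1\}$ and $\chi\equiv0$ on $\{|x|\ge2\}$. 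From \eqref{19/9/26/15:7} the leading term of $W$ at infinity is $\sqrt3\,|x|^{-1}$, and since $\Lambda(|x|^{-1})=\tfrac12|x|^{-1}+x\!\cdot\!\nabla|x|^{-1}=-\tfrac12|x|^{-1}$, one may write
\[
W=\sqrt3\,(1-\chi)|x|^{-1}+\Psi,\qquad \Lambda W=-\tfrac{\sqrt3}{2}(1-\chi)|x|^{-1}+\Phi ,
\]
where $\Psi,\Phi\in C^{\infty}(\mathbb{R}^3)$ are bounded and decay like $(1+|x|)^{-3}$; in particular $\Psi,\Phi\in\bigcap_{r>1}L^r(\mathbb{R}^3)$.

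The next step is the explicit computation on the homogeneous piece. One checks directly from \eqref{explicit-1} (or by applying $-\Delta+\alpha$ to the right-hand side) that
\[
(-\Delta+\alpha)^{-1}|x|^{-1}=\frac{1-e^{-\sqrt\alpha\,|x|}}{\alpha\,|x|},
\]
hence $(-\Delta+\alpha)^{-1}\big[(1-\chi)|x|^{-1}\big]=\dfrac{1-e^{-\sqrt\alpha|x|}}{\alpha|x|}-(-\Delta+\alpha)^{-1}(\chi|x|^{-1})$. The term $(-\Delta+\alpha)^{-1}(\chi|x|^{-1})$ is bounded in $L^{\infty}$ uniformly in $0<\alpha<1$ (it is dominated pointwise by $(-\Delta)^{-1}(\chi|x|^{-1})\in L^{\infty}$, since $\chi|x|^{-1}\in L^1\cap L^2$), and by Lemmas \ref{Delta-ineq}--\ref{18/11/23/17:17} the same is true, in $L^6(\mathbb{R}^3)$, of $(-\Delta+\alpha)^{-1}\Psi$ and $(-\Delta+\alpha)^{-1}\Phi$. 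Substituting the two decompositions, the entire $\alpha$-dependent singularity of the pairings is carried by $\int V_\pm g(y)\,\dfrac{1-e^{-\sqrt\alpha|y|}}{\alpha|y|}\,dy$. Writing $s=\sqrt\alpha|y|$ and using the elementary inequality $0\le s-1+e^{-s}\le\tfrac12 s^2$ one gets the uniform bound
\[
\Big|\frac{1-e^{-\sqrt\alpha|y|}}{\alpha|y|}-\alpha^{-1/2}\Big|=\frac{s-1+e^{-s}}{\alpha|y|}\le\frac{|y|}{2},
\]
so that, after replacing the bracket by $\alpha^{-1/2}$, the error is controlled by $\tfrac12\int|V_\pm g|\,|y|\,dy$, uniformly in $\alpha$. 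Thus the divergent contribution equals $\sqrt3\,\alpha^{-1/2}\int V_\pm g\,dy=\sqrt3\,\alpha^{-1/2}\mathcal{F}[V_\pm g](0)$ coming from $W$ (and $-\tfrac{\sqrt3}{2}\alpha^{-1/2}\mathcal{F}[V_+g](0)$ coming from $\Lambda W$); taking the real part in the $V_+$ pairing and the imaginary part in the $V_-$ pairing produces the terms $+A_+\Re\mathcal{F}[V_+g](0)\alpha^{-1/2}$ and $-A_-\Im\mathcal{F}[V_-g](0)\alpha^{-1/2}$ with $A_+=\tfrac{\sqrt3}{2}>0$ and $A_-=\sqrt3>0$.

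It then remains to bound the errors, and these are all routine. After the subtraction, every remaining term is either of the form $\langle V_\pm g,h\rangle$ with $h=(-\Delta+\alpha)^{-1}(\chi|x|^{-1})$, $(-\Delta+\alpha)^{-1}\Psi$ or $(-\Delta+\alpha)^{-1}\Phi$ bounded in $L^{\infty}$ or $L^6$ uniformly in $\alpha$, or the quantity $\int|V_\pm g|\,|y|\,dy$. Using H\"older's inequality together with $V_\pm\in L^{r}$ for all $r$, $|x|V_\pm,(1+|x|)^2V_\pm\in\bigcap_{r>1}L^r$, and the mapping $(-\Delta+\alpha)^{-1}\colon L^{6/5}\to L^6$ bounded uniformly in $\alpha$ (Lemma \ref{18/11/23/17:17}), each such term is $\lesssim\|g\|_{L^q}$ with constant depending only on $q$; and writing $V_\pm g=(1+|x|)V_\pm\cdot(1+|x|)^{-1}g$ with $(1+|x|)V_\pm,\,|x|(1+|x|)V_\pm\in L^2(\mathbb{R}^3)$ gives the same bound by $\|(1+|x|)^{-1}g\|_{L^2}$. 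Combining these estimates yields \eqref{18/12/02/11:26} and \eqref{19/10/1/10:17}. The one genuinely non-routine point is the exact identification of the $\alpha^{-1/2}$ coefficient via the closed form for $(-\Delta+\alpha)^{-1}|x|^{-1}$ and the bound $s-1+e^{-s}\le\tfrac12 s^2$; the separation of the $|x|^{-1}$ tail by the cut-off and the weighted $L^p$ bookkeeping are standard.
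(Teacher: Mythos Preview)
Your proof is correct and follows the same high-level strategy as the paper's: isolate the $|x|^{-1}$ tail of $\Lambda W$ (respectively $W$), extract the $\alpha^{-1/2}$ divergence from that piece explicitly, and bound the remainders by routine resolvent and H\"older estimates. The execution of the key step, however, is genuinely different. The paper passes to Fourier variables via Parseval, writing the pairing with the $|x|^{-1}$ piece as $\int \frac{\mathcal{F}[V_{+}g](\xi)}{(|\xi|^{2}+\alpha)|\xi|^{2}}\,d\xi$, then splits into $|\xi|\le 1$ and $|\xi|\ge 1$ and pulls out $\mathcal{F}[V_{+}g](0)$ times the explicit integral $\int_{|\xi|\le 1}\frac{d\xi}{(|\xi|^{2}+\alpha)|\xi|^{2}} \sim \alpha^{-1/2}\arctan(\alpha^{-1/2})$; the low-frequency remainder is handled by a second-order Taylor expansion of $\mathcal{F}[V_{+}g]$ at the origin. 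You instead stay entirely in physical space, using the closed form $(-\Delta+\alpha)^{-1}|x|^{-1}=\frac{1-e^{-\sqrt\alpha|x|}}{\alpha|x|}$ and the elementary pointwise bound $0\le s-1+e^{-s}\le \tfrac{1}{2}s^{2}$ to replace $\frac{1-e^{-\sqrt\alpha|y|}}{\alpha|y|}$ by $\alpha^{-1/2}$ with an error controlled by $\tfrac{1}{2}\int |V_{\pm}g|\,|y|\,dy$. This is more elementary---no Fourier analysis, no frequency splitting, no Taylor expansion of the symbol---and it yields explicit values $A_{+}=\tfrac{\sqrt3}{2}$, $A_{-}=\sqrt3$, whereas the paper only asserts positivity. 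The paper's Fourier approach, on the other hand, is the one that generalizes most directly to other dimensions or to potentials whose resolvent on $|x|^{-\gamma}$ does not have a closed form. Your cutoff-based separation of the tail is equivalent to the paper's algebraic decomposition \eqref{18/12/03/10:17}; both reduce the remainder to something in $L^{6/5}$ so that Lemma~\ref{18/11/23/17:17} applies uniformly in $\alpha$.
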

We will prove Proposition 
\ref{18/12/02/11:25} shortly. 
Here, it is worthwhile noting that for any $q\ge 2$, 
\begin{equation}\label{19/10/4/10:11}
\big|\mathcal{F}[V_{+}g](0)\big|
\le 
\|V_{+}g\|_{L^{1}} 
\lesssim 
\min{\{ \| g \|_{L^{q}},\, \|(1+|x|)^{-1}g\|_{L^{2}}\}}
.
\end{equation}
Hence, Proposition \ref{18/12/02/11:25} 
together with \eqref{19/10/4/10:11} yields:
\begin{corollary}
\label{18/12/02/11:25cor}
It holds that for any $0< \alpha <1$, 
$2\le q <\infty$ and $g \in L^{q}
(\mathbb{R}^{3})$, 
\begin{align}
\label{18/12/02/11:26cor}
&\big|
\langle (-\Delta + \alpha)^{-1} V_{+}g ,\ \Lambda W \rangle
\big|
\lesssim 
\alpha^{-\frac{1}{2}}
\min{\bigm\{ 
\| g \|_{L^{q}},\ \|(1+|x|)^{-1}g\|_{L^{2}}
\bigm\}}
,
\\[6pt]
\label{19/10/1/10:17cor}
&
\big|
\langle (-\Delta + \alpha)^{-1} V_{-}g ,\ i W \rangle
\big|
\lesssim
\alpha^{-\frac{1}{2}}
\min{\bigm\{ 
\| g \|_{L^{q}},\ \|(1+|x|)^{-1}g\|_{L^{2}}
\bigm\}}, 
\end{align}
where the implicit constants depend only on $q$.
\end{corollary}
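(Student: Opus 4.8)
The plan is to deduce Corollary \ref{18/12/02/11:25cor} directly from Proposition \ref{18/12/02/11:25} together with the elementary bound \eqref{19/10/4/10:11}. The key observation is that, in the two estimates \eqref{18/12/02/11:26} and \eqref{19/10/1/10:17}, the only term carrying the singular factor $\alpha^{-\frac{1}{2}}$ is the one involving $\mathcal{F}[V_{\pm}g](0)$, and this quantity equals $\int_{\mathbb{R}^{3}}V_{\pm}(x)g(x)\,dx$, hence is dominated in absolute value by $\|V_{\pm}g\|_{L^{1}}$; since $V_{\pm}$ decays like $(1+|x|)^{-4}$, this is in turn controlled by $\min\{\|g\|_{L^{q}},\ \|(1+|x|)^{-1}g\|_{L^{2}}\}$ by \eqref{19/10/4/10:11} (and its obvious analogue for $V_{-}$, which follows because $|V_{-}|\le|V_{+}|$).

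Concretely, I would first apply the triangle inequality to \eqref{18/12/02/11:26} to get
\begin{equation*}
\big|\langle (-\Delta+\alpha)^{-1}V_{+}g,\ \Lambda W\rangle\big|
\le
A_{+}\,\big|\Re\mathcal{F}[V_{+}g](0)\big|\,\alpha^{-\frac{1}{2}}
+ C\,\min\{\|g\|_{L^{q}},\ \|(1+|x|)^{-1}g\|_{L^{2}}\},
\end{equation*}
with $C$ depending only on $q$. Then I would insert \eqref{19/10/4/10:11} into the first term on the right-hand side; since $0<\alpha<1$ forces $\alpha^{-\frac{1}{2}}>1$, the second term is also bounded by $\alpha^{-\frac{1}{2}}\min\{\dots\}$, and the two contributions combine into \eqref{18/12/02/11:26cor}. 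The estimate \eqref{19/10/1/10:17cor} is obtained in exactly the same manner, starting from \eqref{19/10/1/10:17} and using the $V_{-}$ analogue of \eqref{19/10/4/10:11}.

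Since every ingredient is already in place, there is no genuine obstacle at the level of this corollary: the whole content resides in Proposition \ref{18/12/02/11:25}, whose proof — extracting the leading-order coefficients $A_{\pm}$ from the $\alpha\to 0$ asymptotics of $(-\Delta+\alpha)^{-1}$ via the representations \eqref{19/9/26/15:7} and \eqref{18/12/22/10:07} — is the genuinely technical step and is treated separately. The only small point to check here is that the non-singular error terms in Proposition \ref{18/12/02/11:25} remain harmless after being estimated up to order $\alpha^{-\frac{1}{2}}$, which is immediate from $\alpha<1$.
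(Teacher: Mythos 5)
Your argument is exactly the paper's: Corollary \ref{18/12/02/11:25cor} follows from Proposition \ref{18/12/02/11:25} by the triangle inequality, using \eqref{19/10/4/10:11} (and its $V_{-}$ analogue) to bound the coefficient of $\alpha^{-\frac{1}{2}}$ and the observation $\alpha^{-\frac{1}{2}}>1$ for $0<\alpha<1$ to absorb the bounded error term. Your reasoning is correct and matches the paper's (one-line) deduction.
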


Now, we give a proof of 
Proposition \ref{18/12/02/11:25}:
\begin{proof}[Proof of Proposition \ref{18/12/02/11:25}] 
Observe that 
we can rewrite $\Lambda W$ as 
\begin{equation}\label{18/12/03/10:17}
\begin{split}
\Lambda W (x)
&=
\Big(1+\frac{|x|^{2}}{3}\Big)^{-\frac{3}{2}}
\Big\{ 
\frac{1}{2}\Big(1+\frac{|x|^{2}}
{3}\Big)
-
\frac{|x|^{2}}{3}
\Big\}
\\[6pt] 
&=
\Big(1+\frac{|x|^{2}}{3}\Big)^{-\frac{3}{2}}
\Big\{ 
\frac{1}{2} - 
\frac{|x|^{2}}{6}
\Big\}
\\[6pt] 
&=-\frac{|x|^{2}}{6}
\Big(\frac{|x|^{2}}{3} \Big)^{-\frac{3}{2}}
+
\frac{1}{2}\Big(1+\frac{|x|^{2}}{3} \Big)^{-\frac{3}{2}}
+
Y(x),
\end{split}
\end{equation}
where 
\begin{equation}\label{18/12/04/09:43}
Y(x):=
-
\frac{|x|^{2}}{6}
\Big(1+\frac{|x|^{2}}{3} \Big)^{-\frac{3}{2}}
+
\frac{|x|^{2}}{6}
\Big(\frac{|x|^{2}}{3} \Big)^{-\frac{3}{2}}.
\end{equation}
Notice that by the fundamental theorem of calculus, one has 
\begin{equation}\label{19/10/3/11:27}
Y(x)
=
\frac{|x|^{2}}{4}
\int_{0}^{1} 
\Big(\theta+\frac{|x|^{2}}{3}\Big)^{-\frac{3}{2}-1} \,d \theta
\lesssim
\int_{0}^{1} 
\Big(\theta+\frac{|x|^{2}}{3}\Big)^{-\frac{3}{2}} \,d \theta.
\end{equation}
By \eqref{19/9/26/15:7}, \eqref{18/12/03/10:17} and \eqref{19/10/3/11:27}, we are convinced that the proofs of \eqref{18/12/02/11:26} and \eqref{19/10/1/10:17} are similar. Hence, we omit the proof of \eqref{19/10/1/10:17}. 

We shall prove \eqref{18/12/02/11:26} by an argument similar to the proof of Lemma 2.5 of \cite{CG}. According to \eqref{18/12/03/10:17}, one has 
\begin{equation}\label{18/12/03/16:02}
\begin{split}
\langle (-\Delta+\alpha)^{-1} V_{+} g, \ \Lambda W \rangle
&= 
-\frac{3^{\frac{3}{2}}}{6}
\langle (-\Delta+\alpha)^{-1} V_{+} g ,\ 
|x|^{-1} \rangle
\\[6pt]
&\quad +
\frac{1}{2}\langle (-\Delta+\alpha)^{-1} V_{+} g, \ 
\Big(1+\frac{|x|^{2}}{3} 
\Big)^{-\frac{3}{2}}
\rangle
\\[6pt]
&\quad + 
\langle (-\Delta+\alpha)^{-1} V_{+} g, \ Y \rangle.
\end{split}
\end{equation}

In what follows, let $2\le q <\infty$, and let $q'$ 
denote the H\"older conjugate of $q$, 
namely $q'=\frac{q}{q-1}$. 
Furthermore, we allow the implicit constants 
to depend on $q$.

\noindent 
{\bf Step 1.}~We consider the first term on the right-hand side of \eqref{18/12/03/16:02}. Using Parseval's identity, \eqref{18/12/22/10:07} and the formula $(-\Delta+\alpha)^{-1}= \mathcal{F}^{-1}(|\xi|^{2}+\alpha)^{-1} \mathcal{F}$, one can see that 
\begin{equation}\label{18/12/02/17:01}
\begin{split}
&
-
\frac{3^{\frac{3}{2}}}{6}
\langle (-\Delta+\alpha)^{-1} 
V_{+} g, \ |x|^{-1} \rangle
\\[6pt]
&=
- C_{1}
\langle (|\xi|^{2}+\alpha)^{-1}\mathcal{F}[V_{+} g] ,\ |\xi|^{-2} 
\rangle
\\[6pt]
&= 
-C_{1} 
\Re \int_{|\xi|\le 1} 
\frac{\mathcal{F}[V_{+} g](0)}{(|\xi|^{2}+\alpha)|\xi|^{2}} 
\,d\xi -
C_{1} \Re
\int_{|\xi|\le 1} 
\frac{\mathcal{F}[V_{+} g](\xi)
-
\mathcal{F}[V_{+} g](0)}{(|\xi|^{2}+\alpha)|\xi|^{2}} 
\,d\xi 
\\[6pt]
&\quad -
C_{1} \Re \int_{1\le|\xi|} 
\frac{\mathcal{F}[V_{+} g](\xi)}{(|\xi|^{2}+\alpha)|\xi|^{2}} 
\,d\xi ,
\end{split}
\end{equation}
where $C_{1}$ is some positive constant. 

We consider the first term on the right-hand side of \eqref{18/12/02/17:01}. Using the spherical coordinates, one can verify that 
\begin{equation}\label{18/12/04/09:59}
\int_{|\xi|\le 1} 
\frac{\mathcal{F}[V_{+} g](0)}{(|\xi|^{2}+\alpha)|\xi|^{2}} 
\,d\xi 
=
C_{2}
\mathcal{F}[V_{+} g](0) \alpha^{-\frac{1}{2}}\arctan{(\alpha^{-\frac{1}{2}})}, 
\end{equation}
where $C_{2}$ is some positive constant. 
Note here that 
\begin{equation}
\label{18/12/25/09:44}
\Big|\frac{\pi}{2} -\arctan{(\alpha^{-\frac{1}{2}})} \Big|
= 
\int_{\alpha^{-\frac{1}{2}}}^{\infty}\frac{1}{1+t^{2}}\,dt 
\le 
\int_{\alpha^{-\frac{1}{2}}}^{\infty}t^{-2} \,dt 
\le 
\alpha^{\frac{1}{2}}
.
\end{equation}

We consider the second term on the 
right-hand side of \eqref{18/12/02/17:01}. First, observe that 
\begin{equation}\label{18/12/05/09:35}
\int_{|\xi|\le 1} \frac{\xi}{(|\xi|^{2}+\alpha)|\xi|^{2}}\,d\xi
=0.
\end{equation}
Then, it follows from $d = 3$, 
\eqref{18/12/05/09:35} and 
elementary computations that 
\begin{equation}\label{19/10/2/14:27}
\begin{split}
&
\Bigm| 
\int_{|\xi|\le 1} 
\frac{\mathcal{F}[V_{+} g](\xi)-\mathcal{F}[V_{+} g](0)}{(|\xi|^{2}+\alpha)|\xi|^{2}} 
\,d\xi
\Bigm| 
\\[6pt]
&=
\Big| 
\int_{|\xi|\le 1} 
\frac{\mathcal{F}[V_{+} g](\xi)-\mathcal{F}[V_{+} g](0)
-\xi\cdot \nabla \mathcal{F}[V_{+}g](0)}{(|\xi|^{2}+\alpha)|\xi|^{2}} 
\,d\xi
\Big| 
\\[6pt]
&\le 
\int_{|\xi|\le 1} \frac{1}{(|\xi|^{2}+\alpha)|\xi|^{2}}
\int_{\mathbb{R}^{3}} \big| e^{-ix\cdot\xi} -1 - \xi \cdot (-ix) \big| 
|V_{+}(x) g(x)| \,dx d\xi
\\[6pt]
&\lesssim 
\int_{|\xi|\le 1} \frac{1}{(|\xi|^{2}+\alpha)|\xi|^{2}}
\int_{\mathbb{R}^{3}} 
\min\{|x||\xi| , \ |x|^{2}|\xi|^{2}\}
|V_{+}(x) g(x)| \,dx d\xi.
\end{split} 
\end{equation}
Let $1< q_{1} \le 2$ be a fixed number depending only on $q$ such that $q_{1}-1 <\frac{3}{q}$ (hence $(4-q_{1}) q'>3$). 
Then, it follows from \eqref{19/10/2/14:27}, 
$|V_{+}(x)| \lesssim (1+|x|)^{-4}$ 
and H\"older's inequality that 
\begin{equation}\label{19/10/2/15:41}
\begin{split}
&\Big| 
\int_{|\xi|\le 1} 
\frac{\mathcal{F}[V_{+} g](\xi)-\mathcal{F}[V_{+} g](0)}{(|\xi|^{2}+\alpha)|\xi|^{2}} 
\,d\xi
\Big| 
\\[6pt]
&\lesssim 
\int_{|\xi|\le 1} 
\frac{|\xi|^{q_{1}}}{(|\xi|^{2}+\alpha)|\xi|^{2}} 
\,d\xi
\int_{\mathbb{R}^{3}} 
|x|^{q_{1}}|V_{+}(x) g(x)| \,dx
\\[6pt]
&\lesssim 
\int_{|\xi|\le 1} 
|\xi|^{q_{1} - 4} 
\,d\xi
\|(1+|x|)^{- 4 + q_{1}} \|_{L^{q'}}
\|g\|_{L^{q}} 
\lesssim \|g\|_{L^{q}}.
\end{split} 
\end{equation}
Similarly, one can verify that 
\begin{equation}\label{19/10/2/14:53}
\begin{split}
&
\Big| 
\int_{|\xi|\le 1} 
\frac{\mathcal{F}[V_{+} g](\xi) -
\mathcal{F}[V_{+} g](0)}{(|\xi|^{2}+\alpha)|\xi|^{2}} 
\,d\xi
\Big| 
\\[6pt]
&\lesssim 
\int_{|\xi|\le 1} 
\frac{|\xi|^{\frac{5}{4}}}{(|\xi|^{2}+\alpha)|\xi|^{2}} 
\,d\xi
\int_{\mathbb{R}^{3}} 
|x|^{\frac{5}{4}}|V_{+}(x) g(x)| \,dx
\\[6pt]
&\lesssim 
\||x|^{\frac{5}{4}} (1+|x|)^{-3} \|_{L^{2}}
\int_{|\xi| \leq 1} 
|\xi|^{- \frac{11}{4}} d\xi
\|(1+|x|)^{-1}g\|_{L^{2}} 
\lesssim 
\|(1+|x|)^{-1}g\|_{L^{2}}.
\end{split} 
\end{equation}

Putting the estimates \eqref{19/10/2/15:41} 
and \eqref{19/10/2/14:53} together, we find that 
for any $2\le q <\infty$, 
\begin{equation}\label{19/10/3/10:17}
\Big| 
\int_{|\xi|\le 1} 
\frac{\mathcal{F}[V_{+} g](\xi) 
-\mathcal{F}[V_{+} g](0)}{(|\xi|^{2}+\alpha)|\xi|^{2}} 
\,d\xi
\Big| 
\lesssim 
\min\bigm\{
\|g\|_{L^{q}}
, \ 
\|(1+|x|)^{-1} g\|_{L^{2}}
\bigm\}.
\end{equation}

We consider the last term 
on the right-hand side of \eqref{18/12/02/17:01}. 
By H\"older's inequality, Plancherel's theorem 
and $|V_{+}(x)|\lesssim (1+|x|)^{- 4}$, 
one can see that 
\begin{equation}\label{18/12/04/11:15}
\begin{split}
\Big| 
\int_{1\le |\xi|} 
\frac{\mathcal{F}[V_{+} g](\xi)}{(|\xi|^{2}+\alpha)|\xi|^{2}} 
\,d\xi
\Big| 
&\le 
\||\xi|^{-4} \|_{L^{2}(|\xi|\ge 1)}
\| \mathcal{F}[ V_{+} g] \|_{L^{2}} 
\\[6pt]
&\lesssim 
\| V_{+} g \|_{L^{2}}
\le 
\min{\bigm\{ \|g\|_{L^{q}}, \ \|(1+|x|)^{-1} g \|_{L^{2}}
\bigm\}}.
\end{split} 
\end{equation}

Now, we conclude from \eqref{18/12/02/17:01} through \eqref{18/12/25/09:44}, \eqref{19/10/3/10:17} and \eqref{18/12/04/11:15} that 
there exists a positive constant $A_{+}$ such that for any $2\le q <\infty$, 
\begin{equation}\label{18/12/04/11:19}
\begin{split}
&
\big| 
- \frac{3^{\frac{3}{2}}}{6}
\langle (-\Delta + \alpha)^{-1} V_{+}g ,\ |x|^{-1} \rangle
-
A_{+} \Re \mathcal{F}[V_{+}g](0) \alpha^{-\frac{1}{2}} 
\big| 
\\[6pt]
&\lesssim 
\min\big\{ 
\| g \|_{L^{q}}, \ 
\| (1+|x|)^{-1} g\|_{L^{2}}
\big\}.
\end{split} 
\end{equation}

\noindent 
{\bf Step 2.}~We shall derive an estimate 
for the second term on the right-hand side 
of \eqref{18/12/03/16:02}. 
It follows from H\"older's inequality, 
Lemma \ref{18/11/23/17:17} that for any $2\le q <\infty$, 
\begin{equation}\label{18/12/02/14:01}
\begin{split}
&\big|
\langle (-\Delta + \alpha)^{-1} V_{+} g ,\ 
\Big(1+\frac{|x|^{2}}{3} \Big)^{-\frac{3}{2}}
\rangle
\big|
\\[6pt]
&\lesssim 
\|(-\Delta + \alpha)^{-1} V_{+} g \|_{L^{6}} 
\| \Big(1+\frac{|x|^{2}}{3} \Big)^{-\frac{3}{2}} 
\|_{L^{\frac{6}{5}}}
\\[6pt]
& \lesssim 
\|V_{+} g \|_{L^{\frac{6}{5}}} 
\lesssim
\min\big\{ 
\| g \|_{L^{q}}, \ 
\| (1+|x|)^{-1} g\|_{L^{2}}
\big\}.
\end{split} 
\end{equation}

\noindent 
{\bf Step 3.}~Finally, we consider the last term 
on the right-hand side of \eqref{18/12/03/16:02} 
and finish the proof.
It follows from \eqref{19/10/3/11:27} that 
\begin{equation}\label{18/12/04/08:55}
\begin{split}
\|Y\|_{L^{\frac{6}{5}}}
&\lesssim 
\Big\| \int_{0}^{1} 
\Big(\theta+\frac{|x|^{2}}{3}\Big)^{-\frac{3}{2}} 
\,d \theta
\Big\|_{L^{\frac{6}{5}}}
\\[6pt]
&\lesssim 
\int_{0}^{1}
\Big\| 
\theta^{-\frac{7}{8}} 
|x|^{-d+\frac{7}{4}} 
\Big\|_{L^{\frac{6}{5}}(|x|\le 1)}
\, d\theta +
\big\| |x|^{-d}
\big\|_{L^{\frac{6}{5}}(1\le |x|)}
\lesssim 1.
\end{split} 
\end{equation}
Hence, by the same computation as 
\eqref{18/12/02/14:01}, and 
\eqref{18/12/04/08:55}, 
one can see that for any 
$2\le q <\infty$, 
\begin{equation}\label{18/12/04/11:42}
\begin{split} 
\big|
\langle (-\Delta + \alpha)^{-1} V_{+} g ,\ 
Y 
\rangle
\big|
&\lesssim 
\| (-\Delta + \alpha)^{-1} V_{+} g \|_{L^{6}} 
\| Y \|_{L^{\frac{2d}{5}}}
\\[6pt]
&\lesssim
\min\big\{ 
\| g \|_{L^{q}}, \ 
\| (1+|x|)^{-1} g\|_{L^{2}}
\big\}. 
\end{split} 
\end{equation}
\par 
Putting \eqref{18/12/03/16:02}, \eqref{18/12/04/11:19}, \eqref{18/12/02/14:01} and \eqref{18/12/04/11:42} together, 
we obtain the desired estimate \eqref{18/12/02/11:26}. 
\end{proof}


\begin{proposition}\label{19/9/26/15:4}
The following estimate holds for any $0<\alpha <1$, $2\le q <\infty$ and any $g \in L^{q}(\mathbb{R}^{3})$: 
\begin{equation}\label{19/9/26/15:15}
\bigm|
\langle (-\Delta + \alpha)^{-1} V_{+}g ,\, \nabla W \rangle
\bigm|
\lesssim 
\min\bigm\{\| g \|_{L^{q}}, \ 
\|(1+|x|)^{-1}g\|_{L^{2}} 
\bigm\},
\end{equation}
where the implicit constant depends only on $q$.
\end{proposition}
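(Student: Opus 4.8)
The plan is to mimic the proof of Proposition~\ref{18/12/02/11:25}, exploiting that $\nabla W$ — unlike $\Lambda W$ — belongs to $L^{2}(\mathbb{R}^{3})$ and decays like $|x|^{-2}$ at infinity, so the singular $\alpha^{-1/2}$ contribution that appeared there is absent here. First I would record an explicit formula for $\partial_j W$ by differentiating \eqref{F-Talenti-1}: one has $\partial_{j}W(x) = -\tfrac{1}{3}\,x_{j}\bigl(1+\tfrac{|x|^{2}}{3}\bigr)^{-3/2}$, which shows $\partial_j W \in L^{q'}(\mathbb{R}^{3})$ for every $q' > 1$ (in particular for the conjugate exponent of any $q \ge 2$, and also $\partial_j W \in L^{6/5}$) and also $(1+|x|)\,\partial_j W \in L^{2}$. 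The key point, which makes this case easier than \eqref{18/12/03/16:02}, is that no decomposition into a non-integrable leading term is needed.

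The main step is then to move the resolvent onto $\nabla W$ and estimate directly. Write
\begin{equation}\label{eq-pf-nW}
\bigm|\langle (-\Delta+\alpha)^{-1}V_{+}g,\ \partial_{j}W\rangle\bigm|
=
\bigm|\langle V_{+}g,\ (-\Delta+\alpha)^{-1}\partial_{j}W\rangle\bigm|
\le
\|V_{+}g\|_{L^{6/5}}\,\|(-\Delta+\alpha)^{-1}\partial_{j}W\|_{L^{6}}.
\end{equation}
By Lemma~\ref{18/11/23/17:17} (with $q=6$, so that $\tfrac{3q}{3+2q}=\tfrac{6}{5}$) the resolvent is bounded uniformly in $\alpha$ from $L^{6/5}$ to $L^{6}$, and since $\partial_j W \in L^{6/5}$ we get $\|(-\Delta+\alpha)^{-1}\partial_{j}W\|_{L^{6}}\lesssim 1$. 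It remains to bound $\|V_{+}g\|_{L^{6/5}}$ by the right-hand side of \eqref{19/9/26/15:15}. Using $|V_{+}(x)|\lesssim(1+|x|)^{-4}$ and H\"older's inequality, $\|V_{+}g\|_{L^{6/5}}\le\|(1+|x|)^{-4}\|_{L^{q_{1}}}\|g\|_{L^{q}}$ where $\tfrac{1}{q_{1}}+\tfrac{1}{q}=\tfrac{5}{6}$; since $q\ge 2$ forces $q_{1}\le 3$, the weight lies in $L^{q_{1}}$, giving the bound $\lesssim\|g\|_{L^{q}}$. For the weighted-$L^{2}$ alternative, write $\|V_{+}g\|_{L^{6/5}}\le\|(1+|x|)V_{+}\|_{L^{3/2}}\,\|(1+|x|)^{-1}g\|_{L^{2}}\lesssim\|(1+|x|)^{-1}g\|_{L^{2}}$, again because $(1+|x|)^{-3}\in L^{3/2}(\mathbb{R}^{3})$. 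Taking the minimum yields \eqref{19/9/26/15:15}.

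I do not expect a serious obstacle here; the only thing to be careful about is the bookkeeping of the H\"older exponents so that every weight $(1+|x|)^{-\gamma}$ genuinely lies in the claimed Lebesgue space over $\mathbb{R}^{3}$ (this needs $\gamma\cdot(\text{exponent})>3$, which holds in all the instances above), and verifying that the implicit constant depends only on $q$ — it does, since $q_{1}$ is determined by $q$ and all other exponents are fixed. One could equally well invoke the Hardy--Littlewood--Sobolev inequality \eqref{HLS-ineq} in place of Lemma~\ref{18/11/23/17:17} to control $(-\Delta+\alpha)^{-1}$, estimating by $(-\Delta)^{-1}$ and discarding $\alpha\ge 0$ as in \eqref{18/11/23/17:18}; either route works, and the argument is strictly simpler than that of Proposition~\ref{18/12/02/11:25} precisely because $\nabla W$ introduces no resonant singularity.
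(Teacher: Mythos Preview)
Your argument has a genuine gap: the claim that $\partial_{j}W \in L^{6/5}(\mathbb{R}^{3})$ (and more generally $\partial_{j}W \in L^{q'}$ for every $q'>1$) is false. From $\partial_{j}W(x)=-\tfrac{1}{3}x_{j}(1+|x|^{2}/3)^{-3/2}$ one has $|\partial_{j}W(x)|\sim |x|^{-2}$ at infinity, so $\partial_{j}W\in L^{s}(\mathbb{R}^{3})$ if and only if $s>3/2$; in particular $\int_{|x|\ge 1}|\partial_{j}W|^{6/5}\,dx\sim\int_{1}^{\infty}r^{-2/5}\,dr=\infty$. Hence the step invoking Lemma~\ref{18/11/23/17:17} fails, and your proposed alternative via the first resolvent identity and \eqref{HLS-ineq} hits the same wall, since it too requires the source in $L^{3q/(3+2q)}$ with $3q/(3+2q)<3/2$. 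There is no choice of H\"older exponents that rescues the scheme: the uniform-in-$\alpha$ mapping property $(-\Delta+\alpha)^{-1}\colon L^{s}\to L^{r}$ forces $\tfrac{3}{s}-\tfrac{3}{r}=2$, hence $s<3/2$, precisely the range where $\partial_{j}W$ is not available.

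The paper instead integrates by parts to transfer $\partial_{j}$ onto $V_{+}g$ and decomposes $W$ via \eqref{19/9/26/15:7} into its slow-decay part $c\,|x|^{-1}$ and an $L^{6/5}$ remainder. The remainder is treated exactly by the H\"older/Lemma~\ref{18/11/23/17:17} argument you outline; the $|x|^{-1}$ piece is handled on the Fourier side, where $\partial_{j}(V_{+}g)$ contributes a factor $i\xi_{j}$, and the oddness identity $\int_{|\xi|\le 1}\xi_{j}\,(|\xi|^{2}+\alpha)^{-1}|\xi|^{-2}\,d\xi=0$ is what eliminates the would-be $\alpha^{-1/2}$ low-frequency singularity. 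So your intuition that the resonant term should be absent is correct, but its absence is not a consequence of decay alone --- it has to be extracted through this frequency-space cancellation.
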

\begin{proof}[Proof of Proposition 
\ref{19/9/26/15:4}] 
the general symbol $d$ to make the 
dependence on the dimension clear. 
Using the integration by parts and \eqref{19/9/26/15:7}, one can see that for any $1\le j \le 3$, 
\begin{equation}\label{19/9/26/15:23}
\begin{split}
&\langle (-\Delta + \alpha)^{-1} V_{+} g, \ \partial_{j} W \rangle
\\[6pt]
&= 
\frac{-1}{3}\langle (-\Delta + \alpha)^{-1} \partial_{j}(V_{+} g), \ 
|x|^{-1}
\rangle
\\[6pt]
&\quad +
\bigm\langle 
(-\Delta+ \alpha)^{-1} \partial_{j}(V_{+} g), \ 
\frac{1}{2} 
\int_{0}^{1} 
\Big(\theta +\frac{|x|^{2}}{3} 
\Big)^{-\frac{3}{2}}\,d\theta
\bigm\rangle.
\end{split} 
\end{equation}

\noindent 
Throughout the proof, 
we allow the implicit constants to 
depend on $q$. 

\noindent
{\bf Step 1.}~We consider the first term on the right-hand side of \eqref{19/9/26/15:23}. 
First, note that 
\begin{equation}\label{19/9/29/8:37}
\int_{|\xi|\le 1} 
\frac{\xi_{j}}{(|\xi|^{2}+\alpha)|\xi|^{2}} 
\,d\xi =0.
\end{equation}
Then, by the same computation as 
\eqref{18/12/02/17:01}, and \eqref{19/9/29/8:37}, 
one sees that 
\begin{equation}\label{19/9/26/15:47}
\begin{split}
&
-\frac{1}{3}\langle (-\Delta + \alpha)^{-1} 
\partial_{j}(V_{+} g), \ |x|^{-1} \rangle
\\[6pt]
&= 
- C_{1}
\Re\int_{|\xi|\le 1} 
\frac{i\xi_{j}\bigm\{ \mathcal{F}[V_{+} g](\xi)-\mathcal{F}[V_{+} g](0)\bigm\}}{(|\xi|^{2}+\alpha)|\xi|^{2}} 
\, d\xi 
\\[6pt]
&\quad +
C_{1} \Re\int_{1\le|\xi|} 
\frac{i\xi_{j}\mathcal{F}[V_{+} g](\xi)}{(|\xi|^{2}+\alpha)|\xi|^{2}} 
\,d\xi,
\end{split}
\end{equation}
where $C_{1}$ is some positive 
constant. 

We consider the first term on the right hand side of \eqref{19/9/26/15:47}. 
By elementary computations and 
$|V_{+}(x)|\sim (1+|x|)^{- 4}$, 
one can verify that 
\begin{equation}\label{19/9/26/16:20}
\begin{split} 
\big| 
\mathcal{F}[V_{+} g](\xi)
-
\mathcal{F}[V_{+} g](0)
\big| 
&=
\big| 
\int_{\mathbb{R}^{3}} \big\{ e^{-ix\cdot\xi} -1 \big\} (V_{+}g)(x)\,dx 
\big|
\\[6pt]
&\lesssim 
\int_{\mathbb{R}^{3}} 
\min\{ 1, \, |\xi| |x|\} (1+|x|)^{-4} 
|g(x)| \,dx
\\[6pt]
&\lesssim 
|\xi|
\min\bigm\{ 
\|g\|_{L^{q}} , \ 
\|(1+|x|)^{-1}g\|_{L^{2}}
\bigm\}. 
\end{split}
\end{equation}
Using \eqref{19/9/26/16:20}, one can find that 
\begin{equation}\label{18/12/04/10:31}
\Big| 
\int_{|\xi|\le 1} 
\frac{i\xi_{j}\bigm\{ 
\mathcal{F}[V_{+} g](\xi)-\mathcal{F}[V_{+} g](0)\bigm\}
}{(|\xi|^{2}+\alpha)|\xi|^{2}} 
\,d\xi
\Big| 
\lesssim 
\min\bigm\{
\|g\|_{L^{q}}, \ \|(1+|x|)^{-1}g\|_{L^{2}}\bigm\}.
\end{equation}

We consider the second term 
on the right-hand side of \eqref{19/9/26/15:47}. 
It is easy to verify that if $q\ge 2$, then 
\begin{equation}\label{19/9/29/9:39}
\begin{split} 
&
\Bigm| 
\int_{1\le |\xi|} 
\frac{i\xi_{j} 
\mathcal{F}[V_{+} g](\xi)
}{(|\xi|^{2}+\alpha)|\xi|^{2}} 
\,d\xi
\Bigm| 
\le
\int_{1\le |\xi|} 
\frac{ 
\big| \mathcal{F}[V_{+} g](\xi) \big|
}{|\xi|^{3}} 
\,d\xi
\\[6pt]
&\le 
\||\xi|^{-3} \|_{L^{2}(|\xi|\ge 1)}
\|V_{+}g\|_{L^{2}} 
\lesssim 
\min\bigm\{ 
\|g\|_{L^{q}},
\ \|(1+|x|)^{-1}g\|_{L^{2}}\bigm\}.
\end{split}
\end{equation}

Thus, putting \eqref{19/9/26/15:47}, \eqref{18/12/04/10:31} and \eqref{19/9/29/9:39} together, we find that for any $q\ge 2$, 
the first term on the right-hand side of 
\eqref{19/9/26/15:23} is estimated as follows: 
\begin{equation}\label{19/9/30/8:15}
\big| 
\bigm\langle (-\Delta + \alpha)^{-1} \partial_{j}(V_{+}g) , \ |x|^{-1} 
\bigm\rangle
\big| 
\lesssim 
\min\{ \|g\|_{L^{q}}, 
\|(1+|x|)^{-1}g \|_{L^{2}}
\}. 
\end{equation}
\\ 
\noindent 
{\bf Step 2.}~We move on to the second term on the right-hand side of \eqref{19/9/26/15:23}, and finish the proof. By H\"older's inequality, one has 
\begin{equation}\label{19/9/30/8:27}
\begin{split}
&
\big|
\bigm\langle 
(-\Delta+ \alpha)^{-1} \partial_{j}(V_{+} g), \ 
\frac{1}{2}
\int_{0}^{1} 
\Big(\theta +\frac{|x|^{2}}{3} 
\Big)^{-\frac{3}{2}}\,d\theta
\bigm\rangle
\big|
\\[6pt]
&\le 
\| (-\Delta + \alpha)^{-1} \partial_{j} (V_{+} g) \|_{L^{6}}
\| \frac{1}{2} \int_{0}^{1} 
\Big(\theta +\frac{|x|^{2}}{3} 
\Big)^{-\frac{3}{2}}\,d\theta \|_{L^{\frac{6}{5}}}.
\end{split} 
\end{equation}
Using the first resolvent equation \eqref{first-res} 
with $\alpha_{0} = 0$, Sobolev's inequality, and 
\eqref{res-est1} in Lemma \ref{Delta-ineq}, we see that for any $2 \le q<\infty$, 
\begin{equation}\label{19/9/30/8:33}
\begin{split}
&
\| (-\Delta + \alpha)^{-1} \partial_{j} (V_{+} g) \|_{L^{6}}
\\[6pt]
&\le
\| (-\Delta)^{-1} \partial_{j} (V_{+} g) \|_{L^{6}}
+
\alpha \| (-\Delta +\alpha)^{-1} (-\Delta)^{-1} \partial_{j} (V_{+} g) \|_{L^{6}}
\\[6pt]
&\lesssim 
\| |\nabla|^{-1} (V_{+} g) \|_{L^{6}}
+
\alpha \||\nabla|^{-1} (-\Delta +\alpha)^{-1} 
(V_{+} g) \|_{L^{6}}
\\[6pt]
&\lesssim 
\| V_{+} g \|_{L^{2}}
+
\alpha \|(-\Delta +\alpha)^{-1} (V_{+} g) \|_{L^{2}}
\\[6pt]
&\lesssim 
\| V_{+} g \|_{L^{2}}
\lesssim 
\min\bigm\{
\|g\|_{L^{q}}, \ 
\|(1+|x|)^{-1}g\|_{L^{2}}
\bigm\}. 
\end{split} 
\end{equation}
Moreover, 
it follows that 
\begin{equation}\label{19/9/30/16:51}
\begin{split}
&
\big\| 
\int_{0}^{1} 
\Big(\theta+\frac{|x|^{2}}{3}
\Big)^{-\frac{3}{2}} \,d \theta
\big\|_{L^{\frac{6}{5}}}
\\[6pt]
&\lesssim 
\int_{0}^{1}
\Big\| 
\theta^{-\frac{3}{4}} 
|x|^{- \frac{3}{2}} 
\Big\|_{L^{\frac{6}{5}}
(|x|\le 1)}
\,d\theta 
+
\big\| |x|^{-3}
\big\|_{L^{\frac{6}{5}}
(1\le |x|)}
\lesssim 1.
\end{split} 
\end{equation}

Plugging \eqref{19/9/30/8:33} and \eqref{19/9/30/16:51} into \eqref{19/9/30/8:27}, 
we find that for any $2 \le q<\infty$,
\begin{equation}\label{19/9/30/9:21}
\begin{split}
&
\big|
\bigm\langle 
(-\Delta + \alpha)^{-1} \partial_{j}(V_{+} g), \ 
\frac{1}{2}
\int_{0}^{1} 
\Big(\theta +\frac{|x|^{2}}{3} 
\Big)^{-\frac{3}{2}}\,d\theta
\bigm\rangle
\big|
\\[6pt]
&\lesssim 
\min\bigm\{
\|g\|_{L^{q}},\ \|(1+|x|)^{-1}g\|_{L^{2}} \bigm\}. 
\end{split}
\end{equation}

Now, we find from \eqref{19/9/26/15:23}, \eqref{19/9/30/8:15} and \eqref{19/9/30/9:21} that the desired estimate \eqref{19/9/26/15:15} holds. 
\end{proof}

In the special case $g=\partial_{j}W$ 
in Proposition \ref{19/9/26/15:4}, we can obtain a detailed estimate:
\begin{lemma}\label{19/9/30/9:26}
It holds that for any $\alpha>0$ 
and $1\le j \le 3$, 
\begin{equation}\label{19/9/30/9:35}
\big|
\langle (-\Delta + \alpha)^{-1} V_{+}\partial_{j} W, \, \partial_{j} W 
\rangle 
+
\| \partial_{j} W \|_{L^{2}}^{2} 
\big| \lesssim 
\alpha^{\frac{1}{4}}. 
\end{equation} 
\end{lemma}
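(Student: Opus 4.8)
The plan is to reduce the whole estimate to a single, explicit resolvent bound by exploiting the identity $V_{+}\partial_{j}W = \Delta \partial_{j}W$ recorded in \eqref{19/10/9/10:22}. Writing $\Delta = -(-\Delta+\alpha)+\alpha$ and using $(-\Delta+\alpha)^{-1}(-\Delta+\alpha)=I$ on $H^{2}(\mathbb{R}^{3})$, one obtains the operator identity $(-\Delta+\alpha)^{-1}\Delta = -I + \alpha(-\Delta+\alpha)^{-1}$, which is legitimate when applied to $\partial_{j}W$ because $\partial_{j}W(x) = -\frac{x_{j}}{3}\big(1+\frac{|x|^{2}}{3}\big)^{-3/2}$ is smooth and, together with its first and second derivatives, is $O((1+|x|)^{-2})$; in particular $\partial_{j}W\in H^{2}(\mathbb{R}^{3})\subset L^{2}(\mathbb{R}^{3})$. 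Pairing with $\partial_{j}W$ then gives the exact formula
\begin{equation*}
\langle (-\Delta+\alpha)^{-1}V_{+}\partial_{j}W,\ \partial_{j}W\rangle + \|\partial_{j}W\|_{L^{2}}^{2} = \alpha\,\langle (-\Delta+\alpha)^{-1}\partial_{j}W,\ \partial_{j}W\rangle ,
\end{equation*}
so \eqref{19/9/30/9:35} is equivalent to $\alpha\,\big|\langle (-\Delta+\alpha)^{-1}\partial_{j}W,\partial_{j}W\rangle\big|\lesssim \alpha^{\frac{1}{4}}$.

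For the latter I would simply combine H\"older's inequality with \eqref{res-est1} in Lemma \ref{Delta-ineq}. Taking the dual pair $s=\frac{12}{7}$, $q=\frac{12}{5}$ (so $q'=s$ and $3(\frac{1}{s}-\frac{1}{q})=\frac{1}{2}<2$), the decay $|\partial_{j}W(x)|\lesssim (1+|x|)^{-2}$ together with $2\cdot\frac{12}{7}=\frac{24}{7}>3$ gives $\partial_{j}W\in L^{12/7}(\mathbb{R}^{3})$. Then Lemma \ref{Delta-ineq} yields $\|(-\Delta+\alpha)^{-1}\partial_{j}W\|_{L^{12/5}}\lesssim \alpha^{\frac{3}{2}\cdot\frac{1}{6}-1}\|\partial_{j}W\|_{L^{12/7}}=\alpha^{-3/4}\|\partial_{j}W\|_{L^{12/7}}$, and H\"older's inequality ($\frac{5}{12}+\frac{7}{12}=1$) gives $\big|\langle (-\Delta+\alpha)^{-1}\partial_{j}W,\partial_{j}W\rangle\big|\lesssim \alpha^{-3/4}\|\partial_{j}W\|_{L^{12/7}}^{2}$. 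Multiplying by $\alpha$ produces exactly the desired $\alpha^{1/4}$. Since Lemma \ref{Delta-ineq} holds with an $\alpha$-independent constant for every $\alpha>0$, no separate treatment of small versus large $\alpha$ is needed.

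The computation is short, and the only point requiring any care — the "obstacle", such as it is — is that the Lebesgue exponents must be tuned precisely: to reach exactly $\alpha^{1/4}$ uniformly in $\alpha>0$ one needs a pair with $\frac{1}{s}-\frac{1}{q}=\frac{1}{6}$, while simultaneously $s>\frac{3}{2}$ and $q'>\frac{3}{2}$ are forced by the membership $\partial_{j}W\in L^{s}\cap L^{q'}$, which pins $s$ to the window $(\frac{3}{2},2)$; the choice $s=\frac{12}{7}$ sits comfortably inside it. (If a sharper bound were wanted, $O(\alpha^{1/2})$ follows by working on the Fourier side: $\widehat{W}(\xi)\sim c|\xi|^{-2}$ near $\xi=0$ gives $\widehat{\partial_{j}W}(\xi)=i\xi_{j}\widehat{W}(\xi)=O(|\xi|^{-1})$, and one estimates $\alpha\int \frac{|\widehat{\partial_{j}W}(\xi)|^{2}}{|\xi|^{2}+\alpha}\,d\xi$ by splitting at $|\xi|=\alpha^{1/2}$. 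However, $\alpha^{1/4}$ is all that Lemma \ref{19/9/30/9:26} requires, so the $L^{p}$ argument above is enough.)
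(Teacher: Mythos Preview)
Your proof is correct and follows essentially the same route as the paper: both reduce the claim, via $V_{+}\partial_{j}W=\Delta\partial_{j}W$, to the identity
\[
\langle (-\Delta+\alpha)^{-1}V_{+}\partial_{j}W,\,\partial_{j}W\rangle+\|\partial_{j}W\|_{L^{2}}^{2}=\alpha\,\langle(-\Delta+\alpha)^{-1}\partial_{j}W,\,\partial_{j}W\rangle,
\]
and then bound the right-hand side by $\alpha^{1/4}$ using Lemma~\ref{Delta-ineq}. The only cosmetic difference is in the final H\"older/resolvent step: the paper pairs in $L^{2}\times L^{2}$ and invokes the weak-space estimate \eqref{res-est2} with $s=\tfrac{3}{2}$, $q=2$ (using $\partial_{j}W\in L^{3/2}_{\rm weak}$), whereas you pair in $L^{12/5}\times L^{12/7}$ and use the strong-space estimate \eqref{res-est1}; both yield the same $\alpha^{-3/4}$ factor, and your choice has the small advantage of staying in ordinary Lebesgue spaces.
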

\begin{proof}[Proof of Lemma \ref{19/9/30/9:26}]
It follows from 
the first resolvent equation 
\eqref{first-res} with 
$\alpha_{0}=0$, 
\eqref{19/10/9/10:22} and 
\eqref{res-est2} in Lemma \ref{Delta-ineq} that 
\begin{equation}\label{19/9/28/17:15}
\begin{split} 
&
\big|
\langle (-\Delta + \alpha)^{-1} V_{+}\partial_{j} W, \, \partial_{j} W \rangle
-
\langle (-\Delta)^{-1} V_{+}\partial_{j} W, \, \partial_{j} W \rangle
\big|
\\[6pt]
&=
\alpha 
\big|\langle (-\Delta+ \alpha)^{-1} (-\Delta)^{-1} V_{+}\partial_{j}W, \, \partial_{j} W 
\rangle \big|
\\[6pt]
&\lesssim 
\alpha
\|(-\Delta)^{-1} V_{+}\partial_{j} W\|_{L^{2}}
\|(-\Delta +\alpha)^{-1} \partial_{j} W\|_{L^{2}}
\\[6pt]
&=
\alpha 
\| \partial_{j}W\|_{L^{2}}
\alpha^{-\frac{3}{4}}
\| \partial_{j}W\|_{L_{\rm weak}^{\frac{3}{2}}}
\lesssim \alpha^{\frac{1}{4}}. 
\end{split} 
\end{equation}
Moreover, by \eqref{19/10/9/10:22}, 
one can see that 
\begin{equation}\label{19/9/30/10:12}
\langle (-\Delta)^{-1} V_{+}\partial_{j} W, \, \partial_{j} W \rangle
=
\langle (-\Delta)^{-1} \Delta \partial_{j} W, \, \partial_{j} W \rangle
=
-\|\partial_{j}W \|_{L^{2}}^{2}
. 
\end{equation}

Putting \eqref{19/9/28/17:15} and 
\eqref{19/9/30/10:12} together, we obtain \eqref{19/9/30/9:35}. 
\end{proof}
\subsection{Resolvent estimate}
\label{sec7-reso}
For $3 <r \le \infty$, 
we introduce the linear subspaces 
$\mathbf{X}_{r}$ and $\mathbf{Y}_{r}$ of 
$L^{r}(\mathbb{R}^{3})\times L^{r}(\mathbb{R}^{3})$ 
equipped with the norms $\|\cdot\|_{\mathbf{X}_{r}}$ 
and $\|\cdot \|_{\mathbf{Y}_{r}}$: 
\begin{align}
\label{19/01/30/13:41}
\mathbf{X}_{r} &:=X_{r} \times P L^{r}(\mathbb{R}^{3}),
\qquad 
\|(f_{1},f_{2})\|_{\mathbf{X}_{r}}
:=
\|f_{1}\|_{L^{r}}+\|f_{2}\|_{L^{r}}
,
\\[6pt]
\label{19/02/01/10:45}
\mathbf{Y}_{r} &:=X_{r} \times \Pi L^{r}(\mathbb{R}^{3}),
\qquad 
\|(f_{1},f_{2})\|_{\mathbf{Y}_{r}}
:=
\|f_{1}\|_{L^{r}}+\|f_{2}\|_{L^{r}}
,
\end{align}
where $PL^{r}(\mathbb{R}^{3})
:=\{ Pf \colon f\in L^{r}(\mathbb{R}^{3})\}$ and 
$\Pi L^{r}(\mathbb{R}^{3}):=\{ \Pi f \colon f\in L^{r}(\mathbb{R}^{3})\}$ 
(see \eqref{19/01/27/15:38} and \eqref{19/9/20/15:50}). 

Next, for any $\varepsilon >0$, we define the linear 
operator $B_{\varepsilon} \colon 
\mathbf{X}_{r} \to L^{r}(\mathbb{R}^{3})$ by 
\begin{equation}\label{19/01/30/16:03}
B_{\varepsilon}f=B_{\varepsilon}(f_{1},f_{2})
:=\varepsilon f_{1} + f_{2} 
\quad 
\mbox{for $f=(f_{1}, f_{2})\in \mathbf{X}_{r}$}. 
\end{equation}
We also define the linear operator 
$\mathbf{C} \colon L^{r}(\mathbb{R}^{3}) \to 
\mathbf{Y}_{r}$ by 
\begin{equation}\label{19/01/30/17:02}
\mathbf{C}f:=((1-P)f,\ \Pi f)
\quad 
\mbox{for $f\in L^{r}(\mathbb{R}^{3})$} .
\end{equation}
Note here that by Lemma \ref{19/01/31/14:17}, 
$(1-P)L^{r}(\mathbb{R}^{3})=X_{r}$. 
\begin{lemma}\label{19/01/28/16:00}
Let $3 <r \le \infty$ and 
$\varepsilon>0$. 
Then, $B_{\varepsilon}$ is surjective. 
Moreover, $C$ is injective.
\end{lemma}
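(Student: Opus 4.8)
The plan is to deduce both assertions from Lemma \ref{19/01/31/14:17} together with the boundedness \eqref{19/01/28/15:30} of $P$ on $L^{r}(\mathbb{R}^{3})$: the two facts I will use are that $(1-P)L^{r}(\mathbb{R}^{3}) = X_{r}$ and that every $g \in L^{r}(\mathbb{R}^{3})$ splits as $g = (1-P)g + Pg$, with $(1-P)g \in X_{r}$ and $Pg \in PL^{r}(\mathbb{R}^{3})$.

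For the surjectivity of $B_{\varepsilon}$, given $g \in L^{r}(\mathbb{R}^{3})$ I would set $f_{1} := \varepsilon^{-1}(1-P)g$ and $f_{2} := Pg$. Then $f_{1} \in X_{r}$, because $(1-P)g \in X_{r}$ by Lemma \ref{19/01/31/14:17} and $X_{r}$ is a linear subspace, while $f_{2} \in PL^{r}(\mathbb{R}^{3})$ by construction; hence $(f_{1}, f_{2}) \in \mathbf{X}_{r}$ and $B_{\varepsilon}(f_{1}, f_{2}) = \varepsilon f_{1} + f_{2} = (1-P)g + Pg = g$. Since $g$ was arbitrary, $B_{\varepsilon}$ is onto.

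For the injectivity of $\mathbf{C}$, suppose $f \in L^{r}(\mathbb{R}^{3})$ satisfies $\mathbf{C}f = ((1-P)f, \Pi f) = (0,0)$. The first step is to read off from $\Pi f = 0$ the orthogonality relations
\[
\langle f, V_{+}\partial_{j}W \rangle = 0 \ \ (1 \le j \le 3), \qquad \langle f, V_{+}\Lambda W \rangle = 0, \qquad \langle f, iV_{-}W \rangle = 0 .
\]
This is valid because the five functions $V_{+}\partial_{1}W, \dots, V_{+}\partial_{3}W, V_{+}\Lambda W, iV_{-}W$ occurring in the definition \eqref{19/9/20/15:50} of $\Pi$ are linearly independent in $L^{r}(\mathbb{R}^{3})$, so that $\Pi f = 0$ forces each coefficient $\langle f, V_{+}\partial_{j}W \rangle / \|V_{+}\partial_{j}W\|_{L^{2}}^{2}$, and likewise the other two, to vanish. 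The linear independence itself I would obtain from Lemma \ref{18/11/14/10:43} (the functions $\partial_{1}W, \dots, \partial_{3}W, \Lambda W$ are linearly independent), from the fact that multiplication by the nowhere-vanishing function $V_{+} = -5W^{4}$ preserves linear independence, and from the observation that $iV_{-}W$ is purely imaginary whereas $V_{+}\partial_{j}W$ and $V_{+}\Lambda W$ are real-valued, so that $iV_{-}W$ is independent of the real span of the other four. Once the three orthogonality relations hold, the definition \eqref{19/01/27/15:38} of $P$ gives $Pf = 0$, and then $f = (1-P)f + Pf = 0 + 0 = 0$, establishing injectivity. There is no genuine obstacle here; the one point to treat with a little care is the linear-independence claim just used, everything else being a direct computation.
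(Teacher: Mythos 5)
Your proposal is correct and follows essentially the same two steps as the paper: the surjectivity argument via $f = (\varepsilon^{-1}(1-P)g,\, Pg)$ is identical, and the injectivity argument is a minor reorganization of the paper's — you extract the orthogonality relations from $\Pi f = 0$ and conclude $Pf = 0$, whereas the paper first uses $(1-P)f = 0$ to write $f = Pf = \sum_j c_j\partial_j W + a\Lambda W + ibW$ and then applies $\Pi f = 0$. The one place you are more careful than the paper is in spelling out why $\Pi f = 0$ forces the individual scalar products to vanish (linear independence of $V_+\partial_1 W, \ldots, V_+\partial_3 W, V_+\Lambda W, iV_-W$, which you justify correctly using Lemma~\ref{18/11/14/10:43}, the pointwise nonvanishing of $V_+$, and the real/imaginary split); the paper leaves that step implicit.
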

\begin{proof}[Proof of Lemma \ref{19/01/28/16:00}] 
Let $g \in L^{r}(\mathbb{R}^{3})$, 
and put $f:=(\varepsilon^{-1}(1-P)g, P g)$. 
Then, by Lemma \ref{19/01/31/14:17}, 
$f \in \mathbf{X}_{r}$. 
Moreover, it is obvious that $B_{\varepsilon}f=g$. 
Hence, $B_{\varepsilon} \colon \mathbf{X}_{r} \to 
L^{r}(\mathbb{R}^{3})$ is 
surjective. 
\par 
Next, suppose $\mathbf{C}f=0$. Then, $(1-P)f=0$ and $\Pi f=0$. 
In particular, 
\begin{equation}\label{19/10/7/8:56}
f=Pf+(1-P)f = Pf
= 
\sum_{j=1}^{3} c_{j} \partial_{j}W 
+ a \Lambda W + i b W 
\end{equation}
for some $c_{1}, c_{2}, c_{3}, a, b \in \mathbb{R}$. 
Furthermore, this together with 
$\Pi f =0$ implies that 
$c_{1}=c_{2} =c_{d}=a=b=0$. 
Thus, $f=0$ and the linear operator 
$\mathbf{C}\colon L^{r}(\mathbb{R}^{3}) \to 
\mathbf{Y}_{r}$ is injective. 
\end{proof}

\begin{lemma}\label{19/01/31/17:09}
Assume $3 < r <\infty$. 
Let $G_{0}$ be the operator defined 
by \eqref{19/10/7/10:51}. 
Then, the following estimates hold for all $\alpha>0$: 
\begin{align}
\label{19/01/31/17:41}
& \bigm\|
\alpha (-\Delta+\alpha)^{-1} G_{0} P 
\bigm\|_{L^{r}\to L^{r}}
\lesssim 
\alpha^{\frac{1}{2}-\frac{3}{2r}},
\\[6pt]
\label{19/01/31/17:42}
&\bigm\| 
\alpha \Pi (-\Delta+\alpha)^{-1} G_{0} 
\bigm\|_{L^{r} \to \Pi L^{r}}
\lesssim 
\alpha^{-\frac{1}{2}} \alpha,
\end{align} 
where the implicit constants depend only on $r$. 
\end{lemma}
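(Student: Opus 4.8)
The plan is to reduce both estimates to the action of $G_{0}$, and of $\{1+(-\Delta+\alpha)^{-1}V_{\pm}\}$, on the kernel modes $\partial_{j}W$, $\Lambda W$, $iW$ of the linearized operator $H$. The basic facts are the identities $G_{0}\partial_{j}W=-\partial_{j}W$, $G_{0}\Lambda W=-\Lambda W$, $G_{0}(iW)=-iW$ — immediate from $V_{+}\partial_{j}W=\Delta\partial_{j}W$, $V_{+}\Lambda W=\Delta\Lambda W$, $V_{-}W=\Delta W$ in \eqref{19/10/9/10:22}, which give $(-\Delta)^{-1}V_{+}\partial_{j}W=-\partial_{j}W$ etc. — together with
\[
\{1+(-\Delta+\alpha)^{-1}V_{+}\}\partial_{j}W=\alpha(-\Delta+\alpha)^{-1}\partial_{j}W,\qquad \{1+(-\Delta+\alpha)^{-1}V_{+}\}\Lambda W=\alpha(-\Delta+\alpha)^{-1}\Lambda W,
\]
and the analogue $\{1+(-\Delta+\alpha)^{-1}V_{-}\}W=\alpha(-\Delta+\alpha)^{-1}W$; each of these follows from $L_{+}\partial_{j}W=L_{+}\Lambda W=0$ and $L_{-}W=0$ (so $H\partial_{j}W=H\Lambda W=H(iW)=0$) combined with the factorization $H+\alpha=(-\Delta+\alpha)G(\alpha)$ in \eqref{eq-HG}. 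I will also use that the convolution kernel $(4\pi|x|)^{-1}e^{-\sqrt{\alpha}|x|}$ of $(-\Delta+\alpha)^{-1}$ in \eqref{explicit-1} has $L^{1}$-norm $\alpha^{-1}$, so $\|\alpha(-\Delta+\alpha)^{-1}\|_{L^{s}\to L^{s}}\le1$ for every $1\le s\le\infty$; since $r>3$ makes both target exponents $\tfrac12-\tfrac3{2r}$ and $\tfrac12$ strictly positive, this disposes of the range $\alpha\ge1$ at once in both estimates. From now on assume $0<\alpha<1$.

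For \eqref{19/01/31/17:41}: the first group of identities gives $G_{0}P=-P$, hence $\alpha(-\Delta+\alpha)^{-1}G_{0}Pf=-\alpha(-\Delta+\alpha)^{-1}Pf$. Writing $Pf$ as a combination of $\partial_{j}W$, $\Lambda W$, $iW$ with coefficients $\lesssim\|f\|_{L^{r}}$ (by \eqref{19/01/28/15:30}), and using $\partial_{j}W\in L^{3/2}_{\rm weak}(\R^{3})$ and $\Lambda W,\,W\in L^{3}_{\rm weak}(\R^{3})$, I apply \eqref{res-est2} of Lemma \ref{Delta-ineq}: the contribution of $\Lambda W$ and $W$ is $\lesssim\alpha\cdot\alpha^{-\frac12-\frac3{2r}}\|f\|_{L^{r}}=\alpha^{\frac12-\frac3{2r}}\|f\|_{L^{r}}$, and the contribution of each $\partial_{j}W$ is $\lesssim\alpha\cdot\alpha^{-\frac3{2r}}\|f\|_{L^{r}}=\alpha^{1-\frac3{2r}}\|f\|_{L^{r}}\le\alpha^{\frac12-\frac3{2r}}\|f\|_{L^{r}}$ since $\alpha<1$. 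This proves \eqref{19/01/31/17:41}.

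For \eqref{19/01/31/17:42}: since $\Pi L^{r}$ is finite-dimensional with $L^{r}$-norm comparable to the coefficients in the frame $\{V_{+}\partial_{j}W,\,V_{+}\Lambda W,\,iV_{-}W\}$, it suffices to bound $\alpha\bigl|\langle(-\Delta+\alpha)^{-1}G_{0}f,\,\varphi\rangle\bigr|$ for $\varphi$ in that frame. Writing $G_{0}f=(-\Delta)^{-1}(V_{+}\Re f+iV_{-}\Im f)$, moving $(-\Delta+\alpha)^{-1}$ and $(-\Delta)^{-1}$ onto $\varphi$, using the first resolvent identity $(-\Delta)^{-1}(-\Delta+\alpha)^{-1}=\alpha^{-1}\{(-\Delta)^{-1}-(-\Delta+\alpha)^{-1}\}$ (see \eqref{first-res}) and $(-\Delta)^{-1}V_{+}\Lambda W=-\Lambda W$ together with the second group of identities, one gets
\[
\alpha\langle(-\Delta+\alpha)^{-1}G_{0}f,\,V_{+}\Lambda W\rangle=-\langle V_{+}\Re f,\,\{1+(-\Delta+\alpha)^{-1}V_{+}\}\Lambda W\rangle=-\alpha\langle(-\Delta+\alpha)^{-1}V_{+}\Re f,\,\Lambda W\rangle,
\]
and similarly $\alpha\langle(-\Delta+\alpha)^{-1}G_{0}f,\,iV_{-}W\rangle=-\alpha\langle(-\Delta+\alpha)^{-1}V_{-}\Im f,\,W\rangle$ and $\alpha\langle(-\Delta+\alpha)^{-1}G_{0}f,\,V_{+}\partial_{j}W\rangle=-\alpha\langle(-\Delta+\alpha)^{-1}V_{+}\Re f,\,\partial_{j}W\rangle$. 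For the first two, Corollary \ref{18/12/02/11:25cor} (with $q=r$) bounds the scalar products by $\alpha^{-\frac12}\|f\|_{L^{r}}$, yielding $\alpha^{\frac12}\|f\|_{L^{r}}$. For the $\partial_{j}W$-term, Proposition \ref{19/9/26/15:4} bounds $|\langle(-\Delta+\alpha)^{-1}V_{+}\Re f,\,\partial_{j}W\rangle|\lesssim\|f\|_{L^{r}}$ uniformly in $\alpha$, so that term is $\lesssim\alpha\|f\|_{L^{r}}\le\alpha^{\frac12}\|f\|_{L^{r}}$ for $\alpha<1$. Collecting the three bounds gives \eqref{19/01/31/17:42}.

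The only genuinely non-routine point is the cancellation displayed for the $V_{+}\Lambda W$-component: a naive expansion of $\alpha\langle(-\Delta+\alpha)^{-1}G_{0}f,\,V_{+}\Lambda W\rangle$ leaves a term $-\langle V_{+}\Re f,\,\Lambda W\rangle$ that is only $O(\|f\|_{L^{r}})$ and does \emph{not} produce the gain $\alpha^{1/2}$; it is precisely because $L_{+}\Lambda W=0$ (equivalently $\{1+(-\Delta)^{-1}V_{+}\}\Lambda W=0$) that the bracket collapses to $\alpha(-\Delta+\alpha)^{-1}\Lambda W$ and Corollary \ref{18/12/02/11:25cor} becomes applicable. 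Everything else is bookkeeping: verifying the weak-Lebesgue memberships of $\partial_{j}W$, $\Lambda W$, $W$, checking the exponent constraints in Lemma \ref{Delta-ineq}, and recasting the $iV_{-}W$- and $\partial_{j}W$-pairings into the exact forms of Corollary \ref{18/12/02/11:25cor} and Proposition \ref{19/9/26/15:4}.
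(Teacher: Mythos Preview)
Your proof is correct and follows essentially the same route as the paper: both reduce \eqref{19/01/31/17:41} to $\|(-\Delta+\alpha)^{-1}\phi\|_{L^{r}}$ for the kernel modes $\phi\in\{\partial_{j}W,\Lambda W,W\}$ via $G_{0}P=-P$, and both reduce \eqref{19/01/31/17:42} to $\alpha\langle(-\Delta+\alpha)^{-1}V_{\pm}\Re f\text{ or }\Im f,\,\phi\rangle$ and invoke Corollary~\ref{18/12/02/11:25cor} / Proposition~\ref{19/9/26/15:4}. The only cosmetic differences are that the paper applies \eqref{res-est2} uniformly with $s=3$ (using $\partial_{j}W\in L^{3}_{\rm weak}$ as well) rather than splitting off the sharper $L^{3/2}_{\rm weak}$ bound, and it reaches the scalar-product form in \eqref{19/01/31/17:42} by writing $V_{+}\Lambda W=-(-\Delta)\Lambda W$ and cancelling directly, rather than via the resolvent identity; your explicit treatment of $\alpha\ge1$ is a harmless extra since the cited Corollary and Proposition are stated only for $0<\alpha<1$.
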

\begin{proof}[Proof of Lemma \ref{19/01/31/17:09}]
First, we shall prove \eqref{19/01/31/17:41}. It follows from \eqref{19/10/9/10:22} and Lemma \ref{Delta-ineq} 
that for any $f \in L^{r}(\mathbb{R}^{3})$, 
\begin{equation}\label{19/01/31/17:10}
\begin{split} 
&\|\alpha (-\Delta+\alpha)^{-1} G_{0} P f\|_{L^{r}}
\\[6pt]
&\le 
\bigm\| 
\alpha (-\Delta+\alpha)^{-1}(-\Delta)^{-1}V_{+} \Re[Pf] 
\bigm\|_{L^{r}}
+
\bigm\| 
\alpha (-\Delta+\alpha)^{-1}(-\Delta)^{-1}V_{-} \Im[Pf] 
\bigm\|_{L^{r}}
\\[6pt]
&\le 
\alpha \sum_{j=1}^{3}
\frac{|\langle f, V_{+} \partial_{j} W \rangle|}
{|\langle \partial_{j} W, V_{+} \partial_{j} W \rangle|} 
\| (-\Delta+\alpha)^{-1}
(-\Delta)^{-1}V_{+}\partial_{j} W \|_{L^{r}}
\\[6pt]
&\quad +
\alpha \frac{|\langle f, V_{+} \Lambda W \rangle|}
{|\langle \Lambda W, V_{+} \Lambda W \rangle|} 
\| (-\Delta+\alpha)^{-1}
(-\Delta)^{-1}V_{+}\Lambda W \|_{L^{r}}\\[6pt]
&\quad +
\alpha \frac{|\langle f, V_{-} W \rangle|}
{|\langle W, V_{-} W \rangle|} 
\| (-\Delta+\alpha)^{-1}(-\Delta)^{-1}V_{-} W \|_{L^{r}}
\\[6pt]
&\lesssim 
\alpha \|f\|_{L^{r}} \sum_{j=1}^{3}
\| (-\Delta+\alpha)^{-1}\partial_{j} W \|_{L^{r}}
\\[6pt]
&\quad +
\alpha \|f\|_{L^{r}} \| (-\Delta+\alpha)^{-1}\Lambda W \|_{L^{r}}
+
\alpha \|f\|_{L^{r}} \| (-\Delta+\alpha)^{-1} W \|_{L^{r}}
\\[6pt]
&\lesssim
\alpha^{\frac{3}{2}(\frac{1}{3}-\frac{1}{r})} \|f\|_{L^{r}}
\Bigm\{ 
\sum_{j=1}^{3} \|\partial_{j} W \|_{L_{\rm weak}^{3}}
+
\|\Lambda W \|_{L_{\rm weak}^{3}}
+
\|W \|_{L_{\rm weak}^{3}}
\Bigm\} 
\\[6pt]
&\lesssim
\alpha^{\frac{1}{2}-\frac{3}{2r}} \|f\|_{L^{r}}, 
\end{split}
\end{equation}
where the implicit constant depends only on $r$. 
Thus, the desired estimate \eqref{19/01/31/16:28} holds. 
\par 
Next, we shall prove \eqref{19/01/31/17:42}. 
It follows from \eqref{19/10/9/10:22}, 
Corollary \ref{18/12/02/11:25cor} and 
Proposition \ref{19/9/26/15:4} 
that for any $f \in L^{r}(\mathbb{R}^{3})$, 
\begin{equation}\label{19/01/31/17:43}
\begin{split} 
& 
\bigm\|
\alpha \Pi (-\Delta+\alpha)^{-1} G_{0} f 
\bigm\|_{L^{r}}
\\[6pt]
&\le 
\bigm\|
\alpha \Pi (-\Delta+\alpha)^{-1} 
(-\Delta)^{-1}V_{+} \Re[f] 
\bigm\|_{L^{r}}
+ 
\bigm\|
\alpha \Pi (-\Delta+\alpha)^{-1} 
(-\Delta)^{-1}V_{+} \Re[f] 
\bigm\|_{L^{r}}
\\[6pt]
&\le 
\alpha \sum_{j=1}^{3}
\frac{|\langle (-\Delta)^{-1}(-\Delta+\alpha)^{-1}V_{+}\Re[f],\ (-\Delta) \partial_{j} W \rangle|}{
\|V_{+} \partial_{j} W \|_{L^{2}}^{2}} \| V_{+}\partial_{j} W \|_{L^{r}}
\\[6pt]
&\quad +
\alpha \frac{|\bigm\langle 
(-\Delta)^{-1}(-\Delta+\alpha)^{-1}V_{+}\Re[f], 
\ (-\Delta) \Lambda W \bigm\rangle|}{
\|V_{+} \Lambda W \|_{L^{2}}^{2}} \| V_{+} \Lambda W \|_{L^{r}}
\\[6pt]
&\quad +
\alpha \frac{|\bigm\langle 
(-\Delta)^{-1}(-\Delta+\alpha)^{-1}V_{-}\Im[f],\ 
(-\Delta) W \bigm\rangle|}{
\|V_{-} W \|_{L^{2}}^{2}} \| V_{-} W \|_{L^{r}}
\\[6pt]
&
\lesssim 
\alpha^{\frac{1}{2}} \| f\|_{L^{r}}.
\end{split}
\end{equation}
Thus, we have completed the proof. 
\end{proof}

Now, we are in a position to prove Theorem \ref{thm-ufe}: 
\begin{proof}[Proof of Theorem 
\ref{thm-ufe}]
Let $0< \alpha < e_{0}$, where $-e_{0}$ is the only one negative eigenvalue of $L_{+}=-\Delta +V_{+}$. 
Assume $6<r<\infty$. 
Furthermore, let $\varepsilon >0$ 
be a small constant to be chosen later, dependently on $r$. 
Note here that by Lemma \ref{18/11/23/17:17}, 
the operator $G(\alpha)$ defined by \eqref{def-G}
maps $L^{r}(\mathbb{R}^{3})$ to itself. 
Then, we define the operator $A_{\varepsilon}(\alpha)$ 
from $\mathbf{X}_{r}$ to $\mathbf{Y}_{r}$ by 
\begin{equation}\label{19/01/30/17:21}
A_{\varepsilon}(\alpha)
:= \mathbf{C} G(\alpha) B_{\varepsilon},
\end{equation}
where $G(\alpha)$ is the operator 
defined by \eqref{def-G}. 
Observe that for any $(f_{1},f_{2})\in \mathbf{X}_{r}$, 
\begin{equation}\label{19/01/31/13:15}
A_{\varepsilon}(\alpha) (f_{1},f_{2})
=
\begin{pmatrix}
A_{11}f_{1}+ A_{12}f_{2} \\
A_{21}f_{1}+A_{22}f_{2}
\end{pmatrix}
=
\left( \begin{array}{cc} A_{11}& A_{12} \\ A_{21} & A_{22} 
\end{array}\right)
\left( \begin{array}{c} f_{1} \\ f_{2} 
\end{array}\right),
\end{equation}
where 
we do not care about the distinction 
between column vectors and row ones, and 
\begin{align}\label{A11}
A_{11}&:=\varepsilon (1-P)G(\alpha)|_{X_{r}},
\quad 
A_{12}:=(1-P)G(\alpha) P,
\\[6pt]
\label{A21} 
A_{21}&:=\varepsilon \Pi G(\alpha)|_{X_{r}},
\qquad 
A_{22}:= \Pi G(\alpha) P.
\end{align}

Now, we claim that the inverse of 
$A_{\varepsilon}(\alpha)\colon 
\mathbf{X}_{r}\to \mathbf{Y}_{r}$ exists. 
Notice that by Lemma \ref{19/01/28/16:00}, 
and Lemma 3.12 of \cite{Jensen-Kato}, 
this claim implies that $G(\alpha) \colon 
L^{r}(\mathbb{R}^{3}) \to L^{r}(\mathbb{R}^{3})$ has the inverse and 
\begin{equation}\label{19/01/30/17:18}
G(\alpha)^{-1} = 
B_{\varepsilon} A_{\varepsilon}(\alpha)^{-1}\mathbf{C}.
\end{equation}

We shall prove the claim in several steps. 
Throughout the proof, 
we allow the implicit constants to depend on $r$. 
\\
\noindent 
{\bf Step 1.}~We consider the operator $A_{11} \colon X_{r} \to X_{r}$. 

Recall from Lemma \ref{18/11/13/14:25} 
that $G|_{X_{r}}\colon X_{r}\to X_{r}$ has the inverse, 
say $K_{1}$, such that 
\begin{equation}\label{19/01/31/14:14}
\|K_{1} \|_{X_{r}\to X_{r}} \lesssim 1. 
\end{equation}
Note that Lemma \ref{19/01/31/14:17} shows that 
\begin{equation}\label{19/01/31/15:30}
(1-P) G|_{X_{r}}
=
G|_{X_{r}}.
\end{equation} 
Then, using \eqref{19/01/31/15:30}
and \eqref{19/01/27/18:15}, 
one can rewrite $A_{11}$ as 
\begin{equation}\label{19/01/31/13:52}
A_{11}
=
\varepsilon G|_{X_{r}}
+
\varepsilon S_{11}
=
\varepsilon G|_{X_{r}}(1+K_{1}S_{11}), 
\end{equation}
where 
\begin{equation}\label{19/01/31/14:12}
S_{11}:= - \alpha (1-P) (-\Delta+\alpha)^{-1} G_{0} |_{X_{r}}. 
\end{equation}
By \eqref{19/01/28/15:30}, 
Lemma \ref{18/11/23/17:17} 
and the Hardy-Littlewood-Sobolev inequality 
(\eqref{HLS-ineq} with $s=2$), we see that 
for any $f\in L^{r}(\mathbb{R}^{3})$, 
Notice that by \eqref{19/01/31/14:14} and Lemma \ref{19/01/31/17:09}, 
\begin{equation}\label{19/01/31/15:48}
\begin{split}
\|S_{11} f\|_{L^{r}}
&\lesssim 
\alpha 
\|(-\Delta +\alpha)^{-1} G_{0}f \|_{L^{r}}
\\[6pt]
&\lesssim 
\alpha^{\frac{3}{2}(\frac{1}{4}-\frac{1}{r})}
\big\{
\|(-\Delta)^{-1}V_{+}\Re[f]\|_{L^{4}}
+
\|(-\Delta)^{-1}V_{-}\Im[f] \|_{L^{4}}
\big\}
\\[6pt]
& \lesssim 
\alpha^{\frac{3}{8}-\frac{3}{2q}}
\|(1+|x|)^{-4} f\|_{L^{\frac{12}{11}}}
\\[6pt]
& \lesssim
\alpha^{\frac{3}{8}-\frac{3}{2q}}
\| f\|_{L^{r}}.
\end{split}
\end{equation} 
Observe from \eqref{19/01/31/15:48} 
and Lemma \ref{19/01/31/14:17} that 
$S_{11}$ maps $X_{r}$ into itself. 
Furthermore, it follows from 
\eqref{19/01/31/14:14}, \eqref{19/01/31/16:10} and 
the theory of Neumann's series that for any sufficiently small $\alpha>0$, the operator $1+K_{1}S_{11}$ has the inverse, say $K_{2}$, such that 
\begin{equation}\label{19/01/31/16:05}
\|K_{2} \|_{X_{r}\to X_{r}}\le (1-\|K_{1}S_{11}\|_{X_{r}\to X_{r}})^{-1} \lesssim 1. 
\end{equation}
We summarize: 
\begin{equation}\label{19/01/31/16:10}
A_{11}^{-1}=\varepsilon^{-1} K_{2}K_{1},
\quad 
\|A_{11}^{-1}\|_{X_{r}\to X_{r}} \lesssim \varepsilon^{-1}. 
\end{equation}

\noindent 
{\bf Step 2.}~We consider the operators 
$A_{12} \colon PL^{r}(\mathbb{R}^{3})
\to X_{r}$ and $A_{21} \colon X_{r} \to 
\Pi L^{r}(\mathbb{R}^{3})$. 
It follows from \eqref{19/01/27/18:15}, \eqref{19/01/28/09:53}, \eqref{19/01/28/15:30} and Lemma \ref{19/01/31/17:09} 
that for any $f \in L^{r}(\mathbb{R}^{3})$, 
\begin{equation}\label{19/01/31/16:28}
\begin{split} 
\|
(1-P)G(\alpha) P f \|_{L^{r}}
&= 
\alpha \|(1-P)
(-\Delta+\alpha)^{-1} G_{0} P f \|_{L^{r}}
\\[6pt]
&\lesssim
\alpha \|
(-\Delta+\alpha)^{-1} G_{0} P f \|_{L^{r}}
\lesssim 
\alpha^{\frac{1}{2}-\frac{3}{2r}} \|f\|_{L^{r}},
\end{split}
\end{equation}
so that 
\begin{equation}\label{19/01/31/16:57}
\| A_{12} \|_{P L^{r}\to X_{r}} 
\lesssim 
\alpha^{\frac{1}{2}-\frac{3}{2r}}.
\end{equation}
Similarly, one can verify that 
\begin{equation}\label{19/02/02/09:18}
\| A_{21} \|_{X_{r}\to \Pi L^{r}} 
\lesssim 
\varepsilon \alpha^{\frac{1}{2}}.
\end{equation}

\noindent 
{\bf Step 3.}~We consider 
$A_{22} \colon PL^{r}(\mathbb{R}^{3})
\to \Pi L^{r}(\mathbb{R}^{3})$. 
By \eqref{19/01/27/18:15}
and \eqref{19/01/28/09:53}, 
we see that for any $f \in L^{r}(\mathbb{R}^{3})$, 
\begin{equation} \label{eq7-01}
A_{22} f = - \alpha \Pi (- \Delta + \alpha)^{-1}
G_{0}Pf. 
\end{equation}
Observe from \eqref{eq7-01}, 
\eqref{19/10/9/10:22}, 
Proposition \ref{18/12/02/11:25} 
and Lemma \ref{19/9/30/9:26} that 
\begin{equation}\label{19/01/27/15:37}
\begin{split}
& 
A_{22}f
= 
-\alpha \sum_{j=1}^{3} 
\frac{\langle f, \, V_{+} \partial_{j} W \rangle}{ 
\langle \partial_{j} W, \, V_{+} \partial_{j} W \rangle
\|V_{+}\partial_{j} W\|_{L^{2}}^{2}} 
\langle (-\Delta+\alpha)^{-1}G_{0}\partial_{j} W ,
\, V_{+}\partial_{j} W \rangle V_{+}\partial_{j} W
\\[6pt]
&\quad -\alpha
\frac{\langle f, V_{+}\Lambda W \rangle}
{ \langle \Lambda W,\, V_{+} \Lambda W \rangle
\|V_{+}\Lambda W\|_{L^{2}}^{2}} 
\langle (-\Delta+\alpha)^{-1} G_{0} 
\Lambda W ,\, V_{+}\Lambda W \rangle 
V_{+}\Lambda W
\\[6pt]
&\quad -\alpha
\frac{\langle f, i V_{-} W \rangle}{ \langle i W,\, i V_{-} W \rangle
\|V_{-} W\|_{L^{2}}^{2}} 
\langle (-\Delta+\alpha)^{-1} G_{0} (iW),\, i V_{-} W \rangle 
V_{-} (i W)
\\[6pt]
&= 
\alpha \sum_{j=1}^{3} 
\frac{\langle f, \, V_{+} \partial_{j} W \rangle}{ 
\langle \partial_{j} W, \, V_{+} \partial_{j} W \rangle
\|V_{+}\partial_{j} W\|_{L^{2}}^{2}} 
\langle (-\Delta+\alpha)^{-1}V_{+} \partial_{j} W ,\, \partial_{j} W \rangle V_{+}\partial_{j} W
\\[6pt]
&\quad +\alpha
\frac{\langle f, V_{+}\Lambda W \rangle}
{ \langle \Lambda W,\, V_{+}\Lambda W \rangle
\|V_{+}\Lambda W\|_{L^{2}}^{2}} 
\langle (-\Delta+\alpha)^{-1} V_{+} \Lambda W ,\, 
\Lambda W \rangle 
V_{+}\Lambda W
\\[6pt]
&\quad +
\alpha
\frac{\langle f, i V_{-} W \rangle}
{ \langle i W,\, i V_{-} W \rangle
\|V_{-} W\|_{L^{2}}^{2}} 
\langle (-\Delta+\alpha)^{-1} V_{-}(iW) ,\, i W \rangle 
V_{-} (i W)
\\[6pt]
&=
\alpha 
\sum_{j=1}^{3} c_{j}(\alpha) 
\langle f, \, V_{+} \partial_{j} W \rangle 
V_{+}\partial_{j} W
\\[6pt]
&\quad 
+\alpha \big\{ 
b_{+} \alpha^{-\frac{1}{2}} + c_{+}(\alpha)
\big\}
\langle f, \, V_{+} \Lambda W \rangle V_{+} \Lambda W 
\\[6pt]
&\quad 
+\alpha \big\{ 
b_{-} \alpha^{-\frac{1}{2}} + c_{-}(\alpha)
\big\}
\langle f, \, i V_{-} \Lambda W \rangle V_{-} (i W), 
\end{split} 
\end{equation}
where $b_{+}\neq 0$, $b_{-} \neq 0$ 
are some constants depending 
only on 
$c_{1}(\alpha), c_{2}(\alpha), c_{3}(\alpha), 
c_{+}(\alpha), c_{-}(\alpha)$ are some constants such that 
\begin{equation}\label{19/01/27/17:20}
\sum_{j=1}^{3}|c_{j}(\alpha)| +|c_{+}(\alpha)| + |c_{-}(\alpha)| 
\lesssim 1.
\end{equation} 
Here, using the above constants 
$c_{1}(\alpha), c_{2}(d), c_{3}(\alpha)$ and $b_{+},b_{-}$, we define an operator $Q \colon P L^{r}(\mathbb{R}^{3}) 
\to \Pi L^{r}(\mathbb{R}^{3})$ by 
\begin{equation}\label{19/01/27/17:10}
\begin{split}
Q f
&:=
\alpha \sum_{j=1}^{d} c_{j}(\alpha) 
\langle f, \, V_{+}\partial_{j} W \rangle V_{+} \partial_{j} W
\\[6pt]
&\quad + 
b_{+} \alpha^{\frac{1}{2}} 
\langle f, \, V_{+}\Lambda W \rangle V_{+} \Lambda W
+b_{-}\alpha^{\frac{1}{2}} 
\langle f,\, i V_{-} W \rangle V_{-}(i W ).
\end{split} 
\end{equation}
Furthermore, we define 
$S_{22} \colon P L^{r}(\mathbb{R}^{3}) 
\to \Pi L^{r}(\mathbb{R}^{3})$ by 
\begin{equation}\label{19/01/27/17:15}
S_{22} f
:=
\alpha c_{+}(\alpha) \langle f, \, V_{+}\Lambda W \rangle V_{+} \Lambda W 
+ \alpha c_{-}(\alpha) \langle f,\, i V_{-} W \rangle V_{-} (i W).
\end{equation}
Then, one sees that 
\begin{equation}\label{19/01/28/10:20}
A_{22} = Q + S_{22}.
\end{equation}
Observe that the inverse of 
$Q \colon PL^{r}(\mathbb{R}^{3}) \to \Pi L^{r}(\mathbb{R}^{3})$ 
is given by 
\begin{equation}\label{19/01/27/17:21}
\begin{split}
Q^{-1}~\cdot 
&= 
\alpha^{-1} \sum_{j=1}^{3} 
\frac{ \partial_{j} W }{c_{j}(\alpha) 
\langle \partial_{j}W, 
\, V_{+} \partial_{j} W \rangle^{2} } 
\langle \, \cdot \, , \ \partial_{j} W \rangle
\\[6pt]
&\quad +
\alpha^{-\frac{1}{2}} 
\frac{ \Lambda W }{b_{+} 
\langle \Lambda W,\, V_{+} \Lambda W \rangle^{2} } 
\langle \, \cdot \, , \ \Lambda W \rangle
\\[6pt]
&\quad +
\alpha^{-\frac{1}{2}} 
\frac{ i W }{b_{-} 
\langle i W,\ , i V_{-} W \rangle^{2} } 
\langle \, \cdot \, ,\ i W \rangle.
\end{split} 
\end{equation}
Observe that
$ 
\|Q^{-1}\|_{PL^{r}\to \Pi L^{r}}
\lesssim
\alpha^{-\frac{1}{2}}
$. 
Rewrite $A_{22}$ as 
\begin{equation}\label{19/01/29/17:52}
A_{22} = Q (1+ Q^{-1}S_{22}) .
\end{equation}
Then, it follows from 
$\langle V_{+}\Lambda W, \, \partial_{j}W \rangle =
\langle V_{-} (iW), \, \partial_{j}W \rangle =0$ ($1\le j \le 3$), 
\eqref{19/01/27/17:20}
and \eqref{19/01/27/17:15} that for any 
$f \in P L^{r}(\mathbb{R}^{3})$,
\begin{equation}\label{19/01/31/18:07}
\begin{split}
\|Q^{-1}S_{22} f\|_{L^{r}}
&\lesssim 
\alpha^{-\frac{1}{2}}
| \langle S_{22}f, \Lambda W \rangle |
+
\alpha^{-\frac{1}{2}}
| \langle S_{22}f, i W \rangle |
\lesssim 
\alpha^{\frac{1}{2}}
\|f\|_{L^{r}}.
\end{split} 
\end{equation}
Furthermore, it follows from 
the theory of Neumann's series that 
for any sufficiently small $\alpha>0$, 
the operator $1+Q^{-1}S_{22}$ has the inverse, 
say $L$, such that 
\begin{equation}\label{19/01/31/18:17}
\|L \|_{P L^{r} \to PL^{r}} \le (1-\|Q^{-1}
S_{22}\|_{PL^{r}\to PL^{r}}) \lesssim 1. 
\end{equation}
We summarize: 
\begin{equation}\label{19/02/02/09:33}
A_{22}^{-1}=LQ^{-1}, 
\qquad 
\|A_{22}^{-1}\|_{\Pi L^{r} \to P L^{r}} 
\lesssim 
\alpha^{-\frac{1}{2}}. 
\end{equation}

\noindent 
{\bf Step 4.}~We shall finish the proof. Put 
\begin{equation}\label{19/02/02/09:39}
\widetilde{A}_{\varepsilon}
:=
\left( \begin{array}{cc} 
1 & A_{11}^{-1}A_{12} 
\\ 
A_{22}^{-1}A_{21} & 1 
\end{array} \right),
\end{equation}
so that 
\begin{equation}\label{19/02/02/09:45}
A_{\varepsilon}(\alpha)
=
\left( \begin{array}{cc} 
A_{11} & 0 
\\ 
0& A_{22} 
\end{array} \right)
\widetilde{A}_{\varepsilon}.
\end{equation}
We see from \eqref{19/01/31/16:10}, 
\eqref{19/02/02/09:33}, \eqref{19/01/31/16:57} and \eqref{19/02/02/09:18} that 
\begin{equation}\label{19/02/02/09:50}
\begin{split}
\| 1- \widetilde{A}_{\varepsilon}\|_{\mathbf{X}_{r} 
\to \mathbf{X}_{r}}
&\le 
\|A_{11}^{-1} A_{12}\|_{P L^{r} \to X_{r}}
+
\|A_{22}^{-1} A_{21}\|_{X_{r} \to P L^{r}}
\\[6pt]
&\lesssim
\varepsilon^{-1} \|A_{12}\|_{P L^{r} \to X_{r}}
+
\alpha^{-\frac{1}{2}}
\|A_{21}\|_{X_{r} \to \Pi L^{r}}
\\[6pt]
&\lesssim
\varepsilon^{-1} \alpha^{\frac{1}{2}-\frac{3}{2r}}
+\varepsilon.
\end{split} 
\end{equation}
Hence, we find that there exist 
$\alpha_{*}\in (0,e_{0})$ and $\varepsilon_{0}\in (0,1)$, 
both depending only on $r$, 
such that for any $0<\alpha < \alpha_{*}$, 
\begin{equation}\label{19/02/02/10:08}
\| 1- \widetilde{A}_{\varepsilon_{0}}
\|_{\mathbf{X}_{r} \to \mathbf{X}_{r}} 
\le \frac{1}{2}. 
\end{equation}
Furthermore, the theory of Neumann's series 
shows that 
$\widetilde{A}_{\varepsilon_{0}}$ 
has the inverse 
$D:=\widetilde{A}_{\varepsilon_{0}}^{-1}$ satisfying 
\begin{equation}\label{19/02/02/10:12}
\|D\|_{\mathbf{X}_{r} \to \mathbf{X}_{r}} \le 2.
\end{equation}
Then, for any $0<\alpha < \alpha_{*}$, 
the inverse of $A_{\varepsilon_{0}}(\alpha)$ exists and is given by 
\begin{equation}\label{19/02/02/10:37}
A_{\varepsilon_{0}}(\alpha)^{-1}
=
\left(\begin{array}{cc} D_{11} & D_{12} \\ D_{21} & D_{22} \end{array} \right)
\left(\begin{array}{cc} A_{11}^{-1} & 0 \\ 0 & A_{22}^{-1} \end{array} \right)
,
\end{equation}
where $D_{jk}$ is the $(j,k)$-entry of $D$. 
Here, we may assume that 
$\varepsilon_{0}^{-1} \le \alpha_{*}^{-\frac{1}{2}}$. 
Then, we see from \eqref{19/01/31/16:10}, 
\eqref{19/02/02/09:33} and 
\eqref{19/02/02/10:12} that for any $0<\alpha < \alpha_{*}$, 
\begin{equation}\label{19/02/02/10:55}
\|A_{\varepsilon_{0}}(\alpha)^{-1} 
\|_{\mathbf{Y}_{r} \to \mathbf{X}_{r}} 
\lesssim 
\|A_{11}^{-1} \|_{X_{r} \to X_{r}}
+
\|A_{22}^{-1} \|_{\Pi L^{r} \to P L^{r}}
\lesssim \alpha^{-\frac{1}{2}}.
\end{equation}
Thus, as mentioned in \eqref{19/01/30/17:18}, 
for any $0< \alpha < \alpha_{*}$, 
the inverse of $G(\alpha)$ exists 
as an operator from $L^{r}(\mathbb{R}^{3})$ to itself and 
\begin{equation}\label{19/02/02/11:07}
G(\alpha)^{-1} 
= B_{\varepsilon_{0}} 
A_{\varepsilon_{0}}(\alpha)^{-1}\mathbf{C}.
\end{equation}
Observe rom \eqref{19/01/30/16:03}, 
\eqref{19/01/30/17:02}, 
\eqref{19/01/28/15:30} and $\varepsilon_{0}<1$ 
that 
\begin{equation}\label{19/02/02/11:10}
\|B_{\varepsilon_{0}}\|_{\mathbf{X}_{r}\to L^{r}}\lesssim 1, 
\qquad 
\|\mathbf{C}\|_{L^{r} \to \mathbf{Y}_{r}}\lesssim 1.
\end{equation}
Hence, we see from \eqref{19/02/02/10:55} 
through \eqref{19/02/02/11:10} that 
for any $0<\alpha <\alpha_{*}$, 
\begin{equation}\label{19/02/02/11:20}
\|
G(\alpha)^{-1} 
\|_{L^{r}\to L^{r}}
\lesssim \alpha^{-\frac{1}{2}},
\end{equation}
which proves the estimate \eqref{res-est-G}. In order to prove \eqref{res-est-G2}, we assume $f\in X_{r}$. Then $\Pi f =0$ and therefore 
\begin{equation}\label{19/02/02/11:21}
\begin{split}
G(\alpha)^{-1}f 
&= 
B_{\varepsilon_{0}} \left(\begin{array}{cc} D_{11} & D_{12} \\ D_{21} & D_{22} \end{array} \right)
\left(\begin{array}{cc} A_{11}^{-1} & 0 \\ 0 & A_{22}^{-1} \end{array} \right)
\left( \begin{array}{c}
(1-P)f
\\
0
\end{array} \right)
\\[6pt]
&=
D_{11}A_{11}^{-1}(1-P)f 
+D_{21}A_{11}^{-1}(1-P)f.
\end{split} 
\end{equation}
Hence, by \eqref{19/02/02/11:21}, \eqref{19/01/31/16:10} with $\varepsilon=\varepsilon_{0}$, and \eqref{19/02/02/10:12}, we can obtain the desired estimate \eqref{res-est-G2}. Thus, we have completed the proof. 
\end{proof}

\subsection{Proof of Theorem \ref{thm-imp}}\label{19/10/9/11:28}

In this section, we prove Theorem 
\ref{thm-imp}. 
For $\alpha>0$, we introduce the notation
\begin{equation}
\label{19/9/21/17:01}
Z_{\alpha, j}:=
\left\{ \begin{array}{ccc}
G(\alpha)^{*} V_{+}\partial_{j} W 
&\mbox{if} & 1\le j\le 3,
\\[6pt]
G(\alpha)^{*} V_{+}\Lambda W 
&\mbox{if} & j= 4,
\\[6pt]
G(\alpha)^{*} (i V_{-} W )
&\mbox{if} & j= 5.
\end{array} \right. 
\end{equation}
It follows from the definition 
of $G(\alpha)^{*}$ (see \eqref{def-G*}), 
$\Im [V_{+}\nabla W ]=0$ and 
\eqref{19/10/9/10:22} that if $1\le j\le 3$, then 
\begin{equation}\label{19/9/27/13:55}
\begin{split}
Z_{\alpha,j}
&=
(1+V_{+}(-\Delta + \alpha)^{-1}) V_{+}\partial_{j} W
\\[6pt]
&=
V_{+}\partial_{j} W+ V_{+}(-\Delta +\alpha)^{-1} 
\{-(-\Delta+\alpha)+\alpha \}\partial_{j}W
\\[6pt]
&=
\alpha V_{+}(-\Delta + \alpha)^{-1} \partial_{j}W
.
\end{split}
\end{equation}
Similarly, one can verify that 
\begin{align}
\label{19/9/27/14:00}
Z_{\alpha, 4}&:=\alpha V_{+} (-\Delta +\alpha)^{-1} \Lambda W, 
\\[6pt]
\label{19/9/30/18:18}
Z_{\alpha, 5}&:= - \alpha V_{-}(-\Delta + \alpha)^{-1} (i W).
\end{align}

For the proof of Theorem 
\ref{thm-imp}, we will use the following fact:
\begin{lemma}\label{19/9/26/11:51}
Let $\alpha>0$. Then, we have 
\begin{align}
\label{19/9/26/10:33}
\langle \partial_{j} W, \, Z_{\alpha, j} \rangle
&=O(\alpha) 
\quad
\mbox{for any $1\le j \le 3$}, 
\\[6pt]
\label{19/9/26/11:55}
\langle \Lambda W, \, Z_{\alpha, 4} \rangle
&=
O(\alpha^{\frac{1}{2}}),
\\[6pt]
\label{19/9/26/11:56}
\langle i W, \, Z_{\alpha, 5} \rangle 
&= 
O(\alpha^{\frac{1}{2}}).
\end{align}
\end{lemma}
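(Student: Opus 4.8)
The plan is to evaluate each of the three scalar products in Lemma \ref{19/9/26/11:51} by rewriting $Z_{\alpha,j}$ via the explicit formulas \eqref{19/9/27/13:55}, \eqref{19/9/27/14:00} and \eqref{19/9/30/18:18}, and then invoke the scalar-product estimates already proved in Section~\ref{19/9/24/6:1}. The key observation is that each $Z_{\alpha,j}$ carries an explicit factor of $\alpha$ in front of a resolvent expression, so the task reduces to controlling quantities of the form $\alpha\,\langle(-\Delta+\alpha)^{-1}V_{\pm}\,\psi,\,\phi\rangle$ with $\psi,\phi\in\{\partial_j W,\Lambda W, W\}$.

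First, for $1\le j\le 3$, I would use \eqref{19/9/27/13:55} to write $\langle \partial_j W, Z_{\alpha,j}\rangle = \alpha\,\langle \partial_j W,\, V_+(-\Delta+\alpha)^{-1}\partial_j W\rangle = \alpha\,\langle (-\Delta+\alpha)^{-1}V_+\partial_j W,\, \partial_j W\rangle$, moving $V_+$ across the self-adjoint resolvent. By Lemma \ref{19/9/30/9:26} the inner scalar product is $-\|\partial_j W\|_{L^2}^2 + O(\alpha^{1/4})$, which is $O(1)$; multiplying by $\alpha$ gives $O(\alpha)$, proving \eqref{19/9/26/10:33}. (Here the weaker bound $\langle (-\Delta+\alpha)^{-1}V_+\partial_j W,\partial_j W\rangle = O(1)$ already suffices, but Lemma \ref{19/9/30/9:26} supplies it directly.) Next, for $j=4$, \eqref{19/9/27/14:00} gives $\langle \Lambda W, Z_{\alpha,4}\rangle = \alpha\,\langle (-\Delta+\alpha)^{-1}V_+\Lambda W,\, \Lambda W\rangle$. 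By Corollary \ref{18/12/02/11:25cor} (estimate \eqref{18/12/02/11:26cor}) with $g=\Lambda W$ and any fixed $q\ge 2$ — noting $\Lambda W\notin L^2$ but $(1+|x|)^{-1}\Lambda W\in L^2$, so the second term in the minimum is finite — one has $|\langle (-\Delta+\alpha)^{-1}V_+\Lambda W,\Lambda W\rangle|\lesssim \alpha^{-1/2}$, hence the product is $O(\alpha^{1/2})$, which is \eqref{19/9/26/11:55}. The case $j=5$ is entirely parallel: \eqref{19/9/30/18:18} gives $\langle iW, Z_{\alpha,5}\rangle = -\alpha\,\langle (-\Delta+\alpha)^{-1}V_-(iW),\, iW\rangle$, and \eqref{19/10/1/10:17cor} with $g=W$ (again using $(1+|x|)^{-1}W\in L^2$) yields the bound $\alpha^{-1/2}$, so the product is $O(\alpha^{1/2})$, establishing \eqref{19/9/26/11:56}.

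I do not anticipate a serious obstacle here; this lemma is essentially a bookkeeping corollary of the heavier analytic work in Propositions \ref{18/12/02/11:25} and \ref{19/9/26/15:4}. The one point requiring a little care is the applicability of the $L^q$/weighted-$L^2$ estimates to the arguments $\Lambda W$ and $W$, which themselves fail to lie in $L^2(\mathbb{R}^3)$: one must check that $(1+|x|)^{-1}\Lambda W$ and $(1+|x|)^{-1}W$ do belong to $L^2(\mathbb{R}^3)$ (they decay like $|x|^{-1}$, so $(1+|x|)^{-1}$ times them decays like $|x|^{-2}$, giving an $L^2$ function in three dimensions), so that the right-hand side $\min\{\|g\|_{L^q},\|(1+|x|)^{-1}g\|_{L^2}\}$ in \eqref{18/12/02/11:26cor} and \eqref{19/10/1/10:17cor} is finite. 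Once this is noted, the three estimates follow immediately by substitution, and the proof is complete.

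\begin{proof}[Proof of Lemma \ref{19/9/26/11:51}]
Let $1\le j\le 3$. By \eqref{19/9/27/13:55} and the self-adjointness of $(-\Delta+\alpha)^{-1}$ on $L^2(\mathbb{R}^3)$,
\begin{equation}\label{19/9/26/11:51-p1}
\langle \partial_{j} W, \, Z_{\alpha, j} \rangle
=
\alpha \langle \partial_{j} W, \, V_{+}(-\Delta+\alpha)^{-1}\partial_{j} W \rangle
=
\alpha \langle (-\Delta+\alpha)^{-1}V_{+}\partial_{j} W, \, \partial_{j} W \rangle.
\end{equation}
By Lemma \ref{19/9/30/9:26}, the scalar product on the right-hand side of \eqref{19/9/26/11:51-p1} equals $-\|\partial_{j} W\|_{L^{2}}^{2}+O(\alpha^{\frac{1}{4}})=O(1)$, so that $\langle \partial_{j} W, \, Z_{\alpha, j} \rangle=O(\alpha)$, which is \eqref{19/9/26/10:33}.

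Next, by \eqref{19/9/27/14:00} and the self-adjointness of the resolvent,
\begin{equation}\label{19/9/26/11:51-p2}
\langle \Lambda W, \, Z_{\alpha, 4} \rangle
=
\alpha \langle (-\Delta+\alpha)^{-1}V_{+}\Lambda W, \, \Lambda W \rangle.
\end{equation}
Since $\Lambda W(x)=O(|x|^{-1})$ as $|x|\to\infty$ and $\Lambda W$ is bounded near the origin, we have $(1+|x|)^{-1}\Lambda W\in L^{2}(\mathbb{R}^{3})$. Hence, applying Corollary \ref{18/12/02/11:25cor} (estimate \eqref{18/12/02/11:26cor}) with $g=\Lambda W$ and any fixed $q$ with $2\le q<\infty$, we obtain $|\langle (-\Delta+\alpha)^{-1}V_{+}\Lambda W, \, \Lambda W \rangle|\lesssim \alpha^{-\frac{1}{2}}\|(1+|x|)^{-1}\Lambda W\|_{L^{2}}\lesssim \alpha^{-\frac{1}{2}}$. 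Plugging this into \eqref{19/9/26/11:51-p2} yields $\langle \Lambda W, \, Z_{\alpha, 4} \rangle=O(\alpha^{\frac{1}{2}})$, which is \eqref{19/9/26/11:55}.

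Finally, by \eqref{19/9/30/18:18} and the self-adjointness of the resolvent,
\begin{equation}\label{19/9/26/11:51-p3}
\langle i W, \, Z_{\alpha, 5} \rangle
=
-\alpha \langle (-\Delta+\alpha)^{-1}V_{-}(iW), \, i W \rangle.
\end{equation}
As $W(x)=O(|x|^{-1})$ as $|x|\to\infty$ and $W$ is bounded, $(1+|x|)^{-1}W\in L^{2}(\mathbb{R}^{3})$. Applying Corollary \ref{18/12/02/11:25cor} (estimate \eqref{19/10/1/10:17cor}) with $g=W$ and any fixed $q$ with $2\le q<\infty$ gives $|\langle (-\Delta+\alpha)^{-1}V_{-}(iW), \, i W \rangle|\lesssim \alpha^{-\frac{1}{2}}\|(1+|x|)^{-1}W\|_{L^{2}}\lesssim \alpha^{-\frac{1}{2}}$. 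Plugging this into \eqref{19/9/26/11:51-p3} yields $\langle i W, \, Z_{\alpha, 5} \rangle=O(\alpha^{\frac{1}{2}})$, which is \eqref{19/9/26/11:56}. This completes the proof.
\end{proof}
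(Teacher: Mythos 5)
Your proof is correct and follows the same route as the paper: rewrite each $\langle \cdot, Z_{\alpha,j}\rangle$ via \eqref{19/9/27/13:55}--\eqref{19/9/30/18:18} to expose the prefactor $\alpha$, move the resolvent across by self-adjointness, and then bound the remaining scalar product by Lemma \ref{19/9/30/9:26} (for $j\le 3$) or Corollary \ref{18/12/02/11:25cor} (for $j=4,5$). The only addition is your explicit verification that $(1+|x|)^{-1}\Lambda W$ and $(1+|x|)^{-1}W$ lie in $L^{2}(\mathbb{R}^{3})$, which the paper leaves implicit; this is a sensible sanity check but does not change the argument.
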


\begin{proof}[Proof of Lemma \ref{19/9/26/11:51}]
We shall prove the claim 
\eqref{19/9/26/10:33}. 
By \eqref{19/9/27/13:55}, 
one can see that if $1\le j \le 3$, then 
\begin{equation}\label{19/9/28/17:8}
\begin{split}
\langle \partial_{j} W, \, Z_{\alpha, j} \rangle
&=
\langle \partial_{j} W, \, \alpha V_{+} (-\Delta + \alpha)^{-1} \partial_{j}W 
\rangle
=
\alpha \langle (-\Delta +\alpha)^{-1} V_{+}\partial_{j} W, \, \partial_{j} W 
\rangle.
\end{split} 
\end{equation}
This together Lemma \ref{19/9/30/9:26} 
shows \eqref{19/9/26/10:33}. 

Next, we shall prove \eqref{19/9/26/11:55}. 
By \eqref{19/9/27/14:00}, one sees that 
\begin{equation}\label{19/9/26/11:57}
\langle \Lambda W, \, Z_{\alpha, 4} \rangle
=
\alpha \langle (-\Delta + \alpha)^{-1} V_{+}\Lambda W, \, \Lambda W 
\rangle.
\end{equation}
From this together with Corollary \ref{18/12/02/11:25cor}, 
we obtain the desired result \eqref{19/9/26/11:55}. 
Similarly, we can prove \eqref{19/9/26/11:56}. 
\end{proof}

Now, we give a proof of Theorem \ref{thm-imp}:
\begin{proof}[Proof of Theorem \ref{thm-imp}]
The proof basically follows the standard proof 
of the implicit function theorem. 
The main difference is to show that the radii 
$\delta_{\rm geo}$ and $r_{\rm geo}$ are 
independent of the parameter $\alpha$. 
\par 
First, we define the functions $\mathscr{H}_{\alpha,1}, \ldots, \mathscr{H}_{\alpha, 5}$ from $\dot{H}^{1}(\mathbb{R}^{3}) \times \mathbb{R}^{3} \times (0,\infty) \times (-\pi, \pi)$ to $\mathbb{R}$ by
\begin{equation}\label{18/03/29/14:05}
\mathscr{H}_{\alpha,j}(u, y, \nu, \theta)
=
\langle 
T_{(y,\nu, \theta)}[u]-W,\, Z_{\alpha, j} 
\rangle
\quad 
\mbox{for $1\le j \le 5$}. 
\end{equation} 
To make the representation simple, we introduce the following notation: 
\begin{align}
\label{19/9/26/7:2}
B&:=\mathbb{R}^{3}\times (0,\infty) 
\times (-\frac{\pi}{2},\frac{\pi}{2}),
\qquad 
\mathbf{b}:=(y,\nu,\theta), 
\qquad 
\mathbf{b}_{0}:=(0,1,0).
\end{align}
Then, define the function 
$\mathscr{H}_{\alpha} \colon \dot{H}^{1}(\mathbb{R}^{3}) \times B \to \mathbb{R}^{5}$ by
\begin{equation}\label{18/02/24/16:47}
\mathscr{H}_{\alpha}(u, \mathbf{b})
:=\bigm( 
\mathscr{H}_{\alpha, 1}(u, \mathbf{b}),\ \ldots, \ 
\mathscr{H}_{\alpha, 5}(u,\mathbf{b})
\bigm).
\end{equation}
Observe that for any $(u,\mathbf{b})=(u,y,\nu, \theta)\in \dot{H}^{1}(\mathbb{R}^{3}) \times 
B$ with $y=(y_{1}, y_{2}, y_{3})$, and any $1\le j \le 5$: 
\begin{align}
\label{18/02/24/17:13}
\partial_{y_{k}} \mathscr{H}_{\alpha, j}(u, \mathbf{b})
&=
\langle T_{\mathbf{b}}[\partial_{k}u], \, Z_{\alpha, j} \rangle 
\qquad 
\mbox{for $1\le k \le 3$},
\\[6pt]
\label{19/9/25/23:35}
\partial_{\nu} \mathscr{H}_{\alpha, j}(u, \mathbf{b})
&= 
-\nu^{-1} 
\langle 
T_{\mathbf{b}}[u], Z_{\alpha,j}
\rangle
-2\nu^{-1} 
\langle
\nu^{-1} e^{i\theta} \nu^{-2}x \cdot
\nabla u (\nu^{-2}x+y), Z_{\alpha, j}
\rangle,
\\[6pt]
\label{18/02/24/17:12}
\partial_{\theta} \mathscr{H}_{\alpha, j}(u, \mathbf{b})
&=\langle T_{\mathbf{b}}[i u],\, Z_{\alpha, j} \rangle, 
\end{align}
In particular, by \eqref{18/02/24/17:13}, \eqref{19/9/25/23:35}, \eqref{18/02/24/17:12} and Lemma \ref{19/9/26/11:51}, one sees that 
\begin{align}
\label{19/9/25/11:21}
\partial_{y_{j}} \mathscr{H}_{\alpha, j}(W,\mathbf{b}_{0})
&=
\langle \partial_{j}W, \, Z_{\alpha, j} \rangle 
=
O(\alpha)
\qquad 
\mbox{for $1\le j \le 3$},
\\[6pt]
\label{19/9/25/20:2}
\partial_{\nu} \mathscr{H}_{\alpha, 4}(W,\mathbf{b}_{0})
&=
-2\langle \Lambda W,\, Z_{\alpha, 4} \rangle
=
O(\alpha^{\frac{1}{2}})
,
\\[6pt]
\label{19/9/25/11:24}
\partial_{\theta} \mathscr{H}_{\alpha, 5}(W,\mathbf{b}_{0})
&
=
\langle i W,\, Z_{\alpha, 5} \rangle
=
O(\alpha^{\frac{1}{2}}).
\end{align}
Moreover, by \eqref{19/9/27/13:55} through 
\eqref{19/9/30/18:18}, 
we can verify that the off-diagonal components of $\partial_{\mathbf{b}}\mathscr{H}_{\alpha}(W,\mathbf{b}_{0})$ vanish, 
namely, for any $1\le j \le 5$, 
\begin{align}
\label{19/9/27/11:16}
\partial_{y_{k}} \mathscr{H}_{\alpha, j}(W,\mathbf{b}_{0})
=
\langle \partial_{k}W, \, Z_{\alpha, j} \rangle 
=0
\quad 
&\mbox{if $j \neq k$ \ ($1\le k \le 3$)},
\\[6pt]
\label{19/9/25/20:1}
\partial_{\nu} \mathscr{H}_{\alpha, j}(W,\mathbf{b}_{0})
=
-2\langle \Lambda W,\, Z_{\alpha, j} \rangle
=0
\quad 
&\mbox{if $j \neq 4$},
\\[6pt]
\label{19/9/25/11:23}
\partial_{\theta} \mathscr{H}_{\alpha, j}(W,\mathbf{b}_{0})
=\langle i W,\, Z_{\alpha, j} \rangle
=0
\quad 
&\mbox{if $j \neq 5$}.
\end{align}

Put 
\begin{equation}\label{19/9/27/11:40} 
A_{\alpha}:=\partial_{\mathbf{b}} \mathscr{H}_{\alpha}(W,\mathbf{b}_{0}). 
\end{equation}
Then, it follows from \eqref{19/9/25/11:21} through \eqref{19/9/25/11:23} that the differential mapping 
$A_{\alpha} \colon \mathbb{R}^{5}\to \mathbb{R}^{5}$ 
is bijective, and the inverse $A_{\alpha}^{-1}$ is of the form 
\begin{equation}\label{19/9/30/11:21}
A_{\alpha}^{-1}
=
\left( \begin{array}{cccccc}
O(\alpha^{-1}) 
& 0
& 0
& 0 
& 0 
\\[6pt]
0
& O(\alpha^{-1}) 
& 0 
&0 
&0
\\[6pt]
0
& 0
& O(\alpha^{-1}) 
& 0
& 0
\\[6pt]
0
& 0
&0
& O(\alpha^{-\frac{1}{2}}) 
& 0
\\[6pt]
0
& 0
&0
& 0
& O(\alpha^{-\frac{1}{2}})
\end{array} \right).
\end{equation}
As well as the standard proof of the implicit function theorem, we consider the mapping $\mathscr{G}_{\alpha}\colon 
H^{1}(\mathbb{R}^{3}) \times B\to \mathbb{R}^{5}$ defined by 
\begin{equation}\label{19/9/25/14:41}
\mathscr{G}_{\alpha}(u, \mathbf{b})
:=
\mathbf{b} -A_{\alpha}^{-1} \mathscr{H}_{\alpha}(u,\mathbf{b}).
\end{equation}
Note that 
$\mathbf{b}=\mathscr{G}_{\alpha}(u,\mathbf{b})$ implies that 
$H_{\alpha}(u,\mathbf{b})=0$. 
Hence, for the desired result, 
it suffices to prove that: 
there exist $0< \delta_{\rm geo}<1$ 
and $0< r_{\rm geo}<\frac{1}{2}$ 
such that for any $0< \alpha <1$, 
there exists a unique continuous mapping 
$\mathbf{b}_{\alpha} \colon B_{\dot{H}^{1}}(W,\delta_{\rm geo}) \to B(\mathbf{b}_{0},r_{\rm geo})$ such that 
\begin{align}
\label{19/10/5/21:11}
\mathbf{b}_{\alpha}(W)&=\mathbf{b}_{0},
\\[6pt]
\label{19/10/5/21:12}
\mathbf{b}_{\alpha}(u)& 
=\mathscr{G}_{\alpha}(u, \mathbf{b}_{\alpha}(u)).
\end{align}

We shall prove this claim in several steps. 
Throughout the proof, 
all of the implicit constants depend 
only on 
the fixed dimension $d=3$. 
\\
\noindent 
{\bf Step 1.}~Let $0< \delta <1$ and $0< r <\frac{1}{2}$. Furthermore, let $u\in B_{\dot{H}^{1}}(W,\delta)$ and $\mathbf{b}=(y,\nu,\theta) \in B(\mathbf{b}_{0},r)$. Then, we shall prove the following estimates: 
\begin{align}
\label{19/10/2/1:1}
\sup_{1\le k \le 3}
\| T_{\mathbf{b}}[\partial_{k}u]-\partial_{k}W \|_{L^{2}}
&\lesssim 
\delta + r, 
\\[6pt]
\label{19/10/2/1:3}
\| T_{\mathbf{b}}[u]-W \|_{L^{6}}
&\lesssim 
\delta +r ,
\\[6pt]
\label{19/10/2/1:2}
\| (1+|x|)^{-1} 
\big\{ 
\nu^{-1} e^{i\theta} 
\nu^{-2}x \cdot \nabla u(\nu^{-2}x+y)
- x\cdot \nabla W \big\} \|_{L^{2}}
&\lesssim 
\delta + r.
\end{align}

Let us begin by proving \eqref{19/10/2/1:1}. 
It follows from elementary computations, 
the fundamental theorem of calculus, 
$|y|+|\nu-1|+|\theta|<r<\frac{1}{2}$
and $0<\delta <1$ that 
\begin{equation}\label{19/10/1/15:31}
\begin{split}
\| T_{\mathbf{b}}[\partial_{k}u]-\partial_{k}W \|_{L^{2}}
&\le 
\| e^{i\theta} \nu^{-1}\partial_{k}u(\nu^{-2}\cdot + y)
-
e^{i\theta} \nu^{-1}\partial_{k}W(\nu^{-2}\cdot+y)
\|_{L^{2}}
\\[6pt]
&\quad +
\|
e^{i\theta} \nu^{-1}\partial_{k}W(\nu^{-2}\cdot+y)
-
e^{i\theta} \nu^{-1}\partial_{k}W(\nu^{-2}\cdot)
\|_{L^{2}}
\\[6pt]
&\quad +
\|
e^{i\theta} \nu^{-1}\partial_{k}W(\nu^{-2}\cdot)
-
e^{i\theta} \partial_{k}W 
\|_{L^{2}}
\\[6pt]
&\quad +
\|e^{i\theta} \partial_{k}W -\partial_{k}W \|_{L^{2}}
\\[6pt]
&\le 
\nu^{2} \| \partial_{k}u -\partial_{k}W\|_{L^{2}}
+
\nu^{2} \| \int_{0}^{1} y\cdot \nabla \partial_{k}
W(\cdot + \kappa y)\,d\kappa \|_{L^{2}}
\\[6pt]
&\quad +
\|\int_{1}^{\nu} 2\lambda^{-1}
T_{\lambda}[\Lambda \partial_{k}W]\,d\lambda \|_{L^{2}}
+ \|\int_{0}^{\theta}e^{it}\,dt \ \partial_{k}W\|_{L^{2}}
\\[6pt]
&\le 4 \delta + 4 r \| \nabla \partial_{k}W\|_{L^{2}}
+ 4 r \|\Lambda \partial_{k} W \|_{L^{2}}
+r \|\partial_{k}W \|_{L^{2}}
\lesssim \delta + r.
\end{split} 
\end{equation}
Thus, we have obtained the 
estimate \eqref{19/10/2/1:1}. 
Similarly, we can prove \eqref{19/10/2/1:3}. 
It remains to prove \eqref{19/10/2/1:2}. 
By the triangle inequality, one has 
\begin{equation}\label{19/10/2/9:10}
\begin{split}
&
\|(1+|x|)^{-1}\big\{ 
\nu^{-1} e^{i\theta} 
\nu^{-2}x\cdot \nabla u (\nu^{-2}x+y) 
- x\cdot \nabla W \big\} \|_{L^{2}}
\\[6pt]
&\le 
|\nu^{-1}-1| \|(1+|x|)^{-1} \nu^{-2}x \cdot \nabla u (\nu^{-2}x+y)
\|_{L^{2}}
\\[6pt]
& \quad +
\|(1+|x|)^{-1}\big\{e^{i\theta} \nu^{-2}x 
\cdot \nabla u (\nu^{-2}x+y) - x\cdot \nabla W \big\} \|_{L^{2}}.
\end{split} 
\end{equation}
We consider the first term on the right-hand side of \eqref{19/10/2/9:10}.
By $|\nu-1|<r <\frac{1}{2}$, one can verify that 
for any $x\in \mathbb{R}^{3}$, 
\begin{equation}\label{19/10/2/9:45}
\begin{split}
&
|\nu^{-1}-1| 
\|(1+|x|)^{-1} \nu^{-2}x \cdot \nabla u (\nu^{-2}x+y)
\|_{L^{2}}
\\[6pt]
&=
|\nu^{-1}-1| \nu^{3} 
\|(1+|\nu^{2} x|)^{-1} (x\cdot \nabla u (x+y))
\|_{L^{2}}
\\[6pt]
&\lesssim |\nu -1| \|\nabla u\|_{L^{2}}
\lesssim 
r \|\nabla u -\nabla W\|_{L^{2}}
+ r \|\nabla W\|_{L^{2}} 
\le r \delta + r 
\le \delta+r. 
\end{split}
\end{equation}
Next, we consider the second term on the right-hand side of \eqref{19/10/2/9:10}. 
By computations similar to \eqref{19/10/2/9:45}, 
\eqref{19/10/1/15:31} and $|\nu^{-2}-1| \lesssim |\nu -1|<r$, 
one can see that 
\begin{equation}\label{19/10/4/8:40}
\begin{split}
&
\|(1+|x|)^{-1} \big\{e^{i\theta} 
\nu^{-2}x \cdot 
\nabla u (\nu^{-2}x+y) - x\cdot \nabla W 
\big\} \|_{L^{2}}
\\[6pt]
&\lesssim 
\|(1+|x|)^{-1}e^{i\theta}
\nu^{-2}x\cdot
\big\{ 
\nabla u (\nu^{-2}x+y)
- \nabla W(\nu^{-2}x+y)
\big\} \|_{L^{2}}
\\[6pt]
&\quad + 
\| (1+|x|)^{-1}
\nu^{-2}x \cdot \{ \nabla W(\nu^{-2}x+y) -
\nabla W(\nu^{-2}x) \} \|_{L^{2}}
\\[6pt]
&\quad + 
\|
1+|x|)^{-1}
\big\{\nu^{-2}x\cdot \nabla W(\nu^{-2}x)
- x\cdot \nabla W\big\}
\|_{L^{2}}
\\[6pt]
&\quad + 
\|(1+|x|)^{-1}
(e^{i\theta}-1)
x\cdot \nabla W \|_{L^{2}}
\\[6pt]
&\lesssim 
\| \nabla u -\nabla W\|_{L^{2}}
\\[6pt]
&\quad 
+
\nu^{2} \| \int_{0}^{1} 
\sum_{j=1}^{3}y_{j} \partial_{j} 
\nabla W(\nu^{-2}x+\kappa y)
\,d\kappa
\|_{L^{2}}
\\[6pt]
&\quad +
|\nu^{-2}-1| \nu^{3} \| \nabla W \|_{L^{2}}
+ \|\int_{1}^{\nu^{-2}} 
\sum_{j=1}^{3} x_{j}\cdot 
\partial_{j}\nabla W(\lambda x) 
\,d\lambda
\|_{L^{2}} 
\\[6pt]
& \quad 
+ \|(1+|x|)^{-1}\int_{0}^{\theta}e^{it}\,dt 
\ x\cdot \nabla W\|_{L^{2}}
\\[6pt]
&\lesssim 
\delta + r.
\end{split} 
\end{equation} 
Putting \eqref{19/10/2/9:10}, \eqref{19/10/2/9:45} and \eqref{19/10/4/8:40} together, we obtain the desired estimate \eqref{19/10/2/1:2}.

\noindent 
{\bf Step 2.}~We shall show that there exist 
$0< \delta_{\rm geo} <1$ and 
$0< r_{\rm geo}<\frac{1}{2}$ 
such that for any $0<\alpha < 1$, 
any $u \in B_{\dot{H}^{1}}(W,\delta_{\rm geo})$, and any $\mathbf{b},\mathbf{c} \in B(\mathbf{b}_{0},r_{\rm geo})$, 
\begin{align}
\label{19/9/25/14:39}
\big| 
\mathscr{G}_{\alpha}(u,\mathbf{b}) 
- \mathscr{G}_{\alpha}(u, \mathbf{c}) 
\big|
&\le 
\frac{1}{4} 
| \mathbf{b}-\mathbf{c} |, 
\\[6pt]
\label{19/10/5/11:15}
\big| 
\mathscr{G}_{\alpha}(u,\mathbf{b}) -\mathbf{b}_{0}
\big|
&\le \frac{r_{\rm geo}}{2}.
\end{align}
Let $\alpha>0$, $u \in \dot{H}^{1}(\mathbb{R}^{3})$ 
and $\mathbf{b}, \mathbf{c} \in 
B:=\mathbb{R}^{3}\times (0,\infty) 
\times (-\frac{\pi}{2},\frac{\pi}{2})$. 
Then, observe from \eqref{19/9/27/11:40} that 
\begin{equation}\label{19/9/25/17:02}
\begin{split}
&
\mathscr{G}_{\alpha}(u, \mathbf{b}) 
- \mathscr{G}_{\alpha}(u, \mathbf{c}) 
\\[6pt]
&=
\mathbf{b} 
- \mathbf{c} 
- A_{\alpha}^{-1}
\big\{
\mathscr{H}_{\alpha}(u,\mathbf{b})
- \mathscr{H}_{\alpha}(u,\mathbf{c})
\big\}
\\[6pt]
&=A_{\alpha}^{-1}A_{\alpha}(\mathbf{b}-\mathbf{c})
-A_{\alpha}^{-1} 
\int_{0}^{1} \partial_{\mathbf{b}} \mathscr{H}_{\alpha}
(u, \mathbf{c} + t (\mathbf{b}-\mathbf{c})) (\mathbf{b}-\mathbf{c}) 
\,dt 
\\[6pt]
&=
A_{\alpha}^{-1} 
\int_{0}^{1}
\Bigm\{
\partial_{\mathbf{b}} \mathscr{H}_{\alpha}(W,\mathbf{b}_{0})
- \partial_{\mathbf{b}} \mathscr{H}_{\alpha}
(u, \mathbf{c} + t (\mathbf{b}-\mathbf{c}))
\Bigm\}(\mathbf{b}-\mathbf{c}) 
\,dt.
\end{split} 
\end{equation}
Let $0<\delta <1$ and $0<r <\frac{1}{2}$
be constants to be specified later. 
Then, observe from \eqref{19/9/30/11:21} that 
for any $u \in B_{\dot{H}^{1}}(W,\delta)$ 
and $\mathbf{b}=(y,\nu,\theta)
\in B(\mathbf{b}_{0},r)$, we have 
\begin{equation}\label{19/9/25/17:07}
\begin{split}
&
\bigm\| 
A_{\alpha}^{-1}\bigm\{ 
\partial_{\mathbf{b}} \mathscr{H}_{\alpha}(W,\mathbf{b}_{0})
- \partial_{\mathbf{b}} \mathscr{H}_{\alpha}
(u, \mathbf{b})
\bigm\} 
\bigm\|_{\mathbb{R}^{5}\to \mathbb{R}^{5}}
\\[6pt]
&\lesssim 
\sum_{1\le j\le 3} \sum_{1\le k \le 3}
\alpha^{-1}
\big|
\partial_{y_{k}} \mathscr{H}_{\alpha,j}(W,\mathbf{b}_{0})
-
\partial_{y_{k}} \mathscr{H}_{\alpha,j}
(u, \mathbf{b})
\big|
\\[6pt]
&\quad + \sum_{1\le j\le 3} 
\alpha^{-1}
\big|
\partial_{\nu} \mathscr{H}_{\alpha,j}(W,\mathbf{b}_{0})
- \partial_{\nu} \mathscr{H}_{\alpha,j}
(u, \mathbf{b})
\big|
\\[6pt]
&\quad + \sum_{1\le j\le 3} 
\alpha^{-1}
\big|
\partial_{\theta} \mathscr{H}_{\alpha,j}(W,\mathbf{b}_{0})
- \partial_{\theta} \mathscr{H}_{\alpha,j}
(u, \mathbf{b}) \big|
\\[6pt]
&\quad +
\sum_{j= 4, 5} 
\sum_{1\le k \le 3}
\alpha^{-\frac{1}{2}}
\big| \partial_{y_{k}} \mathscr{H}_{\alpha,j}(W,\mathbf{b}_{0})
- \partial_{y_{k}} \mathscr{H}_{\alpha,j}
(u, \mathbf{b})
\big|
\\[6pt]
&\quad 
+
\sum_{j=4,5}
\alpha^{-\frac{1}{2}}
\big|
\partial_{\nu} \mathscr{H}_{\alpha,j}(W,\mathbf{b}_{0})
- \partial_{\nu} \mathscr{H}_{\alpha,j}
(u, \mathbf{b}) \big|
\\[6pt]
&\quad 
+
\sum_{j=4,5}
\alpha^{-\frac{1}{2}}
\big|
\partial_{\theta} \mathscr{H}_{\alpha,j}(W,\mathbf{b}_{0})
-
\partial_{\theta} \mathscr{H}_{\alpha,j}
(u, \mathbf{b})
\big|.
\end{split} 
\end{equation}

\noindent 
{\bf Step 2.1.}~Consider the first term on the right-hand side of \eqref{19/9/25/17:07}. 
By \eqref{18/02/24/17:13}, \eqref{19/9/25/11:21}, 
\eqref{19/9/27/11:16}, \eqref{19/9/27/13:55}, 
Lemma \ref{19/9/26/15:4} 
and \eqref{19/10/2/1:1}, 
one can see that 
\begin{equation}\label{19/9/27/13:45}
\begin{split} 
&
\sum_{1\le j \le 3} \sum_{1\le k\le 3}
\alpha^{-1} \big|
\partial_{y_{k}} \mathscr{H}_{\alpha, j}(u, \mathbf{b}_{0}) 
-
\partial_{y_{k}} \mathscr{H}_{\alpha, j}(W, \mathbf{b})
\big|
\\[6pt]
&=
\sum_{1\le j\le 3}\sum_{1\le k \le 3}
\alpha^{-1} \big|
\langle T_{\mathbf{b}}[\partial_{k}u]-\partial_{k}W, \, Z_{\alpha, j} 
\rangle
\big|
\\[6pt]
&=
\sum_{1\le j\le 3}
\sum_{1\le k \le 3}
\big|
\langle 
(-\Delta +\alpha)^{-1} V_{+}
\{ T_{\mathbf{b}}[\partial_{k}u]-\partial_{k}W\}, \, \partial_{j}W 
\rangle
\big|
\\[6pt]
&\lesssim 
\sup_{1\le k \le 3}\| T_{\mathbf{b}}
[\partial_{k}u]-\partial_{k}W \|_{L^{2}}
\lesssim 
\delta +r.
\end{split} 
\end{equation}

\noindent 
{\bf Step 2.2.}~Consider the second term on the right-hand side of \eqref{19/9/25/17:07}. 
Observe from \eqref{19/9/27/13:55} that 
\begin{equation}\label{Mar5-1}
\langle W, Z_{\alpha,j} \rangle=0
\quad 
\mbox{for all $1\le j \le 3$}.
\end{equation}
By \eqref{19/9/25/23:35}, \eqref{Mar5-1}, 
\eqref{19/9/25/20:1}, 
\eqref{19/9/27/13:55}, Lemma \ref{19/9/26/15:4}, 
$|\nu^{-1}-1| \lesssim |\nu-1| <r$, 
\eqref{19/10/2/1:3} and \eqref{19/10/2/1:2}, 
one can see that 
\begin{equation}\label{19/10/1/16:21}
\begin{split} 
&
\sum_{1 \le j\le 3}\alpha^{-1} 
\big|
\partial_{\nu} \mathscr{H}_{\alpha,j}(W,\mathbf{b}_{0})
-
\partial_{\nu} \mathscr{H}_{\alpha,j}
(u, \mathbf{b})
\big|
\\[6pt]
&\le 
2\sum_{1\le j\le 3} \alpha^{-1} 
\big|
\langle \nu^{-1} T_{\mathbf{b}}[u] - W, \, Z_{\alpha, j} 
\rangle
\big|
\\[6pt]
&\quad +
2 \sum_{1\le j \le 3}\alpha^{-1}
|\nu^{-1} - 1| 
\big| \langle 
\nu^{-1} e^{i\theta} \nu^{-2}x 
\cdot \nabla u (\nu^{-2}x+y), 
Z_{\alpha,j}
\rangle \rangle
\\[6pt]
&\quad +
2\sum_{1\le j \le 3} \alpha^{-1}
\big| \langle \nu^{-1} e^{i\theta} \nu^{-2}x\cdot \nabla u (\nu^{-2}x+y)
-
\Lambda W, Z_{\alpha,j} 
\rangle \big|
\\[6pt]
&\lesssim 
\| \nu^{-1} \big\{ T_{\mathbf{b}}[u]- W\big\} \|_{L^{6}}
+
\| (\nu^{-1} -1) W \|_{L^{6}}
\\[6pt]
&\quad +
|\nu^{-1}-1|
\| (1+|x|)^{-1} \nu^{-2}|x| |\nabla u (\nu^{-2}x+y)|\|_{L^{2}}
\\[6pt]
&\quad +
\|(1+|x|)^{-1}\big\{\nu^{-1}e^{i\theta}
\nu^{-2}x \cdot \nabla u (\nu^{-2}x+y) - 
x\cdot \nabla W \big\}\|_{L^{2}}
\\[6pt]
&\lesssim \delta +r.
\end{split} 
\end{equation}

\noindent 
{\bf Step 2.3.}~Consider the third term on the right-hand side of \eqref{19/9/25/17:07}. 
By \eqref{18/02/24/17:12}, \eqref{19/9/25/11:23}, 
\eqref{19/9/27/13:55}, 
Proposition \ref{19/9/26/15:4} 
and \eqref{19/10/2/1:3} that 
\begin{equation}\label{19/10/1/16:55}
\begin{split} 
&
\sum_{1 \le j\le 3}
\alpha^{-1} \big|
\partial_{\theta} \mathscr{H}_{\alpha,j}(W,\mathbf{b}_{0})
-
\partial_{\theta} \mathscr{H}_{\alpha,j}
(u, \mathbf{b})
\big|
=
\sum_{1\le j\le 3}
\alpha^{-1} \big|
\langle T_{\mathbf{b}}[i u]- i W, \, Z_{\alpha, j} 
\rangle
\big|
\\[6pt]
&=
\sum_{1\le j\le 3}
\big|
\langle 
(-\Delta +\alpha)^{-1} V_{+}
\big\{ T_{\mathbf{b}}[u]- W \big\}, \, \partial_{j}W 
\rangle
\big|
\lesssim 
\| T_{\mathbf{b}}[u]- W \|_{L^{6}}
\lesssim 
\delta +r.
\end{split} 
\end{equation}

\noindent 
{\bf Step 2.4.}~Consider the fourth term on the right-hand side of \eqref{19/9/25/17:07}. By \eqref{18/02/24/17:13}, 
\eqref{19/9/27/11:16}, \eqref{19/9/27/14:00}, 
\eqref{19/9/30/18:18}, Corollary \ref{18/12/02/11:25cor} and \eqref{19/10/2/1:1}, 
one can see that 
\begin{equation}\label{19/9/30/18:5}
\begin{split} 
&
\sum_{j=4,5} \sum_{1\le k \le 3}
\alpha^{-\frac{1}{2}} \big|
\partial_{y_{k}} \mathscr{H}_{\alpha, j}(u, \mathbf{b}_{0}) 
-
\partial_{y_{k}} \mathscr{H}_{\alpha, j}(W, \mathbf{b})
\big|
\\[6pt]
&\lesssim 
\sup_{1\le k \le 3}\| T_{\mathbf{b}}
[\partial_{k}u]-\partial_{k}W \|_{L^{2}}
\lesssim \delta +r.
\end{split} 
\end{equation}

\noindent 
{\bf Step 2.5.}~Consider the fifth term on the right-hand side of \eqref{19/9/25/17:07}. 
By \eqref{19/9/25/23:35}, $\Lambda W =\frac{1}{2}W+x\cdot \nabla W$, \eqref{19/9/25/20:2}, \eqref{19/9/25/20:1}, \eqref{19/9/27/14:00}, \eqref{19/9/30/18:18}, Corollary \ref{18/12/02/11:25cor}, 
$|\nu^{-1}-1|\lesssim r$, \eqref{19/10/2/1:3} and \eqref{19/10/2/1:2}, 
\begin{equation}\label{19/10/1/16:22}
\begin{split} 
&
\sum_{j=4,5} 
\alpha^{-\frac{1}{2}}
\big|
\partial_{\nu} \mathscr{H}_{\alpha,j}(W,\mathbf{b}_{0})
-
\partial_{\nu} \mathscr{H}_{\alpha,j}
(u, \mathbf{b})
\big|
\\[6pt]
&\lesssim
\sum_{j=4,5}
\alpha^{-\frac{1}{2}}\big|
\langle \nu^{-1} T_{\mathbf{b}}[u]-W, \ Z_{\alpha, j} 
\rangle
\big|
+ 
\sum_{j=4,5}
\alpha^{-\frac{1}{2}}\big|
\langle \nu^{-1}e^{i\theta}\nu^{-2}x \cdot
\nabla u (\nu^{-2}+y) - x\cdot \nabla W, 
\ Z_{\alpha, j} 
\rangle
\big|
\\[6pt]
& \quad + 
\sum_{j=4,5}\alpha^{-\frac{1}{2}}
|\nu^{-1}-1| 
\big| \langle
\nu^{-1}e^{i\theta} \nu^{-2}x \cdot
\nabla u(\nu^{-2} x+y), Z_{\alpha,j} 
\rangle \big|
\\[6pt]
&\lesssim 
\| \nu^{-1} T_{\mathbf{b}}[ u]- W \|_{L^{6}}
+
\| (1+|x|)^{-1} 
\big\{ \nu^{-1} T_{\mathbf{b}}[x\cdot \nabla u]- x\cdot \nabla W \big\} \|_{L^{2}}
\\[6pt]
&\quad +
|\nu^{-1}-1| 
\|(1+|x|)^{-1} \nu^{-2}|x| 
\nabla u (\nu^{-2}x+y ) \|_{L^{2}}
\\[6pt]
&
\lesssim \delta +r.
\end{split} 
\end{equation}

\noindent 
{\bf Step 2.6.}~Consider the last term on the right-hand side of \eqref{19/9/25/17:07}. By \eqref{18/02/24/17:12}, 
\eqref{19/9/25/11:24}, \eqref{19/9/25/11:23}, 
\eqref{19/9/27/14:00}, \eqref{19/9/30/18:18}, Corollary \ref{18/12/02/11:25cor} and \eqref{19/10/2/1:3}, 
\begin{equation}\label{19/10/1/16:56}
\begin{split} 
&
\sum_{j=4,5} 
\alpha^{-\frac{1}{2}}
\big|
\partial_{\theta} \mathscr{H}_{\alpha,j}(W,\mathbf{b}_{0})
-
\partial_{\theta} \mathscr{H}_{\alpha,j}
(u, \mathbf{b})
\big|
\\[6pt]
&=\sum_{j=4,5}
\alpha^{-\frac{1}{2}}\big|
\langle T_{\mathbf{b}}[i u]- iW, \, Z_{\alpha, j} 
\rangle
\big|
\lesssim 
\| T_{\mathbf{b}}[ u]- W \|_{L^{6}}
\lesssim \delta +r.
\end{split} 
\end{equation}

\noindent 
{\bf Step 2.7.}~We shall derive the estimates 
\eqref{19/9/25/14:39} and \eqref{19/10/5/11:15}. 
Plugging 
the estimates \eqref{19/9/27/13:45} though \eqref{19/10/1/16:56} into \eqref{19/9/25/17:07}, one see that for any $\mathbf{b}\in B(\mathbf{b}_{0},r)$, 
\begin{equation}\label{19/10/4/10:38}
\bigm\| 
A_{\alpha}^{-1}\bigm\{ 
\partial_{\mathbf{b}} \mathscr{H}_{\alpha}(W,\mathbf{b}_{0})
-
\partial_{\mathbf{b}} \mathscr{H}_{\alpha}
(u, \mathbf{b})
\bigm\} 
\bigm\|_{\mathbb{R}^{5}\to \mathbb{R}^{5}}
\lesssim 
\delta +r.
\end{equation}
Moreover, it is easy to verify that if $\mathbf{b}, \mathbf{c} \in B(\mathbf{b}_{0},r)$, then $\mathbf{c} + t (\mathbf{b}-\mathbf{c}) 
\in B(\mathbf{b}_{0},r)$. 
Hence, it follows from \eqref{19/9/25/17:02} 
and \eqref{19/10/4/10:38} that for any 
$u\in B_{\dot{H}^{1}}(W,\delta)$ and 
$\mathbf{b}, \mathbf{c} \in B(\mathbf{b}_{0},r)$, 
\begin{equation}
\begin{split}
&
\big|
\mathscr{G}_{\alpha}(u,\mathbf{b}) 
- \mathscr{G}_{\alpha}(u,\mathbf{c}) 
\big| 
\\[6pt]
&\le 
\sup_{0\le t\le 1}
\big\| 
A_{\alpha}^{-1} 
\bigm\{
\partial_{\mathbf{b}} \mathscr{H}_{\alpha}(W,\mathbf{b}_{0})
-
\partial_{\mathbf{b}} \mathscr{H}_{\alpha}
(u, \mathbf{c} + t (\mathbf{b}-\mathbf{c}))
\Bigm\}
\big\|_{\mathbb{R}^{5}\to \mathbb{R}^{5}}
|\mathbf{b}-\mathbf{c}| 
\\[6pt]
&\lesssim 
(\delta+r) |\mathbf{b}-\mathbf{c}|,
\end{split} 
\end{equation}
so that if $\delta \ll 1$ and $r \ll 1$, then 
\begin{equation}\label{19/10/4/10:43}
\big|
\mathscr{G}_{\alpha}(u,\mathbf{b}) 
- 
\mathscr{G}_{\alpha}(u,\mathbf{c}) 
\big| 
\le \frac{1}{4}|\mathbf{b} - \mathbf{c}| 
.
\end{equation}
This proves \eqref{19/9/25/14:39}. 

On the other hand, by \eqref{18/03/29/14:05}, 
\eqref{19/9/27/13:55} 
through \eqref{19/9/30/18:18}, 
Proposition \ref{19/9/26/15:4}, Corollary \ref{18/12/02/11:25cor} and Sobolev's inequality, one can see that for any 
$u \in B_{\dot{H}^{1}}(W,\delta)$, 
\begin{equation}
\begin{split}
&\big|
\mathscr{G}_{\alpha}(u,\mathbf{b}_{0}) -\mathbf{b}_{0} 
\big| 
=
\big|A_{\alpha}^{-1} \mathscr{H}_{\alpha}(u,\mathbf{b}_{0}) \big|
\\[6pt]
&\le 
\sum_{1\le j\le 3} \alpha^{-1} 
\big| 
\langle 
u- W, \, \alpha V_{+}(-\Delta +\alpha)^{-1} \partial_{j}W 
\rangle
\big| 
\\[6pt]
&\quad +
\alpha^{-\frac{1}{2}}
\big| 
\langle 
u- W, \, \alpha V_{+}(-\Delta +\alpha)^{-1} \Lambda W 
\rangle
\big| 
+\alpha^{-\frac{1}{2}}
\big| 
\langle 
u- W, \, \alpha V_{-}(-\Delta + \alpha)^{-1} (iW) 
\rangle
\big| 
\\[6pt]
&\lesssim
\|u-W\|_{L^{6}}\lesssim 
\|\nabla \{u-W\}\|_{L^{2}} \le \delta,
\end{split} 
\end{equation}
so that if $\delta \ll r$, then 
\begin{equation}\label{19/10/5/13:28}
\big|
\mathscr{G}_{\alpha}(u,\mathbf{b}_{0}) -\mathbf{b}_{0}
\big| \le \frac{r}{4}. 
\end{equation}
Furthermore, it follows from \eqref{19/10/4/10:43} 
and \eqref{19/10/5/13:28} 
that if $\delta \ll r \ll 1$, 
then for any $u \in B_{\dot{H}^{1}}(W,\delta)$ 
and $\mathbf{b} \in B(\mathbf{b}_{0},r)$, 
\begin{equation}\label{19/10/5/12:7}
\begin{split}
\big|
\mathscr{G}_{\alpha}(u,\mathbf{b}) -\mathbf{b}_{0}
\big| 
&\le 
\big|
\mathscr{G}_{\alpha}(u,\mathbf{b}) 
- 
\mathscr{G}_{\alpha}(u,\mathbf{b}_{0}) 
\big| 
+
\big|
\mathscr{G}_{\alpha}(u,\mathbf{b}_{0}) -\mathbf{b}_{0}
\big| 
\\[6pt]
&\le 
\frac{1}{4}|\mathbf{b}-\mathbf{b}_{0}|
+
\frac{r}{4} 
\le 
\frac{r}{2}.
\end{split} 
\end{equation}
Thus, we have proved \eqref{19/10/5/11:15}.

\noindent 
{\bf Step 3.}~We shall finish the proof of the proposition. 
Let $0< \delta_{\rm geo}<1$ and 
$0< r_{\rm geo}<\frac{1}{2}$ be constants given in the previous step, 
and let $0<\alpha <1$. 
Recall that it suffices to prove that for any 
$0< \alpha <1$, there exists a unique continuous 
mapping $\mathbf{b}_{\alpha} 
\colon B_{\dot{H}^{1}}(W,\delta_{\rm geo}) 
\to B(\mathbf{b}_{0},r_{\rm geo})$ satisfying 
\eqref{19/10/5/21:11} and \eqref{19/10/5/21:12}.
To this end, we define the sequence 
$\{ \mathbf{b}_{\alpha, n}\}$ of mappings from 
$B_{\dot{H}^{1}}(W,\delta_{\rm geo})$ to 
$\mathbb{R}^{5}$ to be that for any 
$u \in B_{\dot{H}^{1}}(W,\delta_{\rm geo})$, 
\begin{align}
\label{19/10/5/16:46}
\mathbf{b}_{\alpha, 1}(u)&:=
\mathscr{G}_{\alpha}(u,\mathbf{b}_{0}),
\\[6pt]
\label{19/10/5/16:3}
\mathbf{b}_{\alpha, n+1}(u)&:=
\mathscr{G}_{\alpha}(u,\mathbf{b}_{\alpha, n}(u))
\quad 
\mbox{for $n \ge 1$}.
\end{align}

\noindent 
{\bf Step 3.1.}~We shall show that 
\begin{equation}\label{19/10/5/17:6}
\big| \mathbf{b}_{\alpha, n}(u) -\mathbf{b}_{0} \big| 
\le \frac{r_{\rm geo}}{2}
\quad
\mbox{for any $n\ge 1$ and $u\in 
B_{\dot{H}^{1}}(W,\delta_{\rm geo})$}.
\end{equation}
It follows from \eqref{19/10/5/11:15} that 
\eqref{19/10/5/17:6} is true for $n=1$. 
Furthermore, if \eqref{19/10/5/17:6} holds 
for some $n\ge 2$ 
(hence $\mathbf{b}_{\alpha,n}(u) 
\in B(\mathbf{b}_{0}, r_{\rm geo})$), 
then by \eqref{19/10/5/11:15}, 
\begin{equation}\label{19/10/5/17:12}
\big| \mathbf{b}_{\alpha, n+1}(u) -\mathbf{b}_{0} \big| 
\le 
\big| \mathscr{G}_{\alpha}
(u, \mathbf{b}_{\alpha,n}(u)) - 
\mathbf{b}_{0} \big|
\le \frac{r_{\rm geo}}{2}.
\end{equation}
Thus, by induction, \eqref{19/10/5/17:6} must hold. 

\noindent 
{\bf Step 3.2.}~We shall show that 
\begin{equation}\label{19/10/5/16:20}
\mathbf{b}_{\alpha, n}(W)=\mathbf{b}_{0}
\quad 
\mbox{for any $n\ge 1$}.
\end{equation}
We prove this by induction. It is easy to verify that $\mathbf{b}_{\alpha,1}(W)=\mathbf{b}_{0}$. Suppose that $\mathbf{b}_{\alpha, n}(W)=\mathbf{b}_{0}$ for some $n\ge 2$. Then, one can verify that 
\begin{equation}\label{19/10/5/17:2}
\mathbf{b}_{\alpha, n+1}(W)
=
\mathscr{G}_{\alpha}(u,\mathbf{b}_{\alpha, n}(W)) 
=
\mathscr{G}_{\alpha}(W,\mathbf{b}_{0})=\mathbf{b}_{0}
.
\end{equation}
Thus, we have proved \eqref{19/10/5/16:20}.

\noindent 
{\bf Step 3.3.}~We prove that for any $0< \alpha <1$, there exists a unique continuous mapping $\mathbf{b}_{\alpha} \colon B_{\dot{H}^{1}}(W,\delta_{\rm geo}) \to B(\mathbf{b}_{0},r_{\rm geo})$ satisfying \eqref{19/10/5/21:11} and \eqref{19/10/5/21:12}, which completes the proof of the lemma. 

Let $n\ge 1$ and $u\in B_{\dot{H}^{1}}(W,\delta_{\rm geo})$. Notice that 
by \eqref{19/10/5/17:6}, $\mathbf{b}_{\alpha,n}(u) \in B(\mathbf{b}_{0},r_{\rm geo})$. Hence, it follows from \eqref{19/9/25/14:39} that 
\begin{equation}\label{19/10/6/8:22}
\begin{split}
&\big| \mathbf{b}_{\alpha,n+1}(u)- \mathbf{b}_{\alpha,n}(u)
\big|
=
\big| \mathscr{G}_{\alpha}(u,\mathbf{b}_{\alpha,n}(u) )
- \mathscr{G}_{\alpha}(u,\mathbf{b}_{\alpha,n-1}(u))
\big|
\\[6pt]
&\le \frac{1}{2}
\big| \mathbf{b}_{\alpha,n}(u)- \mathbf{b}_{\alpha,n-1}(u)
\big|
=
\frac{1}{2}\big| \mathscr{G}_{\alpha}(u,\mathbf{b}_{\alpha,n-1}(u) )
- \mathscr{G}_{\alpha}(u,\mathbf{b}_{\alpha,n-2}(u))
\big|
\\[6pt]
&\le \Big( \frac{1}{2} \Big)^{2}
\big| \mathbf{b}_{\alpha,n-1}(u)- \mathbf{b}_{\alpha,n-2}(u)
\big|
\\[6pt]
&\le \cdots \le 
\Big( \frac{1}{2} \Big)^{n}
\big| \mathbf{b}_{\alpha,2}(u)- \mathbf{b}_{\alpha,1}(u)
\big|
=
\Big( \frac{1}{2}\Big)^{n}
\big| \mathscr{G}_{\alpha}(u,\mathbf{b}_{\alpha,1}(u) )
- \mathscr{G}_{\alpha}(u,\mathbf{b}_{0})
\big|
\\[6pt]
&\le 
\Big( \frac{1}{2}\Big)^{n+1}
\big| \mathbf{b}_{\alpha,1}(u) 
- \mathbf{b}_{0}
\big|
\le 
\Big( \frac{1}{2}\Big)^{n+2} r_{\rm geo}
\le 
\Big( \frac{1}{2}\Big)^{n+2}.
\end{split} 
\end{equation}
Furthermore, by the triangle inequality and \eqref{19/10/6/8:22}, one can verify that 
\begin{equation}\label{19/10/5/21:56}
\begin{split} 
\big| \mathbf{b}_{\alpha,n}(u)- \mathbf{b}_{\alpha,m}(u)
\big|
&\le 
\sum_{l=m+1}^{n}
\big| \mathbf{b}_{\alpha,l}(u)- \mathbf{b}_{\alpha,l-1}(u)
\big|
\le 
\sum_{l=m+1}^{n} \Big( \frac{1}{2}\Big)^{l+2}
\\[6pt]
&=
\Big( \frac{1}{2}\Big)^{m+3}
-
\Big( \frac{1}{2}\Big)^{n+2}
\to 0 \quad 
\mbox{as $m,n\to \infty$},
\end{split}
\end{equation}
so that $\{ \mathbf{b}_{\alpha,n}(u) \}$ is Cauchy sequence in $\mathbb{R}^{5}$. Thus, for any $u\in B_{\dot{H}^{1}}(W,\delta_{\rm geo})$, the limit of $\{ \mathbf{b}_{\alpha,n}(u) \}$ exists. Let $u \in B_{\dot{H}^{1}}(W,\delta_{\rm geo})$, and put $\mathbf{b}_{\alpha}(u) :=\lim\limits_{n\to \infty}\mathbf{b}_{\alpha,n}(u)$. Then, using \eqref{19/10/5/17:6}, one can verify that 
\begin{equation}\label{19/10/6/8:47}
\mathbf{b}_{\alpha}(u) \in B(\mathbf{b}_{0},r_{\rm geo})
.
\end{equation}
Moreover, it follows from \eqref{19/10/5/16:20} that 
\begin{equation}\label{19/10/6/9:3}
\mathbf{b}_{\alpha}(W)
=
\lim_{n\to \infty} \mathbf{b}_{\alpha,n}(W)
=
\mathbf{b}_{0}. 
\end{equation} 
Furthermore, by the continuity of the mapping $\mathbf{b}\in B(\mathbf{b}_{0},r_{\rm geo}) \mapsto \mathscr{G}_{\alpha}(u,\mathbf{b}) \in \mathbb{R}^{5}$ (see \eqref{19/9/25/14:39}), one sees that 
\begin{equation}\label{19/10/5/22:38}
\mathbf{b}_{\alpha}(u)
=
\lim_{n\to \infty} \mathbf{b}_{\alpha,n+1}(u)
=
\lim_{n\to \infty}\mathscr{G}_{\alpha}(u,\mathbf{b}_{\alpha,n}(u))
=
\mathscr{G}_{\alpha}(u,\mathbf{b}_{\alpha}(u)). 
\end{equation} 
Thus, we have obtained a mapping $\mathbf{b}_{\alpha}(\cdot) \colon B_{\dot{H}^{1}}(W,\delta_{\rm geo}) \to B(\mathbf{b}_{0},r_{\rm geo})$ satisfying \eqref{19/10/5/21:11} and \eqref{19/10/5/21:12}. 

We shall prove the continuity of the mapping $u\in B_{\dot{H}^{1}}(W,\delta_{\rm geo}) \mapsto \mathbf{b}_{\alpha}(u)$. It follows from \eqref{19/10/5/22:38} and \eqref{19/9/25/14:39} that for any $u,v \in B_{\dot{H}^{1}}(W,\delta_{\rm geo})$,
\begin{equation}\label{19/10/6/7:47}
\begin{split}
&\big| 
\mathbf{b}_{\alpha}(u)- \mathbf{b}_{\alpha}(v)
\big|
\\[6pt]
&=
\big| 
\mathscr{G}_{\alpha}(u,\mathbf{b}_{\alpha}(u))
- 
\mathscr{G}_{\alpha}(v,\mathbf{b}_{\alpha}(v))
\big|
\\[6pt]
&\le 
\big| 
\mathscr{G}_{\alpha}(u,\mathbf{b}_{\alpha}(u))
- 
\mathscr{G}_{\alpha}(u,\mathbf{b}_{\alpha}(v))
\big|
+
\big| 
\mathscr{G}_{\alpha}(u,\mathbf{b}_{\alpha}(v))
- 
\mathscr{G}_{\alpha}(v,\mathbf{b}_{\alpha}(v))
\big|
\\[6pt]
&\le 
\frac{1}{4}|\mathbf{b}_{\alpha}(u)-\mathbf{b}_{\alpha}(v)|
+
\big| A_{\alpha}^{-1}
\big\{
\mathscr{H}_{\alpha}(u,\mathbf{b}_{\alpha}(v))
- 
\mathscr{H}_{\alpha}(v,\mathbf{b}_{\alpha}(v))
\big\} \big| 
.
\end{split}
\end{equation}
Furthermore, by \eqref{19/10/6/7:47}, \eqref{19/9/27/13:55} through \eqref{19/9/30/18:18}, Proposition \ref{19/9/26/15:4}, Corollary \ref{18/12/02/11:25cor} and Sobolev's inequality, one can verify that for any $u,v \in B_{\dot{H}^{1}}(W,\delta_{\rm geo})$,
\begin{equation}\label{19/10/6/7:53}
\begin{split} 
\big| 
\mathbf{b}_{\alpha}(u)- \mathbf{b}_{\alpha}(v)
\big|
&\lesssim 
\big| A_{\alpha}^{-1}
\big\{
\mathscr{H}_{\alpha}(u,\mathbf{b}_{\alpha}(v))
- 
\mathscr{H}_{\alpha}(v,\mathbf{b}_{\alpha}(v))
\big\} \big| 
\\[6pt]
&\lesssim 
\sum_{j=1}^{3} \alpha^{-1}
\big| \langle T_{\mathbf{b}_{\alpha}(v)}[u-v],\ 
\alpha V_{+}(-\Delta +\alpha)^{-1} \partial_{j}W \rangle \big| 
\\[6pt]
&\quad +
\alpha^{-\frac{1}{2}}
\big| \langle T_{\mathbf{b}_{\alpha}(v)}[u-v],\ \alpha V_{+}(-\Delta +\alpha)^{-1} \Lambda W \rangle \big| 
\\[6pt]
&\quad +
\alpha^{-\frac{1}{2}}
\big| \langle T_{\mathbf{b}_{\alpha}(v)}[u-v],\ \alpha V_{-}(-\Delta +\alpha)^{-1} (iW) \rangle \big| 
\\[6pt]
&\lesssim 
\|T_{\mathbf{b}_{\alpha}(v)}[u-v]\|_{L^{6}}
=
\|u-v\|_{L^{6}}
\lesssim
\|u-v\|_{\dot{H}^{1}}
.
\end{split}
\end{equation}
Thus, we have proved the continuity of $\mathbf{b}_{\alpha}(\cdot)$. 

It remains to prove the uniqueness. Suppose that there exists a mapping $\mathbf{c}_{\alpha}\colon B_{\dot{H}^{1}}(W,\delta_{\rm geo}) \to B(\mathbf{b}_{0},r_{\rm geo})$ such that $\mathbf{c}_{\alpha}(W)=\mathbf{b}_{0}$ and $\mathbf{c}_{\alpha}(u)=\mathscr{G}_{\alpha}(u,\mathbf{c}_{\alpha}(u))$ for any $u\in B_{\dot{H}^{1}}(W,\delta_{\rm geo})$. Then, it follows from \eqref{19/9/25/14:39} that for any $u\in B_{\dot{H}^{1}}(W,\delta_{\rm geo})$, 
\begin{equation}\label{19/10/6/10:20}
\big| 
\mathbf{b}_{\alpha}(u)- \mathbf{c}_{\alpha}(u)
\big|
= 
\big| \mathscr{G}_{\alpha}(u,\mathbf{b}_{\alpha}(u))
- 
\mathscr{G}_{\alpha}(u,\mathbf{c}_{\alpha}(u))
\big| 
\le 
\frac{1}{4}\big| 
\mathbf{b}_{\alpha}(u)- \mathbf{c}_{\alpha}(u)
\big|
.
\end{equation}
This implies that $\mathbf{b}_{\alpha}=\mathbf{c}_{\alpha}$. Thus, we have completed the proof. 
\end{proof}

\appendix


\section{Proof of \eqref{sob-bc-rela}} \label{app-A} 

In this section, we give a proof of \eqref{sob-bc-rela}. 

Let $u \in \dot{H}^{1}(\mathbb{R}^d)$ satisfy 
$\mathcal{N}_{\omega}(u) = 0$. 
It follows from \eqref{19/9/7/15:31} and 
$\mathcal{N}_{\omega}(u) = 0$ that 
\[
\|\nabla u\|_{L^{2}}^{2}
= \|u\|_{L^{\frac{2d}{d-2}}}^{\frac{2d}{d-2}} 
\leq \sigma^{- \frac{d}{d-2}} 
\|\nabla u\|_{L^{2}}^{\frac{2d}{d-2}}. 
\]
This yields that 
\[
\sigma^{\frac{d}{2}} \leq 
\|\nabla u\|_{L^{2}}^{2}. 
\]
For any $u \in \dot{H}^{1}(\mathbb{R}^{d})$ 
with $\mathcal{N}_{\omega}(u) = 0$, 
we obtain 
\[
\mathcal{H}^{\ddagger}(u) = 
\frac{1}{d} \|\nabla u\|_{L^{2}}^{2} 
\geq \frac{1}{d} \sigma^{\frac{d}{2}}. 
\]
Taking an infimum on $u \in 
\dot{H}^{1}(\mathbb{R}^{d})$ 
with $\mathcal{N}_{\omega}(u) = 0$, 
we see that 
\begin{equation} \label{m-infty-1}
m_{\infty} \geq \frac{1}{d} \sigma^{\frac{d}{2}}. 
\end{equation}
Let $u \in \dot{H}^{1}(\mathbb{R}^{d})$ 
satisfy $\|u\|_{L^{\frac{2d}{d-2}}} = 1$. 
Putting $w = \|\nabla u\|_{L^{2}}^{\frac{d-2}{2}}u$, 
one has 
\[
\mathcal{N}_{\omega}(w) 
= \|\nabla u\|_{L^{2}}^{d} 
- \|\nabla u\|_{L^{2}}^{d} \|u\|_{L^{\frac{2d}{d-2}}}
^{\frac{2d}{d-2}} 
= \|\nabla u\|_{L^{2}}^{d} 
- \|\nabla u\|_{L^{2}}^{d}
= 0. 
\] 
This implies that 
\[
m_{\infty} \leq 
\mathcal{H}^{\ddagger}(w) = 
\frac{1}{2} \|\nabla u\|_{L^{2}}^{\frac{d}{2}} 
- \frac{d-2}{2d} \|\nabla u\|_{L^{2}}^{d} 
= \frac{1}{d} \|\nabla v\|_{L^{2}}^{d}. 
\]
Taking an infimum on $u 
\in \dot{H}^{1}(\mathbb{R}^{d})$ 
with $\|u\|_{L^{\frac{2d}{d-2}}} = 1$, 
we obtain 
\begin{equation} \label{m-infty-2}
m_{\infty} \leq \frac{1}{d} \sigma^{\frac{d}{2}}. 
\end{equation}
From \eqref{m-infty-1} and \eqref{m-infty-2}, 
we infer that \eqref{sob-bc-rela} holds.


\section{Proof of Proposition \ref{thm-0-1}}
\label{exisistence-groundstate}
In this appendix, 
following \cite[Proposition 1.1]{AIIKN}, 
we shall give a proof of Proposition \ref{thm-0-1}. 

\begin{proof}[Proof of Proposition \ref{thm-0-1}]
From Lemma \ref{variational-character4}, 
it suffices to prove the existence of minimizer for
\begin{equation} \label{mini-1}
\inf\left\{ \mathcal{J}(u) \colon u \in H^{1}(\mathbb{R}^{3}) 
\setminus \{0\}, \ \mathcal{N}_{\omega}(u) \leq 0 \right\}. 
\end{equation}
To this end, we consider a minimizing sequence $\{u_{n}\}$ for \eqref{mini-1}. 
We denote the Schwarz symmetrization of $u_{n}$ by $u_{n}^{*}$. 
Note that $\| \nabla u_n^\ast 
\|_{L^2} \leq \| \nabla u_n \|_{L^2}$ and 
$\| u_n^\ast \|_{L^q} = 
\| u_n \|_{L^q}$ hold for each $q \in [2,2^\ast]$ 
(see e.g. \cite{Lieb-Loss}). 
From these properties 
and Lemma \ref{variational-character4}, we have
\begin{align}
\label{eq:A.6}
&\mathcal{N}_{\omega}(u_{n}^{*})\le 0
\quad 
\mbox{for any $n \in \mathbb{N}$},
\\[6pt]
\label{eq:A.7}
&\lim_{n\to \infty}\mathcal{J}(u_{n}^{*}) = m_{\omega}^{\mathcal{N}}. 
\end{align}
It follows from \eqref{eq:A.6} and \eqref{eq:A.7} that 
\begin{equation}
\|\nabla u_{n}^{*}\|_{L^{2}}^{2} + \omega \|u_{n}^{*}\|_{L^{2}}^{2} 
\leq \|u_{n}^{*}\|_{L^{p+1}}^{p+1} + \|u_{n}^{*}\|_{L^{6}}^{6} \lesssim 1. 
\end{equation}
Thus, we see that 
$\{u_{n}^{*}\}$ is bounded in $H^{1}(\mathbb{R}^{d})$. 
Since $\{u_{n}^{*}\}$ is radially symmetric 
and bounded in $H^{1}(\mathbb{R}^{d})$, 
there exists a radially symmetric function 
$Q_{\omega} \in H^{1}(\mathbb{R}^{d})$ 
such that, passing to some subsequence, 
\begin{equation}
\label{eq:A.8}
\lim_{n\to \infty}u_{n}^{*}=Q_{\omega} 
\quad 
\mbox{weakly in $H^{1}(\mathbb{R}^{d})$ and 
strongly in $L^{p+1}(\mathbb{R}^{d})$}. 
\end{equation}
We shall show that $Q_{\omega}$ 
becomes a minimizer for $m_{\omega}^{\mathcal{N}}$.

We first claim that $Q_{\omega} \not\equiv 0$. 
Suppose the contrary that $Q_{\omega} \equiv 0$. 
Then, it follows from (\ref{eq:A.6}) and (\ref{eq:A.8}) that, passing to some subsequence, 
\begin{equation}\label{eq:A.9}
0\ge \lim_{n\to \infty}\mathcal{N}_{\omega}(u_{n}^{*})
\ge \lim_{n\to \infty}\left\{ 
\left\| \nabla u_{n}^{*} \right\|_{L^{2}}^{2}
-\left\| u_{n}^{*} \right\|_{L^{6}}^{6}
\right\}.
\end{equation}
If $\| \nabla u_n^* \|_{L^2} \to 0$, then 
it follows from the boundedness of $\{u_{n}^{*}\}$ in $H^{1}(\mathbb{R}^{3})$
and H\"{o}lder's inequality that 
$\| u_n^* \|_{L^q} \to 0$ for all $2<q \leq 6$. 
This together with \eqref{eq:A.7} yields that 
\begin{equation}
m_{\omega}^{\mathcal{N}} = \lim_{n \to \infty} \mathcal{J}(u_{n}^{*}) 
= 0.
\end{equation} 
However, this contradicts \eqref{m-lower}. 
Therefore, by taking a subsequence, 
we may assume $\lim_{n\to\infty} \| \nabla u_n^* \|_{L^2} > 0$.

Now, \eqref{eq:A.9} with the definition of $\sigma$ gives us
\begin{equation}\label{19/10/9/14:38}
\lim_{n\to \infty}\left\| \nabla u_{n}^{*} \right\|_{L^{2}}^{2}
\ge 
\sigma 
\lim_{n\to \infty}\left\| u_{n}^{*} \right\|_{L^{6}}^{2}
\ge 
\sigma \lim_{n\to \infty}
\left\| \nabla u_{n}^{*} \right\|_{L^{2}}^{\frac{2}{3}}.
\end{equation}
From this together with 
$\lim_{n\to\infty} \| \nabla u_n^* \|_{L^2} >0$ 
and \eqref{eq:A.9}, we have 
\begin{equation}\label{eq:A.10}
\sigma^{\frac{3}{2}}
\le \lim_{n\to \infty}\left\| \nabla u_{n}^{*} \right\|_{L^{2}}^2 \leq 
\lim_{n \to \infty} \left\|u_{n}^{*} \right\|_{L^{6}}^{6}.
\end{equation}
Hence, we see from \eqref{eq-J}, 
\eqref{eq:A.7} and \eqref{19/10/9/14:38} that 
\[
\begin{split}
m_{\omega}^{\mathcal{N}}
&=
\lim_{n\to \infty}\mathcal{J}(u_{n}^{*})
\\[6pt]
&\ge 
\lim_{n\to \infty}
\left\{ 
\frac{p-1}{2(p+1)}\left\|u_{n}^{*} \right\|_{L^{p+1}}^{p+1}
+
\frac{1}{3}\left\| u_{n}^{*} \right\|_{L^{6}}^{6}
\right\}
\\[6pt]
&\ge \frac{1}{3}\lim_{n\to \infty}
\left\|u_{n}^{*} \right\|_{L^{6}}^{6}
\ge m_{\infty}.
\end{split}
\]
However, this contradicts the assumption. 
Thus, $Q_{\omega} \not\equiv 0$.

Using Lemma \ref{thm-bl}, we have
\begin{align}
\label{eq:A.11}
\mathcal{J}(u_{n}^{*})
-
\mathcal{J}(u_{n}^{*}-Q_{\omega})
-
\mathcal{J}(Q_{\omega})
&=o_{n}(1),
\\[6pt]
\label{eq:A.12}
\mathcal{N}_{\omega}(u_{n}^{*}) 
- \mathcal{N}_{\omega}(u_{n}^{*}-Q_{\omega})
-\mathcal{N}_{\omega}(Q_{\omega})
&=o_{n}(1)
.
\end{align} 
Furthermore, \eqref{eq:A.11} together with \eqref{eq:A.7} and the positivity of $\mathcal{J}$ implies that
\begin{equation}\label{eq:A.13}
\mathcal{J}(Q_{\omega}) \le m_{\omega}^{\mathcal{N}}.
\end{equation}

Next, we shall show that 
$\mathcal{N}_{\omega}(Q_{\omega}) \leq 0$. 
Suppose that $\mathcal{N}_{\omega}(Q_{\omega})>0$. 
Then, it follows from \eqref{eq:A.6} and \eqref{eq:A.12} that $\mathcal{N}_{\omega}(u_{n}^{*}-Q_{\omega})<0$ 
for sufficiently large $n$. 
Hence, we can take $\lambda_{n}\in 
(0,1)$ such that 
$\mathcal{N}_{\omega}
(\lambda_{n}(u_{n}^{*}-Q_{\omega}))=0$. 
Furthermore, we see from $0<\lambda_{n}<1$, \eqref{eq:A.7}, \eqref{eq:A.11} 
and $Q_{\omega} \neq 0$ that 
\begin{equation}
\begin{split}
m_{\omega}^{\mathcal{N}}
&\le 
\mathcal{J}( \lambda_{n} (u_{n}^{*}-Q_{\omega}))
< 
\mathcal{J}(u_{n}^{*}-Q_{\omega})
=\mathcal{J}(u_{n}^{*})
-
\mathcal{J}(Q_{\omega})+o_{n}(1)
\\[6pt]
&= m_{\omega}^{\mathcal{N}} -
\mathcal{J}(Q_{\omega})+o_{n}(1) < m_{\omega}^{\mathcal{N}} 
\end{split}
\end{equation}
for sufficiently large $n \in \mathbb{N}$, 
which is a contradiction. 
Thus, $\mathcal{N}_{\omega}(Q_{\omega}) 
\leq 0$.

Since $Q_{\omega} \not\equiv 0$ 
and $\mathcal{N}_{\omega}(Q_{\omega}) \leq 0$, 
it follows from Lemma \ref{variational-character4} 
that 
\begin{equation}\label{eq:A.15}
m_{\omega}^{\mathcal{N}}
\le \mathcal{J}(Q_{\omega}). 
\end{equation}
Moreover, it follows from 
the weak lower semicontinuity that 
\begin{equation}\label{eq:A.16}
\mathcal{J}(Q_{\omega}) 
\le 
\liminf_{n\to \infty}\mathcal{J}(u_{n}^{*})
\le m_{\omega}^{\mathcal{N}}.
\end{equation} 
Combining (\ref{eq:A.15}) and (\ref{eq:A.16}), we obtain $\mathcal{J}(Q_{\omega})=m_{\omega}^{\mathcal{N}}$. 
Thus, we have proved that $Q_{\omega}$ 
is a minimizer for $m_{\omega}^{\mathcal{N}}$. 
\end{proof}

\section{Proofs of Proposition \ref{thm4-1}} 
\label{section-4}
In this section, we give a proof of Proposition \ref{thm4-1}.

\begin{proof}[Proof of Proposition \ref{thm4-1}]
Let $\omega^{\prime}>0$. 
We see from Lemma 
\ref{variational-character4} that for any $\varepsilon>0$, 
there exists a nontrivial function $u_{\varepsilon} \in H^{1}
(\mathbb{R}^{3})$ such that $\mathcal{N}_{\omega^{\prime}}(u_{\varepsilon})\le 0$ and $\mathcal{J}(u_{\varepsilon})
\le m_{\omega^{\prime}}^{\mathcal{N}} +\varepsilon$. 
Observe that $\mathcal{N}_{\omega}(u_{\varepsilon}) 
< \mathcal{N}_{\omega^{\prime}}(u_{\varepsilon})$ 
for any $0< \omega < \omega^{\prime}$. 
Hence, Lemma \ref{variational-character4} together with the properties of $u_{\varepsilon}$ shows that for any 
$0< \omega < \omega^{\prime}$, 
\begin{equation}\label{19/9/8/9:25}
m_{\omega}^{\mathcal{N}} 
\le 
\mathcal{J}(u_{\varepsilon})
\le 
m_{\omega^{\prime}}^{\mathcal{N}} +\varepsilon.
\end{equation}
Taking $\varepsilon \to 0$ in \eqref{19/9/8/9:25} yields $m_{\omega}^{\mathcal{N}} \le m_{\omega^{\prime}}^{\mathcal{N}}$. 
This proves the proposition. 
\end{proof}


\section{Compactness in $\boldsymbol{L^{p}}$}\label{18/11/11/15:30}

We record a compactness theorem in $L^{p}$ (see Proposition A.1 of \cite{Killip-Visan-book}):
\begin{lemma}\label{18/11/10/13:29} 
Let $d\ge 1$ and $1\le p < \infty$. A sequence $\{f_{n}\}$ in $L^{p}(\mathbb{R}^{d})$ has a convergent subsequence if and only if is satisfies the following properties:
\\
\noindent 
{\rm (i)}~There exists $A>0$ such that 
\begin{equation}\label{18/11/07/10:16}
\|f_{n} \|_{L^{p}}\le A
\end{equation}
for all $n\ge 1$. 
\\
\noindent 
{\rm (ii)}~For any $\varepsilon>0$ there exists $\delta>0$ such that 
\begin{equation}\label{18/11/07/10:17}
\int_{\mathbb{R}^{d}} |f_{n}(x)-f_{n}(x+y) |\,dx < \varepsilon
\end{equation}
for all $n\ge 1$ and any $|y|<\delta$.
\\
\noindent 
{\rm (iii)}~For any $\varepsilon>0$ there exists $R>0$ such that 
\begin{equation}\label{18/11/07/10:18}
\int_{|x|\ge R} |f_{n}(x)|^{p} \,dx < \varepsilon
\end{equation}
for all $n\ge 1$. 
\end{lemma}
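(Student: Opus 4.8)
This is the classical Riesz--Fr\'echet--Kolmogorov compactness criterion, so the plan is to reproduce the standard mollification argument, the only real work being to check that \emph{every} estimate is uniform in $n$; since the statement is quoted from \cite{Killip-Visan-book}, one could of course simply cite it. (The forward implication is to be read as: if $\{f_{n}\}$ is relatively compact in $L^{p}(\mathbb{R}^{d})$, then \eqref{18/11/07/10:16}--\eqref{18/11/07/10:18} hold. This follows from total boundedness: cover $\{f_{n}\}$ by finitely many $\varepsilon$-balls centred at $g_{1},\dots,g_{N}\in L^{p}(\mathbb{R}^{d})$; each fixed $g_{j}$ is bounded, tight, and has continuous translation --- the last by density of $C_{c}(\mathbb{R}^{d})$ in $L^{p}(\mathbb{R}^{d})$ --- and taking the worst of the finitely many parameters together with the triangle inequality transfers \eqref{18/11/07/10:16}--\eqref{18/11/07/10:18} to the whole family up to a $2\varepsilon$, which is arbitrary.)

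For the substantive direction, assume \eqref{18/11/07/10:16}--\eqref{18/11/07/10:18}. Fix $\varphi\in C_{c}^{\infty}(\mathbb{R}^{d})$ with $\varphi\ge0$, $\int\varphi=1$ and $\operatorname{supp}\varphi\subset B_{1}$, and put $\varphi_{\varepsilon}(x):=\varepsilon^{-d}\varphi(x/\varepsilon)$. The first step is that mollification approximates the $f_{n}$ uniformly: from
\begin{equation}
f_{n}\ast\varphi_{\varepsilon}-f_{n}
=\int_{\mathbb{R}^{d}}\varphi_{\varepsilon}(y)\big\{f_{n}(\cdot-y)-f_{n}(\cdot)\big\}\,dy
\end{equation}
and Minkowski's integral inequality, condition \eqref{18/11/07/10:17} (in its $L^{p}$ form, and using invariance of Lebesgue measure under translation) gives $\sup_{n}\|f_{n}\ast\varphi_{\varepsilon}-f_{n}\|_{L^{p}}\to0$ as $\varepsilon\to0$.

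The second step uses \eqref{18/11/07/10:16}: for $\varepsilon$ fixed, H\"older's inequality yields $\|f_{n}\ast\varphi_{\varepsilon}\|_{L^{\infty}}\le A\|\varphi_{\varepsilon}\|_{L^{p'}}$ and $\|\nabla(f_{n}\ast\varphi_{\varepsilon})\|_{L^{\infty}}\le A\|\nabla\varphi_{\varepsilon}\|_{L^{p'}}$, uniformly in $n$, so $\{f_{n}\ast\varphi_{\varepsilon}\}_{n}$ is bounded and equi-Lipschitz and, by the Arzel\`a--Ascoli theorem, has a subsequence converging uniformly on each closed ball $\overline{B_{R}}$; moreover, since for $\varepsilon\le1$ the restriction of $f_{n}\ast\varphi_{\varepsilon}$ to $\{|x|>R+1\}$ only involves $f_{n}$ on $\{|x|>R\}$, condition \eqref{18/11/07/10:18} controls $\|f_{n}\ast\varphi_{\varepsilon}\|_{L^{p}(|x|>R+1)}$ uniformly in $n$. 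The third step is a diagonal argument: apply the above with $\varepsilon=\varepsilon_{m}\downarrow0$ and, at stage $m$, choose $R_{m}$ (via \eqref{18/11/07/10:18} and step one) so large that $\sup_{n}\big\{\|f_{n}\|_{L^{p}(|x|>R_{m})}+\|f_{n}\ast\varphi_{\varepsilon_{m}}-f_{n}\|_{L^{p}}\big\}<1/m$; extracting nested subsequences and diagonalising, we obtain a subsequence $\{f_{n_{j}}\}$ along which $j\mapsto f_{n_{j}}\ast\varphi_{\varepsilon_{m}}$ converges uniformly on $\overline{B_{R_{m}}}$, hence is Cauchy in $L^{p}(B_{R_{m}})$ (finite measure), for every $m$. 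Splitting into $\{|x|\le R_{m}\}$ and $\{|x|>R_{m}\}$ and inserting the approximation $\|f_{n_{j}}-f_{n_{j}}\ast\varphi_{\varepsilon_{m}}\|_{L^{p}}<1/m$ shows $\{f_{n_{j}}\}$ is Cauchy in $L^{p}(\mathbb{R}^{d})$, and completeness yields the convergent subsequence.

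The only point requiring care is the uniformity bookkeeping: step one must invoke hypothesis \eqref{18/11/07/10:17} itself (not merely continuity of translation for each fixed $f_{n}$), and in the diagonal step the radius $R_{m}$ must be chosen \emph{after} $\varepsilon_{m}$, so that the tightness estimate \eqref{18/11/07/10:18} survives the convolution. Everything else --- Minkowski's inequality, H\"older's inequality, Arzel\`a--Ascoli, and the $3\varepsilon$ patching --- is soft.
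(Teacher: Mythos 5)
The paper does not prove Lemma~\ref{18/11/10/13:29}; it simply cites Proposition~A.1 of \cite{Killip-Visan-book}. Your proof is a correct, self-contained reproduction of the standard Riesz--Fr\'echet--Kolmogorov mollification argument, and the uniformity-in-$n$ bookkeeping --- Minkowski's integral inequality for the approximation step, H\"older plus Arzel\`a--Ascoli for precompactness of the mollified family on balls, tightness surviving convolution because $\operatorname{supp}\varphi_{\varepsilon}\subset B_{1}$, and a diagonal extraction with $R_{m}$ chosen after $\varepsilon_{m}$ --- is all handled correctly, including the soft forward direction via total boundedness. One detail worth flagging: as printed in the paper, condition~(ii) has the $L^{1}$ integrand $\int|f_{n}(x)-f_{n}(x+y)|\,dx$ rather than the $L^{p}$ version $\int|f_{n}(x)-f_{n}(x+y)|^{p}\,dx$; this is evidently a misprint (the $L^{1}$ form does not even control $L^{p}$ translation continuity and would make the stated equivalence false), and you correctly read it ``in its $L^{p}$ form,'' which is what the Minkowski step requires. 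Modulo that correction to the statement, your argument is complete and sound.
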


\section*{Acknowledgements}
The work of S.I. was supported by NSERC grant (371637-2014).
The work of H.K. was supported by JSPS KAKENHI Grant Number 
JP17K14223.

\bibliographystyle{plain}

\vspace{24pt}

\end{document}